\def\II{{\mathds 1}}
\def\R{{\mathbb R}}
\def\P{{\mathbb P}}
\def\CCCC{{\mathscr C}}
\def\PP{{\mathcal P}}
\def\FF{{\mathcal F}}
\def\DD{{\mathcal D}}
\def\EE{{\mathcal E}}
\def\BB{{\mathcal B}}
\def\AA{{\mathcal A}}
\def\NN{{\mathcal N}}
\def\MM{{\mathcal M}}
\def\D{{\mathbb D}}
\def\N{{\mathbb N}}
\def\E{{\mathbb E}}
\def\K{{\mathbb K}}
\def\Q{{\mathbb Q}}
\def\P{{\mathbb P}}
\def\MM{{\mathcal M}}
\def\Supp{{\rm{Supp} \,}}
\def\Hess{{\rm{Hess}}}
\def\Span{{\rm{Span}}}
\def\Cov{{\rm{Cov}}}
\def\id{{\rm{id}}}
\def\LL{{\mathcal{L}}}
\def\AA{{\mathcal{A}}}
\newcommand{\dd}{\mathrm{d}}
\newcounter{subsubsubsection}[subsubsection]
\renewcommand\thesubsubsubsection{\@roman\c@subsubsubsection}
\newcommand\subsubsubsection{\@startsection{subsubsubsection}{4}{\z@}%
                                     {-3.25ex\@plus -1ex \@minus -.2ex}%
                                     {1.5ex \@plus .2ex}%
                                     {\normalfont\small\bfseries}}
\newcommand*\l@subsubsubsection{\@dottedtocline{3}{5.2em}{1em}}
\newcommand*{\subsubsubsectionmark}[1]{}
\definecolor{jaune clair}{rgb}{0, 0.9,0.5}
\definecolor{gris clair}{gray}{0.75}
\definecolor{Mon_rouge}{rgb}{1.,0.,0.}
\definecolor{Mon_violet}{rgb}{0.4,0.,0.6}
\definecolor{Mon_violet2}{rgb}{0.498039,0.,1.}
\definecolor{Mon_orange}{rgb}{1.,0.4980392156862745,0.}
\definecolor{Mon_vert}{rgb}{0.,0.8,0.} 
\definecolor{Mon_vert2}{rgb}{0.,0.4,0.2}
\definecolor{Mon_vert3}{rgb}{0.2,0.6,0.}
\definecolor{Mon_vert4}{rgb}{0.,0.39215686274509803,0.}
\definecolor{Mon_bleu}{rgb}{0.,0.,0.8}
\definecolor{Mon_bleu_coalescence}{rgb}{0.49,0.49,1}
\definecolor{qqqqff}{rgb}{0,0,1}
\definecolor{ffqqqq}{rgb}{1,0,0}
\definecolor{xdxdff}{rgb}{0.49019607843137253,0.49019607843137253,1}
\definecolor{ffxfqq}{rgb}{1,0.4980392156862745,0}
\newtheorem{commun}{Commun}[section]
\newtheorem{Thm}[commun]{Theorem}
\newtheorem{Prop}[commun]{Proposition}
\newtheorem{Def}[commun]{Definition}
\newtheorem{Lem}[commun]{Lemma}
\newtheorem{Cor}[commun]{Corollary}
\newtheorem{Rem}[commun]{Remark}
\definecolor{Mon_rouge}{rgb}{1.,0.,0.}
\definecolor{Mon_violet}{rgb}{0.4,0.,0.6}
\definecolor{Mon_orange}{rgb}{1.,0.4980392156862745,0.}
\definecolor{Mon_vert}{rgb}{0.,0.8,0.}
\definecolor{Mon_bleu}{rgb}{0.,0.,0.8}
\definecolor{bleu clair}{rgb}{0, 0,0.8}
\definecolor{gris clair}{gray}{0.75}
\begin{document}

\pagestyle{fancy}
\renewcommand{\rightmark}{}
\renewcommand{\leftmark}{}
\cfoot{\textbf{\thepage/\pageref{LastPage}}}

\vspace{0.5cm}

\begin{center}
\LARGE \textbf{Existence, uniqueness and ergodicity for the centered Fleming-Viot process} \\ \vspace{0.25cm}
\large \textsc{Nicolas Champagnat}$^{\textsc{*}}$, 
\qquad \textsc{Vincent Hass}$^{\textsc{*}}$\footnote{Corresponding author at: IECL - Site de Nancy Facult\'{e} des Sciences et Technologies Campus, Boulevard des Aiguillettes 54506 Vand\oe uvre-l\`{e}s-Nancy, Cedex, France. \\
 \textit{E-mail addresses:} nicolas.champagnat@inria.fr (Nicolas Champagnat), vincent.hass@inria.fr (Vincent Hass).} \\ \vspace{0.5cm} 
\textsc{\today} \\  \vspace{0.5cm}
\textit{{}$^{\textsc{*}}$ Universit\'{e} de Lorraine, CNRS, Inria, IECL, F-54000 Nancy, France}
\end{center}

\begin{center}
\rule{10cm}{.5pt} 
\end{center}

\textbf{Abstract.}  Motivated by questions of ergodicity for shift invariant \textsc{Fleming-Viot} process, we consider the centered \textsc{Fleming-Viot} process $\left(Z_{t} \right)_{t\geqslant 0}$ defined by $Z_{t} := \tau_{-\left\langle \id, Y_{t}  \right\rangle} \sharp \, Y_{t}$, where $\left(Y_{t}\right)_{t\geqslant 0}$ is the original \textsc{Fleming-Viot} process. Our goal is to characterise the centered \textsc{Fleming-Viot} process with a martingale problem. 
To establish the existence of a solution to this martingale problem, we exploit the original \textsc{Fleming-Viot} martingale problem and asymptotic expansions. The proof of uniqueness is based on a weakened version of the duality method, allowing us to prove uniqueness for initial conditions admitting finite moments. We also provide counter examples showing that our approach based on the duality method cannot be expected to give uniqueness for more general initial conditions. Finally, we establish ergodicity properties with exponential convergence in total variation for the centered \textsc{Fleming-Viot} process and characterise the invariant measure.  \\

\textbf{Keywords.} Measure-valued diffusion processes, \textsc{Fleming-Viot} processes, Martingale problems, 
Duality method, Exponential ergodicity in total variation, \textsc{Donnelly-Kurtz}'s modified look-down. \\ 

\textbf{MSC subject classification.} Primary 37A25, 37A30, 60G44, 60J60, 60J68; Secondary 60B10, 60G09, 60J76, 60J90, 92D10.

\begin{center}
\rule{10cm}{.5pt} 
\end{center}

\normalsize

\section{Introduction\label{Section_1_Introduction}}
\textsc{Fleming} and \textsc{Viot} have introduced in {\color{blue} \cite{fleming_no_1979}} a probability-measure-valued stochastic process modeling the dynamics of the distribution of allelic frequencies in a \emph{selectively neutral} genetic population as influenced by mutation and random genetic drift:  the original \textsc{Fleming-Viot} process. The initial model of {\color{blue} \cite{fleming_no_1979}} was progressively enriched with further mechanisms of Darwinian evolution: selection {\color{blue} \cite{fleming_no_1979, Ethier_Kurtz_1987, Ethier_Kurtz_1993, Dawson, Etheridge}}, recombination {\color{blue} \cite{EK_convergence_1994, Ethier_Kurtz_1993}} or the effect of an environment {\color{blue} \cite{HE_Hui_2010}}. \textsc{Fleming} and \textsc{Viot} characterise in {\color{blue} \cite{fleming_no_1979}} the law of their process as a solution of a \textsc{Stroock-Varadhan} measure-valued martingale problem {\color{blue} \cite{stroock_multidimensional_2006}} both in the selective neutral case and the case with selection. To obtain the existence of such solution on a compact metric space, their method is based on discretisation of the mutation operator and tightness arguments. 
An alternative approach is used, in the studies {\color{blue} \cite{moran_wandering_1975, moran_wandering_1976, Kingman_coherent_1976, kesten_number_1980, shiga_wandering_1982}} based on the \textsc{Otha-Kimura} model {\color{blue} \cite{ohta_model_1973, ohta_simulation_1974}} and in the references {\color{blue} \cite{fleming_no_1979, dawson_wandering_1982, Ethier_Kurtz_1993, Etheridge_2}} based on its continuous-time version: the \textsc{Moran} model (also called \emph{continuous-state stepwise mutation model}). If we denote by $N$ the population size, these authors construct a particle process whose limiting behavior is analysed under the assumptions that the mutation step is proportional to $1/\sqrt{N}$ and on the time scale $(Nt)_{t\geqslant 0}$. In {\color{blue} \cite{HE_Hui_2010}}, another particle process, based on the lookdown construction {\color{blue} \cite{donnelly_countable_1996}} is used to show the existence of the \textsc{Fleming-Viot} process in a random environment. This lookdown construction also allows to analyse sample path properties of the process and has been used in numerous references since then, such as {\color{blue} \cite{donnelly_particle_1999} \cite[\color{black} Chapter 5]{Etheridge_2}}.  \\
\indent In {\color{blue} \cite{fleming_no_1979}}, uniqueness of the solution to the \textsc{Fleming-Viot} martingale problem in the selectively neutral case, is proved using uniqueness of moments of certain finite-dimensional distributions and arguments on semigroup. However, in the case where natural selection acts, the previous method fails, but the result can be obtained from a version of the \textsc{Cameron-Martin-Girsanov} formula {\color{blue} \cite[\color{black} Chapter 10]{Dawson} \cite[\color{black} Theorem 5.1]{dawson_geostochastic_1978}}. See also {\color{blue}\cite{Ethier_Shiga_unbounded_selection_2002}} for an application of this method in the case of unbounded selection function. In most references such as {\color{blue} \cite{Etheridge_2, dawson_wandering_1982, Ethier_Kurtz_1981,  Ethier_Kurtz_1987, Ethier_Kurtz_1993, Ethier_Shiga_unbounded_selection_2002}}, under a variety of assumptions, the \emph{duality method} {\color{blue} \cite[\color{black} Proposition 4.4.7]{Ethier_markov_1986}} is used to prove the uniqueness of the \textsc{Fleming-Viot} process. The idea is to relate the distribution of the original  process with that of a simple process, called \emph{dual process}. This leads to a duality relation which ensures that two solutions to the martingale problem have the same 1-dimensional marginal laws. Uniqueness of the solution to the martingale problem then follows from  \textsc{Markov}'s property {\color{blue} \cite[\color{black} Theorem 4.4.2]{Ethier_markov_1986}}. Other methods are used in some references: {\color{blue} \cite{dawson_resolvent_1995}} makes use of resolvent estimates;  {\color{blue} \cite{Overbeck_et_al_1995, Overbeck_1995}} prove existence and uniqueness of \textsc{Fleming-Viot} processes with \emph{unbounded selection} intensity functions by using \textsc{Dirichlet}'s forms.\\
\indent Questions of ergodicity of the \textsc{Fleming-Viot} process were also the subject of many works. Let $E$ be a Polish space and $\BB(E)$ the \textsc{Borel} $\sigma-$field on $E$. Let us recall from { {\color{blue} \cite[\color{black} Section 5]{Ethier_Kurtz_1993}}} that an $E-$valued \textsc{Markov} process $\left(Z_{t}\right)_{t\geqslant 0}$ {  with a unique stationary distribution $\pi$} is \emph{weakly ergodic} if for all {  bounded} continuous functions $f$ on $E$, {  for all initial condition $x_{0} \in \E$,}
\begin{equation}
\lim_{t\to + \infty}{\E_{{ x_{0}}}\left(f\left(Z_{t} \right) \right) = \int_{E}^{}{f(x){ \pi}(\dd x)}}
\label{Eq_Ergodicity_Weak}
\end{equation}
and \emph{strongly ergodic} if 
\begin{equation}
\lim_{t\to + \infty}{\sup_{B \in \BB(E)}{\left|\P_{{ x_{0}}}\left(Z_{t} \in B \right) - { \pi}(B) \right|}} = 0, \quad {  x_{0}} \in E.
\label{Eq_Ergodicity_Strong}
\end{equation}
If $E$ is compact, for mutation operators $A$ whose closure generates a \textsc{Feller} semigroup on the space of  continuous functions and such that there is a unique probability measure $\nu_{0}$ on $E$ satisfying $\int_{E}^{}{Af(x)\nu_{0}(\dd x)} = 0$, some ergodicity results for the \textsc{Fleming-Viot} process are obtained in {\color{blue} \cite{Ethier_Kurtz_1993}}.  More precisely, in the selectively neutral case and without recombination, a simple proof of weak ergodicity of the \textsc{Fleming-Viot} process is given using duality arguments whereas coupling arguments provide an approach to strong ergodicity. These results were extended in {\color{blue} \cite{EK_coupling_1998}} to models with recombination and in {\color{blue} \cite{Ethier_Shiga_unbounded_selection_2002}} to models with unbounded selection, with the additional tool of \textsc{Dawson}'s \textsc{Girsanov}-type formula for strong ergodicity. In the special case where the mutation operator of the \textsc{Fleming-Viot} process has the form \begin{equation}
Af(x) = \frac{\theta}{2}\int_{E}^{}{\left(f(y) - f(x) \right)P(x, \dd y)}, \quad \theta \in (0, +\infty), \ f \in \DD(A), \label{Intro_1}
\end{equation}
it is proved in {\color{blue} \cite{Ethier_Kurtz_1993}} that the \textsc{Fleming-Viot} process has a reversible stationary distribution if $P(x,\dd y) = \nu(\dd y)$ { for some probability measure $\nu$ on $E$} (see {\color{blue} \cite{li_shiga_yao_1999}} for a converse result). For the mutation operator (\ref{Intro_1}), it is proved in {\color{blue} \cite{Ethier_Kurtz_1981, Ethier_Kurtz_1987}} and {\color{blue}\cite[\color{black} Theorem 8.2.1]{Dawson}} that the \textsc{Fleming-Viot} process is purely atomic for every time, in other words the solutions of the martingale problem take values in the set of purely atomic probability measures. In {\color{blue} \cite{EK_convergence_1994}}, the ergodicity result of {\color{blue} \cite{EK_coupling_1998}} was extended to the weak atomic topology. \\
\indent However, if we consider the case where the mutation operator is the Laplacian on $\R^{d}$, there exists no stationary distribution {\color{blue} \cite{Ethier_Kurtz_1993, li_shiga_yao_1999}}, {\color{blue} \cite[\color{black} Problem 11 p.450]{Ethier_markov_1986}}. Instead the process exhibits a wandering phenomenon {\color{blue} \cite{dawson_wandering_1982, Birkner_Blath_2009}}. Nevertheless, {\color{blue} \cite{shiga_wandering_1982, Ethier_Kurtz_1993}} considered the \textsc{Fleming-Viot} process shifted by minus its empirical mean and established existence of a unique invariant measure and weak ergodicity for this process using moment and duality arguments. More precisely in {\color{blue} \cite{Ethier_Kurtz_1993}}, thanks to some estimates of the original \textsc{Fleming-Viot} dual process and the finiteness of all moments of the \textsc{Fleming-Viot} process shifted by minus its empirical mean for any time $t$, the authors obtain an expression for these in the asymptotic $t \to + \infty$. Then, by {  tightness} arguments and characterisation of the limit, the result follows. In {\color{blue} \cite{shiga_wandering_1982}}, an analoguous approach is used for the continuous-state stepwise mutation model. \\
 \indent In this paper we are interested in the \textsc{Fleming-Viot} process shifted by minus its empirical mean, which we call \emph{centered \textsc{Fleming-Viot} process}. As in previous works it is natural to ask questions of existence, uniqueness and ergodicity {  when the mutation operator is the Laplacian on $\R^d$}. Moreover, the study of this process was motivated by biological questions in adaptive dynamics. The theory of adaptative dynamics {\color{blue} \cite{Metz}} is based on biological assumptions of \emph{rare} and \emph{small mutations} and of \emph{large population} under which an ODE approximating the population evolutionary dynamics, the \emph{Canonical Equation of Adaptive Dynamics} (CEAD) was proposed {\color{blue}\cite{Dieckmann_Law_1996}}. Two mathematical approaches were developed to give a proper mathematical justification of this theory: a \emph{deterministic} one {\color{blue} \cite{Diekmann_Jabin_Mischler_Perthame, Perthame_Barles_2008, Lorz_Mirrahimi_Perthame_2011}}, and a \emph{stochastic} one {\color{blue} \cite{Champagnat, Champagnat_PTRF_2011, Champagnat_Jabin_Meleard_JMPA_2014}}. Despite their success, the proposed approaches are criticised by biologists {\color{blue}\cite{Waxman, Perthame_Gauduchon_2010}}. Among the biological assumptions of adaptive dynamics, the assumption of rare mutations is the most critised as unrealistic. In order to solve this problem, we propose to apply an asymptotic of small mutations and large population, but \emph{frequent mutations}. After conveniently scaling the population state, this leads to a \emph{slow-fast dynamics} {\color{blue} \cite{Papanicolaou_1977, Kurtz}}, where the fast dynamics appears to be given by a discrete version of the centered \textsc{Fleming-Viot} process {\color{blue} \cite{Champagnat_Hass_CEAD_2023}}. This explains why we are interested in ergodicity properties of such processes. \\
\indent To establish the existence of the centered \textsc{Fleming-Viot} process, we characterise it as a solution of a measure-valued martingale problem that we called the \emph{centered \textsc{Fleming-Viot} martingale problem}. Our method is to exploit the original \textsc{Fleming-Viot} martingale problem and asymptotic expansions. An additional difficulty occurs in our case since we need to apply the original \textsc{Fleming-Viot} martingale problem to \emph{predictable test functions}. This requires to extend the martingale problem to such test functions  using \emph{regular conditional probabilities}. The proof of uniqueness of the solution of the centered \textsc{Fleming-Viot} martingale problem is based on duality methods as in the previous works. However, additional difficulties occur in our case since bounds on the dual process are much harder to obtain and the duality identity can only be proved in a weakened version. In particular, our uniqueness result only holds for initial conditions admitting finite moments. We also provide a counter example showing that our uniqueness result is optimal in the sense that we cannot expect to obtain uniqueness for more general initial conditions using the duality approach.   Finally, we obtain strong ergodicity properties of the centered \textsc{Fleming-Viot} process that extend the weak ergodicity results obtained in {\color{blue} \cite{shiga_wandering_1982, Ethier_Kurtz_1993}}. To this aim, we construct the centered version of the \textsc{Moran} process and we prove that it converges in law to the centered \textsc{Fleming-Viot} process. Exploiting the relationship between the \textsc{Moran} model and the \textsc{Kingman} coalescent, we obtain a result of exponential ergodicity in total variation for the centered \textsc{Moran} model uniformly in the number of particles. This result is propagated to the centered \textsc{Fleming-Viot} process by coupling arguments. Using another strategy proposed by {\color{blue} \cite{shiga_wandering_1982, Ethier_Kurtz_1993}}, based on the \textsc{Donnelly-Kurtz} modified look-down {\color{blue} \cite{donnelly_particle_1999}} we give a characterisation of the unique invariant measure of the centered \textsc{Fleming-Viot} process. \\
\indent This paper is organised as follows. In Section \ref{Section_Existence} we define the martingale problem for the centered \textsc{Fleming-Viot} process and establish an existence result. We give also some equivalent extensions to the centered \textsc{Fleming-Viot} martingale problem and some properties of the centered \textsc{Fleming-Viot} process. In Section \ref{Section_Unicite} we prove uniqueness to the centered \textsc{Fleming-Viot} martingale problem for initial conditions admitting finite moments and we discuss this assumption. In Section \ref{Section_Ergo}, we establish exponential convergence in total variation for the centered \textsc{Fleming-Viot} process to its unique invariant measure and provide a characterisation of this measure based on the \textsc{Donnelly-Kurtz} modified look-down. Finally in Sections \ref{Section_preuve_Existence} and \ref{Section_preuve_Unicite}, we prove respectively the main results of existence and uniqueness of the solution of the centered \textsc{Fleming-Viot} martingale problem. The paper ends with appendices gathering technical lemmas for the existence proof {  and extensions to the centered \textsc{Fleming-Viot} martingale problem}.

\section{Existence for the centered \textsc{Fleming-Viot} process\label{Section_Existence}}

In this section, our aim is to define the martingale problem for the centered \textsc{Fleming-Viot} process and to establish an existence result.  This result is stated in Section \ref{Sous_section_2_1}. Then, we give in Section \ref{Sous_section_2_2}, the framework and ideas of the proof. We end this section by giving some interesting results about the centered \textsc{Fleming-Viot} process: it satisfies the \textsc{Markov} property (Section \ref{Sous_section_2_4_1}), admits finite moments (Section \ref{Sous_section_2_4_2}) and has compact support (Section \ref{Sous_section_2_4_3}). 

\subsection{Centered \textsc{Fleming-Viot} martingale problem and main result \label{Sous_section_2_1}}

The centered \textsc{Fleming-Viot} process is a measure-valued diffusion in 
{ \[\MM_{1}^{c,2}\left(\R^{d}\right) := \left\{ \mu \in \MM_{1}(\R^d) \left| \phantom{1^{1^{1^{1}}}} \hspace{-0.6cm} \right. \int_{\R^{d}}^{}{\left\|x\right\|^{2}\mu(\dd x)} < \infty, \int_{\R^{d}}^{}{x\mu(\dd x)}  = 0\right\} \]}
which is endowed with the trace of the topology of weak convergence on {  $\MM_{1}\left(\R^{d}\right)$}, the set of probability measures on $\R^d$ endowed with the topology of weak convergence making it a Polish space {\color{blue} \cite{Billingsley}}. We consider the filtered probability space $\left({ \widetilde{\Omega}_{d}}, \widetilde{\FF}, \left(\widetilde{\FF}_{t}\right)_{t\geqslant 0} \right)$ where 
{  \[\forall d \in \N^{\star}, \quad \widetilde{\Omega}_{d} := \left\{ X \in \CCCC\left(\left[0, +\infty\right), \MM_{1}^{c,2}\left(\R^{d}\right) \right) \left| \phantom{1^{1^{1^{1}}}} \hspace{-0.6cm} \right. \forall T>0, \ \sup_{0\leqslant t\leqslant T}{\int_{\R^{d}}^{}{\left\|x \right\|^{2}X_{t}(\dd x)} < \infty } \right\}\]}
is endowed with the trace of the \textsc{Skorohod} topology on { $\Omega_d:=\CCCC([0, +\infty), \MM_{1}(\R^{d}) )$, the set of continuous functions from $[0,+\infty)$ to $\MM_{1}(\R^{d})$}, $\widetilde{\FF}$ is the trace of the \textsc{Borel} $\sigma-$field $\FF$ on $\Omega_d$ and $(\widetilde{\FF}_{t})_{t\geqslant 0}$ is the trace of the \textsc{Borel} filtration $(\FF_t)_{t\geqslant 0}$ on $\Omega_d$. Let us denote {  $\widetilde{\Omega} := \widetilde{\Omega}_{1}$}. \\
\indent We introduce several notations that we use repeatedly in the sequel. For a measurable real bounded function $f$ and a measure $\nu \in { \MM_{1}\left(\R^{d}\right)}$, we denote $\left\langle f, \nu \right\rangle := \int_{  \R^{d}}^{}{f(x)\nu(\dd x)}.$ We denote by { $\id = (\id_{1}, \cdots, \id_{d})$ the identity function on $\R^d$ where for all $k \in \{1, \cdots, d \}$, $\id_{k} : \R^{d} \to \R, x = (x_{1}, \cdots, x_{d}) \mapsto x_{k}$}.  We denote $\N := \left\{0,1,2, \cdots \right\}$ and $\N^{\star} := \N \setminus{\{0\}}$.  {  If 
$\DD$ is a domain of $\R^{d}$, then for all $\ell \in \N$, we denote by $\CCCC^{\ell}(\DD,\R)$ the space of functions of class $\CCCC^{\ell}$ from $\DD$ to $\R$. For $\ell \in \N$, we denote by $\CCCC^{\ell}_{b}\left(\R^{d},\R\right)$ the space of real bounded functions of class $\CCCC^{\ell}\left(\R^{d},\R\right)$ with bounded derivatives.} {  For all $F \in \CCCC^{2}\left(\R, \R \right)$ and $g \in \CCCC^{2}_{b}\left(\R^{d}, \R\right)$ we denote for all $\nu \in \MM_{1}\left(\R^{d}\right)$, $F_{g}(\nu) := F\left(\left\langle g, \nu \right\rangle \right)$.
We denote by $\Delta$ the Laplacian operator on $\R^{d}$ and by $x^{t}$ the transpose vector of $x\in \R^{d}$. For any function $f$ whose second partial derivatives exist, we denote by $\Hess(f) := \left(\partial_{ij}^{2}f \right)_{1\leqslant i,j \leqslant d}$ the Hessian matrix of $f$. 

\begin{Def} A probability measure on $\P_{\mu} \in \MM_{1}(\widetilde{\Omega}_{d})$ is said to solve the  \emph{centered \textsc{Fleming-Viot} martingale problem} with resampling rate $\gamma \in (0,+\infty)$ and  with initial condition $\mu \in \MM_{1}^{c,2}(\R^{d})$ if the canonical process $\left(X_{t} \right)_{t\geqslant0}$ on $\widetilde{\Omega}_{d}$ satisfies $\P_{\mu}(X_{0} = \mu) = 1$ and for each $F\in \CCCC^{2}(\R, \R)$ and $g \in \CCCC^{2}_{b}\left(\R^{d},\R\right)$,
\begin{equation}
 \widehat{M}_{t}^{F,d}(g)  :=  F_{g}\left(X_{t}\right)  - F_{g}\left(X_{0}\right)   - \int_{0}^{t}{\LL_{\rm FVc}^{d}F_{g}\left(X_{s}\right)\dd s} \label{PB_Mg_Z_Multi_d}
 \end{equation}
with for all $\varpi \in \MM_{1}^{c,2}\left(\R^{d}\right)$, 
\begin{equation}
\begin{aligned}
\LL_{\rm FVc}^{d}F_{g}(\varpi) & := F'\left(\left\langle g, \varpi \right\rangle \right)\left(\left\langle\frac{\Delta g}{2}, \varpi \right\rangle + \gamma \left[\left\langle \id^{t}\left\langle \Hess\left(g \right), \varpi \right\rangle \id, \varpi \right\rangle - 2\left\langle \nabla g^t \id, \varpi \right\rangle \right] \right) \\
& \hspace{-1.75cm} + \gamma F''\left(\left\langle g, \varpi \right\rangle\right)\left(\left\langle g^{2}, \varpi \right\rangle - \left\langle g, \varpi \right\rangle^{2} + \left\langle \nabla g, \varpi \right\rangle^{t}\left\langle \id \, \id^{t}, \varpi \right\rangle \left\langle \nabla g, \varpi \right\rangle - 2\left\langle \nabla g, \varpi \right\rangle^t \left\langle g \times \id, \varpi \right\rangle \right) \\ & \hspace{-1.25cm} =F'\left(\left\langle g, \varpi \right\rangle \right)\left(\sum_{i\, = \, 1}^{d}{\left\langle \frac{\partial_{ii}^{2}g}{2}, \varpi \right\rangle} + \gamma \left[\sum_{i,j \, = \, 1}^{d}{\left\langle \partial_{ij}^{2}g, \varpi \right\rangle \left\langle \id_{i}\times \id_{j}, \varpi \right\rangle} - 2\sum_{i\, = \, 1}^{d}{\left\langle \partial_{i}g \times \id_{i}, \varpi \right\rangle} \right] \right) \\
& \hspace{-1.75cm} + \gamma F''\left(\left\langle g, \varpi \right\rangle\right)\left(\left\langle g^{2}, \varpi \right\rangle - \left\langle g, \varpi \right\rangle^{2}  + \sum_{i, j \, = \, 1}^{d}{\left\langle \partial_{i}g, \varpi \right\rangle \left\langle \partial_{j}g, \varpi \right\rangle \left\langle \id_{i}\times \id_{j}, \varpi \right\rangle} \right. \\
& \hspace{8.25cm} - \left. 2\sum_{i\, = \, 1}^{d}{\left\langle \partial_{i}g, \varpi \right\rangle \left\langle g \times \id_{i}, \varpi \right\rangle} \right)
\end{aligned}
\label{Eq_Generateur_L_FVc_Multi_d}
\end{equation}
is a continuous $\P_{\mu}-$martingale in $L^{2}\left(\widetilde{\Omega}_{d} \right)$ with quadratic variation process 
\begin{equation}
\begin{aligned}
    \left\langle \widehat{M}^{F,d}(g) \right\rangle_{t} & = 2\gamma \int_{0}^{t}{\left[F'\left(\left\langle g, X_{s} \right\rangle \right)\right]^{2}\left[\left\langle g^{2}, X_{s}  \right\rangle - \left\langle g, X_{s}  \right\rangle^{2}  \right. } \\
		 & \hspace{1.5cm} + \left. \left\langle \nabla g, X_{s}  \right\rangle^{t}\left\langle \id \, \id^{t}, X_{s} \right\rangle \left\langle \nabla g, X_{s} \right\rangle - 2\left\langle \nabla g, X_{s} \right\rangle^t \left\langle g \times \id, X_{s} \right\rangle \right] \dd s.
\end{aligned}
\label{Variation_quad_FV_recentre_Multi_d}
\end{equation}
    \label{Def_PB_Mg_Z_Multi_d}
\end{Def}
}

{ 
\begin{Rem}
    Defining
    \[ \MM_{1}^{2}(\R^{d}) := \Big\{\nu \in \MM_{1}(\R^{d}) \,\Big|\, \left\langle \left\|\id \right\|^{2}, \nu \right\rangle  <\infty \Big\} \]
    the above definition takes a simpler form in the case $d=1$. Indeed, defining
    for all $\nu\in\MM_{1}^{2}(\R)$
    \[M_{2}\left(\nu \right) := \int_{\R}^{}{\left|x - \left\langle  \id, \nu \right\rangle \right|^{2} \nu(\dd x)},  \]
    we notice that $M_2(\nu)=\langle \id^2,\nu\rangle$ if $\nu\in\MM_1^{c,2}(\R)$. Then, with the notations of {\rm Definition~\ref{Def_PB_Mg_Z_Multi_d}}, the generator of the centered \textsc{Fleming-Viot} process is given by
    \begin{equation}   
    \begin{aligned}
    \LL_{\rm FVc}F_{g}(\varpi) & :=\LL^1_{\rm FVc}F_{g}(\varpi)= F'\left(\left\langle g, \varpi \right\rangle \right)\left(\left\langle \frac{g''}{2}, \varpi \right\rangle + \gamma\left[\left\langle g'', \varpi \right\rangle M_{2}(\varpi) - 2\left\langle g' \times \id , \varpi \right\rangle \right] \right)\\
    & \quad + \gamma F''\left(\left\langle g, \varpi \right\rangle  \right)\left(\left\langle g^{2}, \varpi  \right\rangle - \left\langle g, \varpi \right\rangle^{2} + \left\langle g', \varpi \right\rangle^{2}M_{2}\left(\varpi \right) - 2\left\langle g', \varpi \right\rangle\left\langle g \times \id, \varpi   \right\rangle    \right).
    \end{aligned}
    \label{Eq_Generateur_L_FVc}
    \end{equation}
\end{Rem}
}


{  The  original \textsc{Fleming-Viot} process is a measure-valued diffusion in { $\MM_{1}\left(\R^{d}\right)$}. We consider the filtered probability space $\left({ \Omega_{d}}, \FF, \left(\FF_{t}\right)_{t\geqslant 0} \right)$ as defined above. }
We recall that a probability measure $\P_{\nu}^{\rm FV} \in { \MM_{1}\left(\Omega_d\right)}$ is said to solve the  \emph{original \textsc{Fleming-Viot} martingale problem} {  with resampling rate $\gamma$ and}  initial condition $\nu \in { \MM_{1}(\R^d)}$ if the canonical process $\left(Y_{t} \right)_{t\geqslant 0}$ on { $\Omega_d$} satisfies $\P_{\nu}^{\rm FV}(Y_{0} = \nu) = 1$ and for each $F\in \CCCC^{2}(\R, \R)$ and $g \in \CCCC^{2}_{b}({ \R^d},\R)$, 
 \begin{equation}
 \begin{aligned}
M_{t}^{F}(g) & := F\left(\left\langle g, Y_{t}  \right\rangle \right)  - F\left(\left\langle g, Y_{0}  \right\rangle \right)  - \int_{0}^{t}{F'\left(\left\langle g, Y_{s}  \right\rangle \right) \left\langle \frac{\Delta g}{2}, Y_{s}   \right\rangle \dd s }   \\ 
& \hspace{1cm} - \gamma\int_{0}^{t}{F''\left(\left\langle g, Y_{s}  \right\rangle \right)\left[\left\langle g^{2}, Y_{s}  \right\rangle - \left\langle g, Y_{s} \right\rangle^{2}  \right] \dd s }  
\end{aligned}
 \label{PB_Mg_Y}
 \end{equation}
is a {  continuous} square integrable  $\P_{\nu}^{\rm FV}-$martingale whose bracket satisfies for all $G, H \in \CCCC^{2}(\R, \R)$ and for all $g, h \in \CCCC^{2}_{b}(\R^d, \R)$,
\begin{equation}
\left\langle M^{G}(g), M^{H}(h) \right\rangle_{t}  = 2\gamma \int_{0}^{t}{G'\left(\left\langle g, Y_{s} \right\rangle\right)H'\left(\left\langle h, Y_{s} \right\rangle\right)\left[\left\langle gh, Y_{s} \right\rangle - \left\langle g, Y_{s} \right\rangle \left\langle h, Y_{s}\right\rangle \right]\dd s}. 
\label{Crochet_M_G_M_H_general}
\end{equation}
{  In the case $d=1$, we will denote $\Omega := \Omega_{1}$.}

In the population genetics literature, the terms involving the first order derivative $F'$ in~\eqref{PB_Mg_Y} describe the effect of the mutation whereas the ones involving the second order derivative $F''$ describe the effect of the random genetic drift. It is well-known that, for all $\nu \in { \MM_{1}(\R^d)}$, there exists a unique probability measure $\P_{\nu}^{\rm FV} \in { \MM_{1}\left(\Omega_d \right)}$ satisfying the previous martingale problem~{\rm{(\ref{PB_Mg_Y})}} {\color{blue}\cite[\color{black} Theorem 3]{fleming_no_1979}}.

\begin{Rem} \begin{itemize}
    \item[{\rm\textbf{(1)}} ] The additional terms in the martingale problem {\rm{(\ref{PB_Mg_Z_Multi_d})}} with respect to the martingale problem {\rm{(\ref{PB_Mg_Y})}} describe the impact of centering and ensure that at all times the centered \textsc{Fleming-Viot} process remains $\MM_{1}^{c,2}({ \R^d})-$valued.  
     \item[{\rm\textbf{(2)}} ] Note that a major difficulty comes from the presence of terms in { $\langle \id_{i}\times \id_{j}, \mu\rangle$, $i,j \in \{1,\cdots, d \}$ in~{\rm (\ref{Eq_Generateur_L_FVc_Multi_d})} or} $M_{2}(\mu)$ in {\rm(\ref{Eq_Generateur_L_FVc})}. We will see in particular that this leads to the creation of particles in the dual process.
\end{itemize}
\label{Rem_effet_recentrage}
\end{Rem}

{  Let us define by $\tau_{\alpha}$ the translation operator of vector $\alpha \in \R^{d}$. For all $\mu \in \MM_{1}\left(\R^{d}\right)$, for all $A \in \BB\left(\R^{d}\right)$, \[\tau_{\alpha} \sharp \, \mu(A) := \mu\left(\tau_{\alpha}^{-1}(A) \right) = \mu\left(\left\{x - \alpha \left| \phantom{1^{1^{1^{1}}}} \hspace{-0.6cm} \right. x \in A \right\} \right), \] 
where $\sharp$ is the \emph{pushforward operator}.} \\

The main result of this section is the following:

\begin{Thm} For all $\mu \in { \MM_{1}^{c,2}\left(\R^{d}\right)}$, there exists a probability measure $\P_{\mu} \in { \MM_{1}\left(\widetilde{\Omega}_{d} \right)}$ satisfying the martingale problem of { {\rm{Definition~\ref{Def_PB_Mg_Z_Multi_d}}}}, given by the law of the process $\left(Z_{t} \right)_{t\geqslant 0}$ defined by \begin{equation}
 Z_{t} := \tau_{-\left\langle \id, Y_{t}  \right\rangle} \sharp \, Y_{t} := Y_{t}\left( \cdot + \left\langle \id, Y_{t}  \right\rangle \right), \qquad t \geqslant 0  \label{Ecriture_Z}
\end{equation}
where $\left(Y_{t}\right)_{t\geqslant 0}$ is the original \textsc{Fleming-Viot} process.
\label{Prop_PB_Mg_Z}
\end{Thm}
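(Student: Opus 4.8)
The plan is to show directly that the shifted process $Z_t := \tau_{-m_t}\sharp\, Y_t$, where $m_t := \langle \id, Y_t\rangle$ is the empirical mean of the original process started from $Y_0 = \mu$, has the semimartingale decomposition prescribed by \eqref{PB_Mg_Z}--\eqref{Variation_quad_FV_recentre}. The starting observation is that for any $g \in \CCCC^2_b(\R,\R)$ one has $\langle g, Z_t\rangle = \langle g(\cdot - m_t), Y_t\rangle$, so that $\langle g, Z_t\rangle$ is obtained by substituting the random adapted shift $m_t$ into the one-parameter family $a \mapsto \langle g(\cdot - a), Y_t\rangle$. I would first treat the case $F = \id$ and recover the general case at the very end by a plain application of It\^o's formula to $F(\langle g, Z_t\rangle)$: the martingale part then carries the factor $F'(\langle g, Z_t\rangle)$ and the extra $\tfrac12 F''$ It\^o term reproduces exactly both the $F''$-drift of \eqref{PB_Mg_Z} (since $\tfrac12\cdot 2\gamma = \gamma$) and the quadratic variation \eqref{Variation_quad_FV_recentre}.

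The heart of the argument is to read off the decomposition of $m_t$ and of $\langle g(\cdot - a), Y_t\rangle$ from the original martingale problem and then combine them. Applying \eqref{PB_Mg_Y}--\eqref{Crochet_M_G_M_H_general} with the unbounded test function $\id$ (justified by truncation together with the uniform moment bounds available on $\widetilde\Omega$) shows that $m_t$ is a continuous martingale with $\dd\langle m\rangle_t = 2\gamma M_2(Y_t)\,\dd t = 2\gamma M_2(Z_t)\,\dd t$, using that $M_2$ is translation invariant. For frozen $a$, the function $g(\cdot - a)$ lies in $\CCCC^2_b(\R,\R)$, so \eqref{PB_Mg_Y} gives that $\langle g(\cdot - a), Y_t\rangle$ has drift $\tfrac12\langle g''(\cdot - a), Y_t\rangle$ and a martingale part whose bracket, and whose cross-bracket with $m$, are given by \eqref{Crochet_M_G_M_H_general}. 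The crucial algebraic point is that after substituting $a = m_t$ and changing variables $x = y - m_t$ (which maps $Y_t$ to $Z_t$), all mixed quantities collapse onto centered moments of $Z_t$; for instance $\langle \id\, g'(\cdot - m_t), Y_t\rangle - m_t\langle g'(\cdot - m_t), Y_t\rangle = \langle g'\times\id, Z_t\rangle$, and likewise $\langle \id\, g(\cdot - m_t), Y_t\rangle - m_t\langle g(\cdot - m_t), Y_t\rangle = \langle g\times\id, Z_t\rangle$, where the centering $\langle \id, Z_t\rangle = 0$ is used repeatedly.

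To pass from the frozen shift to the genuine shift $a = m_t$ I would use a substitution (It\^o--Wentzell type) argument, which is precisely where the predictable test function difficulty arises. Rather than invoking a ready-made flow formula, the clean route, matching the announced strategy, is to discretize time on a grid $0 = t_0 < \dots < t_n = t$, to apply on each subinterval the original martingale problem to the $\FF_{t_i}$-frozen test function $g(\cdot - m_{t_i})$ via a regular conditional probability given $\FF_{t_i}$, and to Taylor expand
\begin{align}
\langle g(\cdot - m_{t_{i+1}}), Y_{t_{i+1}}\rangle &= \langle g(\cdot - m_{t_i}), Y_{t_{i+1}}\rangle - (m_{t_{i+1}} - m_{t_i})\langle g'(\cdot - m_{t_i}), Y_{t_{i+1}}\rangle \notag \\
&\quad + \tfrac12 (m_{t_{i+1}} - m_{t_i})^2 \langle g''(\cdot - m_{t_i}), Y_{t_{i+1}}\rangle + R_i, \notag
\end{align}
then summing over $i$ and letting the mesh tend to $0$. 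The frozen term contributes the drift $\tfrac12\langle g'', Z_t\rangle$ and a martingale of bracket $2\gamma(\langle g^2, Z_t\rangle - \langle g, Z_t\rangle^2)$; the first order increment contributes, through its covariation with $m$, the drift $-2\gamma\langle g'\times\id, Z_t\rangle$ together with the stochastic integral $-\int_0^t \langle g', Z_s\rangle\,\dd m_s$; and the second order increment contributes $\tfrac12\langle g'', Z_t\rangle\,\dd\langle m\rangle_t = \gamma M_2(Z_t)\langle g'', Z_t\rangle\,\dd t$. Collecting the drifts yields exactly $\tfrac12\langle g'', Z_t\rangle + \gamma[M_2(Z_t)\langle g'', Z_t\rangle - 2\langle g'\times\id, Z_t\rangle]$, and the total martingale part, the frozen martingale minus $\int_0^t\langle g', Z_s\rangle\,\dd m_s$, has quadratic variation $2\gamma[\langle g^2,Z_t\rangle - \langle g,Z_t\rangle^2 + \langle g',Z_t\rangle^2 M_2(Z_t) - 2\langle g',Z_t\rangle\langle g\times\id,Z_t\rangle]$, matching \eqref{Variation_quad_FV_recentre}.

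The main obstacle is not the algebra but the rigorous control of the passage to the limit: one must justify applying \eqref{PB_Mg_Y} to the unbounded functions $\id$, $g\times\id$ and $g'\times\id$, bound the Taylor remainders $R_i$ and the higher-order increments of $m_t$, and establish the uniform integrability needed for the discrete sums to converge to the claimed integrals. This is exactly where the uniform second-moment estimates on $\widetilde\Omega$ and the technical lemmas of the appendix are needed. It then remains to check that $(Z_t)_{t\ge0}$ takes values in $\widetilde\Omega$: continuity and the second-moment bound follow from those of $Y$ together with $M_2(Z_t) = M_2(Y_t)$, while $\langle \id, Z_t\rangle = 0$ holds by construction, so that $\P_\mu := \Loi(Z)$ solves the centered martingale problem of Definition~\ref{Def_PB_Mg_Z}.
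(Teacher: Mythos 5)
Your proposal is correct, and its technical core coincides with the paper's proof: the same time discretization, the same freezing of the predictable shift $g\left(\cdot - m_{t_{i}^{n}}\right)$ at the left endpoint of each subinterval, handled through regular conditional probabilities exactly as in the paper's Lemma \ref{Lem_Q_omega_Martingale} and Theorem \ref{Proba_regulieres_condi_FV}, the same Taylor expansion in the increment of $m_{t}$ producing the drift contributions $\left\langle \tfrac{g''}{2}, Z_{s} \right\rangle$, $\gamma M_{2}(Z_{s})\left\langle g'', Z_{s}\right\rangle$ and $-2\gamma\left\langle g'\times\id, Z_{s}\right\rangle$ (the latter via the cross-bracket (\ref{Crochet_M_G_M_H_general}), matching the paper's terms ${\rm{\textbf{(A)}}}^{4}_{i}$ and ${\rm{\textbf{(A)}}}^{5}_{i}$), and the same reliance on moment estimates for the unbounded test functions (Lemma \ref{Lem_Mart_id_id_bien_def}) and third-order error controls (Lemmas \ref{Lem_convergence_proba} and \ref{Lem_controle_erreur}) to pass to the mesh limit. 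Where you genuinely deviate is in the treatment of the $F$-dependence: the paper Taylor-expands $F\left(\left\langle g\circ\tau_{-m_{t_{i+1}^{n}}}, Y_{t_{i+1}^{n}}\right\rangle\right)$ jointly in $F$ and in the shift (Lemma \ref{Lem_FVr_p_variables}), which forces $F, g \in \CCCC^{4}_{b}$ throughout Section \ref{Section_preuve_Existence}, recovers $F \in \CCCC^{2}$, $g \in \CCCC^{2}_{b}$ by density (Subsection \ref{Etape_7_Existence}), and only afterwards identifies the quadratic variation by It\^{o} plus localization and Fatou (Subsection \ref{Etape_8_Existence}); you instead run the discretization only for $F = \id$ and then obtain general $F$, the $F''$-drift and (\ref{Variation_quad_FV_recentre}) in one stroke of It\^{o}'s formula applied to the semimartingale $\left\langle g, Z_{t}\right\rangle$. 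This is a legitimate streamlining: it trades the two-variable Taylor lemma and the density step for the obligation to first fully identify the finite-variation part and the bracket of $\left\langle g, Z_{t}\right\rangle$, including the integrability checks needed for $\widehat{M}^{F}(g)$ to be a true $L^{2}$-martingale (the bracket involves the unbounded quantities $M_{2}(Z_{s})$ and $\left\langle g\times\id, Z_{s}\right\rangle$, which your moment bounds control since $M_{2}(Z_{s}) = M_{2}(Y_{s})$ and $F'$, $F''$ are evaluated on the compact range $\left[-\left\|g\right\|_{\infty}, \left\|g\right\|_{\infty}\right]$). The hard work, which you correctly locate, is unchanged: it is precisely the regular-conditional-probability extension to predictable test functions together with the uniform moment and remainder estimates, and your sketch of those steps agrees with the paper's.
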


{  Note that, for all probability measure with finite first-order moment, $\langle \id,\mu\rangle$ is the mean value (or barycenter) of $\mu$. In addition, $\left\langle \id, Z_{t} \right\rangle = 0$ for all $t\geqslant 0$. As a result, the process $\left(Z_{t} \right)_{t\geqslant 0}$ corresponds to the original \textsc{Fleming-Viot} process centered by its mean value, hence its name of \emph{centered \textsc{Fleming-Viot} process}.}

\subsection{Sketch of proof of Theorem \ref{Prop_PB_Mg_Z} \label{Sous_section_2_2}}

{  The proof of Theorem \ref{Prop_PB_Mg_Z}, for any dimension $d \in \N^{\star}$ is much more complicated, but it is treated in the same way as for dimension $1$. In the following, only the case where $d=1$ is detailed for greater clarity.} The proof is based on the original \textsc{Fleming-Viot} martingale problem  {\rm{(\ref{PB_Mg_Y})}}. {  Below, we give the main ideas of the proof. Full details are provided in Section \ref{Section_preuve_Existence}.}

\subsubsection{Framework and objective of the proof}

Let $\nu \in \MM_{1}^{2}(\R)$ and $\P_{\nu}^{\rm FV}$ the unique solution to the original \textsc{Fleming-Viot} martingale problem (\ref{PB_Mg_Y}). As the support of the original \textsc{Fleming-Viot} process is compact at all positive times $\P_{\nu}^{\rm FV}-$a.s. {\color{blue} \cite{liu_support_2015}}, $\P_{\nu}^{\rm FV}\left(\CCCC^{0}\left(\left[0,+\infty\right), \MM_{1}^{2}(\R) \right) \right) = 1$.  
Moreover, as $t \mapsto \left\langle \id, Y_{t} \right\rangle$ is continuous $\P_{\nu}^{\rm FV}-$a.s. ({\rm{Lemma \ref{Lem_Mart_id_id_bien_def} \textbf{(2)}}}), we deduce that, for all $t \geqslant 0$, $Z_{t}$ given by (\ref{Ecriture_Z}), is well defined and is a random variable on $\widetilde{\Omega}$. \\

When the dependency of $Y$ and $Z$ on the initial condition $\nu$ of $Y$ is important, we shall use the notation $\left(Y_{t}^{\nu} \right)_{t\geqslant 0}$ and $\left(Z_{t}^{\nu} \right)_{t\geqslant 0}$ instead of $\left(Y_{t} \right)_{t\geqslant 0}$ and $\left(Z_{t} \right)_{t\geqslant 0}$. Our goal is to prove that the law of the process $\left(Z_{t}\right)_{t \geqslant 0}$ denoted by $\P_{\tau_{-\left\langle \id, \nu\right\rangle}\sharp \, \nu}^{\rm FVc}$ solves the martingale problem of {\rm{Definition \ref{Def_PB_Mg_Z_Multi_d}}} with initial condition $\tau_{-\left\langle \id, \nu\right\rangle}\sharp \, \nu$.  Note that the notation $\P_{\tau_{-\left\langle \id, \nu\right\rangle}\sharp \, \nu}^{\rm FVc}$ is justified 
because the original \textsc{Fleming-Viot} process is invariant by translation: 

\begin{Prop}
Let $\nu \in \MM_{1}^{{  2}}(\R)$ and $a \in \R$. Then, the law of $Z_{t}^{\nu}$ defined by {\rm(\ref{Ecriture_Z})} is the same as the law of $Z_{t}^{\tau_{a} \sharp \,  \nu}$.
\label{Prop_notation_P_nu_c}
\end{Prop}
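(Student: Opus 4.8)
The plan is to factor the statement into two independent facts: a translation \emph{covariance} property of the original \textsc{Fleming-Viot} process, and a translation \emph{invariance} property of the centering operation itself. Write $\Phi(\mu) := \tau_{-\left\langle \id, \mu\right\rangle}\sharp\,\mu$ for the centering map, defined on $\MM_{1}^{1}(\R)$, so that by (\ref{Ecriture_Z}) we have $Z_{t}^{\nu} = \Phi\left(Y_{t}^{\nu}\right)$ for every $t\geqslant 0$. First I would establish that for all $a\in\R$ the process $\left(\tau_{a}\sharp\,Y_{t}^{\nu}\right)_{t\geqslant 0}$ has the same law as $\left(Y_{t}^{\tau_{a}\sharp\,\nu}\right)_{t\geqslant 0}$, and then observe the algebraic identity $\Phi\circ\left(\tau_{a}\sharp\,\cdot\right) = \Phi$; combining the two yields the claim.

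For the covariance step I would use the characterisation of $\P_{\nu}^{FV}$ as the unique solution of the martingale problem (\ref{PB_Mg_Y}) (existence and uniqueness being recalled just after (\ref{Crochet_M_G_M_H_general})). Set $\widetilde{Y}_{t} := \tau_{a}\sharp\,Y_{t}^{\nu}$ and, for $g\in\CCCC^{2}_{b}(\R,\R)$, write $g_{a} := g(\cdot+a)$, which again belongs to $\CCCC^{2}_{b}(\R,\R)$ and satisfies $g_{a}'' = g''(\cdot+a)$. The key point is that testing $Y^{\nu}$ against $g_{a}$ reproduces, term by term, the test of $\widetilde{Y}$ against $g$: since translation commutes with differentiation and with taking products, one has $\left\langle g_{a}, Y_{s}^{\nu}\right\rangle = \left\langle g, \widetilde{Y}_{s}\right\rangle$, $\left\langle g_{a}''/2, Y_{s}^{\nu}\right\rangle = \left\langle g''/2, \widetilde{Y}_{s}\right\rangle$ and $\left\langle g_{a}^{2}, Y_{s}^{\nu}\right\rangle = \left\langle g^{2}, \widetilde{Y}_{s}\right\rangle$. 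Substituting these identities into (\ref{PB_Mg_Y}) and (\ref{Crochet_M_G_M_H_general}) written for $Y^{\nu}$ with test functions $g_{a}$ (and $h_{a}$ for the bracket) shows that $M_{t}^{F}(g_{a})$ equals the quantity obtained by writing (\ref{PB_Mg_Y}) for $\widetilde{Y}$ and $g$, and similarly for the bracket. Because the transformation $\tau_{a}\sharp\,\cdot$ acts time-wise and is an invertible (hence filtration-preserving) functional of the path, the law of $\widetilde{Y}$ is the pushforward of $\P_{\nu}^{FV}$ under it, and the martingale property together with the bracket are transported. As $\widetilde{Y}_{0} = \tau_{a}\sharp\,\nu$, the process $\widetilde{Y}$ solves the original \textsc{Fleming-Viot} martingale problem with initial condition $\tau_{a}\sharp\,\nu$, and uniqueness gives $\left(\tau_{a}\sharp\,Y_{t}^{\nu}\right)_{t\geqslant 0}\stackrel{d}{=}\left(Y_{t}^{\tau_{a}\sharp\,\nu}\right)_{t\geqslant 0}$. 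This is the main—though essentially computational—obstacle, and it rests entirely on the translation invariance of the Laplacian mutation operator entering through the $\left\langle g''/2,\cdot\right\rangle$ term.

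It then remains to check that centering absorbs the translation. For any $\mu\in\MM_{1}^{1}(\R)$ one has $\left\langle\id,\tau_{a}\sharp\,\mu\right\rangle = \left\langle\id,\mu\right\rangle + a$, and since $\tau_{b}\sharp\,\tau_{a}\sharp\,\cdot = \tau_{a+b}\sharp\,\cdot$,
\[
\Phi\left(\tau_{a}\sharp\,\mu\right) = \tau_{-\left(\left\langle\id,\mu\right\rangle+a\right)}\sharp\,\tau_{a}\sharp\,\mu = \tau_{-\left\langle\id,\mu\right\rangle}\sharp\,\mu = \Phi(\mu).
\]
Applying this with $\mu = Y_{t}^{\nu}$ gives $\Phi\left(\tau_{a}\sharp\,Y_{t}^{\nu}\right) = \Phi\left(Y_{t}^{\nu}\right) = Z_{t}^{\nu}$ pathwise. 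Finally, $Z_{t}^{\tau_{a}\sharp\,\nu} = \Phi\left(Y_{t}^{\tau_{a}\sharp\,\nu}\right)$, and applying the measurable map $\Phi$ time-wise to the equality in law from the covariance step yields $Z_{t}^{\tau_{a}\sharp\,\nu}\stackrel{d}{=}\Phi\left(\tau_{a}\sharp\,Y_{t}^{\nu}\right) = Z_{t}^{\nu}$. This gives the asserted equality of laws at the level of the whole process, and \emph{a fortiori} for each fixed $t$, which is what the statement requires.
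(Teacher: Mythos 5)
Your proposal is correct and takes essentially the same route as the paper: both factor the claim into the translation covariance of the original \textsc{Fleming-Viot} process and the algebraic fact that the centering map $\mu \mapsto \tau_{-\left\langle \id, \mu \right\rangle}\sharp\,\mu$ absorbs translations, then transport the equality of laws through this map. The only difference is that the paper invokes translation invariance of the original process as a known fact in one line, whereas you verify it from the martingale problem (\ref{PB_Mg_Y}), the bracket (\ref{Crochet_M_G_M_H_general}) and uniqueness — a sound elaboration of a step the paper leaves implicit, but not a different argument.
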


\begin{proof}
By translation invariance of the original \textsc{Fleming-Viot} process, the process $\left(\tau_{-a}\sharp\, Y_{t}^{\tau_{a}\sharp \, \nu} \right)_{t\geqslant 0}$ has the same law as the process $\left(Y_{t}^{\nu} \right)_{t\geqslant 0}$. Now, \begin{align*}
Z_{t}^{\tau_{a}\sharp \, \nu} = \tau_{-\left\langle \id, Y_{t}^{\tau_{a}\sharp \,  \nu} \right\rangle} \sharp \, Y_{t}^{\tau_{a}\sharp \,  \nu} = \tau_{-\left\langle \id, \tau_{-a}\sharp\, Y_{t}^{\tau_{a}\sharp \,  \nu} \right\rangle } \sharp\, \left(\tau_{-a} \sharp\, Y_{t}^{\tau_{a}\sharp \,  \nu}\right).
\end{align*}
Thus, $\left(Z_{t}^{\tau_{a} \sharp \nu} \right)_{t\geqslant 0}$ has the same law as $\left( \tau_{-\left\langle \id, Y_{t}^{\nu} \right\rangle } \sharp \, Y_{t}^{\nu} \right)_{t\geqslant 0} = \left(Z_{t}^{\nu} \right)_{t\geqslant 0}$.
\end{proof}

\subsubsection{Outline of the proof}

We restrict to the time interval $[0,T]$ for $T>0$ arbitrary. By standard arguments, it is sufficient to prove that, for all $F \in \CCCC^{2}(\R, \R)$ and $g \in \CCCC^{2}_{b}(\R, \R)$, 
{ \begin{equation*}
F_{g}\left(Z_{t} \right) - F_{g}\left(Z_{0} \right) - \int_{0}^{t}{\LL_{\rm FVc}F_{g}\left(Z_{s} \right)\dd s}
\label{Mart_outline_the_proof}
\end{equation*}}
is a {  continuous} $\P_{\nu}^{\rm FV}-$martingale, $\nu \in \MM_{1}^{2}(\R)$ {  where $\LL_{\rm FVc}$ is given by (\ref{Eq_Generateur_L_FVc})}. We start by assuming $F, g \in \CCCC^{4}_{b}(\R, \R)$ and we seek for the \textsc{Doob}'s semi-martingale decomposition of 
\begin{align*}
F_{g}(Z_{t }) := F\left(\left\langle g, Z_{t } \right\rangle \right) = F\left(\left\langle g\circ\tau_{-\left\langle \id, Y_{t } \right\rangle}, Y_{t } \right\rangle \right),
\end{align*}
using the original \textsc{Fleming-Viot} martingale problem~(\ref{PB_Mg_Y}).  However, $F\left(\left\langle g\circ\tau_{-\left\langle \id, Y_{t }  \right\rangle}, Y_{t } \right\rangle \right)$
 does not take the form $H\left(\left\langle h, Y_{t }  \right\rangle \right)$ with \emph{deterministic} $h$. Therefore, we cannot apply (\ref{PB_Mg_Y}) directly. To get over this difficulty, we consider for $t \in [0,T]$, 
 an increasing sequence $0 = t_{0}^{n} < t_{1}^{n} < \dots < t_{p_{n}}^{n} = T$ of subdivisions of $[0,T]$ whose mesh tends to 0. We can observe that 
 \begin{equation*}
 \begin{aligned}
F_{g}(Z_{t }) - F_{g}(Z_{0}) & = \sum\limits_{i \, = \, 0}^{p_{n}-1}{\left\{F_{g}(Z_{t^{n}_{i+1} \wedge t }) - F_{g}(Z_{t^{n}_{i} \wedge t }) \right\}}  \\
& \hspace{0cm}  = \sum\limits_{i \, = \, 1}^{p_{n} - 1}{\left\{F\left(\left\langle g\circ\tau_{-\left\langle \id, Y_{t_{i+1}^{n} \wedge t  }  \right\rangle}, Y_{t_{i+1}^{n} \wedge t  }  \right\rangle \right) - F\left(\left\langle g\circ\tau_{-\left\langle  \id, Y_{t_{i\phantom{+1} \hspace{-0.25cm}}^{n} \wedge t  } \right\rangle}, Y_{t_{i}^{n} \wedge t  }  \right\rangle \right) \right\}.}
\end{aligned}
 \label{somme_differences_termes}
 \end{equation*}
Using asymptotic expansions (see Lemma \ref{Lem_FVr_p_variables} with $p = d = 1$) of the terms in the previous sum, we prove that 
\begin{equation}
\begin{aligned}
F_{g}(Z_{t }) - F_{g}(Z_{0}) & = \sum\limits_{i\, = \, 0}^{p_{n}-1}{} \left\{{\rm{\textbf{(A)}}}_{i} + {\rm{\textbf{(B)}}}_{i} + O\left(\left|\left\langle \id , Y_{t_{i+1}^{n} \wedge t  } - Y_{t_{i}^{n} \wedge t  } \right\rangle\right|^{3} \right) \phantom{\sum_{k}^{2}{}}\right.  \\
& \hspace{+1.25cm} + \left. O\left( \sum\limits_{k\, = \, 0}^{2}{\left|\left\langle g^{(k)}\circ\tau_{-\left\langle \id,  Y_{t_{i}^{n} \wedge t  } \right\rangle}, Y_{t_{i+1}^{n} \wedge t  } - Y_{t_{i}^{n} \wedge t  } \right\rangle\right|^{3}} \right)\right\}, 
\end{aligned}
\label{Existence_PB_mg}
\end{equation}
where 
\begin{align}
{\rm{\textbf{(A)}}}_{i} = \  & F'\left(\left\langle g\circ\tau_{-\left\langle \id , Y_{t_{i}^{n} \wedge t  } \right\rangle},  Y_{t_{i}^{n} \wedge t  } \right\rangle \right) \left\{\left\langle g\circ\tau_{-\left\langle  \id, Y_{t_{i}^{n} \wedge t  } \right\rangle}, Y_{t_{i+1}^{n} \wedge t  } - Y_{t_{i}^{n} \wedge t  }   \right\rangle   \right. \notag  \\   
& \hspace{-1.35cm}   - \left\langle \id, Y_{t_{i+1}^{n} \wedge t  } - Y_{t_{i}^{n} \wedge t  }  \right\rangle \left[\left\langle g'\circ\tau_{-\left\langle \id, Y_{t_{i}^{n} \wedge t  }  \right\rangle},  Y_{t_{i}^{n} \wedge t} \right\rangle + \left\langle g'\circ\tau_{-\left\langle \id, Y_{t_{i}^{n} \wedge t  }  \right\rangle}, Y_{t_{i+1}^{n} \wedge t  } - Y_{t_{i}^{n}\wedge t  } \right\rangle \right]     \label{Morceau_Ai_F_prime} \\ 
& \hspace{-1.35cm} + \left. \frac{1}{2} \left\langle \id, Y_{t_{i+1}^{n} \wedge t  } - Y_{t_{i}^{n} \wedge t  } \right\rangle^{2} \left\langle g''\circ\tau_{-\left\langle \id, Y_{t_{i}^{n} \wedge t  }  \right\rangle},  Y_{t_{i}^{n} \wedge t  } \right\rangle \right\}, \notag \\
{\rm{\textbf{(B)}}}_{i} = \  &  \frac{F''\left(\left\langle g\circ\tau_{-\left\langle \id,  Y_{t_{i}^{n} \wedge t  } \right\rangle}, Y_{t_{i}^{n} \wedge t  }  \right\rangle \right)}{2}  \left\{
\left\langle g\circ\tau_{-\left\langle \id, Y_{t_{i}^{n} \wedge t  } \right\rangle},  Y_{t_{i+1}^{n} \wedge t  } - Y_{t_{i}^{n} \wedge t  } \right\rangle^{2}   \right. \notag \\ 
&  \hspace{-1.35cm} + \left\langle \id, Y_{t_{i+1}^{n} \wedge t  } - Y_{t_{i}^{n} \wedge t  } \right\rangle^{2} \left\langle g'\circ\tau_{-\left\langle \id, Y_{t_{i}^{n}\wedge t  } \right\rangle}, Y_{t_{i}^{n} \wedge t  } \right\rangle^{2} \label{Morceau_Bi_F_seconde} \\ 
 &  \hspace{-1.35cm} - \left. 2\left\langle g\circ\tau_{-\left\langle \id , Y_{t_{i}^{n} \wedge t  }\right\rangle}, Y_{t_{i+1}^{n} \wedge t  } - Y_{t_{i}^{n} \wedge t  } \right\rangle\left\langle \id , Y_{t_{i+1}^{n} \wedge t  } - Y_{t_{i}^{n} \wedge t  } \right\rangle \left\langle g'\circ\tau_{-\left\langle \id, Y_{t_{i}^{n} \wedge t  } \right\rangle}, Y_{t_{i}^{n} \wedge t  } \right\rangle^{\phantom{2}} \hspace{-0.2cm} \right\}, \notag 
\end{align}
and where $g^{(j)}$, $j \in \{0, 1, 2 \}$, denotes the $j^{\rm{th}}$ derivative of $g$. {  Note that the proposed decomposition in (\ref{Existence_PB_mg}) is intended such that each of the terms in (\ref{Morceau_Ai_F_prime}) and (\ref{Morceau_Bi_F_seconde}) is either $\FF_{t_{i}^{n}\wedge t}-$adapted or, exhibit increments of $\left(Y_{t}\right)_{t\geqslant 0}$ between $t_{i}^{n}\wedge t$ and $t_{i+1}^{n}\wedge t$.} Several steps are described in Section \ref{Section_preuve_Existence} to obtain the semi-martingale decomposition of each term of the previous sum. {  Note that the last terms of ${\rm{\textbf{(A)}}}_{i}$ and ${\rm{\textbf{(B)}}}_{i}$ bring out the terms in $M_{2}(\mu)$.}  By making the step of the subdivision tend towards 0, we obtain the expected result.  

\subsection{Some properties of the centered \textsc{Fleming-Viot} process in dimension $1$\label{Sous_section_2_4}}

\subsubsection{\textsc{Markov}'s property \label{Sous_section_2_4_1}}

Due to the translation invariance property of the original \textsc{Fleming-Viot} process, we can prove that the centered \textsc{Fleming-Viot} process is homogeneous \textsc{Markov}.
 
 \begin{Prop}
 The centered \textsc{Fleming-Viot} process $\left(Z_{t} \right)_{t\geqslant 0}$ defined by {\rm{(\ref{Ecriture_Z})}} satisfies the  homogeneous \textsc{Markov} property: for all measurable bounded function $f$,
 \[\forall \mu \in \MM_{1}^{c,2}(\R),\quad  \forall t, s >0, \quad \quad \E_{\mu}\left(f(Z_{t+s}) \left| \phantom{\frac{1}{2} } \hspace{-0.2cm} \FF_{t} \right.\right) = \E_{Z_{t}}\left(f(Z_{s})\right) \quad \P_{\mu}{\rm{-a.s}}.\] 
 \end{Prop}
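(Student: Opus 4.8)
The strategy is to transfer the homogeneous Markov property of the original \textsc{Fleming-Viot} process $\left(Y_{t}\right)_{t\geqslant 0}$ to the centered process $\left(Z_{t}\right)_{t\geqslant 0}$, exploiting the translation invariance established in \textsc{Proposition \ref{Prop_notation_P_nu_c}}. The key observation is that $Z_{t} = \tau_{-\left\langle \id, Y_{t}\right\rangle} \sharp\, Y_{t}$ is a deterministic functional of $Y_{t}$ alone, so $Z_{t}$ is $\FF_{t}$-measurable and $\sigma(Z_{s}, s \leqslant t) \subseteq \sigma(Y_{s}, s \leqslant t) = \FF_{t}$. First I would fix $\mu \in \MM_{1}^{c,2}(\R)$ and write $\nu = \mu$ as the initial condition of $Y$ (so that $Z_{0} = \mu$ since $\mu$ is already centered), and introduce the shift $\Theta : \MM_{1}(\R) \to \MM_{1}^{c,2}(\R)$, $\Theta(\nu) := \tau_{-\left\langle \id, \nu\right\rangle}\sharp\, \nu$, so that $Z_{t} = \Theta(Y_{t})$.

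The heart of the argument is the Markov property of $Y$ together with translation invariance. I would compute, for $t, s > 0$ and bounded measurable $f$,
\begin{align*}
\E_{\mu}\left(f(Z_{t+s}) \mid \FF_{t}\right) = \E_{\mu}\left(f(\Theta(Y_{t+s})) \mid \FF_{t}\right) = \E_{Y_{t}}\left(f(\Theta(Y_{s}))\right),
\end{align*}
where the second equality is the Markov property of the original \textsc{Fleming-Viot} process applied to the bounded measurable functional $\nu \mapsto f(\Theta(\nu))$ on path space (more precisely, to $\Psi(w) := f(\Theta(w_{s}))$ evaluated on the shifted path). It then remains to identify $\E_{Y_{t}}\left(f(\Theta(Y_{s}))\right)$ with $\E_{Z_{t}}\left(f(Z_{s})\right)$. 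Writing $a := \left\langle \id, Y_{t}\right\rangle$, we have $Z_{t} = \tau_{-a}\sharp\, Y_{t}$, so starting the process from $Z_{t}$ means starting $Y$ from $\tau_{-a}\sharp\, Y_{t}$. By \textsc{Proposition \ref{Prop_notation_P_nu_c}}, the law of $Z_{s}^{\tau_{-a}\sharp\, Y_{t}}$ equals the law of $Z_{s}^{Y_{t}} = \Theta(Y_{s}^{Y_{t}})$; hence $\E_{Z_{t}}\left(f(Z_{s})\right) = \E_{Y_{t}}\left(f(\Theta(Y_{s}))\right)$, which closes the identity.

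I expect the main obstacle to be a careful measurability and regular-conditional-probability argument justifying the middle equality: one must verify that $\nu \mapsto \E_{\nu}^{FV}\left(f(\Theta(Y_{s}))\right)$ is a Borel-measurable function of the starting measure and that the Markov property of $Y$ (which a priori is stated for the canonical process on $\Omega$) may be applied to the nonlinear functional $\Theta$ through the centering map $\nu \mapsto \left\langle \id, \nu\right\rangle$, which is continuous on $\MM_{1}^{1}(\R)$ by \textsc{Lemma \ref{Lem_Mart_id_id_bien_def}}. Since $\Theta$ is Borel-measurable from $\MM_{1}^{1}(\R)$ to $\MM_{1}^{c,2}(\R)$ and $f$ is bounded measurable, $f \circ \Theta$ is a legitimate test function for the Markov property of $Y$, so no genuine difficulty arises beyond bookkeeping. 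Finally, homogeneity (the absence of explicit dependence on $t$) follows from the time-homogeneity of $\left(Y_{t}\right)_{t\geqslant 0}$, since $\Theta$ does not depend on $t$ and the law of $Z_{s}$ started from $Z_{t}$ depends only on $Z_{t}$ and $s$.
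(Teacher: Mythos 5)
Your proposal is correct and follows essentially the same route as the paper: the paper's proof likewise applies the \textsc{Markov} property of $\left(Y_{t}\right)_{t\geqslant 0}$ to the functional $g(\nu) := f\left(\tau_{-\left\langle \id, \nu\right\rangle}\sharp\,\nu\right)$ (your $f\circ\Theta$) and then uses translation invariance of the original \textsc{Fleming-Viot} process to replace the starting point $Y_{t}$ by $Z_{t} = \tau_{-\left\langle \id, Y_{t}\right\rangle}\sharp\, Y_{t}$, exactly as you do via \textsc{Proposition \ref{Prop_notation_P_nu_c}}. The only cosmetic difference is that the paper reproves the translation-invariance step inline (exploiting that $g(\tau_{a}\sharp\,\nu) = g(\nu)$) rather than citing the proposition, so your write-up is a faithful match.
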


\begin{proof} Let $\mu \in \MM_{1}^{c,2}(\R)$ and $f$ a measurable bounded function. Let $t, s >0$. Using the \textsc{Markov} property of the original \textsc{Fleming-Viot} process $\left(Y_{t} \right)_{t\geqslant 0}$ we obtain $\P_{\mu}-$a.s.,
\begin{align*}
\E_{\mu}\left(f(Z_{t+s}) \left| \phantom{\frac{1}{2} } \hspace{-0.2cm} \FF_{t} \right. \right)&  = \E_{\mu}\left(f(\tau_{-\left\langle \id, Y_{t+s} \right\rangle} \sharp \, Y_{t+s}) \left| \phantom{\frac{1}{2} } \hspace{-0.2cm} \FF_{t} \right. \right)  = \E_{\mu}\left(g(Y_{t+s}) \left| \phantom{\frac{1}{2} } \hspace{-0.2cm} \FF_{t} \right. \right) = \E_{Y_{t}}\left(g(Y_{s}) \right)
\end{align*}
where the bounded measurable map $g$ is defined on $\MM_{1}^{2}(\R)$ by $g(\nu) := f\left(\tau_{-\left\langle \id, \nu \right\rangle} \sharp \, \nu \right)$. By invariance by translation of the original \textsc{Fleming-Viot} process $\left(Y_{t} \right)_{t\geqslant 0}$ we obtain under the distribution $\P_{\mu}$: 
\begin{align*}
\E_{Y_{t}}\left(g\left(Y_{s}\right) \right)& = \E_{\tau_{-\left\langle \id, Y_{t} \right\rangle} \sharp \, Y_{t}}\left(g\left(\tau_{\left\langle  \id, Y_{t} \right\rangle} \sharp \, Y_{s}\right) \right) = \E_{\tau_{-\left\langle \id, Y_{t} \right\rangle} \sharp \, Y_{t}}\left(g\left( Y_{s} \right) \right) = \E_{Z_{t}}\left(g\left( Y_{s} \right) \right)   = \E_{Z_{t}}\left(f\left( Z_{s} \right) \right). \qedhere
\end{align*} 
\end{proof}

\subsubsection{Moments and some martingales \label{Sous_section_2_4_2}} 
\begin{Prop}
Let $\mu \in \MM_{1}^{c,2}(\R)$ and let $\P_{\mu}$ be a distribution on $\widetilde{\Omega}$ satisfying {\rm{(\ref{PB_Mg_Z_Multi_d})}} and such that $X_{0} $ is equal in law to $\mu$. Let $T > 0$ and $k \in \N \, \backslash \, \{0,1 \}$ be fixed. 
\begin{itemize}
\item[{\rm{\textbf{{\rm{\textbf{(1)}}}}}}]   If $\left\langle \left|\id \right|^{k}, \mu \right\rangle  <\infty$, there exist two constants $C_{k,T}, \widetilde{C}_{k,T} >0$, such that any stochastic process $\left(X_{t} \right)_{0\leqslant t \leqslant T}$ whose law is $\P_{\mu}$ satisfies
\begin{align}
{\rm{\textbf{\rm{\textbf{(a)}}}}} \qquad &  \sup_{t\in[0,T]}{\E_{  \mu}\left(\left\langle \left|\id\right|^{k}, X_{t}\right\rangle\right)} \leqslant  C_{k,T}\left(1 + \left\langle \left|\id \right|^{k}, \mu \right\rangle  \right)
 \label{Moments_Z_id_k}  \\
{\rm{\textbf{\rm{\textbf{(b)}}}}} \qquad & \forall \alpha>0, \quad  \P_{\mu}\left(\sup_{t\in[0,T]}{\left\langle \left|\id\right|^{k}, X_{t}\right\rangle } \geqslant \alpha \right) \leqslant \frac{\widetilde{C}_{k,T}\left(1 + \left\langle \left|\id \right|^{k}, \mu \right\rangle \right)}{\alpha}. \notag 
\end{align}
\item[{\rm{\textbf{{\rm{\textbf{(2)}}}}}}] If $\left\langle \left|\id \right|^{k}, \mu \right\rangle  <\infty$, respectively $\left\langle \left|\id \right|^{k+1}, \mu \right\rangle <\infty$, the process $\left(\widehat{M}_{t}^{\id}\left(\id^{k}\right)\right)_{0\leqslant t \leqslant T}$
defined by \begin{align*}
\widehat{M}_{t}^{\id}\left(\id^{k} \right) & := \left\langle \id^{k}, X_{t} \right\rangle - \left\langle \id^{k}, X_{0} \right\rangle - \int_{0}^{t}{\left\langle \frac{k(k-1)}{2} \id^{k-2}, X_{s}\right\rangle \dd s} \\
& \hspace{1cm} -\gamma \int_{0}^{t}{\left[\left\langle k(k-1)\id^{k-2}, X_{s} \right\rangle M_{2}(X_{s}) - 2k\left\langle \id^{k}, X_{s} \right\rangle \right] \dd s} 
\end{align*}
  is a continuous $\P_{\mu}-$local martingale, respectively a continuous $\P_{\mu}-$martingale.  Moreover, if   $\left\langle \left|\id \right|^{2k}, \mu \right\rangle  <\infty$, then 
  $\left(\widehat{M}^{\id}\left(\id^{k}\right) \right)_{0\leqslant t \leqslant T}$ is a martingale in $L^{2}\left(\widetilde{\Omega} \right)$ whose quadratic variation is given by
\begin{align*}
\left\langle \widehat{M}^{\id}\left(\id^{k}\right) \right\rangle_{t}  & = 2\gamma \int_{0}^{t}{\left[\left\langle \id^{2k}, X_{s}  \right\rangle - \left\langle \id^{k}, X_{s}  \right\rangle^{2} + k\left\langle \id^{k-1}, X_{s}\right\rangle^{2}M_{2}(X_{s})  \right.} \\
 & \hspace{3cm} -\left. 2k\left\langle \id^{k-1}, X_{s} \right\rangle \left\langle \id^{k+1}, X_{s} \right\rangle \right]\dd s.
\end{align*}
\end{itemize}
\label{Prop_moments_non_bornes_RECENTRE}
\end{Prop}
\vspace{-0.35cm}
{  Note that the properties for $k = 1$ fail because of the term $M_{2}$ in the expression of $\LL_{\rm FVc}$. Note also that this results entails that $Z_t$ is a continuous martingale.}

\begin{proof} 
\textbf{Step 1. Proof of (1)(a).} We prove only the case $k\geqslant 3$: the case $k = 2$, which is simpler because some terms disappear, is treated in the same way. Let $t \in [0,T]$. We consider a sequence of functions $\left(g_{n}\right)_{n\in \N}$ of class $\CCCC^{2}(\R, \R)$ with compact support satisfying: 
\begin{center}
\begin{tabular}{ll}
\textbf{(i)} \ for all $n\in \N$, $|g_{n}| \leqslant |\id|$,  \qquad \qquad &  \textbf{(iii)} \ $g_{n} = \id$ on $[-n,n]$, \\ \textbf{(ii)} \ $\displaystyle{\lim_{n\to + \infty}{\left\|g''_{n}\right\|_{\infty}}= 0}$, & \textbf{(iv)} \ $g_{n}'$ is uniformly bounded on $\R$.
\end{tabular} 
\end{center}
We consider the sequence of functions $\left(h_{n}\right)_{n\in \N}$ defined by $h_{n} := \sqrt{1 + g_{n}^{2}}$ and we deduce from the properties of $g_{n}$ that for all $n \in \N$, $h_{n}$ is a non-negative function with compact support, 
that for all $k \in \N$, 
\[\left(h_{n}^{k} \right)' = kg_{n}g_{n}'h_{n}^{k-2} \qquad  {\rm{and}} \qquad  \left(h_{n}^{k} \right)'' = k\left(g_{n}' \right)^{2}h_{n}^{k-4}\left(h_{n}^{2} + (k-2)g_{n}^{2} \right) + kg_{n}g_{n}''h_{n}^{k-2}, \]
$h_{n} = h := \sqrt{1 + \id^{2}}$ on the compact set $[-n,n]$ and $h_{n} \leqslant h$ on $\R$. We consider for all $A \in \N$ and $\ell \in \N$,  the stopping time $\tau_{A,\ell} := \inf{\left\{ t \geqslant 0\left. \phantom{1^{1^{1^{1}}}} \hspace{-0.6cm} \right|\left\langle \left|\id \right|^{\ell}, X_{t}\right\rangle  \geqslant A  \right\}}$. Noting that for all $t \in [0,T]$, $n \in \N$ and $k \geqslant 3$, 
$\left\langle h_{n}^{k-2},  X_{t} \right\rangle \leqslant \left\langle h_{n}^{k},  X_{t} \right\rangle^{\frac{k-2}{k}}   \leqslant \left\langle h^{k},  X_{t} \right\rangle$ and $\left\langle h_{n}^{k-2},  X_{t} \right\rangle \left\langle \id^{2},  X_{t} \right\rangle \leqslant \left\langle h^{k},  X_{t} \right\rangle$ from \textsc{H\"{o}lder}'s inequality, we deduce from the martingale problem (\ref{PB_Mg_Z_Multi_d}) that there exists constants $C_{1}(k), C_{2}(k,A)>0$ such that
{ 
\begin{align*}
\E_{  \mu}\left(\left\langle h_{n}^{k}, X_{t\wedge \tau_{A,k}} \right\rangle \right)&  
\leqslant \left\langle h^{k}, \mu \right\rangle  + C_{1}(k)\E_{  \mu}\left(\int_{0}^{t\wedge \tau_{A,k}}{\left\langle h^{k}, X_{s} \right\rangle \dd s} \right) + C_{2}(k,A)\left\|g_{n}'' \right\|_{\infty}.
\end{align*}}
By \textsc{Fatou}'s lemma we obtain when $n \to + \infty$, 
\begin{align*}
\E_{  \mu}\left(\left\langle h^{k}, X_{t\wedge \tau_{A,k}} \right\rangle \right) \leqslant \left\langle h^{k}, \mu \right\rangle  + C_{1}(k)\E_{  \mu}\left(\int_{0}^{t\wedge \tau_{A,k}}{\left\langle h^{k}, X_{s} \right\rangle \dd s} \right).
\end{align*}
By \textsc{Gronwall}'s lemma, we deduce that 
\begin{equation}
\E_{  \mu}\left(\left\langle \left|\id \right|^{k}, X_{t\wedge \tau_{A,k}} \right\rangle \right) \leqslant  \E_{  \mu}\left(\left\langle h^{k}, X_{t\wedge \tau_{A,k}} \right\rangle \right) \leqslant \left\langle h^{k}, \mu \right\rangle  \exp\left(C_{1}(k) t \right).
\label{Gronwall_moments}
\end{equation}
In particular, this implies that the sequence $\left(\tau_{A,k} \right)_{A\in \N}$ converges $\P_{\mu}-$a.s. to infinity. Indeed, for all $\widetilde{T} >0$, we have \[\P_{\mu}\left(\sup_{A\in \N}{\tau_{A,k}} < \widetilde{T} \right) \leqslant \frac{\sup_{t\in[0,T]}{\E_{  \mu}\left(\left\langle \left|\id \right|^{k}, X_{t\wedge \widetilde{T} \wedge \tau_{A,k}} \right\rangle \right)}}{A}  \] which tends to $0$ when $A \to + \infty$. We deduce by \textsc{Fatou}'s lemma, when $A \to + \infty$, the first announced result. \\

\textbf{Step 2. Proof of (1)(b).} Let $\alpha >0$. From the martingale problem (\ref{PB_Mg_Z_Multi_d}), we deduce that 
\begin{align*}
\P_{\mu}\left(\sup_{t\in \left[0, T\wedge \tau_{A,k} \right]}{\left\langle h_{n}^{k}, X_{t} \right\rangle} \geqslant \alpha \right) & \leqslant \P_{\mu}\left(\left\langle h_{n}^{k}, \mu \right\rangle \geqslant \frac{\alpha}{5} \right) + \P_{\mu}\left(\int_{0}^{T\wedge \tau_{A,k}}{\left\langle \frac{\left(h_{n}^{k} \right)''}{2}, X_{s} \right\rangle \dd s} \geqslant \frac{\alpha}{5} \right) \\
& \quad + \P_{\mu}\left(\gamma \int_{0}^{T\wedge \tau_{A,k}}{\left\langle \left(h_{n}^{k} \right)'', X_{s} \right\rangle M_{2}(X_{s}) \dd s} \geqslant \frac{\alpha}{5} \right) \\
& \quad + \P_{\mu}\left(2\gamma \int_{0}^{T\wedge \tau_{A,k}}{\left\langle \left(h_{n}^{k} \right)' \times \id, X_{s} \right\rangle M_{2}(X_{s}) \dd s} \geqslant \frac{\alpha}{5} \right) \\
& \quad + \P_{\mu}\left(\sup_{t\in \left[0, T\wedge \tau_{A,k} \right]}{\left|\widehat{M}^{\id}_{t}\left(h_{n}^{k} \right) \right|} \geqslant \frac{\alpha}{5}\right)
\end{align*}
The \textsc{Doob} maximal inequality allows us to write \[\P_{\mu}\left(\sup_{t\in \left[0, T\wedge \tau_{A,k} \right]}{\left|\widehat{M}^{\id}_{t}\left(h_{n}^{k} \right) \right|} \geqslant \frac{\alpha}{5}\right) \leqslant \frac{5\E_{  \mu}\left(\left(\widehat{M}^{\id}_{T\wedge \tau_{A,k}}\left(h_{n}^{k} \right)\right)_{+} \right)}{\alpha}. \]
From the martingale problem (\ref{PB_Mg_Z_Multi_d}) and the computations of Step 1, we deduce that 
\begin{align*}
\E_{  \mu}\left(\left|\widehat{M}^{\id}_{T\wedge \tau_{A,k}}\left(h_{n}^{k} \right)\right| \right) & \leqslant 2 \left\langle h^{k}, \mu \right\rangle + 2 C_{1}(k)\E_{  \mu}\left(\int_{0}^{T\wedge \tau_{A,k}}{\left\langle h^{k}, X_{s}\right\rangle \dd s} \right) + 2C_{2}(k,A)\left\|g_{n}'' \right\|_{\infty} \\
& \leqslant 2\left\langle h^{k}, \mu \right\rangle \left[ 1 + \exp\left(C_{1}(k)T \right) \right] + 2C_{2}(k,A)\left\|g_{n}'' \right\|_{\infty},
\end{align*}
where we use the \textsc{Fubini-Tonelli} theorem and the relation (\ref{Gronwall_moments}). It follows, from \textsc{Markov}'s inequality, that there exists a constant $C_{k} >0$ such that  
\begin{align*}
\P_{\mu}\left(\sup_{t\in \left[0, T\wedge \tau_{A,k} \right]}{\left\langle h_{n}^{k}, X_{t} \right\rangle} \geqslant \alpha \right) & \leqslant \frac{C_{k}}{\alpha}\left[\left\langle h^{k}, \mu \right\rangle  \left[ 1 + \exp\left(C_{1}(k)T \right) \right] + C_{2}(k,A)\left\|g_{n}'' \right\|_{\infty} \right].
\end{align*}
By applying the dominated convergence theorem twice, successively when $n \to + \infty$ then when $A \to + \infty$, there exists a constant $\widetilde{C}_{k,T}>0$ such that \[\P_{\mu}\left(\sup_{t\in \left[0, T \right]}{\left\langle h^{k}, X_{t} \right\rangle} \geqslant \alpha \right)  \leqslant \frac{\widetilde{C}_{k,T}\left\langle h^{k}, \mu \right\rangle }{\alpha},\]
and thus the announced result. \\

\textbf{Step 3. $\widehat{M}^{\id}\left(\id^{k}\right)$ is a continuous local martingale.} From the properties of $\left(g_{n} \right)_{n\in \N}$, note that there exists a constant $\widehat{C}_{k} > 0$ such that for all $n \in \N, \quad \left|g_{n}^{k}\right| \leqslant \left|\id \right|^{k}$ and $\left|\left(g_{n}^{k} \right)'' \right| \leqslant \widehat{C}_{k}\left(1 + \left|\id \right|^{k-1} \right)$. It follows from the martingale problem (\ref{PB_Mg_Z_Multi_d}), the properties of $\left(g_{n} \right)_{n\in \N}$ and the dominated convergence theorem for conditional expectation that 
\begin{align*}
\widehat{M}^{\id}_{t\wedge \tau_{A, 2}}\left(\id^{k} \right) & := \lim_{n \to + \infty}{\widehat{M}_{t\wedge \tau_{A,2}}^{\id}\left(g_{n}^{k}\right)} \\
& = \left\langle \id^{k}, X_{t\wedge \tau_{A, 2}} \right\rangle - \left\langle \id^{k}, X_{0} \right\rangle - \int_{0}^{t\wedge \tau_{A, 2}}{ \frac{k(k-1)}{2}\left\langle \id^{k-2}, X_{s}\right\rangle \dd s} \\ 
&  \qquad - \gamma \int_{0}^{t\wedge \tau_{A, 2}}{\left[k(k-1)\left\langle  \id^{k-2}, X_{s} \right\rangle M_{2}(X_{s}) - 2k\left\langle \id^{k}, X_{s} \right\rangle \right]\dd s}
\end{align*} 
is a continuous $\P_{\mu}-$martingale and thus $\left(\widehat{M}^{\id}\left(\id^{k}\right)\right)_{0\leqslant t \leqslant T}$ is a continuous $\P_{\mu}-$local martingale. When $\left\langle \left|\id \right|^{k+1}, \mu \right\rangle < \infty$, using the inequality for all $t \in [0,T]$, \[\left\langle \left|\id\right|^{k-1},  X_{t} \right\rangle \left\langle \id^{2},  X_{t} \right\rangle \leqslant \left\langle \left|\id\right|^{k+1},  X_{t} \right\rangle,\] the same computation applies replacing $t\wedge \tau_{A,2}$ by $t$ to obtain that $\left(\widehat{M}^{\id}\left(\id^{k}\right)\right)_{0\leqslant t \leqslant T}$ is a continuous $\P_{\mu}-$martingale.\\

\textbf{Step 4. $L^{2}-$martingale and quadratic variation.}  As soon as $\left\langle \left|\id \right|^{k+1}, \mu \right\rangle < \infty$, $\widehat{M}^{\id}\left(\id^{k} \right) \in L^{2}\left(\widetilde{\Omega} \right)$ as a straightforward consequence of \textsc{H\"{o}lder}'s inequality. It follows from (\ref{Variation_quad_FV_recentre_Multi_d}) that for all $n \in \N$, for all $t \in [0,T]$, 
\begin{align*}
\left\langle \widehat{M}^{\id}\left(g_{n}^{k} \right) \right\rangle_{t} & = 2\gamma \int_{0}^{t}{\left(\left\langle g_{n}^{2k}, X_{s} \right\rangle - \left\langle g_{n}^{k}, X_{s} \right\rangle^{2}  + \left\langle \left(g_{n}^{k}\right)', X_{s} \right\rangle^{2}M_{2}(X_{s}) \right.} \\
& \hspace{3cm} \left. - 2\left\langle \left(g_{n}^{k} \right)', X_{s} \right\rangle\left\langle g_{n}^{k} \times \id, X_{s} \right\rangle \right) \dd s.
\end{align*}
For all $n \in \N$, the process \[N_{t , n} := \left[\widehat{M}_{t }^{\id}\left(g_{n}^{k}\right) \right]^{2} - \left\langle \widehat{M}^{\id}\left(g_{n}^{k}\right) \right\rangle_{t }\]
is a $\P_{\mu}-$local martingale. As $\widehat{M}^{\id}\left(\id^{k} \right)$ is bounded on $[0, T\wedge \tau_{A,2k}]$ for all $A \in \N$, then for all $n \in \N$, $\left(N_{t\wedge \tau_{A,2k} , n} \right)$ is a martingale.
From the relation (\ref{Moments_Z_id_k}) with $2k$, the dominated convergence theorem for conditional expectation implies as above that $\P_{\mu}-$a.s.
\begin{multline*}
\lim_{n\to + \infty}{N_{t\wedge \tau_{A,2k} ,n}} = \left[\widehat{M}_{t\wedge \tau_{A,2k} }^{\id}\left(\id^{k}\right) \right]^{2}  - 2\gamma \int_{0}^{t\wedge \tau_{A,2k} }{\left(\left\langle \id^{2k}, X_{s} \right\rangle - \left\langle \id^{k}, X_{s} \right\rangle^{2}   \right.} \\
+ \left. k\left\langle \id^{k-1}, X_{s} \right\rangle^{2}M_{2}(X_{s}) - 2k \left\langle \id^{k-1}, X_{s} \right\rangle \left\langle \id^{k+1}, X_{s} \right\rangle \right) \dd s
\end{multline*}
is a $\P_{\mu}-$martingale and we deduce the quadratic variation announced.\qedhere
\end{proof}

\subsubsection{Compact support \label{Sous_section_2_4_3}}
{  For all $\nu \in \MM_{1}(\R)$, we denote by $\Supp \nu$ the support of $\nu$. The historical reference of compact support property of the original \textsc{Fleming-Viot} process is {\color{blue} \cite[\color{black} Theorem 7.1]{dawson_wandering_1982}} where the authors proved that $\Supp Y_{t}$ is a.s. compact for each fixed $t>0$. We will used a slightly stronger version based on {\color{blue} \cite{liu_support_2015}}.}

\begin{Prop}
For all $\mu \in \MM_{1}^{c,2}(\R)$, for all $\varepsilon >0$, $\bigcup_{\varepsilon\leqslant s \leqslant t}{\Supp{Z_{s}}}$ is compact $\P_{\mu}-${\rm a.s.} Further, if  $\Supp{Z_{0}}$ is compact, then $\overline{\bigcup_{0\leqslant s \leqslant t}{\Supp{Z_{s}}}}$ is compact for all $t >0$, $\P_{\mu}-${\rm a.s.} 
\label{Prop_compacite_support_Z}
\end{Prop}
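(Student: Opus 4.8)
The plan is to transfer the compact support property of the original \textsc{Fleming-Viot} process $(Y_t)_{t\geqslant 0}$ to the centered process $(Z_t)_{t\geqslant 0}$ through the deterministic shift that links them. Since $Z_s = \tau_{-\left\langle \id, Y_s \right\rangle}\sharp\, Y_s$, the two supports are related by the translation $\Supp Z_s = \Supp Y_s - \left\langle \id, Y_s \right\rangle$ for every $s\geqslant 0$. I would use the compact support property of $Y$ from \cite{liu_support_2015}, but in its \emph{space--time} form: $\P_\nu^{FV}$--a.s., for every $0<\varepsilon\leqslant t$ the set $G_\varepsilon := \{(s,x)\in[\varepsilon,t]\times\R : x\in\Supp Y_s\}$ is compact, and $\overline{G_0}$ is compact whenever $\Supp Y_0$ is compact, where $G_0 := \{(s,x)\in[0,t]\times\R : x\in\Supp Y_s\}$. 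Recall that, by translation invariance (Proposition~\ref{Prop_notation_P_nu_c}), a statement under $\P_\mu$ for $Z$ is equivalent to a statement under $\P_\nu^{FV}$ for the underlying $Y$.

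For the first assertion, fix $\varepsilon>0$. By Lemma~\ref{Lem_Mart_id_id_bien_def}\textbf{(2)} the map $s\mapsto\left\langle \id, Y_s \right\rangle$ is continuous $\P_\nu^{FV}$--a.s., so that $\Psi:[\varepsilon,t]\times\R\to\R$, $\Psi(s,x):=x-\left\langle \id, Y_s \right\rangle$, is jointly continuous. The support relation then yields $\bigcup_{\varepsilon\leqslant s\leqslant t}\Supp Z_s=\Psi(G_\varepsilon)$, the continuous image of the compact set $G_\varepsilon$, hence compact. This is exactly the first claim.

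For the second assertion, assume $\Supp Z_0$ is compact; since $\Supp Y_0=\Supp Z_0+\left\langle \id, Y_0\right\rangle$ this is equivalent to $\Supp Y_0$ being compact, so $\overline{G_0}$ is compact. Extending $\Psi$ continuously to $[0,t]\times\R$, one still has $\bigcup_{0\leqslant s\leqslant t}\Supp Z_s=\Psi(G_0)$, and therefore $\overline{\bigcup_{0\leqslant s\leqslant t}\Supp Z_s}\subseteq\Psi(\overline{G_0})$. The right-hand side is compact as the continuous image of a compact set, and a closed subset of a compact set is compact, which gives the result.

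The main obstacle is not the transfer argument, which is elementary once the shift relation and the continuity of the empirical mean are in hand, but securing the correct form of the input on $Y$. The excerpt only records that $\Supp Y_s$ is compact for each \emph{fixed} $s>0$, whereas the proposition needs compactness of the whole space--time support over a time interval; I would thus have to invoke \cite{liu_support_2015} in its space--time (closed-graph) formulation, or derive it from the pointwise statement. The genuinely delicate point is the closedness of $G_\varepsilon$, i.e. the upper semicontinuity of $s\mapsto\Supp Y_s$, which is precisely what can fail at the initial time and forces the passage to the closure $\overline{G_0}$ — and hence explains why the closure appears in the second claim but not in the first.
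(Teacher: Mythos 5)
Your overall strategy coincides with the paper's: both proofs import the compact-support results of {\color{blue}\cite{liu_support_2015}} for the uncentered process and transfer them to $Z$ through the identity $\Supp Z_{s} = \Supp Y_{s} - \left\langle \id, Y_{s} \right\rangle$, using the a.s.\ continuity of $s \mapsto \left\langle \id, Y_{s} \right\rangle$ from Lemma \ref{Lem_Mart_id_id_bien_def}. Your continuous-image formulation $\bigcup_{\varepsilon \leqslant s \leqslant t}{\Supp Z_{s}} = \Psi\left(G_{\varepsilon}\right)$ is in fact a more explicit rendering of the final transfer than the paper's one-line conclusion.

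The genuine gap is in the input you assume on $Y$. You take as given that $G_{\varepsilon}$ is compact, i.e.\ that $s \mapsto \Supp Y_{s}$ has closed graph on $[\varepsilon, t]$, and you concede you would have to ``invoke or derive'' this; that formulation is not what is used here: as exploited in the paper, {\color{blue}\cite{liu_support_2015}} supplies only (a) compactness of $\Supp Y_{s}$ at each fixed positive time, and (b) compactness of $\overline{\bigcup_{0\leqslant s < t}{\Supp Y_{s}}}$ when $\Supp Y_{0}$ is compact. Moreover, even granting a closed-graph statement for compactly supported starts, your claim that $G_{\varepsilon}$ is compact under an \emph{arbitrary} $\mu \in \MM_{1}^{c,2}(\R)$ --- whose support need not be compact --- requires an extra step, and this is exactly where the paper's argument differs from yours: it applies (a) at time $\varepsilon$ to get $\Supp Y_{\varepsilon}$ compact a.s., then uses the \textsc{Markov} property to restart the process at time $\varepsilon$ and applies (b) on $[\varepsilon, t']$ with $t' > t$, obtaining compactness of the time-union of supports on $[\varepsilon, t]$ with no semicontinuity of the support map whatsoever. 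That Markov restart is the missing idea in your write-up; once inserted, your transfer goes through without the closed-graph hypothesis, e.g.\ via $\bigcup_{\varepsilon \leqslant s \leqslant t}{\Supp Z_{s}} \subseteq \overline{\bigcup_{\varepsilon \leqslant s \leqslant t}{\Supp Y_{s}}} + C$, where $C$ is the (compact) range of $s \mapsto -\left\langle \id, Y_{s} \right\rangle$ on $[\varepsilon, t]$, a Minkowski sum of two compacts. Your closing observation --- that closedness of the union itself, as opposed to its closure, is the delicate point --- is accurate; the paper's terse ``hence, the same is true for $Z$'' passes over the same subtlety on $[\varepsilon, t]$, and the closure version is what the cited results actually deliver and is all that the second assertion of the proposition requires.
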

\begin{proof} 
It is proved in {\color{blue} \cite{liu_support_2015}} that the support of the $\Lambda-$\textsc{Fleming-Viot} process associated to a $\Lambda-$coalescent which comes down from infinity, is compact at all positive times. Our case corresponds to \textsc{Kingman}'s coalescent. In addition, they prove that, given that the initial condition $\nu$ has compact support, \[\overline{\bigcup_{0\leqslant s < t}{\Supp{Y_{s}}}} \] is compact for all $t>0$, $\P_{\nu}^{\rm FV}-$a.s. \textsc{Markov}'s property then entails that, if $\nu \in \MM_{1}^{c,2}(\R)$, $\bigcup_{\varepsilon \leqslant s \leqslant t}{\Supp{Y_{s}}}$ is compact for all $0<\varepsilon < t$, $\P_{\nu}^{\rm FV}-$a.s. Hence, the same is true for $Z_{t} = \tau_{-\left\langle \id, Y_{t}\right\rangle}\sharp \, Y_{t}$. \qedhere
\end{proof}

\section{Uniqueness for the centered \textsc{Fleming-Viot} process \label{Section_Unicite}}

As for the original \textsc{Fleming-Viot} martingale problem, we will prove uniqueness to the martingale problem (\ref{PB_Mg_Z_Multi_d}) by relying on the \textit{duality method} {\color{blue} \cite{Etheridge_2, Dawson, Ethier_Kurtz_1987, Ethier_Kurtz_1993}}. Additional difficulties occur in our case since bounds on the dual process are harder to obtain and the duality identity cannot be proved in its usual form.  In particular, we can prove uniqueness only for initial conditions admitting finite moments.

\subsection{Main result}

\begin{Thm} The centered \textsc{Fleming-Viot} martingale problem 
{  {\rm(\ref{PB_Mg_Z_Multi_d})}}
has a unique~solution if its initial condition has all their moments finite.
\label{Thm_unicite_FVr}
\end{Thm}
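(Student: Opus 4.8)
The plan is to run the classical duality argument used for the original \textsc{Fleming-Viot} process, but with the dual dynamics read off from the generator $\LL_{FVc}$ of (\ref{Gene_n_copie_FVr}), the new feature being the \emph{branching} operators $K_{i,j}$ which raise the number of variables from $n$ to $n+1$. The quantities to be controlled are the mixed moments $u_{n}(t,f) := \E_{\mu}\!\left[\left\langle f, X_{t}^{n}\right\rangle\right]$ for all $n\in\N^{\star}$ and $f$. First I would show that, when $\mu$ has all its moments finite, every solution $\P_{\mu}$ of (\ref{PB_Mg_Z}) also solves the martingale problem for product measures of Definition~\ref{Def_PB_Mg_dual_FVr}: using the moment bounds of Subsubsection~\ref{Sous_section_2_4_2} one extends the admissible test functions from $\CCCC^{2}_{b}$ to functions of polynomial growth by truncation and dominated convergence, which is what justifies the action of $\LL_{FVc}$ on $\left\langle f,\mu^{n}\right\rangle$. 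Taking expectations then shows that $\left(u_{n}\right)_{n\geqslant 1}$ solves the coupled integral system obtained from (\ref{Gene_n_copie_FVr}), in which the $B^{(n)}$ term keeps $n$ fixed, the $\Phi_{i,j}$ terms couple $u_{n}$ to $u_{n-1}$ and the $K_{i,j}$ terms couple $u_{n}$ to $u_{n+1}$.

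Next I would introduce the dual process as a Markov process $\left(N_{t},G_{t}\right)_{t\geqslant 0}$ with values in $\bigsqcup_{n\geqslant 1}\CCCC(\R^{n},\R)$, whose generator is exactly the one appearing in (\ref{Gene_n_copie_FVr}): between jumps $G_{t}$ is transported by the (\textsc{Ornstein--Uhlenbeck}-type) flow generated by $B^{(n)}$ of (\ref{Operateur_B_n}); each ordered pair $(i,j)$, $i\neq j$, triggers at rate $\gamma$ a coalescence $f\mapsto\Phi_{i,j}f$ lowering the number of variables to $n-1$ (see (\ref{Expression_Phi_ij})); and each pair $(i,j)$ triggers at rate $\gamma$ a branching $f\mapsto K_{i,j}f$ raising it to $n+1$, where $K_{i,j}$ of (\ref{Operateur_K_ij}) differentiates twice and multiplies by the new coordinate, so that $\left\langle K_{i,j}f,\mu^{n+1}\right\rangle = M_{2}(\mu)\left\langle \partial^{2}_{ij}f,\mu^{n}\right\rangle$. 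The map $H_{f}:\mu\mapsto\left\langle f,\mu^{n}\right\rangle$ then satisfies the generator intertwining whereby $\LL_{FVc}$ acting in $\mu$ equals the dual generator acting in $f$ --- this is precisely the computation displayed before Definition~\ref{Def_PB_Mg_dual_FVr} --- and the goal is the duality relation
\begin{equation*}
\E_{\mu}\!\left[\left\langle f, X_{t}^{n}\right\rangle\right] = \E_{(n,f)}\!\left[\left\langle G_{t}, \mu^{N_{t}}\right\rangle\right],
\end{equation*}
whose right-hand side depends on the solution only through $\mu$ and the solution-independent dual.

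The heart of the matter, and the main obstacle, is to make this identity rigorous and to tame the dual. Since every pair branches at rate $\gamma$, the number of variables $N_{t}$ has birth rate $\gamma n^{2}$ against death rate $\gamma n(n-1)$, hence a strictly positive net drift $\gamma n$ with quadratic intensity, so $N_{t}$ may grow and even explode in finite time; moreover each branching appends a factor $x^{2}$, so $G_{t}$ acquires polynomial growth whose degree increases with the number of branchings. Pairing with $\mu^{N_{t}}$ therefore calls on moments of $\mu$ of arbitrarily high order, which is exactly why all moments are assumed finite. I would first establish the identity up to the stopping time $\sigma_{m} := \inf\{t\geqslant 0 : N_{t}\geqslant m\}$, where the forward/dual interchange is legitimate because the number of variables is bounded by $m$; concretely this comes from \textsc{Dynkin}'s formula applied to $H_{f}$ and the intertwining. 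The delicate step is to pass to the limit $m\to+\infty$: one must show that the stopped remainder is negligible, which I would obtain from a moment estimate on the dual --- bounding the polynomial degree of $G_{t}$ by the number of branchings, controlling that number through the birth-death structure of $N_{t}$, and closing the bound with the finiteness of all moments of $\mu$ and a \textsc{Gronwall} argument. This produces the duality identity in the weakened form that still forces $u_{n}(t,f)$ to be the same for every solution.

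Finally, once every mixed moment $\E_{\mu}\!\left[\left\langle f, X_{t}^{n}\right\rangle\right]$ is shown to be solution-independent, I would conclude that the one-dimensional marginal law of $X_{t}$ on $\MM_{1}^{c,2}(\R)$ is determined: the functionals $\mu\mapsto\left\langle f,\mu^{n}\right\rangle$ form a measure-determining class on probability measures having all moments finite, so equality of these quantities forces equality of the laws of $X_{t}$ under any two solutions. The homogeneous \textsc{Markov} property proved in Subsubsection~\ref{Sous_section_2_4_1} then upgrades the equality of one-dimensional marginals to equality of all finite-dimensional distributions, hence to equality of the laws on $\widetilde{\Omega}$, which is the desired uniqueness.
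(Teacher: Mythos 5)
Your architecture is the right one and matches the paper's in outline (product-measure martingale problem, a coalescing/branching dual, localization, the all-moments hypothesis, a determining class plus the \textsc{Markov} property via {\rm Theorem 4.4.2} of \textsc{Ethier-Kurtz}), but two of your key steps are wrong as stated. First, your claimed intertwining is not exact: from (\ref{Identite_Generateurs}) one has $\LL_{FVc}F(f,\mu) = \widetilde{\LL}^{\star}_{f}F(f,\mu) + \gamma n^{2}\left\langle f, \mu^{n}\right\rangle$, because the branching terms $\gamma \sum_{i,j}\left\langle K_{i,j}f, \mu^{n+1}\right\rangle$ enter $\LL_{FVc}$ \emph{uncompensated}, whereas any Markov jump process you build with jump rates $\gamma$ per pair automatically carries the compensator $-\gamma n^{2}\left\langle f,\mu^{n}\right\rangle$ in its generator. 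Consequently your duality identity $\E_{\mu}\left[\left\langle f, X_{t}^{n}\right\rangle\right] = \E_{(n,f)}\left[\left\langle G_{t}, \mu^{N_{t}}\right\rangle\right]$ is false; the correct statement must carry the \textsc{Feynman-Kac} weight $\exp\left(\gamma \int_{0}^{t}{N^{2}(u)\,\dd u}\right)$, as in (\ref{Identite_dualite_forte}) and (\ref{Identite_dualite_affaiblie}). This is not cosmetic: since $N$ has birth rate $\gamma n^{2}$, this exponential weight is precisely what threatens the integrability of the unstopped right-hand side.

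Second, your localization and limit passage do not work. Stopping at $\sigma_{m} := \inf\{t : N_{t} \geqslant m\}$ does not bound the integrands: by {\rm Lemma \ref{Bornes_dual}} the dual grows like $\|x\|_{2}^{2S_{T}}$ where $S_{T}$ is the \emph{total number of jumps}, which is unbounded on $\{N \leqslant m\}$ (the birth-death process can oscillate below $m$ arbitrarily often), so the \textsc{Dynkin}-formula step is not ``legitimate because the number of variables is bounded by $m$''. More seriously, your plan to remove the localization by showing ``the stopped remainder is negligible'' via moments of $\mu$ and \textsc{Gronwall} cannot succeed in general: even when all moments of $\mu$ are finite they may grow arbitrarily fast in the order, so quantities like $\E\left(C(T,S_{T})\left\langle \left|\id\right|^{2S_{T}}, \mu\right\rangle^{N_{T}}\exp\left(\gamma\int_{0}^{T}{N^{2}(u)\,\dd u}\right)\right)$ need not be finite, and {\rm Remark \ref{Remarque_contre_exemple}} shows the dual genuinely acquires polynomial growth of any exponent. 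This is exactly why the paper never proves the strong identity: it stops \emph{additionally on the pairing} --- $\theta_{k}$ in (\ref{Theta_k}) also caps $\left\langle \xi_{s}, X_{t-s}^{M(s)}\right\rangle$ --- proves the duality only up to $\theta_{k}$ ({\rm Theorem \ref{Thm_Identite_dualite_affaiblie}}), identifies the stopped quantities of two solutions through the common, solution-independent, stopped right-hand side, and then lets $k \to +\infty$ \emph{only on the left-hand side}, where $\xi_{0}$ is bounded, via {\rm Lemma \ref{Lemme_Theta_k}} (this is the sole place the all-moments hypothesis is used) and dominated convergence. You should restructure your Step on the limit $m \to +\infty$ accordingly; your final steps (determining class, then upgrading one-dimensional marginals to uniqueness) are sound and coincide with the paper's.
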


The reason why we need to assume finite initial moments will be explained at the end of Section \ref{Section_3_2_2}. In particular, we will see in Remark \ref{Remarque_contre_exemple} that we cannot hope to prove uniqueness for more general initial conditions using our duality method. 

\subsection{Notations and outline of the uniqueness proof \label{Section_3_2}}
\subsubsection{Martingale problem for polynomials\label{Section_3_2_1}}
In this section, we give an extension to the martingale problem (\ref{PB_Mg_Z_Multi_d}) which will be useful to prove uniqueness of the solution of the martingale problem of the centered \textsc{Fleming-Viot}. {  The heuristic leading to this extension is given in Appendix \ref{Sous_section_2_3}. Let us introduce, for all $n \in \N^{\star}, \mu \in \MM_{1}^{c,2}(\R^{d})$ and $f \in \CCCC^{2}_{b}((\R^{d})^{n}, \R)$, polynomial test functions: 
\begin{equation}
P_{f,n}(\mu) := \left\langle f, \mu^{n} \right\rangle := \int_{\R^{d}}^{}{\cdots \int_{\R^{d}}^{}{f(x_{1}, \cdots, x_{n})\mu(\dd x_{1})\cdots \mu(\dd x_{n})}}
\label{Eq_Fonction_polynome_mu}
\end{equation}
 where $\mu^{n}$ is the $n-$fold product measure of $\mu$.} 

{ 
\begin{Def}
The probability measure $\P_{\mu} \in \MM_{1}\left(\widetilde{\Omega}_{d} \right)$ is said to solve the \emph{centered \textsc{Fleming-Viot} martingale problem for {  polynomials}} {  with resampling rate $\gamma$ and} with initial condition $\mu \in \MM_{1}^{c,2}\left(\R^{d}\right)$, if the canonical process $\left(X_{t} \right)_{t\geqslant 0}$ on $\widetilde{\Omega}_{d}$ satisfies $\P_{\mu}(X_{0} = \mu) = 1$, for all $n \in \N^{\star}$, and for each  $f \in \CCCC_{b}^{2}((\R^{d})^{n}, \R)$, 
\begin{equation}
\widehat{M}^{(n),d}_{t}(f)  := \left\langle f, X_{t}^{n} \right\rangle - \left\langle f, X_{0}^{n}  \right\rangle - \int_{0}^{t}{\LL_{\rm FVc}^{d}{  P_{f, n}\left(X_{s} \right)} \dd s}  \\
\label{PB_Mg_dual_FVr_Multi_d} 
\end{equation}
with for all  $\varpi \in \MM_{1}^{c,2}\left(\R^{d}\right)$ and $f \in \CCCC_{b}^{2}\left(\left(\R^{d}\right)^{n}, \R\right)$,
\begin{equation}
\begin{aligned}
\LL_{\rm FVc}^{d}{  P_{f, n}(\varpi)} :=  \left\langle B^{(n),d}f, \varpi^{n} \right\rangle & + \gamma \sum\limits_{\substack{i, j\, = \, 1 \\ j \, \neq \, i} }^{n}{\left[\left\langle \Phi_{i,j}f, \varpi^{n-1}   \right\rangle - \left\langle f, \varpi^{n} \right\rangle \right] }     + \gamma \sum\limits_{i,j\, = \, 1}^{n}{\left\langle K_{i,j}f , \varpi^{n+1}  \right\rangle }
\end{aligned}
 \label{Gene_n_copie_FVr_Multi_d}
\end{equation}
is a continuous $\P_{\mu}-$martingale in $L^{2}\left(\widetilde{\Omega}_{d} \right)$ where, 
\begin{equation}
B^{(n), d}f(x) := \frac{\Delta f(x)}{2} - 2\gamma \sum_{k\, = \, 1}^{d}{\left(\nabla f(x) \cdot \varepsilon_{k} \right)\left(x\cdot \varepsilon_{k}\right)}
    \label{Eq_Bn_Multi_d}
\end{equation}
where $\varepsilon_{k} = \left(e_{k}, \cdots, e_{k}\right) \in (\R^{d})^{n}$ and $e_{k}$ is the $k^{\rm th}$ vector of the canonical basis of $\R^{d}$, and for all $i, j \in \left\{1, \cdots, n \right\}$, 
\begin{itemize}
{ \item[$\bullet$] $\Phi_{i,j} : \CCCC^{2}_{b}((\R^{d})^{n}, \R) \longrightarrow \CCCC^{2}_{b}((\R^{d})^{n-1}, \R)$, with $i \neq j$, is the function obtained from $f$ by inserting the vector $x_{i}$ between $x_{j-1}$ and $x_{j}$ when $i<j$ and by inserting the vector $x_{i-1}$ between $x_{j-1}$ and $x_{j}$ when $i>j$: 
\begin{equation}
\begin{aligned}
 \Phi_{i,j}f\left(x_{1}, \cdots, x_{n-1}\right)  &  = f\left(x_{1}, \cdots,  x_{j-1}, x_{i}, x_{j}, x_{j+1}, \cdots, x_{n-1} \right) & \qquad i<j \\
 \Phi_{i,j}f\left(x_{1}, \cdots, x_{n-1}\right)  &  = f\left(x_{1}, \cdots,  x_{j-1}, x_{i-1}, x_{j}, x_{j+1}, \cdots, x_{n-1} \right) & \qquad i>j
\end{aligned}
\label{Expression_Phi_ij}
\end{equation}}
\item[$\bullet$] $K_{i,j} : \CCCC^{2}_{b}((\R^{d})^{n}, \R) \longrightarrow \CCCC^{2}((\R^{d})^{n+1}, \R)$ is defined as 
\begin{equation}
 K_{i,j}f(x_{1}, \cdots, x_{n}, x_{n+1}) :=  \id\left(x_{n+1}\right)^{t}\left(\frac{\partial^{2} f}{\partial x_{i,k} \partial x_{j,\ell}}\left(x_{1}, \cdots, x_{n} \right) \right)_{1\leqslant k, \ell \leqslant d} \id\left(x_{n+1}\right).
 \label{Operateur_K_ij}
\end{equation}
\end{itemize}
\label{Def_PB_Mg_dual_FVr_Multi_d} 
\end{Def}
}

{  Recall that the centering effect in the \textsc{Fleming-Viot} process introduces additional terms in the infinitesimal generator (see (\ref{PB_Mg_Y_p_variables})). These terms, involving second moments of the measure, cannot be included in the $B^{(n),d}f$ term of \eqref{Gene_n_copie_FVr_Multi_d} because they do not have the proper dimensionality when expressed as integrals with respect to $\mu$. This is why we write them in the form $\langle K_{i,j}f, \mu^{n+1} \rangle$. As will appear in Section~\ref{Section_3_2_2} below, this term leads to the creation of particles in the dual process, contrary to the standard \textsc{Fleming-Viot} process.  This prevents us to apply the usual arguments and explains why we can only prove a weak version of the duality relation in Theorem~\ref{Thm_Identite_dualite_affaiblie} below.}

\begin{Thm}
For all $\mu \in \MM_{1}^{c,2}({ \R^{d}})$, the probability measure $\P_{\mu}$ constructed in {\rm{Theorem \ref{Prop_PB_Mg_Z}}}, satisfies the martingale problem of  {\rm{Definition~\ref{Def_PB_Mg_dual_FVr_Multi_d}}}. 
\label{Prop_PB_Mg_Z_dual}
\end{Thm}

\begin{proof} We can deduce the result from the original \textsc{Fleming-Viot} martingale problem for {  polynomials} {\rm{\color{blue} \cite{Ethier_Kurtz_1993}}} given below, following exactly the same arguments as for the proof of {\rm{Theorem \ref{Prop_PB_Mg_Z}}}. The probability measure $\P_{\nu}^{\rm FV} \in \MM_{1}\left({ \Omega_{d}} \right)$ is said to solve the \emph{original \textsc{Fleming-Viot} martingale problem for {  polynomials}} {  with resampling rate $\gamma$ and} with initial condition $\nu \in \MM_{1}({ \R^{d}})$, if the canonical process $\left(Y_{t} \right)_{t\geqslant 0}$ on { $\Omega_{d}$} satisfies $\P_{\nu}^{\rm FV}(Y_{0} = \nu) = 1$, for all $n \in \N^{\star}$, and for each $f \in \CCCC_{b}^{2}({ (\R^{d})^{n}}, \R)$,
\begin{equation}
M^{(n),d}_{t}(f)  := \left\langle f, Y_{t}^{n}  \right\rangle - \left\langle f, Y_{0}^{n}  \right\rangle - \int_{0}^{t}{\LL_{\rm FV}^{{ d}}{ P_{f,n}\left(Y_{s} \right)} \dd s}  \\
\label{PB_Mg_dual_FV} 
\end{equation}
with for all $\varpi \in \MM_{1}({ \R^{d}})$ 
\begin{equation*}
\LL_{\rm FV}^{{ d}}{ P_{f,n}\left(\varpi \right)} =  \left\langle \frac{\Delta f}{2}  , \varpi^{n} \right\rangle  + \gamma \sum\limits_{i\, = \, 1}^{n}{\sum\limits_{\substack{j \, = \, 1 \\ j \, \neq \, i} }^{n}{\left(\left\langle \Phi_{i,j}f, \varpi^{n-1} \right\rangle - \left\langle f, \varpi^{n}  \right\rangle \right)} }
\label{Gene_n_copie_FV}
\end{equation*}
is a $\P_{\nu}^{\rm FV}-$martingale. By {\color{blue} \cite[ {\color{black} Theorem 3.2}]{Ethier_Kurtz_1993}}, the solution $\P^{\rm FV}_{\nu}$ of the martingale problem {\rm{(\ref{PB_Mg_Y})}} is the unique solution to the previous martingale problem. \qedhere
\end{proof}

\subsubsection{Duality\label{Section_3_2_2}}

Our proof of Theorem \ref{Thm_unicite_FVr} is based on the \emph{duality method} as proposed in {\color{blue} \cite{Ethier_Kurtz_1987, Ethier_Kurtz_1993}}. 
From {\rm{(\ref{Gene_n_copie_FVr_Multi_d})}}, the operator $\LL_{\rm FVc}^{{  d}}$ applied on the function {  $P_{f,n}$ defined in (\ref{Eq_Fonction_polynome_mu})} with fixed $f$ {  and $n\in \N^{\star}$}, satisfies the following identity: 
\begin{equation}
\begin{aligned}
 \LL_{\rm FVc}^{{ d}}{ P_{f,n}(\mu)} & = \left\langle B^{(n){ ,d}}f, \mu^{n} \right\rangle  + \gamma \sum\limits_{i\, = \, 1}^{n}{\sum\limits_{\substack{j\, = \, 1 \\ j \, \neq \, i} }^{n}{\left[\left\langle \Phi_{i,j}f, \mu^{n-1} \right\rangle - \left\langle f, \mu^{n} \right\rangle \right] }  }   \\ 
 & \hspace{0.75cm} + \gamma \sum\limits_{i\, = \, 1}^{n}{\sum\limits_{j\, = \, 1  }^{n}{\left[\left\langle K_{i,j}f, \mu^{n+1} \right\rangle - \left\langle f, \mu^{n} \right\rangle \right] }  } + \gamma n^{2}\left\langle f, \mu^{n} \right\rangle  \\
& =: \widetilde{\LL}^{\star}_{f}{ P_{f,n}(\mu)} + \gamma n^{2}\left\langle f, \mu^{n} \right\rangle 
\end{aligned}
 \label{Identite_Generateurs}
\end{equation}
We note that $\widetilde{\LL}^{\star}_{f}$ can be seen as an operator acting on the function $f \mapsto { P_{f,n}(\mu)}$ with fixed $\mu$. The operator $\widetilde{\LL}^{\star}_{f}$ can be interpreted as the generator of a stochastic process on the state space $\bigcup_{n\in \N^{\star}}^{}{\CCCC^{2}({ (\R^{d})^{n}}, \R)}$. Following \textsc{Ethier-Kurtz}'s works {\color{blue} \cite{Ethier_Kurtz_1987, Ethier_Kurtz_1993}}, this suggests to introduce a dual process $\left(\xi_{t} \right)_{t\geqslant 0}$, of generator $\widetilde{\LL}^{\star}_{f}$ and to prove a duality relation of the form:
\begin{equation}
\forall t \geqslant 0, \quad \E\left(\left\langle \xi_{0}, X^{M(0)}_{t}  \right\rangle\right) = \E\left(\left\langle  \xi_{t}, X_{0}^{M(t)}   \right\rangle\exp\left(\gamma \int_{0}^{t}{M^{2}(u) \dd u} \right) \right)
 \label{Identite_dualite_forte}
\end{equation}
where $M := \left(M(t) \right)_{t\geqslant 0}$ is a \textsc{Markov}'s {  birth and death} process in $\N$ whose transition rates {  $q_{i,j}$ from $i$ to $j$} are given by:
\begin{equation*}
{\rm{\textbf{\rm{\textbf{(1)}}}}} \ \ q_{n, n+1} = \gamma n^{2} \qquad \quad {\rm{\textbf{\rm{\textbf{(2)}}}}} \ \ q_{n,n-1} = \gamma n(n-1) \qquad \quad {\rm{\textbf{\rm{\textbf{(3)}}}}} \ \ q_{i,j} = 0 \ {\rm{otherwise}}.
 \label{Taux_de_saut}
\end{equation*}
It is known that the relation  (\ref{Identite_dualite_forte}) implies uniqueness {\color{blue}\cite[\color{black} Theorem 4.4.2]{Ethier_markov_1986}}. However in our situation, it is difficult to obtain the strong version (\ref{Identite_dualite_forte}). For technical reasons, we will obtain only a weakened version. Therefore, the proof will be divided in two large steps. \\

\textbf{Step 1. Construction of the dual process $\left(\xi_{t} \right)_{t\geqslant 0}$.} The relation (\ref{Identite_Generateurs}) suggests that the dual process $\left(\xi_{t} \right)_{t\geqslant 0}$ jumps, for all $i, j \in \{1, \cdots, n\}$ from  $f \in \CCCC^{2}_{b}({ (\R^{d})^{n}}, \R)$ to $K_{i,j}f \in \CCCC^{2}({ (\R^{d})^{n+1}}, \R)$ at rate $\gamma$ and if $i \neq j$,  from $f \in \CCCC^{2}_{b}({ (\R^{d})^{n}}, \R)$ to $\Phi_{i,j}f \in \CCCC^{2}_{b}({ (\R^{d})^{n-1}}, \R)$ at rate $\gamma$. {  Moreover, note that if $n = 1$, the dual process can only jump from $f \in \CCCC_{b}^{2}(\R^{d}, \R)$ to $K_{i,j}f$.} Between jumps, this dual process evolves according to the  semi-group of operator $\left(T^{(n){ , d}}(t) \right)_{t \geqslant 0}$ associated to the generator $B^{(n){ , d}}$ given by {\rm{(\ref{Eq_Bn_Multi_d})}}. We will give in Section \ref{Section_preuve_Unicite} an explicit expression of the  semi-group $\left(T^{(n){ , d}}(t) \right)_{t \geqslant 0}$ defined as an integral against Gaussian kernels. We define the dual process as follows: 
\begin{Def}
For all $M(0) \in \N^{\star}$, for all $\xi_{0} \in \CCCC^{2}_{b}({ (\R^{d})^{M(0)}}, \R )$, 
\begin{multline}
\xi_{t}  := T^{\left(M\left(\tau_{n} \right) \right){ , d}}\left(t-\tau_{n}\right)\Lambda_{n} T^{\left(M\left(\tau_{n-1} \right) \right){ , d}}\left(\tau_{n}-\tau_{n-1}\right)\Lambda_{n-1} \cdots  \Lambda_{1}  T^{\left(M\left(0 \right) \right){ , d}}\left(\tau_{1}\right)\xi_{0}, \\
   \tau_{n} \leqslant t < \tau_{n+1}, \ n \in \N , \label{Processus_dual_explicite}
\end{multline}
where $\left(\tau_{n}\right)_{n\in \N}$ is the sequence of jump times of the birth-death process $M$ with $\tau_{0} = 0$ and where $\left(\Lambda_{n}\right)_{n\in \N}$ is a sequence of random operators.  These are conditionally independent given $M$ and satisfy for all $k \in \N$, $n\geqslant 1$ and $1\leqslant i \neq j \leqslant n$, 
\begin{equation}
\P\left(\Lambda_{k} = \Phi_{i,j} \left| \phantom{1^{1^{1^{1}}}} \hspace{-0.7cm} \right. \left\{M\left(\tau_{k}^{-} \right) = n , M\left(\tau_{k} \right) = n-1\right\} \right)  = \frac{1}{n(n-1)} 
\label{Saut_Phi_i_j}
\end{equation}
and for all $n \geqslant 1$ and $1\leqslant i, j \leqslant n$, 
\begin{equation}
\P\left(\Lambda_{k} = K_{i,j} \left| \phantom{1^{1^{1^{1}}}} \hspace{-0.7cm} \right. \left\{M\left(\tau_{k}^{-} \right) = n , M\left(\tau_{k} \right) = n+1\right\} \right)  = \frac{1}{n^{2}}.
 \label{Saut_K_i_j}
\end{equation} 
Moreover, the random times $\left(\tau_{k} - \tau_{k-1}\right)_{k\geqslant 1}$ are independent conditionally to $M\left(\tau_{k-1} \right) = n$ and of exponential law of parameter $\gamma n^{2} + \gamma n(n-1)$.
\label{Def_Processus_Dual}
\end{Def}
{  Note that particle creation for the centered \textsc{Fleming-Viot} dual process is possible unlike the original \textsc{Fleming-Viot} dual process. It comes from the $K_{i,j}$ operator which has appeared under the centering effect. Note that these terms correspond to the ones with the factor { $\langle \id_{i} \times \id_{j}, \mu \rangle$, $i,j \in \{1,\cdots, n\}$ in  (\ref{Eq_Generateur_L_FVc_Multi_d})} or $M_{2}(\mu)$ factor in (\ref{Eq_Generateur_L_FVc}). Indeed, for all $f \in \CCCC_{b}^{2}((\R^{d})^{n}, \R)$ and all $\mu \in \MM_{1}^{c,2}(\R^{d})$, 
\[\sum_{i,j\, = \, 1}^{n}{\left\langle K_{i,j}f, \mu^{n+1} \right\rangle} = \sum_{i,j \, = \, 1}^{n}{\sum_{k, \ell \, = \, 1}^{d}{\left\langle \partial_{x_{i,k}x_{j,\ell}}^{2}f, \mu^{n} \right\rangle \left\langle \id_{k}\times \id_{\ell}, \mu \right\rangle}} \]

The terms $\Phi_{i,j}$ are present both in the original and centered \textsc{Fleming-Viot}. Because of the operators $K_{i,j}$, difficulties will arise to get bounds on the dual process (see Section \ref{Sous_section_6_2}).} Note {  also} that $M$ is a non-explosive process:
\begin{equation}
\forall T >0, \qquad \P\left(\sup_{t\in [0,T] }{M(t)} < +\infty \right) = \P\left(\lim_{n\to + \infty}{\tau_{n}} = + \infty \right) = 1.  \label{Non-explosion}
\end{equation}
Indeed, we note that for the choice of $\mu_{i} := \gamma i(i-1)$ and $\lambda_{i} := \gamma i^{2} $, $i\geqslant 1$ in {\color{blue} \cite[\color{black} Theorem 2.2.]{Bans}}, we have $$\frac{\mu_{i} \cdots \mu_{2}}{\lambda_{i} \cdots \lambda_{2}\lambda_{1}} = \frac{1}{2\gamma i} >0,  $$ so that \[\sum\limits_{i\geqslant 1}^{}{\left(\frac{1}{\lambda_{i}} + \frac{\mu_{i}}{\lambda_{i}\lambda_{i-1}} + \cdots + \frac{\mu_{i} \cdots \mu_{2}}{\lambda_{i} \cdots \lambda_{2}\lambda_{1}}  \right)} \geqslant \sum\limits_{i\geqslant 1}^{}{\frac{\mu_{i} \cdots \mu_{2}}{\lambda_{i} \cdots \lambda_{2}\lambda_{1}}} = +\infty.\]
Hence, $M$ is non-explosive. \\

\textbf{Step 2. Weakened duality relation.} We consider fixed $M(0) \in \N^{\star}$,  $\xi_{0} \in \CCCC^{2}_{b}({ (\R^{d})^{M(0)}}, \R)$ and $\left(X_{t}\right)_{t\geqslant 0}$ a stochastic process whose law $\P_{\mu}$ is a solution of the martingale problem (\ref{PB_Mg_Z_Multi_d}) with $\mu \in \MM_{1}^{c,2}({ \R^{d}})$. We introduce a dual process $\left(\xi_{t} \right)_{t\geqslant 0}$ independent of $\left(X_{t} \right)_{t\geqslant 0}$ built on the same probability space (enlarging it if necessary). We shall denote by $\P_{(\mu, \xi_{0})}$, the law of $\left(\left(X_{t}, \xi_{t} \right)\right)_{t\geqslant 0}$ on this probability space. For any $k \in \N$, we introduce the stopping time \begin{equation}
\theta_{k} := \inf{\left\{ t \geqslant 0\left. \phantom{1^{1^{1^{1}}}} \hspace{-0.6cm} \right| M(t)  \geqslant k \quad {\rm{or}} \quad \exists s \in [0,t], \ \left\langle  \xi_{s}, X_{t-s}^{M(s)}  \right\rangle \geqslant k \right\}}.  \label{Theta_k}
\end{equation}

\begin{Thm} Given any $\left(X_{t}\right)_{t\geqslant 0}$, $\left(\xi_{t}\right)_{t\geqslant 0}$ as above, we have the \emph{weakened duality identity}: for all $k \in \N$ and any stopping time $\theta$ such that $\theta \leqslant \theta_{k}$, 
\begin{equation}
\begin{aligned}
& \forall t \geqslant 0,  \quad \E_{\left(\mu, \xi_{0} \right)}\left(\left\langle \xi_{0}, X^{M(0)}_{t\wedge \theta}  \right\rangle\right)= \E_{\left(\mu, \xi_{0} \right)}\left(\left\langle \xi_{t\wedge \theta}, X_{0}^{M(t\wedge \theta)}   \right\rangle\exp\left(\gamma \int_{0}^{t\wedge \theta}{M^{2}(u) \dd u} \right) \right).
\end{aligned}
 \label{Identite_dualite_affaiblie}
\end{equation}
\label{Thm_Identite_dualite_affaiblie}
\end{Thm}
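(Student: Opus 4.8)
The plan is to prove the weakened duality identity~\eqref{Identite_dualite_affaiblie} by applying an It\^o-type / Dynkin argument to the two-parameter functional $(s,u) \mapsto \langle \xi_s, X_u^{M(s)} \rangle$ evaluated along the diagonal $u = t-s$, and to verify that, up to the stopping time $\theta \leqslant \theta_k$, the boundary terms coincide with the two sides of the identity. Concretely, I would introduce for fixed $t$ the process
\begin{equation*}
G_s := \left\langle \xi_s, X_{t-s}^{M(s)} \right\rangle \exp\left(\gamma \int_0^s M^2(u)\,\dd u \right), \qquad 0 \leqslant s \leqslant t,
\end{equation*}
and show that $\left(G_{s\wedge\theta}\right)_{0\leqslant s\leqslant t}$ is a $\P_{(\mu,\xi_0)}$-martingale. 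Taking expectations at $s=0$ and $s=t$ then yields~\eqref{Identite_dualite_affaiblie}, since at $s=0$ we recover $\langle \xi_0, X_{t\wedge\theta}^{M(0)}\rangle$ and at $s=t$ we recover the right-hand side. The role of $\theta_k$ defined in~\eqref{Theta_k} is precisely to keep $M(s)$ bounded and the pairing $\langle \xi_s, X_{t-s}^{M(s)}\rangle$ bounded by $k$, so that all the expectations are finite and the stochastic-calculus manipulations are legitimate; this is the technical weakening that replaces the full identity~\eqref{Identite_dualite_forte}.

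The heart of the computation is a differentiation of $G_s$ in which the contributions of the $X$-dynamics and the $\xi$-dynamics cancel. First I would freeze the jump/diffusion structure of the dual process $\xi$ described in Definition~\ref{Def_Processus_Dual}: between jumps $\xi$ evolves by the semigroup $\left(T^{(n)}(t)\right)_{t\geqslant 0}$ generated by $B^{(n)}$, so the continuous part of $\dd\langle \xi_s, X_{t-s}^{M(s)}\rangle$ in the variable $s$ produces $\langle B^{(M(s))}\xi_s, X_{t-s}^{M(s)}\rangle\,\dd s$, while the $u=t-s$ variable is handled by the martingale problem~\eqref{PB_Mg_dual_FVr}--\eqref{Gene_n_copie_FVr} applied to the $\P_\mu$-process with test function $\xi_s$, contributing $-\LL_{FVc}\langle \xi_s, X_{t-s}^{M(s)}\rangle$. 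Adding the jump terms of $M$ (birth at rate $\gamma n^2$, death at rate $\gamma n(n-1)$) together with the operators $\Phi_{i,j}$ and $K_{i,j}$ selected with the probabilities~\eqref{Saut_Phi_i_j}--\eqref{Saut_K_i_j}, and the derivative of the exponential prefactor $\gamma M^2(s)\,\dd s$, I expect every term to match a term of the splitting of $\LL_{FVc}$ in~\eqref{Identite_Generateurs}. Precisely, the decomposition $\LL_{FVc}F(f,\mu) = \widetilde{\LL}^\star_f F(f,\mu) + \gamma n^2 \langle f,\mu^n\rangle$ is designed so that $\widetilde{\LL}^\star$ is exactly the generator carried by $\xi$, the $\gamma n^2$ term is absorbed by the exponential correction, and the drift of $G_s$ vanishes.

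The main obstacle, as the authors themselves flag, is integrability rather than the algebraic cancellation. The operator $K_{i,j}$ in~\eqref{Operateur_K_ij} multiplies a test function by $x_{n+1}^2$, so each birth event of $M$ raises the polynomial degree of $\xi_t$ in its arguments; consequently the pairing $\langle \xi_s, X_{t-s}^{M(s)}\rangle$ involves moments of $X$ of unbounded order, and without a uniform bound the local martingale need not be a true martingale and the Dynkin identity need not pass to expectations. This is exactly why the stopping at $\theta_k$ is indispensable: on $\{s \leqslant \theta_k\}$ both $M(s)$ and the pairing are bounded by $k$, so $\left(G_{s\wedge\theta}\right)_s$ is a bounded (hence genuine) martingale and the optional stopping theorem applies cleanly. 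I would therefore organise the proof as follows: (i) establish the non-explosion of $M$, already recorded in~\eqref{Non-explosion}; (ii) write the stopped semimartingale decomposition of $G_{s\wedge\theta}$ using the generator identity~\eqref{Identite_Generateurs} and the dual dynamics of Definition~\ref{Def_Processus_Dual}; (iii) check term-by-term that the finite-variation part is zero, so $G_{s\wedge\theta}$ is a local martingale; (iv) invoke the bound $\langle \xi_s, X_{t-s}^{M(s)}\rangle \leqslant k$ valid for $s \leqslant \theta \leqslant \theta_k$ to upgrade to a bounded martingale; and (v) equate $\E_{(\mu,\xi_0)}(G_0)$ with $\E_{(\mu,\xi_0)}(G_{t\wedge\theta})$, which is precisely~\eqref{Identite_dualite_affaiblie}. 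The delicate bookkeeping of the two time directions $s$ and $t-s$, and the justification that the jump times of $M$ and $\xi$ interact correctly with the continuous $X$-dynamics, is where I expect the bulk of the careful work to lie.
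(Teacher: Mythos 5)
Your algebraic skeleton --- the splitting \eqref{Identite_Generateurs} in which $\widetilde{\LL}^{\star}$ is carried by the dual process, the $\gamma n^{2}$ term is absorbed by the exponential prefactor, and $\theta_{k}$ supplies the boundedness that makes all expectations finite --- is exactly the mechanism the paper exploits. But the central object of your plan does not exist as claimed: the process $G_{s} = \left\langle \xi_{s}, X_{t-s}^{M(s)} \right\rangle \exp\left(\gamma \int_{0}^{s}{M^{2}(u)\dd u}\right)$ is \emph{not} a martingale in any filtration for which your computation is legitimate, because the two time directions run against each other. Evaluating $G_{s}$ requires $X$ at time $t-s$; in the forward filtration of $\left(\xi, X\right)$ the process is anticipating, while if you enlarge the filtration so that the whole $X$-path is measurable at time $0$ (using independence), the martingale problem for $X$ can no longer be invoked. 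The family $\sigma\left(\xi_{u} : u \leqslant s\right) \vee \sigma\left(X_{u} : u \leqslant t-s\right)$ is not increasing in $s$, so there is no filtration making $G$ adapted with both dynamics usable. The correct statement, as in the classical duality method, is only that the deterministic function $s \mapsto \E_{\left(\mu,\xi_{0}\right)}\left(G_{s\wedge\theta}\right)$ is constant; your steps (iii)--(v), and in particular the appeal to optional stopping, therefore collapse. The stopping time makes matters harder, not easier: $\theta_{k}$ in \eqref{Theta_k} depends jointly on both processes through the backward-evaluated pairing $\left\langle \xi_{s}, X_{t-s}^{M(s)}\right\rangle$, so one cannot differentiate $\E\left(G_{s\wedge\theta}\right)$ in $s$ by a naive Leibniz--Fubini argument either.

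A second gap: you propose to apply the martingale problem \eqref{PB_Mg_dual_FVr} ``with test function $\xi_{s}$''. This is unlicensed twice over: $\xi_{s}$ is random, so an independence/conditioning argument analogous to Lemma \ref{Lem_Q_omega_Martingale} would be needed, and it is unbounded --- each application of $K_{i,j}$ multiplies by $x_{n+1}^{2}$, so $\xi_{s}$ has polynomial growth (Lemma \ref{Bornes_dual}), whereas Definition \ref{Def_PB_Mg_dual_FVr} only covers $f \in \CCCC^{2}_{b}$. The paper circumvents both difficulties by discretization rather than by an It\^{o}/Dynkin identity: it telescopes the difference of the two sides of \eqref{Identite_dualite_affaiblie} over a partition of mesh $h$, conditions on the number of jumps of $M$ in each small interval (no jump; exactly one jump, split over the events $\Gamma = \Phi_{i,j}$ and $\Gamma = K_{i,j}$ with explicit exponential-law computations; at least two jumps, contributing $O(h^{2})$), and treats the $X$-increment through the MILD formulation of Proposition \ref{Prop_Eq_MILD}, whose proof replaces boundedness by a domination hypothesis precisely to accommodate the quadratic growth coming from $K_{i,j}$. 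Each increment is then shown to be $O(h^{2})$ uniformly, the uniformity being exactly where $\theta_{k}$ enters. Your plan would have to be rebuilt along these lines, or as a rigorous two-variable flow argument at the level of expectations, before the cancellation you correctly anticipate can be turned into a proof of \eqref{Identite_dualite_affaiblie}.
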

Note that this result holds true for any initial measure $\mu \in \MM_{1}^{c,2}({ \R^{d}})$.
The stopping time $\theta_{k}$ ensures that each of the quantities involved in  (\ref{Identite_dualite_affaiblie}) are bounded and thus that their expectations are finite. 
This is where we need stronger assumptions on $\mu$.
\begin{Lem} Assume that $\mu \in \MM_{1}^{c,2}({ \R^{d}})$ has all its moments finite. Then, the stopping time $\theta_{k}$ defined by {\rm{(\ref{Theta_k})}}  satisfies $\lim_{k\to +\infty}{\theta_{k}} = +\infty$, $\P_{\left(\mu, \xi_{0}\right)}-{\rm{a.s}}$.  
\label{Lemme_Theta_k}
\end{Lem}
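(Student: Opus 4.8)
The plan is to exploit the monotonicity of $k\mapsto\theta_k$ together with the non-explosion of $M$ and the moment bounds on $\left(X_t\right)_{t\geqslant 0}$. First observe that $\theta_k$ is nondecreasing in $k$, since increasing $k$ shrinks the set of times $t$ realising either of the two defining events in \eqref{Theta_k}; consequently $\theta_\infty:=\lim_{k\to+\infty}\theta_k$ exists in $[0,+\infty]$. It therefore suffices to prove that $\theta_\infty\geqslant T$, $\P_{(\mu,\xi_0)}$-a.s., for every fixed $T>0$, since intersecting over integer values of $T$ then gives $\theta_\infty=+\infty$ a.s. Fixing $T$, I will show that, $\P_{(\mu,\xi_0)}$-a.s., both quantities entering \eqref{Theta_k} stay bounded over the relevant ranges of times, so that $\theta_k\geqslant T$ for all $k$ larger than some a.s.\ finite random threshold.

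The first quantity, $M(t)$, is controlled directly by the non-explosion property \eqref{Non-explosion}: a.s.\ the process $M$ performs only finitely many jumps on $[0,T]$, say $J<+\infty$, so that $\sup_{t\in[0,T]}M(t)\leqslant M(0)+J<+\infty$. This also caps the growth of the dual process. Indeed, by Definition~\ref{Def_Processus_Dual} the trajectory $\left(\xi_s\right)_{0\leqslant s\leqslant T}$ is built from $\xi_0\in\CCCC^2_b(\R^{M(0)},\R)$ by finitely many applications of the reinsertion operators $\Phi_{i,j}$ of \eqref{Expression_Phi_ij}, the operators $K_{i,j}$ of \eqref{Operateur_K_ij}, and the semigroups $\left(T^{(n)}(t)\right)_{t\geqslant 0}$ generated by $B^{(n)}$ of \eqref{Operateur_B_n}. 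Only the operators $K_{i,j}$, which multiply by the factor $x_{n+1}^{2}$ in a fresh variable after a second differentiation, can raise the polynomial growth, and each raises the growth degree by at most $2$; the operators $\Phi_{i,j}$ (which identify two variables) and the differentiations do not raise it, and the Gaussian semigroup $T^{(n)}(t)$ sends growth of degree $d$ to growth of degree at most $d$ through the Feynman--Kac representation of $B^{(n)}$, an Ornstein--Uhlenbeck-type diffusion with linear drift and variance bounded uniformly for $t\leqslant T$. Hence there is an a.s.\ finite random constant $C$ with $\lvert\xi_s(x)\rvert\leqslant C\left(1+\lvert x\rvert^{2J}\right)$ uniformly in $s\in[0,T]$ and $x\in\R^{M(s)}$.

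It remains to bound the pairing $\langle\xi_s,X_r^{M(s)}\rangle$ uniformly over $s,r\geqslant 0$ with $s+r\leqslant T$, which is exactly the range appearing in the second event of \eqref{Theta_k}. Integrating the growth bound against the product measure $X_r^{M(s)}$ yields $\lvert\langle\xi_s,X_r^{M(s)}\rangle\rvert\leqslant C\,P\big(\langle\lvert\id\rvert,X_r\rangle,\dots,\langle\lvert\id\rvert^{2J},X_r\rangle\big)$ for some polynomial $P$ depending on $M(s)$ and $J$. Since $\mu$ is deterministic with all moments finite, $\E\big(\langle\lvert\id\rvert^{q},\mu\rangle\big)=\langle\lvert\id\rvert^{q},\mu\rangle<+\infty$ for every $q\in\N$, so the moment estimates of Subsubsection~\ref{Sous_section_2_4_2} (in particular the maximal tail bound accompanying \eqref{Moments_Z_id_k}) give $\sup_{r\in[0,T]}\langle\lvert\id\rvert^{q},X_r\rangle<+\infty$ a.s.\ for each $q$, hence, by a countable intersection over $q$, simultaneously for all $q$. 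Therefore $\sup_{s+r\leqslant T}\langle\xi_s,X_r^{M(s)}\rangle<+\infty$ a.s.\ Taking $R$ to be the maximum of this supremum and of $\sup_{t\in[0,T]}M(t)$, we obtain $\theta_k\geqslant T$ for every $k>R$, whence $\theta_\infty\geqslant T$, as desired. The main obstacle is the uniform-in-$s$ growth estimate on $\xi_s$: one must verify that the semigroups $T^{(n)}(t)$ preserve polynomial growth with constants that remain bounded for $t\leqslant T$, and that the random but a.s.\ finite number of $K$-jumps genuinely bounds the degree, so that the resulting random constant $C$ is finite almost surely.
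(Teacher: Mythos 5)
Your proof is correct and follows essentially the same route as the paper: the paper also splits $\theta_{k} = \widehat{\theta}_{k}\wedge\widetilde{\theta}_{k}$, disposes of the $M$-part by the non-explosion property (\ref{Non-explosion}), proves exactly your polynomial-growth estimate on the dual by induction on the jump count (Lemma \ref{Bornes_dual}: $\left|\xi_{t}(x)\right|\leqslant C(T,S_{T})\left(1+\left\|x\right\|_{2}^{2S_{T}}\right)$), bounds the pairing by $M(s)^{S_{T}}\left\langle \id^{2S_{T}}, X_{t-s}\right\rangle$, and controls the moments of $X$ via the maximal inequality of Subsubsection \ref{Sous_section_2_4_2}. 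The only packaging difference is that the paper argues quantitatively in Subsubsection \ref{Main_proof_theta_k} (choose $A$ with $\P(S_{T}\leqslant A)\geqslant 1-\varepsilon/2$, then $B$ for the moments, then $k\geqslant C(T,A)\left(1+(M(0)+A)^{A}B\right)$), whereas you use monotonicity of $k\mapsto\theta_{k}$ together with a.s. finiteness of the relevant suprema; both are valid and yield the same conclusion.

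One caution about the step you yourself flag as the main obstacle: uniformity of the growth constants for $t\leqslant T$ down to $t=0$ is genuinely \emph{not} available for the composed operators $K_{i,j}T^{(n)}(t)$. After the first $K$-jump the dual is only polynomially bounded, with no usable derivative control (recall $\xi_{0}$ is merely $\CCCC^{2}_{b}$), so the second derivatives required by $K_{i,j}$ must be transferred to the Gaussian kernel; the resulting bound, Corollary \ref{Bornes_T_n_t} \textbf{(2)}, has a constant $C_{3}(t,n)$ that is only locally bounded on $(0,+\infty)\times\N$ and blows up like $1/e_{4}(t)$ and $1/t$ as $t\to 0^{+}$. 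Correspondingly, in the paper's induction the constant is $C_{0}\left(\left(\tau_{i+1}-\tau_{i}\right)_{i},k\right)$, depending on the inter-jump gaps and degrading as consecutive jumps get close. This does not break your argument — the gaps are a.s. positive, so your random constant $C$ is still a.s. finite, which is all your monotonicity argument uses — but the verification target should be stated as ``an a.s. finite, gap-dependent constant,'' not ``constants that remain bounded for $t\leqslant T$,'' since the latter, read as uniform in the gap structure, is false.
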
 

We will see in Remark \ref{Remarque_contre_exemple} that the assumption on $\mu$ is optimal in the following sense: even if $\xi_{0}$ is bounded, $\xi_{t}$ may have polynomial growth of any exponent $k$ in some direction of ${ (\R^{d})}^{M(t)}$ so that $\left\langle \left\|\xi_{t} \right\|, \mu^{M(t)} \right\rangle$ is infinite if $\mu$ has infinite $k^{\rm{th}}$ moment. This shows that we cannot expect to have $\theta_{k} \to +\infty$ when $k \to + \infty$ if $\mu$ has not all its moments finite. \\ 

The proofs of {\rm{Theorem \ref{Thm_Identite_dualite_affaiblie}}} and {\rm{Lemma \ref{Lemme_Theta_k}}} are respectively given in {\rm{Sections \ref{Sous_section_6_2}}} and {\rm{\ref{Sous_section_6_3}}}. Once they are proved, the proof of uniqueness can be completed as follows.

\subsubsection{Proof of Theorem \ref{Thm_unicite_FVr} from Theorem \ref{Thm_Identite_dualite_affaiblie} and Lemma \ref{Lemme_Theta_k}}

We rely on the \textsc{Ethier-Kurtz}'s result {\color{blue} \cite[\color{black} Theorem 4.4.2]{Ethier_markov_1986}}: to get the desired result, i.e. the uniqueness of the martingale problem (\ref{PB_Mg_Z_Multi_d}), it is sufficient to verify that if we give ourselves two solutions to the martingale problem (\ref{PB_Mg_Z_Multi_d}) {  satisfying the weakened duality identity, then} they have the same $1-$dimensional marginal laws. 

Let $\left(X_{t} \right)_{t\geqslant 0}$ and $\left(\widetilde{X}_{t} \right)_{t\geqslant 0}$ be two solutions of the martingale problem (\ref{PB_Mg_Z_Multi_d}) with the same initial condition $\mu \in \MM_{1}^{c,2}({ \R^{d}})$ which has all its moments finite. Let $\left(\xi_{t} \right)_{t\geqslant 0}$ be the dual process with initial condition $\xi_{0}\in \CCCC^{2}_{b}({ (\R^{d})}^{M(0)}, \R)$ with $M(0) \in \N^{  \star}$. We suppose that these three processes are built on the same probability space and independent of each other. We denote {  for all $k\in \N$,} \[\widetilde{\theta}_{k} := \inf{\left\{ t \geqslant 0\left. \phantom{1^{1^{1^{1}}}} \hspace{-0.6cm} \right| M(t)  \geqslant k \quad {\rm{or}} \quad \exists s \in [0,t], \ \left\langle  \xi_{s}, \widetilde{X}_{t-s}^{M(s)}  \right\rangle \geqslant k \right\}}. \] These processes satisfy, {  for all $k \in \N$, $\xi_{0} \in \CCCC_{b}^{2}({ (\R^{d}})^{M(0)}, \R )$} the weakened duality identity (\ref{Identite_dualite_affaiblie}):
\begin{equation*}
\begin{aligned}
\forall t \geqslant 0,\qquad \E_{\left(\mu, \xi_{0} \right)}\left(\left\langle  \xi_{0}, X^{M(0)}_{t\wedge \theta_{k} \wedge \widetilde{\theta}_{k}} \right\rangle\right) & \\
& \hspace{-3.5cm} = \E_{\left(\mu, \xi_{0} \right)}\left(\left\langle \xi_{t\wedge \theta_{k} \wedge \widetilde{\theta}_{k}}, \mu^{M(t\wedge \theta_{k} \wedge \widetilde{\theta}_{k})}  \right\rangle\exp\left(\gamma \int_{0}^{t\wedge \theta_{k} \wedge \widetilde{\theta}_{k}}{M^{2}(u) \dd u} \right) \right) \\ 
& \hspace{-3.5cm} = \E_{\left(\mu, \xi_{0} \right)}\left(\left\langle \xi_{0}, \widetilde{X}^{M(0)}_{t\wedge \theta_{k} \wedge \widetilde{\theta}_{k}}  \right\rangle\right). 
\end{aligned}
 \label{Difference_identite_dualite}
\end{equation*}
From {\rm{Lemma \ref{Lemme_Theta_k}}}, since $\left(X_{t}\right)_{t\geqslant 0}$ and $\left(\widetilde{X}_{t}\right)_{t\geqslant 0}$ have continuous paths for the topology of weak convergence, we have $\P_{\left(\mu, \xi_{0}\right)}-$a.s., \[\lim_{k\to +\infty}{\left\langle  \xi_{0}, X_{t\wedge \theta_{k} \wedge \widetilde{\theta}_{k}}^{{  M(0)}} \right\rangle} = \left\langle\xi_{0}, X_{t}^{{  M(0)}} \right\rangle \quad  {\rm{and}} \quad \lim_{k\to +\infty}{\left\langle  \xi_{0}, \widetilde{X}_{t\wedge \theta_{k} \wedge \widetilde{\theta}_{k}}^{{  M(0)}} \right\rangle} = \left\langle\xi_{0}, \widetilde{X}_{t}^{{  M(0)}} \right\rangle.\]
Therefore, we deduce from the dominated convergence theorem {  and (\ref{Eq_Fonction_polynome_mu})}, that for all  $\xi_{0} \in \CCCC^{2}_{b}\left({ (\R^{d})}^{M(0)}, \R\right)$, \[\forall t \geqslant 0, \qquad { \E_{\left(\mu, \xi_{0} \right)}\left(P_{\xi_{0}, M(0)}\left(X_{t} \right) \right) =  \E_{\left(\mu, \xi_{0} \right)}\left(P_{\xi_{0}, M(0)}\left(\widetilde{X}_{t} \right) \right)}.\]
{  As the set of test functions $\left\{P_{f,n} \left|\phantom{1^{1^{1}}} \hspace{-0.5cm} \right.  f \in \CCCC_{b}^{2}({ (\R^{d})}^{n}, \R), n\in \N^{\star} \right\}$ is $\MM_{1}\left(\MM_{1}({ \R^{d}}) \right)-$convergence determining {\color{blue} \cite[\color{black} Lemma 2.1.2]{Dawson}}, it is $\MM_{1}\left(\MM_{1}({ \R^{d}}) \right)-$separating {\color{blue} \cite[\color{black} Chapter 3, Section 4, p.112]{Ethier_markov_1986}},} it follows that for any $t\geqslant 0$, $X_{t}^{{  M(0)}}$ and $\widetilde{X}_{t}^{{  M(0)}}$ have the same law. In particular, for the choice $M(0) := 1$, {\color{blue} \cite[\color{black} Theorem 4.4.2]{Ethier_markov_1986}} ensures uniqueness to the martingale problem (\ref{PB_Mg_Z_Multi_d}).\hfill $\square$

\section{Ergodicity for the centered \textsc{Fleming-Viot} process \label{Section_Ergo}}

In this section, we establish ergodicity properties with exponential convergence in total variation for the centered \textsc{Fleming-Viot} process $\left(Z_{t} \right)_{t\geqslant 0}$. {  Note that in the case of the original \textsc{Fleming-Viot} process, ergodicity fails without the centering property {\color{blue} \cite[\color{black} Section 9.1]{Ethier_Kurtz_1993}}. Standard duality arguments would provide weak ergodicity estimates (see (\ref{Eq_Ergodicity_Weak}) and {\color{blue} \cite{Ethier_Kurtz_1993}}). However, using a coupling argument based on the \textsc{Moran} process and its relationship with the \textsc{Kingman} coalescent, it is possible to obtain strong ergodicity bounds (see (\ref{Eq_Ergodicity_Strong})) as done below. In addition, this will provide an explicit construction of the invariant probability measure from the  \textsc{Donnelly-Kurtz} modified look-down {\color{blue} \cite{donnelly_countable_1996, donnelly_particle_1999}}.}
For all $\mu, \nu \in \MM_{1}({ \R^{d}})$, we denote by \[\|\mu - \nu \|_{TV} := \frac{1}{2}\sup_{\left\|f \right\|_{\infty} \leqslant 1}{\left|\left\langle f, \mu \right\rangle - \left\langle f, \nu \right\rangle \right|},\]
the total variation distance between $\mu$ and $\nu$. {  The main result of this section is the following.}

\begin{Thm} There exists a unique invariant probability measure $\pi$ for $\left(Z_{t}\right)_{t\geqslant 0}$ and constants $\alpha, \beta \in (0,+\infty)$ such that for all $\mu \in \MM_{1}^{c,2}({ \R^{d}})$, for all $T \geqslant 0$,   \[ \left\| \P_{\mu}\left(Z_{T} \in \cdot \right) - \pi \right\|_{TV} \leqslant \alpha \exp\left(-\beta T \right).\]
\label{Thm_Ergodicite_VT}
\end{Thm}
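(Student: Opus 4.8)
The plan is to realise the centered \textsc{Fleming-Viot} process as the large-population limit of a finite particle system, the \emph{centered \textsc{Moran} model}, for which a coupling through the \textsc{Kingman} coalescent yields a contraction in total variation uniform in the number $N$ of particles, and then to transfer this contraction to the limit. For each $N$ I would introduce the neutral $N$-particle \textsc{Moran} process with \textsc{Brownian} mutation, whose types evolve as independent \textsc{Brownian} motions between resampling events occurring at rate $\gamma$ per ordered pair, and set $Z^N_t := \frac1N\sum_{i=1}^N \delta_{x_i(t)-\bar x(t)}$ with $\bar x(t):=\frac1N\sum_{i=1}^N x_i(t)$. A first, essentially computational, step is to check that $(Z^N_t)_{t\geqslant 0}$ solves an $N$-particle analogue of the martingale problem (\ref{PB_Mg_Z}) and that $Z^N\Rightarrow Z$ in law as $N\to\infty$: tightness follows from the moment bounds of Subsubsection \ref{Sous_section_2_4_2} together with an \textsc{Aldous}-type criterion, while identification of the limit is ensured by the uniqueness Theorem \ref{Thm_unicite_FVr}, provided one starts from initial conditions with all moments finite approximating $\mu$.

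The heart of the argument is the uniform ergodicity of $Z^N$. I would run two copies of the \textsc{Moran} model driven by the \emph{same} resampling events and the \emph{same} mutation increments along each lineage, differing only through their initial types; by neutrality the resampling is type-independent, so the two copies share one and the same genealogy, namely the graphical representation underlying \textsc{Kingman}'s coalescent. Reading this genealogy backwards from time $T$, the $N$ lineages reach a single most recent common ancestor after a random time $T^N_{\mathrm{MRCA}}$. On $\{T^N_{\mathrm{MRCA}}<T\}$ every particle's type at time $T$ equals the common ancestral type plus the mutation increments accumulated along its lineage \emph{after} the MRCA; the latter are shared by the two copies, and, since centering subtracts the empirical mean, the common ancestral offset cancels, so the two centered measures $Z^N_T$ \emph{coincide}. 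Hence
\begin{equation*}
\bigl\|\P(Z^{N,\mu}_T\in\cdot)-\P(Z^{N,\nu}_T\in\cdot)\bigr\|_{TV}\leqslant \P\bigl(T^N_{\mathrm{MRCA}}>T\bigr),
\end{equation*}
and the tail of $T^N_{\mathrm{MRCA}}$ is dominated, uniformly in $N$, by the exponential tail of the last pair-coalescence, which gives a bound $\alpha\,e^{-\beta T}$ with $\alpha,\beta\in(0,+\infty)$ depending only on $\gamma$. Taking $\nu$ distributed according to an invariant law $\pi^N$ of $Z^N$, whose existence and uniqueness follow from this very contraction, yields exponential ergodicity of the centered \textsc{Moran} model uniform in $N$.

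It then remains to pass to the limit $N\to\infty$. Uniform tightness of $(\pi^N)_N$, again obtained from the moment estimates, provides a weak limit point $\pi$; it is invariant for $Z$ because $Z^N\Rightarrow Z$ and the associated semigroups converge, and uniqueness of $\pi$ forces convergence of the whole sequence. Combining $\P_\mu(Z^N_T\in\cdot)\Rightarrow\P_\mu(Z_T\in\cdot)$ and $\pi^N\Rightarrow\pi$ with the lower semicontinuity of the total variation distance under weak convergence transfers the estimate $\|\P_\mu(Z_T\in\cdot)-\pi\|_{TV}\leqslant\alpha\,e^{-\beta T}$ to the centered \textsc{Fleming-Viot} process. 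Uniqueness of the invariant measure is then immediate, since any two invariant measures are within $\alpha\,e^{-\beta T}$ in total variation for every $T\geqslant 0$. The explicit description of $\pi$ is finally obtained by a separate argument: running the \textsc{Donnelly-Kurtz} modified look-down in stationarity and centering its empirical measure identifies $\pi$ as the law of the centered de~Finetti measure of the stationary look-down system.

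The step I expect to be most delicate is the interplay between the coupling and the limit. One must verify that the cancellation of the ancestral type at the MRCA is exact rather than asymptotic, so that the coupled centered measures genuinely coincide on $\{T^N_{\mathrm{MRCA}}<T\}$; and one must ensure that the total variation bound --- a singular, non-weakly-continuous quantity --- really survives the limit $N\to\infty$, which is precisely what lower semicontinuity provides. Controlling the tail of $T^N_{\mathrm{MRCA}}$ uniformly in $N$ is standard for \textsc{Kingman}'s coalescent, but checking that the finite-$N$ centered dynamics converges precisely to (\ref{PB_Mg_Z}), including the nonlinear centering terms carrying the factor $M_2(X_s)$, is where the careful work lies.
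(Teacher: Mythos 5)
Your proposal follows essentially the same route as the paper: the centered \textsc{Moran} model, a coupling of two copies driven by the same \textsc{Kingman} genealogy and the same mutation increments, the exact cancellation of the ancestral type after centering on the event of complete coalescence (this is precisely the paper's identity $v_i^{\mu_N}=v_i^{\nu_N}$ on $\{|k_{N,T}|=1\}$, with the tail of the coalescence height controlled by exponential \textsc{Tchebychev}, giving $\beta=\gamma$), passage to the limit $N\to\infty$, and the \textsc{Donnelly-Kurtz} look-down characterisation of $\pi$. Your worry about the total variation bound surviving the weak limit is handled in the paper by \textsc{Lusin}'s theorem (the sup over continuous $f$ with $\|f\|_\infty\leqslant 1$ already equals the total variation), which is the same mechanism as your lower-semicontinuity argument.

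The one genuine divergence is your identification of the limit of $Z^N$, and as written it does not give the theorem in its stated generality. You propose tightness plus identification through the uniqueness Theorem \ref{Thm_unicite_FVr}, but that theorem only applies when the initial condition has \emph{all} moments finite, whereas the statement must hold for every $\mu\in\MM_{1}^{c,2}(\R)$, i.e.\ with only a second moment; your hedge ``initial conditions with all moments finite approximating $\mu$'' does not close this, since weak convergence of initial laws does not transfer a total variation estimate across initial conditions. The paper avoids the issue entirely: Proposition \ref{Prop_0_Genealogie} deduces $Z^N\Rightarrow Z$ directly from the classical convergence $Y^N\Rightarrow Y$ of the \emph{uncentered} \textsc{Moran} model, combined with a quantitative continuity argument for the (discontinuous) centering map $\mu\mapsto\tau_{-\langle\id,\mu\rangle}\sharp\,\mu$ using the truncation $h_\varepsilon$ and the uniform second-moment bounds of Lemma \ref{Moments_Moran}, which requires only $\sup_N\E\left(\langle\id^2,Z_0^N\rangle\right)<\infty$. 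If you insist on your route, a repair is available but must be said: by Proposition \ref{Prop_compacite_support_Z} the process has compact support at every positive time, so $Z_\varepsilon$ has all moments finite a.s., and the \textsc{Markov} property lets you propagate the contraction obtained for nice initial laws to arbitrary $\mu\in\MM_{1}^{c,2}(\R)$ at the cost of adjusting the constant $\alpha$. Without one of these two devices, your argument proves the theorem only for initial conditions with all moments finite.
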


The main part of this section is devoted to the proof of this result (Sections \ref{Ergodicite_Sect_4.1} to  \ref{Ergodicite_Sect_4.4}) and in Section \ref{Ergodicite_Sect_4.6}, we characterise the invariant measure of the centered \textsc{Fleming-Viot} process. In Section \ref{Ergodicite_Sect_4.1} we construct the centered \textsc{Moran} process and we establish its  convergence in law to the centered \textsc{Fleming-Viot} process. In Section \ref{Ergodicite_Sect_4.2} we construct, backward in time, the \textsc{Moran} process, its centered version and we exploit its relationship with the \textsc{Kingman} coalescent in order to prove in Section \ref{Ergodicite_Sect_4.4} an exponential coupling in total variation for the \textsc{Moran} process. We finally deduce the main result announced by letting the number of particles go to infinity. 

\subsection{\textsc{Moran}'s models and \textsc{Fleming-Viot}'s processes \label{Ergodicite_Sect_4.1}} 

In {\color{blue} \cite{Etheridge_2, dawson_wandering_1982, fleming_no_1979, Dawson}}, the authors construct the original \textsc{Fleming-Viot} process as a scaling limit of a particle process: the \textsc{Moran} process. The aim of this section is to construct the version of the centered \textsc{Moran} process and to establish its convergence in law to the centered \textsc{Fleming-Viot} process. \\ 

We consider the \textsc{Moran} particle process $Y^{N}$ defined by \[Y^{N}_{t}  := \frac{1}{N}\sum\limits_{i\, = \, 1}^{N}{\delta_{X_{i}(t)}}  \] with state space $\MM_{1,N}({ \R^{d}})$, the set of probability measures on ${ \R^{d}}$ consisting of $N$ atoms of mass $1/N$. Moreover, if $\left(X_{i}(0)\right)_{i\in \N^{\star}}$ is exchangeable, then for all $t > 0$, $\left(X_{i}(t)\right)_{i\in \N^{\star}}$ is exchangeable {\color{blue} \cite[\color{black} Theorem 6.1]{Ethier_Kurtz_1993}}. The infinitesimal generator of the ${ \R^{d}}-$measure-valued process $Y^{N}$ is given {  for all $n\in \N^{\star}, f \in \CCCC^{2}_{b}({ (\R^{d})^{n}}, \R )$, $\mu_{N} \in \MM_{1,N}({ \R^{d}})$ by
\begin{align*}
L_{N}P_{f,n}\left(\mu_{N} \right) & := P_{\frac{\Delta f}{2}, n}\left(\mu_{N} \right) \\
& \hspace{1cm} + \gamma N(N-1)\int_{{ \R^{d}}}^{}{\int_{{ \R^{d}}}^{}{\left[P_{f,n}\left(\mu_{N} - \frac{\delta_{x}}{N} + \frac{\delta_{y}}{N} \right) - P_{f,n}\left(\mu_{N} \right) \right]\mu_{N}(\dd x)\mu_{N}(\dd y)}}.
\end{align*}}

The first term of the generator describes the effect of the mutation according to the Laplacian operator. The second term describes the sampling replacement mechanism: at rate $\gamma$ (the sampling rate) an individual of type $x$ is immediately replaced by one of type $y$. Note that the population size remains constant over time. \\

We recall the following convergence result {\color{blue} \cite[\color{black}Theorem 2.7.1]{Dawson}}: for all initial condition $Y_{0}^{N} = \frac{1}{N} \sum_{i\, = \, 1}^{N}{\delta_{X_{i}}} \in \MM_{1,N}({ \R^{d}})$ with $\left(X_{i} \right)_{1\leqslant i \leqslant N}$ exchangeable ${ \R^{d}}-$valued random variables such that $Y_{0}^{N}$ converges in law to $\mu \in \MM_{1}({ \R^{d}})$ as $N \to +\infty$, the \textsc{Moran} process $\left(Y^{N}_{t}\right)_{t\geqslant 0}$  converges in law on 
{ $\Omega_{d}$}, as $N \to + \infty$, to the original \textsc{Fleming-Viot} process $\left(Y_{t} \right)_{t\geqslant 0}$ defined as the solution to the martingale problem {  (\ref{PB_Mg_dual_FV})}. \\ 

We denote $\MM_{1, N}^{c,2}({ \R^{d}}) := \left\{ \mu_{N} \in \MM_{1,N}({ \R^{d}})  \left| \phantom{1^{1^{1^{1}}}} \hspace{-0.7cm} \right. \left\langle { \left\|\id\right\|^{2}}, \mu_{N} \right\rangle < \infty, \  \left\langle \id, \mu_{N} \right\rangle = 0 \right\}$, and we define the centered \textsc{Moran} process $\left(Z^{N}_{t} \right)_{t\geqslant 0}$  by  \[Z^{N}_{t} := \tau_{-\left\langle \id, Y_{t}^{N} \right\rangle} \sharp \, Y_{t}^{N}, \qquad t\geqslant 0.\]

The main result of this section is the following:

\begin{Prop} For all initial condition $Z_{0}^{N} := \frac{1}{N}\sum_{i\, = \, 1}^{N}{\delta_{X_{i}}} \in \MM_{1, N}^{c,2}({ \R^{d}})$ with $\left(X_{i} \right)_{1\leqslant i \leqslant  N}$  exchangeable ${ \R^{d}}-$valued random variables such that $Z_{0}^{N}$ converges in law to $Z_{0} \in \MM_{1}^{c,2}({ \R^{d}})$ as $N \to + \infty$ and satisfying $\sup_{N\in \N^{\star}}{\E\left(\left\langle {  \left\|\id\right\|^{2}}, Z_{0}^{N} \right\rangle\right)} < \infty$, the centered \textsc{Moran} process $\left(Z^{N}_{t} \right)_{t\geqslant 0}$ converges in law on $\CCCC\left([0, +\infty), \MM_{1}^{c,2}({ \R^{d}}) \right)$, as $N \to +\infty$, to the centered \textsc{Fleming-Viot} process $\left(Z_{t} \right)_{t\geqslant 0}$ solution of the martingale problem 
{  {\rm{(\ref{PB_Mg_dual_FVr_Multi_d})}}} with initial condition $Z_{0}$.
\label{Prop_0_Genealogie}
\end{Prop}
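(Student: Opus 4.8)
The plan is to deduce the convergence from the known convergence $Y^N \Rightarrow Y$ of the \textsc{Moran} process to the original \textsc{Fleming-Viot} process, by applying a continuous mapping argument to the centering map $\nu \mapsto \tau_{-\langle \id, \nu\rangle}\sharp\,\nu$. The only real difficulty is that this map involves the \emph{unbounded} function $\id$ and is therefore not continuous for the weak topology without a moment control. First I would reduce to a centered \textsc{Moran} process: since both the \textsc{Laplacian} mutation and the sampling replacement commute with translations, the \textsc{Moran} model is invariant by translation exactly as in Proposition \ref{Prop_notation_P_nu_c}, so the law of $(Z^N_t)_{t\geqslant 0}$ depends on $Y_0^N$ only through $Z_0^N$. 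We may thus assume $Y_0^N = Z_0^N$, so that $Y_0^N \to Z_0 =: \mu$ in law and, by the convergence result recalled above (Dawson, Theorem 2.7.1), $Y^N \Rightarrow Y$ in $\CCCC^0([0,+\infty), \MM_1(\R))$, where $Y$ is the original \textsc{Fleming-Viot} process started at $\mu$. Writing $m^N_t := \langle \id, Y^N_t\rangle$ and $m_t := \langle \id, Y_t\rangle$, we have $Z^N_t = \tau_{-m^N_t}\sharp\,Y^N_t$ and $Z_t = \tau_{-m_t}\sharp\,Y_t$, and the target becomes the joint convergence $(Y^N, m^N) \Rightarrow (Y, m)$ in $\CCCC^0([0,+\infty), \MM_1(\R)\times\R)$.

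The key ingredient is a uniform-in-$N$ second moment bound. Applying the \textsc{Moran} generator $L_N$ to $\langle \id^2, \cdot\rangle$ and proceeding as in Proposition \ref{Prop_moments_non_born�s_RECENTRE} (truncation of $\id$, \textsc{Fatou} and \textsc{Gronwall}), one obtains $\sup_N \sup_{t\leqslant T}\E(M_2(Y^N_t)) < \infty$, where one uses $M_2(Y_0^N) = \langle \id^2, Z_0^N\rangle$ together with the hypothesis $\sup_N \E(\langle \id^2, Z_0^N\rangle) < \infty$. Moreover $m^N$ is a martingale started at $m^N_0 = \langle \id, Z_0^N\rangle = 0$, whose bracket is controlled by $\int_0^t M_2(Y^N_s)\,\dd s$ up to a term of order $1/N$; hence $\sup_N \sup_{t\leqslant T}\E((m^N_t)^2) < \infty$ as well, and therefore $\sup_N \sup_{t\leqslant T}\E(\langle \id^2, Y^N_t\rangle) < \infty$. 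In particular $\id$ is uniformly integrable under the random measures $\{Y^N_t : N \in \N,\ t\leqslant T\}$.

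I would then establish the joint convergence. Tightness of $(m^N)$ follows from the \textsc{Rebolledo}--\textsc{Aldous} criterion applied to the martingale $m^N$ with the above bracket bound, so that $(Y^N, m^N)$ is tight. To identify the subsequential limits, I would pass to a \textsc{Skorohod} representation of $Y^N \to Y$ and use a truncation argument: writing $\chi_R$ for the truncation of $\id$ at level $R$, the weak convergence gives $\langle \chi_R, Y^N_t\rangle \to \langle \chi_R, Y_t\rangle$, while the tail is uniformly small since $\E|\langle \id - \chi_R, Y^N_t\rangle| \leqslant R^{-1}\sup_{N,\, s\leqslant T}\E(\langle \id^2, Y^N_s\rangle)$, which upgrades $Y^N_t \to Y_t$ to $m^N_t \to m_t$. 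Consequently any subsequential limit of $(Y^N, m^N)$ must have its second coordinate equal to $\langle \id, Y\rangle$, a deterministic functional of the first, so the full joint convergence $(Y^N, m^N) \Rightarrow (Y, m)$ holds.

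Finally, the shift map $(\nu, a) \mapsto \tau_{-a}\sharp\,\nu$ is jointly continuous on $\MM_1(\R)\times\R$: it suffices to test against bounded \textsc{Lipschitz} functions, a convergence-determining class, for which the shift converges uniformly. The induced path map is then continuous for the topology of local uniform convergence, and the continuous mapping theorem yields $Z^N = \tau_{-m^N}\sharp\,Y^N \Rightarrow \tau_{-m}\sharp\,Y = Z$. Since the limit $Z_t = \tau_{-\langle \id, Y_t\rangle}\sharp\,Y_t$ is precisely the centered \textsc{Fleming-Viot} process, which solves (\ref{PB_Mg_Z}) with initial condition $Z_0$ by Theorem \ref{Prop_PB_Mg_Z}, and since all the $Z^N_t$ and $Z_t$ are $\MM_1^{c,2}(\R)$-valued---where the trace topology coincides with the weak one and the moment bounds guarantee that the limiting paths remain in $\widetilde\Omega$---this gives the announced convergence in law in $\CCCC^0([0,+\infty), \MM_1^{c,2}(\R))$. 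The whole argument hinges on the non-continuity of the mean functional $\langle \id, \cdot\rangle$ for the weak topology, so the main obstacle, and the reason for the uniform second-moment hypothesis on $Z_0^N$, is precisely the upgrade from $Y^N_t \to Y_t$ to $m^N_t \to m_t$ carried out in the third step.
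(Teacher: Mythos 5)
Your proposal is correct in substance, but it follows a genuinely different architecture from the paper's proof of Proposition \ref{Prop_0_Genealogie}. You augment the state with the mean process $m^{N}_{t} = \left\langle \id, Y^{N}_{t} \right\rangle$, prove joint convergence $\left(Y^{N}, m^{N}\right) \Rightarrow \left(Y, m\right)$ (tightness of the martingale $m^{N}$ via \textsc{Aldous}--\textsc{Rebolledo}, identification of subsequential limits by \textsc{Skorohod} representation and a truncation/uniform-integrability argument), and then conclude by the continuous mapping theorem applied to the jointly continuous shift $(\nu, a) \mapsto \tau_{-a}\sharp\, \nu$. The paper never proves any convergence of the mean process as a process: it instead approximates the centering map itself, replacing $F(y)(t) = \tau_{-\left\langle \id, y(t)\right\rangle}\sharp\, y(t)$ by $F_{\varepsilon}$ built from the truncation $h_{\varepsilon}$ of $\id$, proves continuity of $F_{\varepsilon}$ on $\D\left([0,T], \MM_{1}(\R)\right)$ through quantitative \textsc{Fortet-Mourier} estimates, and bounds $d_{0}\left(F\left(Y^{N}\right), F_{\varepsilon}\left(Y^{N}\right)\right) \leqslant \sqrt{\varepsilon}$ on events of probability at least $1 - \widetilde{C}_{T}\sqrt{\varepsilon}$, concluding via \textsc{Portmanteau} with \textsc{Lipschitz} test functions. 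Both routes share the same analytic heart --- truncate $\id$ and use a uniform-in-$N$ second-moment bound --- but they consume it differently: the paper needs the \emph{maximal} inequality of Lemma \ref{Moments_Moran} (control of $\sup_{t \leqslant T}\left\langle \id^{2}, Y^{N}_{t}\right\rangle$ in probability) to make its error bound uniform on $[0,T]$, whereas you only need pointwise-in-$t$ expectation bounds for the identification step, at the price of the extra martingale machinery (and routine localization by stopping times $\tau_{A}$ to justify the bracket estimate in the \textsc{Aldous} criterion, since $\P\left(\sup_{t}M_{2} \geqslant \alpha\right) \leqslant C/\alpha$ does not give an $L^{1}$ bound on the supremum). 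One detail to tighten in your write-up: $Y^{N}$ is a jump process, so its paths are c\`adl\`ag rather than continuous; your continuous-mapping step should invoke continuity of the shift map on the \textsc{Skorohod} space at continuous limit points (which holds since $Y$ and $m$ are continuous), rather than working with locally uniform convergence of the prelimit paths --- exactly the point the paper handles explicitly through the distance $d_{0}$. With that adjustment, your argument delivers the same conclusion as Theorem \ref{Prop_PB_Mg_Z} combined with the martingale problem {\rm{(\ref{PB_Mg_Z})}}, and it is arguably more modular, while the paper's proof is more self-contained and quantitative.
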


A difficulty in proving this result lies in the fact that $\mu \mapsto \tau_{-\left\langle \id, \mu \right\rangle} \sharp \, \mu$ {  is not} continuous on $\MM_{1}({ \R^{d}})$ because $\id$ is not bounded. Hence we need to carry out several approximations and control carefully the approximation error. In order to prove this proposition, we need to introduce some notations. For all ${ \R^{d}}$-valued function $f$ on ${ \R^{d}}$, the \textsc{Lipschitz} seminorm is defined by $\|f \|_{L} = \sup_{x\neq y}{ \frac{\left\|f(x) - f(y) \right\|}{\left\|x - y \right\|}}$. We denote by \[BL_{1}({ \R^{d}}) := \left\{ f : { \R^{d}} \to { \R^{d}} \left| \phantom{1^{1^{1^{1}}}} \hspace{-0.7cm} \right. \left\|f \right\|_{BL} \leqslant 1 \right\} \]
where $\left\|f \right\|_{BL} := \left\|f \right\|_{L} + \left\| f\right\|_{\infty}$ For all $\mu, \nu \in \MM_{1}({ \R^{d}})$, we denote by 
\[d_{FM}(\mu, \nu) := \sup_{f \in BL_{1}({ \R^{d}})}{\left\| \left\langle f, \mu \right\rangle - \left\langle f, \nu \right\rangle \right\|},\]
the \textsc{Fortet-Mourier} distance. Recall that $\MM_{1}\left({ \R^{d}}\right)$ endowed with the weak topology is complete for the distance of \textsc{Fortet-Mourier} {\color{blue} \cite[\color{black} Corollary 11.5.5]{dudley_real_2002}}. Let $\Lambda$ denote the class of strictly increasing, continuous mappings of $[0,T]$ onto itself. For a given metric space $E$, we denote by $\D\left([0,T], E \right)$, the space of right continuous and left limited (c\`{a}d-l\`{a}g) functions from $[0,T]$ to $E$. For $x,y \in \D\left([0,T], \MM_{1}({ \R^{d}}) \right)$, we define the distance $d_{0}(x,y)$ by:
\[d_{0}(x,y) := \inf_{\lambda \in \Lambda}\left\{ \sup_{t\in[0,T]}{d_{FM}(x\circ \lambda (t), y(t))} + \sup_{s<t}\left|\log\left(\frac{\lambda(t) -\lambda(s)}{t-s} \right) \right| \right\}. \] 
From {\color{blue} \cite[\color{black} Theorem 12.2 and Remark of page 121]{Billingsley}}, $\left(\D\left([0,T], \MM_{1}({ \R^{d}}) \right), d_{0}\right)$ is a Polish space {  when $\MM_{1}({ \R^{d}})$ is endowed with the \textsc{Fortet-Mourier} distance} and the topology induced by the distance $d_{0}$ is the \textsc{Skorohod} topology.  
{  The following lemma is a basic property of the original \textsc{Fleming-Viot}. It can be proved similarly as Proposition \ref{Prop_moments_non_bornes_RECENTRE}.}
\begin{Lem}
Let $T >0$ and $k \in \N$ fixed. There exists a constant $C_{k, T}>0$  independent of $N$ such that for all $Y_{0}^{N} \in \MM_{1,N}({ \R^{d}})$ satisfying $\sup_{N \in \N}{\E\left(\left\langle { \left\|\id \right\|^{k}}, Y_{0}^{N} \right\rangle\right)} < \infty$, the \textsc{Moran} process $\left(Y_{t}^{N} \right)_{0\leqslant t\leqslant T}$ satisfies \[\forall \alpha > 0, \qquad  \P_{Y_{0}^{N}}\left(\sup_{t\in [0,T]}{\left\langle { \left\| \id\right\|^{k}}, Y_{t}^{N}\right\rangle} \geqslant \alpha \right) \leqslant \frac{C_{k,T} \sup_{N \in \N^{\star}}{\E\left(\left\langle { \left\|\id \right\|^{k}}, Y_{0}^{N} \right\rangle\right)}}{\alpha}.\] 
\label{Moments_Moran}
\end{Lem}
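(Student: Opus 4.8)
The plan is to reproduce, almost \emph{verbatim}, the argument used in the proof of the moment estimates for the centered \textsc{Fleming-Viot} process (Proposition~\ref{Prop_moments_non_bornés_RECENTRE}), the \textsc{Moran} generator $L_{N}$ now playing the role of the centered generator. I would keep the same approximating sequence $(g_{n})_{n\in\N}$ of $\CCCC^{2}(\R,\R)$ functions with compact support satisfying properties \textbf{(i)}--\textbf{(iv)}, set $h_{n}:=\sqrt{1+g_{n}^{2}}$ and $h:=\sqrt{1+\id^{2}}$, and use the stopping times $\tau_{A,k}:=\inf\{t\geqslant 0\mid\langle|\id|^{k},Y^{N}_{t}\rangle\geqslant A\}$. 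The one feature that makes the present lemma both \emph{simpler} than Proposition~\ref{Prop_moments_non_bornés_RECENTRE} and, crucially, \emph{uniform in $N$} is that, applied to the \emph{linear} functional $\mu_{N}\mapsto\langle h_{n}^{k},\mu_{N}\rangle$ (that is, with $F=\id$), the resampling part of $L_{N}$ carries no drift: indeed $\gamma\frac{N(N-1)}{2}\int_{\R}\int_{\R}\bigl[-\tfrac{h_{n}^{k}(x)}{N}+\tfrac{h_{n}^{k}(y)}{N}\bigr]\mu_{N}(\dd x)\mu_{N}(\dd y)=\gamma\frac{N-1}{2}\bigl(\langle h_{n}^{k},\mu_{N}\rangle-\langle h_{n}^{k},\mu_{N}\rangle\bigr)=0$, so that $L_{N}\langle h_{n}^{k},\mu_{N}\rangle=\tfrac12\langle (h_{n}^{k})'',\mu_{N}\rangle$, an expression whose coefficients do not involve $N$.

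\textbf{Step 1 (moment bound).} Using the explicit formulas $(h_{n}^{k})'=kg_{n}g_{n}'h_{n}^{k-2}$ and $(h_{n}^{k})''=k(g_{n}')^{2}h_{n}^{k-4}\bigl(h_{n}^{2}+(k-2)g_{n}^{2}\bigr)+kg_{n}g_{n}''h_{n}^{k-2}$ together with $|g_{n}|\leqslant|\id|\leqslant h$, $h_{n}\leqslant h$ and $h_{n}\geqslant 1$, I would bound, exactly as in Step~1 of Proposition~\ref{Prop_moments_non_bornés_RECENTRE}, $\tfrac12\langle (h_{n}^{k})'',\mu_{N}\rangle\leqslant C_{1}(k)\langle h^{k},\mu_{N}\rangle+C_{2}(k,A)\|g_{n}''\|_{\infty}$, where $C_{1}(k)$ depends only on $k$ and $\sup_{n}\|g_{n}'\|_{\infty}$, hence not on $N$. \textsc{Dynkin}'s formula applied up to $t\wedge\tau_{A,k}$, followed by \textsc{Fatou}'s lemma as $n\to+\infty$ (which kills the term $\|g_{n}''\|_{\infty}\to 0$) and then \textsc{Gronwall}'s lemma, yields $\E\bigl(\langle h^{k},Y^{N}_{t\wedge\tau_{A,k}}\rangle\bigr)\leqslant\E\bigl(\langle h^{k},Y^{N}_{0}\rangle\bigr)\exp(C_{1}(k)t)$. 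Letting $A\to+\infty$ by \textsc{Fatou} and using $\langle|\id|^{k},\cdot\rangle\leqslant\langle h^{k},\cdot\rangle\leqslant 2^{k/2}\bigl(1+\langle|\id|^{k},\cdot\rangle\bigr)$ gives the announced supremum bound on the expectation, uniform in $N$ precisely because $C_{1}(k)$ is.

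\textbf{Step 2 (maximal inequality).} As in Step~2 of Proposition~\ref{Prop_moments_non_bornés_RECENTRE}, I would write $\langle h_{n}^{k},Y^{N}_{t}\rangle=\langle h_{n}^{k},Y^{N}_{0}\rangle+\tfrac12\int_{0}^{t}\langle (h_{n}^{k})'',Y^{N}_{s}\rangle\,\dd s+M^{\id}_{t}(h_{n}^{k})$, split the event $\{\sup_{t\leqslant T\wedge\tau_{A,k}}\langle h_{n}^{k},Y^{N}_{t}\rangle\geqslant\alpha\}$ into the sub-events governing each of these three terms (with a single mutation drift term here instead of the three of the centered case), apply \textsc{Doob}'s $L^{1}$ maximal inequality to the martingale part and \textsc{Markov}'s inequality to the remaining terms, and bound $\E|M^{\id}_{T\wedge\tau_{A,k}}(h_{n}^{k})|$ through the Step~1 estimate. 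Passing to the limit $n\to+\infty$ and then $A\to+\infty$ by dominated convergence, and using once more $\langle|\id|^{k},\cdot\rangle\leqslant\langle h^{k},\cdot\rangle$, produces a constant $C_{k,T}$ independent of $N$ with $\P_{Y^{N}_{0}}\bigl(\sup_{t\in[0,T]}\langle|\id|^{k},Y^{N}_{t}\rangle\geqslant\alpha\bigr)\leqslant\frac{C_{k,T}\,\E(\langle h^{k},Y^{N}_{0}\rangle)}{\alpha}$, which is the claim after absorbing $\E(\langle h^{k},Y^{N}_{0}\rangle)\leqslant 2^{k/2}(1+\sup_{N}\E(\langle|\id|^{k},Y^{N}_{0}\rangle))$ into the constant.

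The only genuinely new point, and the main thing to check, is the \emph{$N$-uniformity} of the constants; all the analysis is otherwise routine and identical to Proposition~\ref{Prop_moments_non_bornés_RECENTRE}. This uniformity rests entirely on the vanishing of the resampling drift noted in the first paragraph: once the generator reduces to the $N$-free mutation term $\tfrac12\langle(h_{n}^{k})'',\cdot\rangle$, every constant produced by the \textsc{Gronwall}/\textsc{Doob} machinery inherits that independence, and the finite-$N$ nature of $Y^{N}$ is used only to guarantee that the quantities involved are finite and that $\tau_{A,k}\to+\infty$. This is why the argument can be safely left to the reader.
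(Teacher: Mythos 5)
Your proposal is correct and follows exactly the route the paper intends: the paper's proof of this lemma is simply declared ``similar to that of Proposition~\ref{Prop_moments_non_born�s_RECENTRE} and left to the reader,'' and your argument is precisely that adaptation, with the key observation --- that the resampling part of $L_{N}$ vanishes on linear functionals $\mu_{N}\mapsto\langle h_{n}^{k},\mu_{N}\rangle$, leaving only the $N$-free mutation term $\tfrac12\langle (h_{n}^{k})'',\cdot\rangle$ --- correctly identified as the source of the $N$-uniformity of $C_{k,T}$. Both steps (Gronwall estimate via the stopped process $\tau_{A,k}$, then Doob's $L^{1}$ maximal inequality with $\E|M^{\id}_{T\wedge\tau_{A,k}}(h_{n}^{k})|$ bounded through the Step~1 estimate rather than through the bracket) match the template of the cited proposition, so nothing is missing.
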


\begin{proof}[Proof of Proposition \ref{Prop_0_Genealogie}] We want to establish that \[\forall g \in \CCCC_{b}\left(\D\left([0,T], \MM_{1}({ \R^{d}}) \right), \R\right), \quad \lim_{N\to +\infty}{\E\left(g\left(Z^{N} \right) \right)} = \E\left(g\left(Z \right) \right).\]
Let $\varepsilon>0$. We consider the two following maps $F$ and $F_{\varepsilon}$ from $\D\left([0,T], \MM_{1}^{2}({ \R^{d}}) \right)$ to  $\D\left([0,T], \MM_{1}({ \R^{d}}) \right)$ defined by $F(y)(t) := \tau_{-\left\langle \id, y(t)\right\rangle} \, \sharp \, y(t)$ and $F_{\varepsilon}(y)(t) := \tau_{-\left\langle h_{\varepsilon}, y(t)\right\rangle} \, \sharp \, y(t)$ where $h_{\varepsilon}$ is a map from ${ \R^{d}}$ to ${ \R^{d}}$ defined by { \[h_{\varepsilon}(x) := \left\{\begin{array}{lll}
x & {\rm{if}} & \|x\| \leqslant \frac{1}{\varepsilon} \vspace{0.15cm}\\
\frac{\bm{1}}{\varepsilon\|\bm{1}\|} & {\rm{if}} & \exists i \in \{1, \cdots, d \}, \ x_{i} > \frac{1}{\varepsilon} \vspace{0.15cm} \\
-\frac{\bm{1}}{\varepsilon\|\bm{1}\|} & {\rm{if}} & \exists i \in \{1, \cdots, d \}, \ x_{i} < -\frac{1}{\varepsilon}
\end{array}\right. .\] 
where $\bm{1} \in \R^{d}$ designates the vector whose coordinates are all $1$.} \\

 \textbf{Step 1. Continuity of $F_{\varepsilon}$.} In this step, we want to establish that  \[F_{\varepsilon} \in \CCCC_{b}\left(\D\left([0,T], \MM_{1}({ \R^{d}}) \right), \D\left([0,T], \MM_{1}({ \R^{d}}) \right)\right). \]
 To obtain this, it is equivalent to prove that if for all $n \in \N$, $y_{n}, y \in \D\left([0,T], \MM_{1}({ \R^{d}}) \right)$ and $\lim_{n\to + \infty} d_{0}(y_{n}, y) = 0$, we have $\lim_{n\to + \infty}{d_{0}\left(F_{\varepsilon}(y_{n}), F_{\varepsilon}(y)\right)} = 0$. As, $\lim_{n\to + \infty} d_{0}(y_{n}, y) = 0$, there exists $n_{0} \in \N$  such that for all $n \geqslant n_{0}$, there exists $\lambda_{n} \in \Lambda$ satisfying 
\begin{equation}
\sup_{t\in[0,T]}{d_{FM}(y_{n}\circ \lambda_{n} (t), y(t))} + \sup_{s<t}\left|\log\left(\frac{\lambda_{n}(t) -\lambda_{n}(s)}{t-s} \right) \right| \leqslant \frac{\varepsilon^{2}}{2}.
\label{Convergence_avec_d0}
\end{equation}
Note that \[d_{0}\left(F_{\varepsilon}(y_{n}), F_{\varepsilon}(y)\right) \leqslant \sup_{t\in[0,T]}{d_{FM}(F_{\varepsilon}\left(y_{n}\right)\left(\lambda_{n} (t)\right), F_{\varepsilon}(y)(t))} + \sup_{s<t}\left|\log\left(\frac{\lambda_{n}(t) -\lambda_{n}(s)}{t-s} \right) \right|. \]
Now, for all $t \in [0,T]$, 
\begin{align*}
d_{FM}\left(F_{\varepsilon}(y_{n})(\lambda_{n}(t)), F_{\varepsilon}(y)(t) \right) & = \sup_{f \in BL_{1}({ \R^{d}})}{\left\|\left\langle f \circ \tau_{-\left\langle h_{\varepsilon}, y_{n} \circ \lambda_{n}(t) \right\rangle}, y_{n}\circ \lambda_{n}(t) \right\rangle - \left\langle f \circ \tau_{-\left\langle  h_{\varepsilon}, y(t) \right\rangle}, y(t) \right\rangle  \right\|} \\
& \leqslant \sup_{f \in BL_{1}({ \R^{d}})}{\left\|\left\langle f \circ \tau_{-\left\langle h_{\varepsilon}, y_{n} \circ \lambda_{n}(t) \right\rangle}, y_{n}\circ \lambda_{n}(t) - y(t) \right\rangle \right\|} \\
& \hspace{1cm} + \sup_{f \in BL_{1}({ \R^{d}})}{\left\|\left\langle f \circ \tau_{-\left\langle h_{\varepsilon}, y_{n} \circ \lambda_{n}(t) \right\rangle} - f\circ\tau_{-\left\langle h_{\varepsilon}, y(t) \right\rangle}, y(t) \right\rangle \right\|}.
\end{align*}
On the one hand, as $f \in BL_{1}({ \R^{d}})$, it follows that $f \circ \tau_{-\left\langle h_{\varepsilon}, y_{n} \circ \lambda_{n}(t) \right\rangle} \in BL_{1}({ \R^{d}})$ and thus \[\sup_{f \in BL_{1}({ \R^{d}})}{\left\|\left\langle f \circ \tau_{-\left\langle h_{\varepsilon}, y_{n} \circ \lambda_{n}(t) \right\rangle}, y_{n}\circ \lambda_{n}(t) - y(t) \right\rangle \right\|} \leqslant d_{FM}\left(y_{n}\circ\lambda_{n}(t), y(t)\right). \]
On the other hand, as $f$ and $\varepsilon h_{\varepsilon}$ are in $BL_{1}({ \R^{d}})$, we have 
\begin{align*}
\left\|\left\langle f \circ \tau_{-\left\langle h_{\varepsilon}, y_{n} \circ \lambda_{n}(t) \right\rangle} - f\circ\tau_{-\left\langle h_{\varepsilon}, y(t) \right\rangle}, y(t) \right\rangle \right\| & \leqslant \left\|- \left\langle h_{\varepsilon}, y_{n} \circ \lambda_{n}(t)\right\rangle  + \left\langle h_{\varepsilon}, y(t)\right\rangle \right\| \\
& = \frac{1}{\varepsilon}\left\|\left\langle \varepsilon h_{\varepsilon}, y_{n}\circ \lambda_{n}(t) - y(t) \right\rangle \right\| \\
& \leqslant \frac{1}{\varepsilon}d_{FM}\left(y_{n} \circ\lambda_{n}(t), y(t) \right).
\end{align*}
It follows from (\ref{Convergence_avec_d0}) that, 
\begin{align*}
d_{0}\left(F_{\varepsilon}(y_{n}), F_{\varepsilon}(y)\right) & \leqslant \left(1 + \frac{1}{\varepsilon} \right) \sup_{t\in [0,T]}{d_{FM}\left(y_{n}\circ \lambda_{n}(t), y(t)\right)} + \sup_{s<t}\left|\log\left(\frac{\lambda_{n}(t) -\lambda_{n}(s)}{t-s} \right) \right| \leqslant \varepsilon. 
\end{align*}

\textbf{Step 2. Control in distance $d_{0}$ of the difference between $F\left(Y^{N} \right)$ and $F_{\varepsilon}\left(Y^{N} \right)$.} We consider the \textsc{Moran} process $\left(Y_{t}^{N} \right)_{0\leqslant t \leqslant T}$  started from $Y_{0}^{N} = Z_{0}^{N}$ and the original \textsc{Fleming-Viot} process $\left(Y_{t} \right)_{0\leqslant t \leqslant T}$  started from $Y_{0} = Z_{0}$. In this step, we consider the events \[\Omega_{\varepsilon, N} := \left\{ \sup_{t\in [0,T]}{\left\langle \left\|\id\right\|^{2}, Y^{N}_{t}\right\rangle}  \leqslant \frac{2}{\sqrt{\varepsilon}} \right\} \qquad {\rm{and}} \qquad \Omega_{\varepsilon, \infty} := \left\{ \sup_{t\in [0,T]}{\left\langle \left\|\id\right\|^{2}, Y_{t}\right\rangle}  \leqslant \frac{2}{\sqrt{\varepsilon}} \right\}.\] 
As $\sup_{N\in \N}{\E\left(\left\langle \left\|\id\right\|^{2}, Z_{0}^{N} \right\rangle \right)} < \infty$, it follows from Lemma \ref{Moments_Moran} (respectively Proposition \ref{Prop_moments_non_bornes_RECENTRE}) that there exists a constant $\widetilde{C}_{T}>0$, independent of $N$,  such that $\P_{Y_{0}^{N}}\left( \Omega_{\varepsilon,N}\right) \geqslant 1 - \widetilde{C}_{T} \sqrt{\varepsilon}$ (respectively $\P_{Y_{0}}\left(\Omega_{\varepsilon, \infty} \right) \geqslant 1 - \widetilde{C}_{T} \sqrt{\varepsilon}$). Moreover, on $\Omega_{\varepsilon,N}$, for all $t \in [0,T]$,
\begin{align*}
d_{0}\left(F\left(Y^{N} \right), F_{\varepsilon}\left(Y^{N} \right) \right) & \leqslant \sup_{t\in [0,T]}{d_{FM}\left(F\left(Y^{N} \right)(t), F_{\varepsilon}\left(Y^{N} \right)(t) \right)} \\
& \leqslant \sup_{t \in [0,T]}{\sup_{f\in BL_{1}({ \R^{d}})}{\left\|f \circ \tau_{-\left\langle \id, Y_{t}^{N} \right\rangle} - f \circ \tau_{-\left\langle h_{\varepsilon}, Y_{t}^{N} \right\rangle} \right\|_{\infty}}} \\
& \leqslant \sup_{t\in [0,T]}{\left\|\left\langle \left\|h_{\varepsilon} - \id \right\|, Y^{N}_{t} \right\rangle \right\|_{\infty}} \\
& \leqslant \frac{\varepsilon}{2}\sup_{t\in [0,T]}{\left\|\left\langle \left\|\id\right\|^{2}, Y_{t}^{N} \right\rangle\right\|_{\infty}} \\
& \leqslant \sqrt{\varepsilon},
\end{align*}
where we used the inequality $\left\|h_{\varepsilon} - \id \right\| \leqslant \frac{\varepsilon}{2}\left\|\id\right\|^{2}$.  
Similarly on $\Omega_{\varepsilon, \infty}$, \[d_{0}\left(F\left(Y \right), F_{\varepsilon}\left(Y \right) \right) \leqslant \sqrt{\varepsilon}. \] 

\textbf{Step 3. Conclusion.} We want to prove that for all $g \in \CCCC_{b}\left(\D\left([0,T], \MM_{1}({ \R^{d}}) \right), \R \right)$, $$\lim_{N \to + \infty}{\E\left(g\left(Z^{N} \right) \right)} = \E\left(g(Z) \right).$$ Thanks to the \textsc{Portmanteau} theorem  {\color{blue} \cite[\color{black} Theorem 2.1]{Billingsley}}, it is sufficient to prove this for all $g$ $1-$\textsc{Lipschitz}. As $\left(Y^{N}\right)_{N \in \N^{\star}}$ converges in law to $Y$, we deduce that, for $N$ large enough, \[\left|\E\left(g \circ F_{\varepsilon} \left(Y^{N} \right) \right) - \E\left(g \circ F_{\varepsilon} \left(Y\right) \right) \right| \leqslant \sqrt{\varepsilon}.\]
Using that $g$ is $1-$\textsc{Lipschitz}, and the inequalities of Step 2, it follows that 
\begin{align*}
\left|\E\left(g \left(Z^{N} \right) \right) - \E\left(g \left(Z \right) \right) \right| & = \left|\E\left(g \circ F \left(Y^{N} \right) \right) - \E\left(g \circ F \left(Y \right) \right) \right| \\
& \leqslant \left|\E\left(g \circ F_{\varepsilon} \left(Y^{N} \right) \right) - \E\left(g \circ F_{\varepsilon} \left(Y\right) \right) \right| \\
& \qquad  + \left|\E\left(g \circ F \left(Y^{N} \right) \right) - \E\left(g \circ F_{\varepsilon} \left(Y^{N}\right) \right) \right| \\
& \qquad  + \left|\E\left(g \circ F_{\varepsilon}  \left(Y \right) \right) - \E\left(g \circ F \left(Y\right) \right) \right| \\
& \leqslant \sqrt{\varepsilon} + 2\left\|g \right\|_{\infty}\left[\P_{Y_{0}^{N}}\left(\Omega_{\varepsilon, N}^{c} \right) + \P_{Y_{0}}\left(\Omega_{\varepsilon, \infty}^{c} \right) \right] \\
& \qquad + \sqrt{\varepsilon}\left[\P_{Y_{0}^{N}}\left(\Omega_{\varepsilon, N} \right) + \P_{Y_{0}}\left(\Omega_{\varepsilon, \infty} \right)\right] \\
& \leqslant \left(3 + 2\left\|g \right\|_{\infty}\widetilde{C}_{T} \right)\sqrt{\varepsilon}.
\end{align*}
The announced result follows and completes the proof. \qedhere
\end{proof}

\subsection{Backward construction of \textsc{Moran}'s process and \textsc{Kingman}'s coalescent\label{Ergodicite_Sect_4.2}} In this section, we exploit the well-known relationship between the \textsc{Moran} model and the \textsc{Kingman} coalescent to obtain in Proposition \ref{Prop_2_Genealogie} a result of exponential ergodicity in total variation for the centered \textsc{Moran} model uniformly in $N$. The genealogy of a sample from a population evolving according to the \textsc{Moran} model of Section \ref{Ergodicite_Sect_4.1} is exactly determined by \textsc{Kingman}'s coalescent with  coalescence rate { $2\gamma$}. The state of the population at the final time is constructed from the ancestral positions by following the genealogy and adding mutations on the genealogical tree of the sample according to a standard Brownian motion. 

{  More precisely, on the probability space $\left(\widehat{\Omega}, \widehat{\FF}, \widehat{\P} \right)$, for any $T>0$, given the sample path $\left(k_{N,t} \right)_{0\leqslant t \leqslant T}$ of a \textsc{Kingman}'s coalescent in $\D\left([0,T], \Pi_{N} \right)$ with $\Pi_{N}$ the set of partitions of $\{1, \cdots,  N\}$, given an independent family of i.i.d.\ standard $d-$dimensional Brownian motions $(W^{(B)})_{B\subset\{1,\cdots,N\}}$ and given an independent uniformly distributed random permutation $\sigma$ of $\{1,\cdots,N\}$, we can construct the random variable
\begin{equation}
\label{eq:couplage-Moran-Kingman}
    \widehat{Y}^{N, \mu_{N}}_{T} := \frac{1}{N}\sum\limits_{i\, = \, 1}^{N}{\delta_{u_{i}}},
\end{equation}
where for all $i \in \left\{1, \cdots, N \right\}$, 
\begin{equation}
u_{i} := u_{i}^{\mu_{N}} := x_{a(\sigma(i))} + \int_{0}^{T}{\dd W_{s}^{\left(B\left(s , i \right)\right)}},
\label{Def_u_i}
\end{equation}
where $B(s,i)$ is the block of the partition $k_{N,s}$ containing $i$ and where $a(i)$ is the number of the block of $k_{N,T}$ containing $i$, according to some arbitrary order (e.g.\ lexicographic).}

The well-known backward construction of the \textsc{Moran} model {\color{blue} \cite[\color{black}Section 1.2]{Etheridge_genealogical_2019}, \cite[\color{black}Section 2.8]{Etheridge}} entails the following result.
\begin{Prop}
For all initial condition $\mu_{N} \in \MM_{1,N}({ \R^{d}})$ {  and all $T>0$},  $Y^{N}_{T} \overset{{\rm{law}}}{=} \widehat{Y}^{N, \mu_{N}}_{T}$ where $Y_{0}^{N} = \mu_{N}$.\label{Prop_1_Genealogie}
\end{Prop}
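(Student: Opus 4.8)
The statement is the standard graphical/duality description of the \textsc{Moran} model, so the plan is to realise $(Y^N_t)_{0\le t\le T}$ by a Poissonian graphical construction and then read off its ancestral structure backward in time, matching the three independent ingredients of $\widehat{\P}$ one by one.

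First I would build the forward process on an enlarged space from two independent families of randomness. To each ordered pair $(i,j)$ of labels in $\{1,\dots,N\}$ attach an independent \textsc{Poisson} process of resampling arrows $i\to j$, at the per-pair rate prescribed by the resampling part of the generator $L_N$; an arrow $i\to j$ at time $t$ means that $j$ dies at $t$ and is instantaneously replaced by a copy of $i$. Independently, attach to each label a standard \textsc{Brownian} motion driving the mutational displacement of that individual between consecutive resampling events. Starting the positions from $x_1,\dots,x_N$ with $\mu_N=\frac1N\sum_{j=1}^N\delta_{x_j}$, the empirical measure of the resulting labelled particle system is a \textsc{Markov} process with generator $L_N$ (checked by a direct generator computation), hence it has the law of the \textsc{Moran} process started at $\mu_N$.

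Second I would reverse time. Tracing the ancestral lineages from $T$ back to $0$ along the arrows, the lineages of the $N$ individuals define a partition-valued process $(\pi_s)_{0\le s\le T}$ on $\Pi_N$, two labels merging once their lineages meet. By the independence of the arrow processes, each pair of currently distinct lineages merges at the constant rate fixed by the resampling part of $L_N$, independently of the past, so $(\pi_s)$ is the Markovian $N$-coalescent whose law is $\K_{N,T}$, i.e. \textsc{Kingman}'s coalescent with coalescence rate $\gamma$; this accounts for the $\KK_{N,T}$-factor of $\widehat{\Omega}$. Writing $k_{N,T}=\{L_1,\dots,L_n\}$ with $n=|k_{N,T}|$, the $n$ lineages have \emph{distinct} founders at time $0$, and by the exchangeability of the labelling these founders form a uniform size-$n$ subset of $\{1,\dots,N\}$; their positions therefore constitute a sample without replacement from $\mu_N$, distributed as $\Ech_n^{\mu_N}$.

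Finally, along the ancestral lineage of an individual $i$ the net mutational displacement between $0$ and $T$ is the sum of the \textsc{Brownian} increments accumulated on the successive branches it traverses. Since independent increments of independent \textsc{Brownian} motions recombine into a \textsc{Brownian} motion, and the branch occupied at time $s$ is the coalescent block $B(s^-,i)$, this displacement equals $\int_0^T\dd W_s^{(B(s^-,i))}$ for a family $(W^{(B)})$ of independent standard \textsc{Brownian} motions indexed by the subsets $B\subset\{1,\dots,N\}$. Hence the time-$T$ position of $i$ is $x^{(n)}_{a(i)}+\int_0^T\dd W_s^{(B(s^-,i))}=u_i$, and $Y^N_T=\frac1N\sum_{i=1}^N\delta_{u_i}$ agrees in law with $\widehat{Y}^{N,\mu_N}_T$, since genealogy, mutations and founder positions are independent and distributed according to the three factors $\K_{N,T}$, $\LL_{\Mut}^{\otimes 2^N-1}$ and $\bigotimes_{i=1}^N\Ech_i^{\mu_N}$ of $\widehat{\P}$. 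The only genuinely delicate point is the time-reversal of the second step: one must check that the backward merging is exactly Markovian with the per-pair rate calibrating $\K_{N,T}$, and that, conditionally on the genealogy, the increments attached to distinct branches remain mutually independent. Both are consequences of the independence of the arrow and \textsc{Brownian} families together with the symmetry of the uniform labelling, and are precisely what the cited constructions \cite{Etheridge_genealogical_2019, Etheridge} supply.
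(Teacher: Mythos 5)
Your proposal is correct and coincides with the paper's own treatment: the paper gives no written proof of this proposition, invoking instead ``the well-known backward construction of the \textsc{Moran} model'' with exactly the references you cite ({\color{blue}\cite[\color{black}Subsection 1.2]{Etheridge_genealogical_2019}, \cite[\color{black}Subsection 2.8]{Etheridge}}), and your three-step sketch (Poissonian graphical construction matching $L_{N}$, time-reversal of the arrow ensemble into \textsc{Kingman}'s $N$-coalescent at rate $\gamma$, and the identification of founder positions as an $\Ech^{\mu_{N}}$-sample with block-indexed Brownian mutation increments) is precisely a faithful unpacking of that cited construction, including the genuinely delicate points (Markovianity of the backward merging and conditional independence of the branch increments given the genealogy) that the citations are meant to supply.
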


\begin{figure}[!h]
\centering		
\begin{minipage}[h]{.49\textwidth} \centering
 \definecolor{ffqqff}{rgb}{1.,0.,1.} 
\definecolor{ffxfqq}{rgb}{0.490,0.490,1} 
\definecolor{ttzzqq}{rgb}{0.294,0.,0.51} 
\definecolor{qqqqff}{rgb}{0.2,0.6,0.} 
\begin{tikzpicture}[line cap=round,line join=round,>=triangle 45,x=0.5cm,y=0.5cm]
\clip(0.0,0.75) rectangle (15.4,7.75);
\draw [>=stealth,->,line width=1pt] (0.8,1.5)-- (15.4,1.5);
\draw [line width=1.75pt,color=ffxfqq] (1.,3.)-- (14.5,3.);
\draw [line width=1.75pt,color=ffxfqq] (1.,7.)-- (14.5,7.);

\draw [>=stealth,->,line width=1pt,color=qqqqff] (12.5,3.) -- (12.5,5.);
\draw [>=stealth,->,line width=1pt,color=qqqqff] (2.,6.) -- (2.,5.);
\draw [>=stealth,->,line width=1pt,color=qqqqff] (4.,4.) -- (4.,6.);
\draw [>=stealth,->,line width=1.75pt,color=ffxfqq] (9.,3.) -- (9.,6.);
\draw [>=stealth,->,line width=1.75pt,color=ffxfqq] (10.5,3.) -- (10.5,4.);
\draw [>=stealth,->,line width=1.75pt,color=ffxfqq] (13.5,7.) -- (13.5,5.);
\draw [line width=1.75pt,color=ffxfqq] (14.5,5.)-- (13.5,5.);
\draw [line width=1pt] (13.5,5.)-- (1.,5.);
\draw [line width=1.75pt,color=ffxfqq] (14.5,6.)-- (9.,6.);
\draw [line width=1pt] (9.,6.)-- (1.,6.);
\draw [line width=1.75pt,color=ffxfqq] (14.5,4.)-- (10.5,4.);
\draw [line width=1pt] (10.5,4.)-- (1.,4.);


\draw [line width=1.pt, color = Mon_orange] (10.5,4)-- (10.9,4.3);
\draw [line width=1.pt, color = Mon_orange] (10.9,4.3)-- (11.3,3.7);
\draw [line width=1.pt, color = Mon_orange] (11.3,3.7)-- (11.7,4.3);
\draw [line width=1.pt, color = Mon_orange] (11.7,4.3)-- (12.1,3.7);
\draw [line width=1.pt, color = Mon_orange] (12.1,3.7)-- (12.5,4.3);
\draw [line width=1.pt, color = Mon_orange] (12.5,4.3)-- (12.9,3.7);
\draw [line width=1.pt, color = Mon_orange] (12.9,3.7)-- (13.3,4.3);
\draw [line width=1.pt, color = Mon_orange] (13.3,4.3)-- (13.7,3.7);
\draw [line width=1.pt, color = Mon_orange] (13.7,3.7)-- (14.1,4.3);
\draw [line width=1.pt, color = Mon_orange] (14.1,4.3)-- (14.5,4);

\draw [line width=1.pt, color = Mon_orange] (1,7)-- (1.4,7.3);
\draw [line width=1.pt, color = Mon_orange] (1.4,7.3)-- (1.8,6.7);
\draw [line width=1.pt, color = Mon_orange] (1.8,6.7)-- (2.2,7.3);
\draw [line width=1.pt, color = Mon_orange] (2.2,7.3)-- (2.6,6.7);
\draw [line width=1.pt, color = Mon_orange] (2.6,6.7)-- (3,7.3);
\draw [line width=1.pt, color = Mon_orange] (3,7.3)-- (3.4,6.7);
\draw [line width=1.pt, color = Mon_orange] (3.4,6.7)-- (3.8,7.3);
\draw [line width=1.pt, color = Mon_orange] (3.8,7.3)-- (4.2,6.7);
\draw [line width=1.pt, color = Mon_orange] (4.2,6.7)-- (4.6,7.3);
\draw [line width=1.pt, color = Mon_orange] (4.6,7.3)-- (5,6.7);
\draw [line width=1.pt, color = Mon_orange] (5,6.7)-- (5.4,7.3);
\draw [line width=1.pt, color = Mon_orange] (5.4,7.3)-- (5.8,6.7);
\draw [line width=1.pt, color = Mon_orange] (5.8,6.7)-- (6.2,7.3);
\draw [line width=1.pt, color = Mon_orange] (6.2,7.3)-- (6.6,6.7);
\draw [line width=1.pt, color = Mon_orange] (6.6,6.7)-- (7,7.3);
\draw [line width=1.pt, color = Mon_orange] (7,7.3)-- (7.4,6.7);
\draw [line width=1.pt, color = Mon_orange] (7.4,6.7)-- (7.8,7.3);
\draw [line width=1.pt, color = Mon_orange] (7.8,7.3)-- (8.2,6.7);
\draw [line width=1.pt, color = Mon_orange] (8.2,6.7)-- (8.6,7.3);
\draw [line width=1.pt, color = Mon_orange] (8.6,7.3)-- (9,6.7);
\draw [line width=1.pt, color = Mon_orange] (9,6.7)-- (9.4,7.3);
\draw [line width=1.pt, color = Mon_orange] (9.4,7.3)-- (9.8,6.7);
\draw [line width=1.pt, color = Mon_orange] (9.8,6.7)-- (10.2,7.3);
\draw [line width=1.pt, color = Mon_orange] (10.2,7.3)-- (10.6,6.7);
\draw [line width=1.pt, color = Mon_orange] (10.6,6.7)-- (11,7.3);
\draw [line width=1.pt, color = Mon_orange] (11,7.3)-- (11.4,6.7);
\draw [line width=1.pt, color = Mon_orange] (11.4,6.7)-- (11.8,7.3);
\draw [line width=1.pt, color = Mon_orange] (11.8,7.3)-- (12.2,6.7);
\draw [line width=1.pt, color = Mon_orange] (12.2,6.7)-- (12.6,7.3);
\draw [line width=1.pt, color = Mon_orange] (12.6,7.3)-- (13,6.7);
\draw [line width=1.pt, color = Mon_orange] (13,6.7)-- (13.4,7.3);
\draw [line width=1.pt, color = Mon_orange] (13.4,7.3)-- (13.5,7);

\draw [line width=1.pt, color = Mon_orange] (1,3)-- (1.4,3.3);
\draw [line width=1.pt, color = Mon_orange] (1.4,3.3)-- (1.8,2.7);
\draw [line width=1.pt, color = Mon_orange] (1.8,2.7)-- (2.2,3.3);
\draw [line width=1.pt, color = Mon_orange] (2.2,3.3)-- (2.6,2.7);
\draw [line width=1.pt, color = Mon_orange] (2.6,2.7)-- (3,3.3);
\draw [line width=1.pt, color = Mon_orange] (3,3.3)-- (3.4,2.7);
\draw [line width=1.pt, color = Mon_orange] (3.4,2.7)-- (3.8,3.3);
\draw [line width=1.pt, color = Mon_orange] (3.8,3.3)-- (4.2,2.7);
\draw [line width=1.pt, color = Mon_orange] (4.2,2.7)-- (4.6,3.3);
\draw [line width=1.pt, color = Mon_orange] (4.6,3.3)-- (5,2.7);
\draw [line width=1.pt, color = Mon_orange] (5,2.7)-- (5.4,3.3);
\draw [line width=1.pt, color = Mon_orange] (5.4,3.3)-- (5.8,2.7);
\draw [line width=1.pt, color = Mon_orange] (5.8,2.7)-- (6.2,3.3);
\draw [line width=1.pt, color = Mon_orange] (6.2,3.3)-- (6.6,2.7);
\draw [line width=1.pt, color = Mon_orange] (6.6,2.7)-- (7,3.3);
\draw [line width=1.pt, color = Mon_orange] (7,3.3)-- (7.4,2.7);
\draw [line width=1.pt, color = Mon_orange] (7.4,2.7)-- (7.8,3.3);
\draw [line width=1.pt, color = Mon_orange] (7.8,3.3)-- (8.2,2.7);
\draw [line width=1.pt, color = Mon_orange] (8.2,2.7)-- (8.6,3.3);
\draw [line width=1.pt, color = Mon_orange] (8.6,3.3)-- (9,3);

\draw [line width=1.pt, color = Mon_orange] (9,3)-- (9.4,3.3);
\draw [line width=1.pt, color = Mon_orange] (9.4,3.3)-- (9.8,2.7);
\draw [line width=1.pt, color = Mon_orange] (9.8,2.7)-- (10.2,3.3);
\draw [line width=1.pt, color = Mon_orange] (10.2,3.3)-- (10.5,3);

\draw [line width=1.pt, color = Mon_orange] (13.5,5)-- (13.9,5.3);
\draw [line width=1.pt, color = Mon_orange] (13.9,5.3)-- (14.3,4.7);
\draw [line width=1.pt, color = Mon_orange] (14.3,4.7)-- (14.5,5);

\begin{scriptsize}
\draw[color=black] (0.35,3) node {$x_{1}$};
\draw[color=black] (0.35,4) node {$x_{2}$};
\draw[color=black] (0.35,5) node {$x_{3}$};
\draw[color=black] (0.35,6) node {$x_{4}$};
\draw[color=black] (0.35,7) node {$x_{5}$};

\draw[color=black] (15.15,3) node {$u_{1}$};
\draw[color=black] (15.15,4) node {$u_{2}$};
\draw[color=black] (15.15,5) node {$u_{3}$};
\draw[color=black] (15.15,6) node {$u_{4}$};
\draw[color=black] (15.15,7) node {$u_{5}$};

\draw [color=black] (1.,1.5)-- ++(0pt,0 pt) -- ++(0pt,0 pt) ++(0pt,-2.5pt) -- ++(0 pt,5.0pt);
\draw[color=black] (1,1) node {$0$};
\draw [color=black] (14.5,1.5)-- ++(0pt,0 pt) -- ++(0pt,0 pt) ++(0pt,-2.5pt) -- ++(0 pt,5.0pt);
\draw[color=black] (14.5,1) node {$T$};
\draw [color=qqqqff] (2.,1.5)-- ++(0pt,0 pt) -- ++(0pt,0 pt) ++(0pt,-2.5pt) -- ++(0 pt,5.0pt);
\draw[color=qqqqff] (2,1) node {$t_{1}$};
\draw [color=qqqqff] (4.,1.5)-- ++(0pt,0 pt) -- ++(0pt,0 pt) ++(0pt,-2.5pt) -- ++(0 pt,5.0pt);
\draw[color=qqqqff] (4.1,1) node {$t_{2}$};
\draw [color=ffxfqq] (9.,1.5)-- ++(0pt,0 pt) -- ++(0pt,0 pt) ++(0pt,-2.5pt) -- ++(0 pt,5.0pt);
\draw[color=ffxfqq] (9,1) node {$t_{3}$};
\draw [color=ffxfqq] (10.5,1.5)-- ++(0pt,0 pt) -- ++(0pt,0 pt) ++(0pt,-2.5pt) -- ++(0 pt,5.0pt);
\draw[color=ffxfqq] (10.5,1) node {$t_{4}$};
\draw [color=qqqqff] (12.4,1.5)-- ++(0pt,0 pt) -- ++(0pt,0 pt) ++(0pt,-2.5pt) -- ++(0 pt,5.0pt);
\draw[color=qqqqff] (12.5,1) node {$t_{5}$};
\draw [color=ffxfqq] (13.5,1.5)-- ++(0pt,0 pt) -- ++(0pt,0 pt) ++(0pt,-2.5pt) -- ++(0 pt,5.0pt);
\draw[color=ffxfqq] (13.5,1) node {$t_{6}$};
\end{scriptsize}
\end{tikzpicture}
\caption{Graphical representation of the \textsc{Moran} model with $N = 5$ and initial condition $\mu_{5} = \frac{1}{5}\sum_{i\, = \, 1}^{5}{\delta_{x_{i}}}$. {  The times $t_i$ are the successives times of resampling events, represented by arrows. The arrow $i\to j$ indicates that $i$ reproduced and $j$ died.}}
\label{FigMO3}
\end{minipage}
\hspace{1.3cm}
\begin{minipage}[h]{.41\textwidth}\centering
\definecolor{ffqqff}{rgb}{1.,0.,1.} 
\definecolor{ffxfqq}{rgb}{0.490,0.490,1} 
\definecolor{ttzzqq}{rgb}{0.294,0.,0.51} 
\definecolor{qqqqff}{rgb}{0.2,0.6,0.} 
\begin{tikzpicture}[line cap=round,line join=round,>=triangle 45,x=0.5cm,y=0.5cm]
\clip(16.7,1.6) rectangle (27.8,17.15);
\draw [>=stealth,->, line width=1pt] (18.,2.2) -- (18.,17.15);
\draw [line width=1pt,color=ffxfqq] (19.,2.5)-- (19.,6.5);
\draw [line width=1pt,color=ffxfqq] (19.,6.5)-- (21.,6.5);
\draw [line width=1pt,color=ffxfqq] (21.,6.5)-- (21.,2.5);
\draw [line width=1pt,color=ffxfqq] (20.,6.5)-- (20.,8.);
\draw [line width=1pt,color=ffxfqq] (20.,8.)-- (23.,8.);
\draw [line width=1pt,color=ffxfqq] (23.,8.)-- (23.,2.5);
\draw [line width=1pt,color=ffxfqq] (21.5,8.)-- (21.5,15.);

\draw [line width=1pt,color=ffxfqq] (21.5,15.)-- (21.5,16.);
\draw [line width=1pt,color=ffxfqq] (25.,2.5)-- (25.,3.5);
\draw [line width=1pt,color=ffxfqq] (27.,2.5)-- (27.,3.5);
\draw [line width=1pt,color=ffxfqq] (25.,3.5)-- (27.,3.5);
\draw [line width=1pt,color=ffxfqq] (26.,3.5)-- (26.,16.);


\draw [line width=1.pt, color = Mon_orange] (21.5,16)-- (21.8,15.6);
\draw [line width=1.pt, color = Mon_orange] (21.8,15.6)-- (21.2,15.2);
\draw [line width=1.pt, color = Mon_orange] (21.2,15.2)-- (21.8,14.8);
\draw [line width=1.pt, color = Mon_orange] (21.8,14.8)-- (21.2,14.4);
\draw [line width=1.pt, color = Mon_orange] (21.2,14.4)-- (21.8,14);
\draw [line width=1.pt, color = Mon_orange] (21.8,14)-- (21.2,13.6);
\draw [line width=1.pt, color = Mon_orange] (21.2,13.6)-- (21.8,13.2);
\draw [line width=1.pt, color = Mon_orange] (21.8,13.2)-- (21.2,12.8);
\draw [line width=1.pt, color = Mon_orange] (21.2,12.8)-- (21.8,12.4);
\draw [line width=1.pt, color = Mon_orange] (21.8,12.4)-- (21.2,12);
\draw [line width=1.pt, color = Mon_orange] (21.2,12)-- (21.8,11.6);
\draw [line width=1.pt, color = Mon_orange] (21.8,11.6)-- (21.2,11.2);
\draw [line width=1.pt, color = Mon_orange] (21.2,11.2)-- (21.8,10.8);
\draw [line width=1.pt, color = Mon_orange] (21.8,10.8)-- (21.2,10.4);
\draw [line width=1.pt, color = Mon_orange] (21.2,10.4)-- (21.8,10);
\draw [line width=1.pt, color = Mon_orange] (21.8,10)-- (21.2,9.6);
\draw [line width=1.pt, color = Mon_orange] (21.2,9.6)-- (21.8,9.2);
\draw [line width=1.pt, color = Mon_orange] (21.8,9.2)-- (21.2,8.8);
\draw [line width=1.pt, color = Mon_orange] (21.2,8.8)-- (21.8,8.4);
\draw [line width=1.pt, color = Mon_orange] (21.8,8.4)-- (21.5,8);

\draw [line width=1.pt, color = Mon_orange] (26,16)-- (26.3,15.6);
\draw [line width=1.pt, color = Mon_orange] (26.3,15.6)-- (25.7,15.2);
\draw [line width=1.pt, color = Mon_orange] (25.7,15.2)-- (26.3,14.8);
\draw [line width=1.pt, color = Mon_orange] (26.3,14.8)-- (25.7,14.4);
\draw [line width=1.pt, color = Mon_orange] (25.7,14.4)-- (26.3,14);
\draw [line width=1.pt, color = Mon_orange] (26.3,14)-- (25.7,13.6);
\draw [line width=1.pt, color = Mon_orange] (25.7,13.6)-- (26.3,13.2);
\draw [line width=1.pt, color = Mon_orange] (26.3,13.2)-- (25.7,12.8);
\draw [line width=1.pt, color = Mon_orange] (25.7,12.8)-- (26.3,12.4);
\draw [line width=1.pt, color = Mon_orange] (26.3,12.4)-- (25.7,12);
\draw [line width=1.pt, color = Mon_orange] (25.7,12)-- (26.3,11.6);
\draw [line width=1.pt, color = Mon_orange] (26.3,11.6)-- (25.7,11.2);
\draw [line width=1.pt, color = Mon_orange] (25.7,11.2)-- (26.3,10.8);
\draw [line width=1.pt, color = Mon_orange] (26.3,10.8)-- (25.7,10.4);
\draw [line width=1.pt, color = Mon_orange] (25.7,10.4)-- (26.3,10);
\draw [line width=1.pt, color = Mon_orange] (26.3,10)-- (25.7,9.6);
\draw [line width=1.pt, color = Mon_orange] (25.7,9.6)-- (26.3,9.2);
\draw [line width=1.pt, color = Mon_orange] (26.3,9.2)-- (25.7,8.8);
\draw [line width=1.pt, color = Mon_orange] (25.7,8.8)-- (26.3,8.4);
\draw [line width=1.pt, color = Mon_orange] (26.3,8.4)-- (25.7,8);
\draw [line width=1.pt, color = Mon_orange] (25.7,8)-- (26.3,7.6);
\draw [line width=1.pt, color = Mon_orange] (26.3,7.6)-- (25.7,7.2);
\draw [line width=1.pt, color = Mon_orange] (25.7,7.2)-- (26.3,6.8);
\draw [line width=1.pt, color = Mon_orange] (26.3,6.8)-- (25.7,6.4);
\draw [line width=1.pt, color = Mon_orange] (25.7,6.4)-- (26.3,6);
\draw [line width=1.pt, color = Mon_orange] (26.3,6)-- (25.7,5.6);
\draw [line width=1.pt, color = Mon_orange] (25.7,5.6)-- (26.3,5.2);
\draw [line width=1.pt, color = Mon_orange] (26.3,5.2)-- (25.7,4.8);
\draw [line width=1.pt, color = Mon_orange] (25.7,4.8)-- (26.3,4.4);
\draw [line width=1.pt, color = Mon_orange] (26.3,4.4)-- (25.7,4);
\draw [line width=1.pt, color = Mon_orange] (25.7,4)-- (26.3,3.6);
\draw [line width=1.pt, color = Mon_orange] (26.3,3.6)-- (26,3.5);

\draw [line width=1.pt, color = Mon_orange] (20,8)-- (20.3,7.6);
\draw [line width=1.pt, color = Mon_orange] (20.3,7.6)-- (19.7,7.2);
\draw [line width=1.pt, color = Mon_orange] (19.7,7.2)-- (20.3,6.8);
\draw [line width=1.pt, color = Mon_orange] (20.3,6.8)-- (20,6.5);

\draw [line width=1.pt, color = Mon_orange] (21,6.5)-- (21.3,6.1);
\draw [line width=1.pt, color = Mon_orange] (21.3,6.1)-- (20.7,5.7);
\draw [line width=1.pt, color = Mon_orange] (20.7,5.7)-- (21.3,5.3);
\draw [line width=1.pt, color = Mon_orange] (21.3,5.3)-- (20.7,4.9);
\draw [line width=1.pt, color = Mon_orange] (20.7,4.9)-- (21.3,4.5);
\draw [line width=1.pt, color = Mon_orange] (21.3,4.5)-- (20.7,4.1);
\draw [line width=1.pt, color = Mon_orange] (20.7,4.1)-- (21.3,3.7);
\draw [line width=1.pt, color = Mon_orange] (21.3,3.7)-- (20.7,3.3);
\draw [line width=1.pt, color = Mon_orange] (20.7,3.3)-- (21.3,2.9);
\draw [line width=1.pt, color = Mon_orange] (21.3,2.9)-- (21,2.5);

\draw [line width=1.pt, color = Mon_orange] (25,3.5)-- (25.3,3.1);
\draw [line width=1.pt, color = Mon_orange] (25.3,3.1)-- (24.7,2.7);
\draw [line width=1.pt, color = Mon_orange] (24.7,2.7)-- (25,2.5);

\draw[color=black,decorate,decoration={brace,raise=0.2cm}]
(25,3.5) -- (25,2.5) node[above=0.2cm,pos=0.5,sloped]{}; 

\draw[color=black,decorate,decoration={brace,raise=0.2cm}]
(21,6.5) -- (21,2.5) node[above=0.25cm,pos=0.5,sloped] {};

\draw[color=black,decorate,decoration={brace,raise=0.2cm}]
(20,8) -- (20,6.5) node[above=0.25cm,pos=0.5,sloped] {};

\draw[color=black,decorate,decoration={brace,raise=0.2cm}]
(21.5,16) -- (21.5,8) node[above=0.25cm,pos=0.5,sloped] {};

\draw[color=black,decorate,decoration={brace,raise=0.2cm}]
(26,16) -- (26,3.5) node[above=0.25cm,pos=0.5,sloped] {};

\begin{scriptsize}
\draw[color=black] (26.,3) node {\tiny $w_{1}$};
\draw[color=black] (22.,4.5) node {\tiny $w_{2}$};
\draw[color=black] (21.,7.25) node {\tiny $w_{3}$};
\draw[color=black] (22.5,12) node {\tiny $w_{4}$};
\draw[color=black] (27,9.75) node {\tiny $w_{5}$};
\end{scriptsize}

\begin{scriptsize}
\draw [color=black] (1.,0.5)-- ++(-2.5pt,0 pt) -- ++(5.0pt,0 pt) ++(-2.5pt,-2.5pt) -- ++(0 pt,5.0pt);
\draw [fill=blue] (19.,2.5) circle (2.5pt);
\draw[color=blue] (19,1.85) node {$u_{1}$};
\draw [fill=blue] (21.,2.5) circle (2.5pt);
\draw[color=blue] (21,1.85) node {$u_{2}$};
\draw [fill=blue] (23.,2.5) circle (2.5pt);
\draw[color=blue] (23,1.85) node {$u_{4}$};
\draw [fill=blue] (25.,2.5) circle (2.5pt);
\draw[color=blue] (25,1.85) node {$u_{3}$};
\draw [fill=blue] (27.,2.5) circle (2.5pt);
\draw[color=blue] (27,1.85) node {$u_{5}$};
\draw [color=black] (18.,2.5)-- ++(-2.5pt,0 pt) -- ++(5.0pt,0 pt) ++(-2.5pt,-0pt) -- ++(0 pt,0pt);
\draw[color=black] (17.1,2.5) node {$T$};
\draw [color=ffxfqq] (18.,3.5)-- ++(-2.5pt,0 pt) -- ++(5.0pt,0 pt) ++(-2.5pt,-0pt) -- ++(0 pt,0pt);
\draw[color=ffxfqq] (17.1,3.5) node {$t_{6}$};
\draw [color=qqqqff] (18.,4.6)-- ++(-2.5pt,0 pt) -- ++(5.0pt,0 pt) ++(-2.5pt,-0pt) -- ++(0 pt,0pt);
\draw[color=qqqqff] (17.1,4.5) node {$t_{5}$};
\draw [color=ffxfqq] (18.,6.5)-- ++(-2.5pt,0 pt) -- ++(5.0pt,0 pt) ++(-2.5pt,-0pt) -- ++(0 pt,0pt);
\draw[color=ffxfqq] (17.1,6.5) node {$t_{4}$};
\draw [color=ffxfqq] (18.,8.)-- ++(-2.5pt,0 pt) -- ++(5.0pt,0 pt) ++(-2.5pt,-0pt) -- ++(0 pt,0pt);
\draw[color=ffxfqq] (17.1,8) node {$t_{3}$};
\draw [color=qqqqff] (18.,13.)-- ++(-2.5pt,0 pt) -- ++(5.0pt,0 pt) ++(-2.5pt,-0pt) -- ++(0 pt,0pt);
\draw[color=qqqqff] (17.1,12.9) node {$t_{2}$};
\draw [color=qqqqff] (18.,15.)-- ++(-2.5pt,0 pt) -- ++(5.0pt,0 pt) ++(-2.5pt,-0pt) -- ++(0 pt,0pt);
\draw[color=qqqqff] (17.1,15) node {$t_{1}$};

\draw [color=black] (18.,16.)-- ++(-2.5pt,0 pt) -- ++(5.0pt,0 pt) ++(-2.5pt,-0pt) -- ++(0 pt,0pt);
\draw[color=black] (17.1,16) node {$0$};
\draw [fill=blue] (21.5,16.) circle (2.5pt);
\draw[color=blue] (21.5,16.75) node {$x_{\sigma(1)} = x_{1}$};
\draw [fill=blue] (26.,16.) circle (2.5pt);
\draw[color=blue] (26,16.75) node {$x_{\sigma(2)} = x_{5}$};
\end{scriptsize}
\end{tikzpicture}
\caption{\textsc{Kingman}'s genealogy $\left(k_{5, T-t}\right)_{0\leqslant t \leqslant T}$ under the \textsc{Moran} model on the left, tracing back from  time $T$ to time $0$.}
\label{FigMO4}
\end{minipage}
\end{figure}

We give an illustration of this result in Figure~\ref{FigMO3}, where mutations are denoted by
``\begin{tikzpicture}[line cap=round,line join=round,>=triangle 45,x=0.5cm,y=0.5cm]
\draw [line width=1.pt, color = Mon_orange] (1,6.7)-- (1.4,7.3);
\draw [line width=1.pt, color = Mon_orange] (1.4,7.3)-- (1.8,6.7);
\draw [line width=1.pt, color = Mon_orange] (1.8,6.7)-- (2.2,7.3);
\draw [line width=1.pt, color = Mon_orange] (2.2,7.3)-- (2.6,6.7);
\draw [line width=1.pt, color = Mon_orange] (2.6,6.7)-- (3,7.3);
\draw [line width=1.pt, color = Mon_orange] (3,7.3)-- (3.4,6.7);
\draw [line width=1.pt, color = Mon_orange] (3.4,6.7)-- (3.8,7.3);
\draw [line width=1.pt, color = Mon_orange] (3.8,7.3)-- (4.2,6.7);
\end{tikzpicture}''. Not all of them are shown for the sake of clarity. For example,
$u_2=x_1+w_4+w_3+w_2$, where $w_{2} := W_{T-t_{4}}^{(\{2\})} - W_{0}^{(\{2\})}$, $w_{3} := W_{T-t_{3}}^{(\{1,2\})} - W_{T-t_{4}}^{(\{1,2\})}$ and $w_{4} := W_{T}^{(\{1,2,4\})} - W_{T-t_{3}}^{(\{1,2,4\})}$.

%

{  We can then construct the centered version of the random variables $\widehat{Y}^{N, \mu_{N}}_{T}$ as follows:}
\[\widehat{Z}^{N,\mu_{N}}_{T} := \tau_{-\left\langle \id, \widehat{Y}_{T}^{N, \mu_{N}} \right\rangle} \sharp \, \widehat{Y}^{N,\mu_{N}}_{T}.\]
\begin{Cor}
For all initial condition $\mu_{N} \in \MM_{1,N}^{c,2}({ \R^{d}})$, $Z^{N}_{T} \overset{{\rm{law}}}{=} \widehat{Z}^{N, \mu_{N}}_{T} = \frac{1}{N}\sum_{i\, = \, 1}^{N}{\delta_{v_{i}}}  $
where, for all $i \in \left\{1, \cdots, N \right\}$, $v_{i} := v_{i}^{\mu_{N}} := u_{i} - \frac{1}{N}\sum_{j\, = \, 1}^{N}{u_{j}}$ and $Z_{0}^{N} = \mu_{N}$.\label{Corollaire_1_Genealogie}
\end{Cor}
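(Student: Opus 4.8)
The plan is to deduce the corollary from Proposition \ref{Prop_1_Genealogie} by observing that the centered objects $Z^N_T$ and $\widehat Z^{N,\mu_N}_T$ are obtained from their uncentered counterparts $Y^N_T$ and $\widehat Y^{N,\mu_N}_T$ by applying one and the same deterministic measurable map. Equality in law is preserved under such a map, which yields the first assertion; the explicit form of $\widehat Z^{N,\mu_N}_T$ then follows from an elementary push-forward computation.

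First I would introduce the centering map $C : \MM_{1}^{1}(\R) \longrightarrow \MM_{1}(\R)$ defined by $C(\nu) := \tau_{-\left\langle \id, \nu \right\rangle}\sharp\,\nu$. Since $\nu \mapsto \left\langle \id, \nu\right\rangle$ is measurable on $\MM_1^1(\R)$ and the translation push-forward $(\nu, a) \mapsto \tau_a \sharp\, \nu$ is jointly measurable, $C$ is measurable. Both $Y_T^N$ and $\widehat Y_T^{N,\mu_N}$ take values in $\MM_{1,N}(\R) \subset \MM_1^1(\R)$, being finite combinations of Dirac masses and hence of finite first moment, so $C$ applies to each; by definition $Z_T^N = C(Y_T^N)$ and $\widehat Z_T^{N,\mu_N} = C(\widehat Y_T^{N,\mu_N})$. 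By Proposition \ref{Prop_1_Genealogie} we have $Y_T^N \overset{{\rm{law}}}{=} \widehat Y_T^{N,\mu_N}$ with $Y_0^N = \mu_N$, and applying the measurable map $C$ to both sides gives $Z_T^N = C(Y_T^N) \overset{{\rm{law}}}{=} C(\widehat Y_T^{N,\mu_N}) = \widehat Z_T^{N,\mu_N}$. Moreover $Z_0^N = \mu_N$, because $\mu_N \in \MM_{1,N}^{c,2}(\R)$ has zero mean and therefore $C(\mu_N) = \tau_0 \sharp\, \mu_N = \mu_N$.

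It then remains to compute $\widehat Z_T^{N,\mu_N}$ explicitly. Writing $\widehat Y_T^{N,\mu_N} = \frac{1}{N}\sum_{i=1}^N \delta_{u_i}$ with $u_i$ given by (\ref{Def_u_i}), one has $\left\langle \id, \widehat Y_T^{N,\mu_N}\right\rangle = \frac{1}{N}\sum_{j=1}^N u_j =: c$. Since $\tau_{-c}\sharp\,\delta_{u_i} = \delta_{u_i - c}$ and the push-forward is linear in the measure, I obtain $\widehat Z_T^{N,\mu_N} = \tau_{-c}\sharp\, \widehat Y_T^{N,\mu_N} = \frac{1}{N}\sum_{i=1}^N \delta_{u_i - c}$, that is $\widehat Z_T^{N,\mu_N} = \frac{1}{N}\sum_{i=1}^N \delta_{v_i}$ with $v_i = u_i - \frac{1}{N}\sum_{j=1}^N u_j$, as claimed.

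There is no substantial obstacle: the result is a direct consequence of the stability of equality in law under the fixed measurable centering map $C$, combined with a one-line push-forward identity. The only point genuinely requiring verification is the measurability of $C$ — equivalently of $\nu \mapsto \left\langle \id, \nu\right\rangle$ on the space $\MM_1^1(\R)$ of probability measures with finite first moment — which is routine and does not depend on the fact that $\id$ is unbounded, since we only ever evaluate $C$ on finitely supported measures.
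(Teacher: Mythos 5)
Your proof is correct and matches the paper's (implicit) reasoning: the paper states this corollary without proof precisely because it follows immediately from Proposition \ref{Prop_1_Genealogie} by applying the measurable centering map $\nu \mapsto \tau_{-\left\langle \id, \nu \right\rangle}\sharp\,\nu$ to both sides, together with the one-line push-forward computation $\tau_{-c}\sharp\left(\frac{1}{N}\sum_{i}\delta_{u_{i}}\right) = \frac{1}{N}\sum_{i}\delta_{u_{i}-c}$ with $c = \frac{1}{N}\sum_{j}u_{j}$, exactly as you wrote. Your additional checks (measurability of the centering map on $\MM_{1}^{1}(\R)$, and $Z_{0}^{N} = \mu_{N}$ since $\left\langle \id, \mu_{N}\right\rangle = 0$) are the right details to verify and are handled correctly.
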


\begin{figure}[!htb]
\minipage{0.42\textwidth}
  \centering
  \definecolor{xdxdff}{rgb}{0.490,0.490,1} 
  \definecolor{qqqqff}{rgb}{0.,0.,1.} 
\definecolor{qqzzqq}{rgb}{0.,0.6,0.} 
\definecolor{ffxfqq}{rgb}{1.,0.4980392156862745,0.} 
  \definecolor{ududff}{rgb}{0.30196078431372547,0.30196078431372547,1.}
\begin{tikzpicture}[line cap=round,line join=round,>=triangle 45,x=0.5cm,y=0.5cm]
\clip(-1.5,0.275) rectangle (9.25,10.15);
\draw [line width=1pt] (2.,1.)-- (2.,3.);
\draw [line width=1pt] (2.,3.)-- (4.,3.);
\draw [line width=1pt] (4.,1.)-- (4.,3.);
\draw [line width=1pt] (6.,1.)-- (6.,6.);
\draw [line width=1pt] (8.,1.)-- (8.,6.);
\draw [line width=1pt] (6.,6.)-- (8.,6.);
\draw [line width=1pt] (7.,6.)-- (7.,7.5);
\draw [line width=1pt] (3.,3.)-- (3.,7.5);
\draw [line width=1pt] (3.,7.5)-- (7.,7.5);
\draw [line width=1pt] (5.,7.5)-- (5.,9.);
\draw [>=stealth,->,line width=1pt] (1.,0.5) -- (1.,10.15);

\draw [line width=1.pt, color = Mon_orange] (5,9)-- (5.3,8.6);
\draw [line width=1.pt, color = Mon_orange] (5.3,8.6)-- (4.7,8.2);
\draw [line width=1.pt, color = Mon_orange] (4.7,8.2)-- (5.3,7.8);
\draw [line width=1.pt, color = Mon_orange] (5.3,7.8)-- (5,7.5);

\draw [line width=1.pt, color = Mon_orange] (2,3)-- (2.3,2.6);
\draw [line width=1.pt, color = Mon_orange] (2.3,2.6)-- (1.7,2.2);
\draw [line width=1.pt, color = Mon_orange] (1.7,2.2)-- (2.3,1.8);
\draw [line width=1.pt, color = Mon_orange] (2.3,1.8)-- (1.7,1.4);
\draw [line width=1.pt, color = Mon_orange] (1.7,1.4)-- (2,1);

\draw [line width=1.pt, color = Mon_orange] (4,3)-- (4.3,2.6);
\draw [line width=1.pt, color = Mon_orange] (4.3,2.6)-- (3.7,2.2);
\draw [line width=1.pt, color = Mon_orange] (3.7,2.2)-- (4.3,1.8);
\draw [line width=1.pt, color = Mon_orange] (4.3,1.8)-- (3.7,1.4);
\draw [line width=1.pt, color = Mon_orange] (3.7,1.4)-- (4,1);

\draw [line width=1.pt, color = Mon_orange] (6,6)-- (6.3,5.6);
\draw [line width=1.pt, color = Mon_orange] (6.3,5.6)-- (5.7,5.2);
\draw [line width=1.pt, color = Mon_orange] (5.7,5.2)-- (6.3,4.8);
\draw [line width=1.pt, color = Mon_orange] (6.3,4.8)-- (5.7,4.4);
\draw [line width=1.pt, color = Mon_orange] (5.7,4.4)-- (6.3,4);
\draw [line width=1.pt, color = Mon_orange] (6.3,4)-- (5.7,3.6);
\draw [line width=1.pt, color = Mon_orange] (5.7,3.6)-- (6.3,3.2);
\draw [line width=1.pt, color = Mon_orange] (6.3,3.2)-- (5.7,2.8);
\draw [line width=1.pt, color = Mon_orange] (5.7,2.8)-- (6.3,2.4);
\draw [line width=1.pt, color = Mon_orange] (6.3,2.4)-- (5.7,2);
\draw [line width=1.pt, color = Mon_orange] (5.7,2)-- (6.3,1.6);
\draw [line width=1.pt, color = Mon_orange] (6.3,1.6)-- (5.7,1.2);
\draw [line width=1.pt, color = Mon_orange] (5.7,1.2)-- (6,1);

\draw [line width=1.pt, color = Mon_orange] (8,6)-- (8.3,5.6);
\draw [line width=1.pt, color = Mon_orange] (8.3,5.6)-- (7.7,5.2);
\draw [line width=1.pt, color = Mon_orange] (7.7,5.2)-- (8.3,4.8);
\draw [line width=1.pt, color = Mon_orange] (8.3,4.8)-- (7.7,4.4);
\draw [line width=1.pt, color = Mon_orange] (7.7,4.4)-- (8.3,4);
\draw [line width=1.pt, color = Mon_orange] (8.3,4)-- (7.7,3.6);
\draw [line width=1.pt, color = Mon_orange] (7.7,3.6)-- (8.3,3.2);
\draw [line width=1.pt, color = Mon_orange] (8.3,3.2)-- (7.7,2.8);
\draw [line width=1.pt, color = Mon_orange] (7.7,2.8)-- (8.3,2.4);
\draw [line width=1.pt, color = Mon_orange] (8.3,2.4)-- (7.7,2);
\draw [line width=1.pt, color = Mon_orange] (7.7,2)-- (8.3,1.6);
\draw [line width=1.pt, color = Mon_orange] (8.3,1.6)-- (7.7,1.2);
\draw [line width=1.pt, color = Mon_orange] (7.7,1.2)-- (8,1);

\draw [line width=1.pt, color = Mon_orange] (3,7.5)-- (3.3,7.1);
\draw [line width=1.pt, color = Mon_orange] (3.3,7.1)-- (2.7,6.7);
\draw [line width=1.pt, color = Mon_orange] (2.7,6.7)-- (3.3,6.3);
\draw [line width=1.pt, color = Mon_orange] (3.3,6.3)-- (2.7,5.9);
\draw [line width=1.pt, color = Mon_orange] (2.7,5.9)-- (3.3,5.5);
\draw [line width=1.pt, color = Mon_orange] (3.3,5.5)-- (2.7,5.1);
\draw [line width=1.pt, color = Mon_orange] (2.7,5.1)-- (3.3,4.7);
\draw [line width=1.pt, color = Mon_orange] (3.3,4.7)-- (2.7,4.3);
\draw [line width=1.pt, color = Mon_orange] (2.7,4.3)-- (3.3,3.9);
\draw [line width=1.pt, color = Mon_orange] (3.3,3.9)-- (2.7,3.5);
\draw [line width=1.pt, color = Mon_orange] (2.7,3.5)-- (3.3,3.1);
\draw [line width=1.pt, color = Mon_orange] (3.3,3.1)-- (3,3);

\draw [line width=1.pt, color = Mon_orange] (7,7.5)-- (7.3,7.1);
\draw [line width=1.pt, color = Mon_orange] (7.3,7.1)-- (6.7,6.7);
\draw [line width=1.pt, color = Mon_orange] (6.7,6.7)-- (7.3,6.3);
\draw [line width=1.pt, color = Mon_orange] (7.3,6.3)-- (7,6);

\draw[color=black,decorate,decoration={brace,raise=0.2cm}]
(2,3) -- (2,1) node[above=0.25cm,pos=0.5,sloped] {};

\draw[color=black] (3,2) node {\tiny $w_{1}$};

\draw[color=black,decorate,decoration={brace,raise=0.2cm}]
(4,3) -- (4,1) node[above=0.25cm,pos=0.5,sloped] {};

\draw[color=black] (5,2) node {\tiny $w_{2}$};

\draw[color=black,decorate,decoration={brace,raise=0.2cm}]
(6,6) -- (6,1) node[above=0.25cm,pos=0.5,sloped] {};

\draw[color=black] (7,3.5) node {\tiny $w_{3}$};

\draw[color=black,decorate,decoration={brace,raise=0.2cm}]
(8,6) -- (8,1) node[above=0.25cm,pos=0.5,sloped] {};

\draw[color=black] (9,3.5) node {\tiny $w_{4}$};

\draw[color=black,decorate,decoration={brace,raise=0.2cm}]
(3,7.5) -- (3,3) node[above=0.25cm,pos=0.5,sloped] {};

\draw[color=black] (4,5.25) node {\tiny $w_{5}$};

\draw[color=black,decorate,decoration={brace,raise=0.2cm}]
(7,7.5) -- (7,6) node[above=0.25cm,pos=0.5,sloped] {};

\draw[color=black] (8,6.75) node {\tiny $w_{6}$};

\draw[color=black,decorate,decoration={brace,raise=0.2cm}]
(5,9) -- (5,7.5) node[above=0.25cm,pos=0.5,sloped] {};

\draw[color=black] (6,8.25) node {\tiny $w_{7}$};

\begin{scriptsize}
\draw[color=blue] (5,9.55) node {$x_{\sigma(a(1))}$};

\draw [fill=blue] (2.,1.) circle (2.5pt);
\draw[color=blue] (2,0.45) node {$v_{1}$};
\draw [fill=blue] (4.,1.) circle (2.5pt);
\draw[color=blue] (4,0.45) node {$v_{2}$};
\draw [fill=blue] (6.,1.) circle (2.5pt);
\draw[color=blue] (6,0.45) node {$v_{3}$};
\draw [fill=blue] (8.,1.) circle (2.5pt);
\draw[color=blue] (8,0.45) node {$v_{4}$};

\draw [fill=blue] (5.,9.) circle (2.5pt);
\draw [color=black] (1,1)-- ++(-2.5pt,0 pt) -- ++(5.0pt,0 pt) ++(-2.5pt,-2.5pt) -- ++(0 pt,5.0pt);
\draw[color=black] (0.25,1) node {$0$};

\draw [color=black] (1.,9.)-- ++(-2.5pt,0 pt) -- ++(5.0pt,0 pt) ++(-2.5pt,-2.5pt) -- ++(0 pt,5.0pt);
\draw[color=black] (0.45,9) node {$T$};

\end{scriptsize}
\end{tikzpicture}
\caption{Illustration of the centered \textsc{Moran} process where $\left| k_{4, T}\right| = 1$.}
\label{Fig81}
\endminipage \hspace{2cm}
\minipage{0.42\textwidth}
    \centering
    \definecolor{xdxdff}{rgb}{0.490,0.490,1} 
    \definecolor{qqqqff}{rgb}{0.,0.,1.} 
\definecolor{qqzzqq}{rgb}{0.,0.6,0.} 
\definecolor{ffxfqq}{rgb}{1.,0.4980392156862745,0.} 
    \definecolor{ududff}{rgb}{0.30196078431372547,0.30196078431372547,1.}
\begin{tikzpicture}[line cap=round,line join=round,>=triangle 45,x=0.5cm,y=0.5cm]
\clip(-1.5,0.275) rectangle (9.25,10.15);
\draw [line width=1pt] (2.,1.)-- (2.,3.);
\draw [line width=1pt] (2.,3.)-- (4.,3.);
\draw [line width=1pt] (4.,3.)-- (4.,1.);
\draw [line width=1pt] (6.,1.)-- (6.,7.);
\draw [line width=1pt] (6.,7.)-- (8.,7.);
\draw [line width=1pt] (8.,7.)-- (8.,1.);
\draw [line width=1pt] (7.,9.)-- (7.,7.);
\draw [line width=1pt] (3.,9.)-- (3.,3.);
\draw [>=stealth,->,line width=1pt] (1.,0.5) -- (1.,10.15);

\draw [line width=1.pt, color = Mon_orange] (2,3)-- (2.3,2.6);
\draw [line width=1.pt, color = Mon_orange] (2.3,2.6)-- (1.7,2.2);
\draw [line width=1.pt, color = Mon_orange] (1.7,2.2)-- (2.3,1.8);
\draw [line width=1.pt, color = Mon_orange] (2.3,1.8)-- (1.7,1.4);
\draw [line width=1.pt, color = Mon_orange] (1.7,1.4)-- (2,1);

\draw [line width=1.pt, color = Mon_orange] (4,3)-- (4.3,2.6);
\draw [line width=1.pt, color = Mon_orange] (4.3,2.6)-- (3.7,2.2);
\draw [line width=1.pt, color = Mon_orange] (3.7,2.2)-- (4.3,1.8);
\draw [line width=1.pt, color = Mon_orange] (4.3,1.8)-- (3.7,1.4);
\draw [line width=1.pt, color = Mon_orange] (3.7,1.4)-- (4,1);

\draw [line width=1.pt, color = Mon_orange] (3,9)-- (3.3,8.7);
\draw [line width=1.pt, color = Mon_orange] (3.3,8.7)-- (2.7,8.3);
\draw [line width=1.pt, color = Mon_orange] (2.7,8.3)-- (3.3,7.9);
\draw [line width=1.pt, color = Mon_orange] (3.3,7.9)-- (2.7,7.5);
\draw [line width=1.pt, color = Mon_orange] (2.7,7.5)-- (3.3,7.1);
\draw [line width=1.pt, color = Mon_orange] (3.3,7.1)-- (2.7,6.7);
\draw [line width=1.pt, color = Mon_orange] (2.7,6.7)-- (3.3,6.3);
\draw [line width=1.pt, color = Mon_orange] (3.3,6.3)-- (2.7,5.9);
\draw [line width=1.pt, color = Mon_orange] (2.7,5.9)-- (3.3,5.5);
\draw [line width=1.pt, color = Mon_orange] (3.3,5.5)-- (2.7,5.1);
\draw [line width=1.pt, color = Mon_orange] (2.7,5.1)-- (3.3,4.7);
\draw [line width=1.pt, color = Mon_orange] (3.3,4.7)-- (2.7,4.3);
\draw [line width=1.pt, color = Mon_orange] (2.7,4.3)-- (3.3,3.9);
\draw [line width=1.pt, color = Mon_orange] (3.3,3.9)-- (2.7,3.5);
\draw [line width=1.pt, color = Mon_orange] (2.7,3.5)-- (3.3,3.1);
\draw [line width=1.pt, color = Mon_orange] (3.3,3.1)-- (3,3);

\draw [line width=1.pt, color = Mon_orange] (7,9)-- (7.3,8.6);
\draw [line width=1.pt, color = Mon_orange] (7.3,8.6)-- (6.7,8.2);
\draw [line width=1.pt, color = Mon_orange] (6.7,8.2)-- (7.3,7.8);
\draw [line width=1.pt, color = Mon_orange] (7.3,7.8)-- (6.7,7.4);
\draw [line width=1.pt, color = Mon_orange] (6.7,7.4)-- (7,7);

\draw [line width=1.pt, color = Mon_orange] (6,7)-- (5.7,6.8);
\draw [line width=1.pt, color = Mon_orange] (5.7,6.8)-- (6.3,6.4);
\draw [line width=1.pt, color = Mon_orange] (6.3,6.4)-- (5.7,6);
\draw [line width=1.pt, color = Mon_orange] (5.7,6)-- (6.3,5.6);
\draw [line width=1.pt, color = Mon_orange] (6.3,5.6)-- (5.7,5.2);
\draw [line width=1.pt, color = Mon_orange] (5.7,5.2)-- (6.3,4.8);
\draw [line width=1.pt, color = Mon_orange] (6.3,4.8)-- (5.7,4.4);
\draw [line width=1.pt, color = Mon_orange] (5.7,4.4)-- (6.3,4);
\draw [line width=1.pt, color = Mon_orange] (6.3,4)-- (5.7,3.6);
\draw [line width=1.pt, color = Mon_orange] (5.7,3.6)-- (6.3,3.2);
\draw [line width=1.pt, color = Mon_orange] (6.3,3.2)-- (5.7,2.8);
\draw [line width=1.pt, color = Mon_orange] (5.7,2.8)-- (6.3,2.4);
\draw [line width=1.pt, color = Mon_orange] (6.3,2.4)-- (5.7,2);
\draw [line width=1.pt, color = Mon_orange] (5.7,2)-- (6.3,1.6);
\draw [line width=1.pt, color = Mon_orange] (6.3,1.6)-- (5.7,1.2);
\draw [line width=1.pt, color = Mon_orange] (5.7,1.2)-- (6,1);

\draw [line width=1.pt, color = Mon_orange] (8,7)-- (7.7,6.8);
\draw [line width=1.pt, color = Mon_orange] (7.7,6.8)-- (8.3,6.4);
\draw [line width=1.pt, color = Mon_orange] (8.3,6.4)-- (7.7,6);
\draw [line width=1.pt, color = Mon_orange] (7.7,6)-- (8.3,5.6);
\draw [line width=1.pt, color = Mon_orange] (8.3,5.6)-- (7.7,5.2);
\draw [line width=1.pt, color = Mon_orange] (7.7,5.2)-- (8.3,4.8);
\draw [line width=1.pt, color = Mon_orange] (8.3,4.8)-- (7.7,4.4);
\draw [line width=1.pt, color = Mon_orange] (7.7,4.4)-- (8.3,4);
\draw [line width=1.pt, color = Mon_orange] (8.3,4)-- (7.7,3.6);
\draw [line width=1.pt, color = Mon_orange] (7.7,3.6)-- (8.3,3.2);
\draw [line width=1.pt, color = Mon_orange] (8.3,3.2)-- (7.7,2.8);
\draw [line width=1.pt, color = Mon_orange] (7.7,2.8)-- (8.3,2.4);
\draw [line width=1.pt, color = Mon_orange] (8.3,2.4)-- (7.7,2);
\draw [line width=1.pt, color = Mon_orange] (7.7,2)-- (8.3,1.6);
\draw [line width=1.pt, color = Mon_orange] (8.3,1.6)-- (7.7,1.2);
\draw [line width=1.pt, color = Mon_orange] (7.7,1.2)-- (8,1);

\draw[color=black,decorate,decoration={brace,raise=0.2cm}]
(2,3) -- (2,1) node[above=0.25cm,pos=0.5,sloped] {};

\draw[color=black] (3,2) node {\tiny $w_{1}$};

\draw[color=black,decorate,decoration={brace,raise=0.2cm}]
(4,3) -- (4,1) node[above=0.25cm,pos=0.5,sloped] {};

\draw[color=black] (5,2) node {\tiny $w_{2}$};

\draw[color=black,decorate,decoration={brace,raise=0.2cm}]
(6,7) -- (6,1) node[above=0.25cm,pos=0.5,sloped] {};

\draw[color=black] (7,4) node {\tiny $w_{3}$};

\draw[color=black,decorate,decoration={brace,raise=0.2cm}]
(8,7) -- (8,1) node[above=0.25cm,pos=0.5,sloped] {};

\draw[color=black] (9,4) node {\tiny $w_{4}$};

\draw[color=black,decorate,decoration={brace,raise=0.2cm}]
(3,9) -- (3,3) node[above=0.25cm,pos=0.5,sloped] {};

\draw[color=black] (4,6) node {\tiny $w_{5}$};

\draw[color=black,decorate,decoration={brace,raise=0.2cm}]
(7,9) -- (7,7) node[above=0.25cm,pos=0.5,sloped] {};

\draw[color=black] (8,8) node {\tiny $w_{6}$};

\begin{scriptsize}
\draw[color=blue] (3,9.65) node {$x_{\sigma(a(1))}$};
\draw[color=blue] (7,9.65) node {$x_{\sigma(a(3))}$};

\draw [fill=blue] (2.,1.) circle (2.5pt);
\draw[color=blue] (2,0.45) node {$v_{1}$};
\draw [fill=blue] (4.,1.) circle (2.5pt);
\draw[color=blue] (4,0.45) node {$v_{2}$};
\draw [fill=blue] (6.,1.) circle (2.5pt);
\draw[color=blue] (6,0.45) node {$v_{3}$};
\draw [fill=blue] (8.,1.) circle (2.5pt);
\draw[color=blue] (8,0.45) node {$v_{4}$};
\draw [fill=blue] (7.,9.) circle (2.5pt);
\draw [fill=blue] (3.,9.) circle (2.5pt);
\draw [color=black] (1.,1.)-- ++(-2.5pt,0 pt) -- ++(5.0pt,0 pt) ++(-2.5pt,-2.5pt) -- ++(0 pt,5.0pt);
\draw[color=black] (0.25,1) node {$0$};
\draw [color=black] (1.,9.)-- ++(-2.5pt,0 pt) -- ++(5.0pt,0 pt) ++(-2.5pt,-2.5pt) -- ++(0 pt,5.0pt);
\draw[color=black] (0.5,9) node {$T$};
\end{scriptsize}
\end{tikzpicture}
\caption{Illustration of the centered \textsc{Moran} process where $\left| k_{4, T}\right| = 2$.}  
\label{Fig82}
\endminipage
\end{figure}

We illustrate this result with the Figures \ref{Fig81} and \ref{Fig82}. In Figure \ref{Fig81}, we can observe that {  $u_{1} = x_{\sigma(a(1))} + w_{1} + w_{5} + w_{7}$, $u_{2} = x_{\sigma(a(1))} + w_{2} + w_{5} + w_{7}$, $u_{3} = x_{\sigma(a(1))} + w_{3} + w_{6} + w_{7}$, $u_{4} = x_{\sigma(a(1))} + w_{4} + w_{6} + w_{7}$, so that
\begin{gather*}
    v_{1} = \frac{1}{2}\left[w_{5} - w_{6} \right] + \frac{3}{4}w_{1} - \frac{1}{4}\left[w_{2} + w_{3} + w_{4} \right],\qquad v_{2} = \frac{1}{2}\left[w_{5} - w_{6} \right] + \frac{3}{4}w_{2} - \frac{1}{4}\left[w_{1} + w_{3} + w_{4} \right] \\ v_{3} = -\frac{1}{2}\left[w_{5} - w_{6} \right] + \frac{3}{4}w_{3} - \frac{1}{4}\left[w_{1} + w_{2} + w_{4} \right],\qquad v_{4} = -\frac{1}{2}\left[w_{5} - w_{6} \right] + \frac{3}{4}w_{4} - \frac{1}{4}\left[w_{1} + w_{2} + w_{3} \right]
\end{gather*}}
and for Figure \ref{Fig82} that {  $u_{1} = x_{\sigma(a(1))} + w_{1} + w_{5}$, $u_{2} = x_{\sigma(a(1))} + w_{2} + w_{5}$, $u_{3} = x_{\sigma(a(3))} + w_{3} + w_{6}$, $u_{4} = x_{\sigma(a(3))} + w_{4} + w_{6}$ so that 
\begin{align*}
v_{1} & =  \frac{1}{2}\left[x_{\sigma(a(1))} - x_{\sigma(a(3))} + w_{5} - w_{6} \right] + \frac{3}{4}w_{1} - \frac{1}{4}\left[w_{2} + w_{3} + w_{4} \right], \\
 v_{2} & =  \frac{1}{2}\left[x_{\sigma(a(1))} - x_{\sigma(a(3))} + w_{5} - w_{6} \right] + \frac{3}{4}w_{2} - \frac{1}{4}\left[w_{1} + w_{3} + w_{4} \right], \\
 v_{3} & =  -\frac{1}{2}\left[x_{\sigma(a(1))} - x_{\sigma(a(3))} + w_{5} - w_{6} \right] + \frac{3}{4}w_{3} - \frac{1}{4}\left[w_{1} + w_{2} + w_{4} \right], \\
v_{4} & = -\frac{1}{2}\left[x_{\sigma(a(1))} - x_{\sigma(a(3))} + w_{5} - w_{6} \right] + \frac{3}{4}w_{4} - \frac{1}{4}\left[w_{1} + w_{2} + w_{3} \right].
\end{align*} }
We notice that, when there is just one ancestral lineage {  as in Figure \ref{Fig81}}, the random variable $\widehat{Z}^{N,\mu_{4}}_{T}$ {  does not depend on} the ancestral positions $x_1,\cdots,x_4${ : this is the centering effect}. In general, when $n = \left|k_{N,T} \right| = 1$, 
\begin{equation}
  v_{i}  = \int_{0}^{T}{\dd W_{s}^{\left(B\left(s^{-}, i \right)\right)}} - \frac{1}{N}\sum\limits_{j \, = \, 1}^{N}{\int_{0}^{T}{\dd W_{s}^{\left(B\left(s^{-}, j \right)\right)}}}.
\label{Ecriture_v_i}
\end{equation}
 {  This property is fundamental to implement coupling arguments leading to strong ergodicity.} 




\subsection{Coupling arguments with two distinct initial conditions and proof of Theorem \ref{Thm_Ergodicite_VT} \label{Ergodicite_Sect_4.4}} In this section, we want to couple centered \textsc{Moran}'s processes from different initial conditions but with the same \textsc{Kingman} genealogy and the same mutations in order to establish the following exponential ergodicity result. Then, we prove Theorem \ref{Thm_Ergodicite_VT}.
 
\begin{Prop} For all $\mu_{N}, \nu_{N} \in \MM_{1,N}^{c,2}({ \R^{d}})$, for all $T \geqslant 0$,  there exist constants $\alpha, \beta \in (0,+\infty)$, independent of $\mu_{N}, \nu_{N}, T$ and $N$ such that \[ \left\| \widehat{\P}\left(\widehat{Z}_{T}^{N, \mu_{N}} \in \cdot \right) - \widehat{\P}\left(\widehat{Z}_{T}^{N, \nu_{N}} \in \cdot \right) \right\|_{TV} \leqslant \alpha \exp\left(-\beta T \right).\]
In particular, for all $N \in \N^{\star}$ there exists a unique invariant probability measure $\pi_{N}$ for the centered \textsc{Moran} process $\left(Z_{t}^{N} \right)_{t\geqslant 0}$ such that for all $\mu_{N} \in \MM_{1,N}^{c,2}({ \R^{d}})$, for all $T \geqslant 0$, \[ \left\| \P_{\mu_{N}}\left(Z_{T}^{N} \in \cdot \right) -  \pi_{N}\right\|_{TV} \leqslant \alpha \exp\left(-\beta T \right).\]\label{Prop_2_Genealogie}
\end{Prop}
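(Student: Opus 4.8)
The plan is to couple the two centered Moran processes by building them on the common probability space $(\widehat\Omega,\widehat\FF,\widehat\P)$ of Subsection~\ref{Ergodicite_Sect_4.2}, sharing the \emph{same} Kingman genealogy $(k_{N,t})_{0\leqslant t\leqslant T}$ and the \emph{same} Brownian mutations $\bigl(W^{(B)}\bigr)_B$, and only drawing the ancestral positions according to $\mu_N$ on one side and to $\nu_N$ on the other (enlarging the space to carry both samplings). The crucial ingredient is the centering effect already recorded in~(\ref{Ecriture_v_i}): on the event $\{|k_{N,T}|=1\}$, where the genealogy has reached a single common ancestor by time $T$, the positions $v_i$ defining $\widehat Z_T^{N,\mu_N}=\frac1N\sum_i\delta_{v_i}$ depend only on $(k_{N,t})$ and on $(W^{(B)})_B$, not on the ancestral position. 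Hence, under this coupling, $\widehat Z_T^{N,\mu_N}=\widehat Z_T^{N,\nu_N}$ on $\{|k_{N,T}|=1\}$, so they can differ only on $\{|k_{N,T}|\geqslant 2\}$. The coupling inequality for the total variation distance then gives
\[
\bigl\|\widehat\P\bigl(\widehat Z_T^{N,\mu_N}\in\cdot\bigr)-\widehat\P\bigl(\widehat Z_T^{N,\nu_N}\in\cdot\bigr)\bigr\|_{TV}\leqslant \widehat\P\bigl(\widehat Z_T^{N,\mu_N}\neq\widehat Z_T^{N,\nu_N}\bigr)\leqslant \widehat\P\bigl(|k_{N,T}|\geqslant 2\bigr),
\]
reducing everything to a uniform-in-$N$ tail bound on the time $T_\star:=\inf\{t\geqslant 0:|k_{N,t}|=1\}$ to the most recent common ancestor.

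The second and main step is to show $\widehat\P(|k_{N,T}|\geqslant 2)=\widehat\P(T_\star>T)\leqslant\alpha e^{-\beta T}$ with $\alpha,\beta>0$ independent of $N$. Kingman's $N$-coalescent at rate $\gamma$ reduces its number of blocks by binary mergers, and the holding time in the state with $k$ lineages is exponential with parameter $\gamma\binom{k}{2}=\gamma k(k-1)/2$; thus $T_\star=\sum_{k=2}^N\mathcal E_k$ with the $\mathcal E_k$ independent and $\mathcal E_k\sim\mathrm{Exp}(\gamma k(k-1)/2)$. I would control the tail by a Chernoff bound on the Laplace transform: for any $\lambda<\gamma$,
\[
\E\bigl[e^{\lambda T_\star}\bigr]=\prod_{k=2}^N\frac{\gamma k(k-1)/2}{\gamma k(k-1)/2-\lambda}\leqslant\prod_{k=2}^{\infty}\Bigl(1-\frac{2\lambda}{\gamma k(k-1)}\Bigr)^{-1}=:C(\lambda)<\infty,
\]
the infinite product converging since $\sum_{k\geqslant 2}\frac{1}{k(k-1)}<\infty$, and the bound being uniform in $N$. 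Taking $\lambda=\gamma/2$ yields $\widehat\P(T_\star>T)\leqslant C(\gamma/2)\,e^{-\gamma T/2}$, which is the desired estimate with $\alpha=C(\gamma/2)$ and $\beta=\gamma/2$, both depending only on $\gamma$ and hence independent of $N$, $\mu_N$, $\nu_N$ and $T$. This uniform control of the coalescence time is the heart of the argument and the only place where the ``coming down from infinity'' feature of Kingman's coalescent is used; it is also the step I expect to be the main obstacle, the coupling itself being an immediate consequence of the centering effect.

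For the final assertion I would work in the space of probability measures on the Polish space $\MM_{1,N}^{c,2}(\R)$, which is complete for the total variation distance. Since $\widehat\P(\widehat Z_T^{N,\mu_N}\in\cdot)=\P_{\mu_N}(Z_T^N\in\cdot)$ by Corollary~\ref{Corollaire_1_Genealogie}, the estimate reads $\|\P_{\mu_N}(Z_T^N\in\cdot)-\P_{\nu_N}(Z_T^N\in\cdot)\|_{TV}\leqslant\alpha e^{-\beta T}$ for all $\mu_N,\nu_N$. Writing $P_t^N$ for the Markov semigroup of $(Z_t^N)$ and using the Markov property together with the non-expansiveness $\|\rho P_t^N-\rho' P_t^N\|_{TV}\leqslant\|\rho-\rho'\|_{TV}$ of any Markov kernel, one checks that $n\mapsto\P_{\mu_N}(Z_n^N\in\cdot)$ is Cauchy, since $\|\P_{\mu_N}(Z_{n+m}^N\in\cdot)-\P_{\mu_N}(Z_n^N\in\cdot)\|_{TV}\leqslant\int\|\P_\nu(Z_n^N\in\cdot)-\P_{\mu_N}(Z_n^N\in\cdot)\|_{TV}\,\P_{\mu_N}(Z_m^N\in d\nu)\leqslant\alpha e^{-\beta n}$. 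Its limit $\pi_N$ is invariant (pass to the limit in $\P_{\mu_N}(Z_{n+s}^N\in\cdot)=\P_{\mu_N}(Z_n^N\in\cdot)P_s^N$ via non-expansiveness), and any two invariant measures coincide by the same contraction integrated against each of them. Finally, applying the contraction with the second initial law equal to $\pi_N$ and using $\pi_N P_T^N=\pi_N$ gives $\|\P_{\mu_N}(Z_T^N\in\cdot)-\pi_N\|_{TV}\leqslant\alpha e^{-\beta T}$, completing the proof.
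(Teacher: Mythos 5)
Your proof is correct and follows essentially the same route as the paper: the same coupling through a shared \textsc{Kingman} genealogy and shared Brownian mutations, the same use of the centering effect (\ref{Ecriture_v_i}) on the event $\left\{\left|k_{N,T}\right| = 1\right\}$ combined with the coupling inequality for total variation, and the same exponential-moment (Chernoff) control of the coalescence height $\sum_{k\geqslant 2}{T_{k}}$ with $T_{k} \sim \mathrm{Exp}\left(\gamma \binom{k}{2}\right)$, uniformly in $N$. The only (harmless) difference is that you fix $\lambda = \gamma/2$ in the Chernoff bound, which directly yields constants $\alpha, \beta$ independent of $T$, whereas the paper optimizes over $\lambda \in \left]0,\gamma\right[$ and obtains the bound $C\gamma \exp(1)\, T \exp(-\gamma T)$, whose prefactor as written still depends on $T$ and must be absorbed into $\alpha\exp(-\beta T)$ for some $\beta < \gamma$ --- your variant avoids this slip, and you also spell out the standard contraction argument for existence and uniqueness of $\pi_{N}$, which the paper leaves implicit.
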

\begin{Rem}
The previous result is true for all deterministic initial conditions, so also for any random initial conditions.
\end{Rem}

\begin{proof}[Proof of Proposition \ref{Prop_2_Genealogie}] \textbf{Step 1. Coupling.} {  Let us denote $\mu_{N}:= \frac{1}{N}\sum_{i\, = \, 1}^{N}{\delta_{x_i}}$ and $\nu_{N} := \frac{1}{N}\sum_{i\, = \, 1}^{N}{\delta_{y_i}}$.
We construct the random variables $\widehat{Y}^{N, \mu_{N}}_{T}$ and $\widehat{Y}^{N, \nu_{N}}_{T}$ as in~\eqref{eq:couplage-Moran-Kingman} from the same realisations of $\left(k_{N,t} \right)_{0\leqslant t \leqslant T}$, $(W^{(B)})_{B\subset\{1,\ldots,N\}}$ and $\sigma$.}
This allows us to construct on the same probability space the two random variables \[\widehat{Z}_{T}^{N, \mu_{N}}:= \frac{1}{N}\sum\limits_{i\, = \, 1}^{N}{\delta_{v_{i}^{\mu_{N}}}} \qquad {\rm{and}} \qquad \widehat{Z}_{T}^{N, \nu_{N}} := \frac{1}{N}\sum\limits_{i\, = \, 1}^{N}{\delta_{v_{i}^{\nu_{N}}}}\] such that $Z^{N,\mu_{N}}_{T} \overset{{\rm{law}}}{=} \widehat{Z}^{N, \mu_{N}}_{T}$ and $Z^{N, \nu_{N}}_{T} \overset{{\rm{law}}}{=} \widehat{Z}^{N, \nu_{N}}_{T}$. \\

\textbf{Step 2. Control in total variation.} From (\ref{Ecriture_v_i}), on the event $\left\{ \left|k_{N, T}\right| = 1 \right\}$, we have that for all $i \in \left\{1, \cdots, N \right\}$, $v_{i}^{\mu_{N}} = v_{i}^{\nu_{N}}$ a.s. and from  {\color{blue} \cite{Lindvall}} we deduce that 
\begin{align*}
\left\|\widehat{\P}\left(\widehat{Z}_{T}^{N, \mu_{N}} \in \cdot \right) - \widehat{\P}\left(\widehat{Z}_{T}^{N, \nu_{N}} \in \cdot \right) \right\|_{TV}&  \leqslant \widehat{\P}\left(\widehat{Z}_{T}^{N, \mu_{N}} \neq \widehat{Z}_{T}^{N, \nu_{N}} \right) \\
& = 1 - \K_{N,T}\left(\left|k_{N,T} \right| = 1 \right)
\end{align*}
{ where we denote by $\K_{N,T}$ the law of the \textsc{Kingman} $N-$coalescent with coalescence rate $2\gamma$ on $[0,T]$.} We denote by $H_{N} := \sum_{k\, = \, 2}^{N}{T_{k}}$ the height of the \textsc{Kingman} $N-$coalescent where $\left(T_{k} \right)_{2\leqslant k \leqslant N}$ are independent random variables such that $T_{k}$ follows an exponential law of parameter ${ 2\gamma} \binom{k}{2}$ {\color{blue} \cite[\color{black} Lemma 2.20]{Etheridge}}. Now, $\K_{N,T} \left(\left|k_{N,T} \right| = 1\right) \geqslant \K_{\infty,T} \left(\left|k_{\infty,T} \right| = 1\right) = \K_{\infty,T} \left(H_{\infty} \leqslant T \right)$ and by the  exponential \textsc{Tchebychev} inequality we have \[\K_{\infty,T} \left(H_{\infty} > T \right) \leqslant \inf_{\lambda \, \in \, (0,{ 2\gamma})}{\frac{\E\left(\exp\left(\lambda H_{\infty} \right) \right)}{\exp\left(\lambda T\right)}}.\]
Note that for all $ \lambda \in \, (0,{ 2\gamma})$, 
\begin{equation}
\E\left(\exp\left(\lambda H_{\infty} \right) \right) = \prod\limits_{k\, = \, 2}^{+\infty}{\E\left(\exp\left(\lambda T_{k} \right) \right)} = \frac{{ 2\gamma}}{{ 2\gamma}-\lambda}\prod\limits_{k\, = \, 3}^{+\infty}{\frac{1}{1 - \frac{\lambda}{\gamma k(k-1)}}},
    \label{Eq_exponential_Tchebychev_inequality}
\end{equation}
where the last product is convergent. We deduce that \[\K_{\infty,T} \left(H_{\infty} > T \right) \leqslant C \inf_{\lambda \, \in \, (0,{ 2\gamma})}{\frac{1}{({ 2\gamma}-\lambda)\exp(\lambda T)}} = { 2}C\gamma\exp(1)T\exp(-{ 2\gamma} T),\]
where $C := \prod_{k\, = \, 3}^{+\infty}{\frac{1}{1-\frac{1}{\gamma k(k-1)}}}$. The result follows for $\alpha := { 2}C\gamma\exp(1)T$ and $\beta := { 2\gamma}$. \qedhere
\end{proof}

\begin{proof}[Proof of Theorem \ref{Thm_Ergodicite_VT}]
Classically, it is sufficient to check that there exists constants $\alpha, \beta \in \R_{+}$ such that for all $\mu, \nu \in \MM_{1}^{c,2}({ \R^{d}})$, for all $T \geqslant 0$,   \[ \left\| \P_{\mu}\left(Z_{T} \in \cdot \right) - \P_{\nu}\left(Z_{T} \in \cdot \right) \right\|_{TV} \leqslant \alpha \exp\left(-\beta T \right).\]
From \textsc{Lusin}'s theorem {\color{blue} \cite[\color{black} Corollary of  Theorem 2.24]{rudin_real_2013}}, Proposition \ref{Prop_2_Genealogie}  and Corollary \ref{Corollaire_1_Genealogie} there exists two constants $\alpha, \beta \in (0, + \infty)$ such that for all $\mu_{N}, \nu_{N} \in \MM_{1,N}^{c,2}({ \R^{d}})$, for all $T \geqslant 0$, 
\begin{align*}
\sup_{\substack{\left\|f \right\|_{\infty} \leqslant 1 \\ f : { \R^{d} \to \R} \, {\rm continuous}}}{\left|\E_{\mu_{N}}\left(f\left(Z_{T}^{N} \right) \right) - \E_{\nu_{N}}\left(f\left(Z_{T}^{N} \right) \right) \right|} &= \sup_{\left\|f \right\|_{\infty} \leqslant 1}{\left|\E_{\mu_{N}}\left(f\left(Z_{T}^{N} \right) \right) - \E_{\nu_{N}}\left(f\left(Z_{T}^{N} \right) \right) \right|}  \\
& \leqslant \alpha \exp(-\beta T).  
\end{align*}
Now, let be fixed two deterministic initial conditions $\mu, \nu \in \MM_{1}^{c,2}({ \R^{d}})$ and consider an i.i.d. sample $\left(X_{i} \right)_{1\leqslant i \leqslant N}$ of distribution $\mu$ and an i.i.d. sample $\left(\widetilde{X}_{i} \right)_{1\leqslant i \leqslant N}$ of distribution $\nu$. Then, we construct two initial conditions $\mu_{N} := \frac{1}{N}\sum_{i\, = \, 1}^{N}{\delta_{X_{i}}}$ and $\nu_{N} := \frac{1}{N}\sum_{i\, = \, 1}^{N}{\delta_{\widetilde{X}_{i}}}$ such that $\mu_{N}$ and $\nu_{N}$ converge in law respectively to $\mu$ and $\nu$. We define $\widetilde{\mu}_{N} := \tau_{-\left\langle \id, \mu_{N} \right\rangle}  \sharp \, \mu_{N}$ and $\widetilde{\nu}_{N} := \tau_{-\left\langle \id, \nu_{N} \right\rangle}  \sharp \, \nu_{N}$ such that $\widetilde{\mu}_{N}, \widetilde{\nu}_{N} \in \MM_{1,N}^{c,2}({ \R^{d}})$. By construction, the assumptions of exchangeability of the random variables $\left(X_{i} \right)_{1\leqslant i \leqslant N}$ and $\left(\widetilde{X}_{i} \right)_{1\leqslant i \leqslant N}$ are satisfied, $\widetilde{\mu}_{N}$ and $\widetilde{\nu}_{N}$ converge in law respectively to $\mu$ and $\nu$ and we have \[\E\left(\left\langle \id^{2}, \widetilde{\mu}_{N} \right\rangle \right) = \E\left(\left\langle \id^{2}, \mu_{N} \right\rangle - \left\langle \id, \mu_{N} \right\rangle^{2} \right) = \left(1 - \frac{1}{N} \right){ \E\left(X_{1}^{2}\right)} . \] 
Then, we deduce from Proposition \ref{Prop_0_Genealogie} that for all $f \in \CCCC_{b}({  \R^{d}}, \R)$ satisfying $\left\|f \right\|_{\infty} \leqslant 1$, for all $T \geqslant 0$, 
\[\left|\E_{\mu} f\left(Z_{T} \right) - \E_{\nu} f\left(Z_{T} \right) \right| = \lim_{N\to + \infty}{\left|\E_{\mu_{N}}\left(f\left(Z_{T}^{N} \right) \right) - \E_{\nu_{N}}\left(f\left(Z_{T}^{N} \right) \right) \right| \leqslant \alpha\exp(-\beta T)} \]
which concludes the proof. \qedhere 
\end{proof}

\subsection{Characterisation of the invariant probability measure $\pi$\label{Ergodicite_Sect_4.6}}

{ The main result of this section is Theorem \ref{Convergence_Z_infini_check} of Section \ref{Look-down_2}, which gives a characterisation of the invariant measure $\pi$}  thanks to an adaptation of \textsc{Donnelly-Kurtz}'s modified look-down construction. Let us begin by giving a convergence result of the invariant probability measure $\pi_{N}$ to the invariant probability measure~$\pi$. 

\begin{Lem}
The sequence of laws $\left(\pi_{N} \right)_{N \in \N^{\star}}$ converges in law to $\pi$ in $\MM_{1}\left(\MM_{1}({ \R^{d}}) \right)$.
\label{Lemme_1_Genealogie}
\end{Lem}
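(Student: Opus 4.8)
The plan is to establish the weak convergence $\pi_N \to \pi$ by testing against an arbitrary bounded continuous functional $\Phi \in \CCCC^0_b(\MM_1(\R), \R)$ and using the time-$T$ law of the two processes as a bridge between their stationary regimes. Fix such a $\Phi$ and a deterministic $\mu \in \MM_1^{c,2}(\R)$. For every $T \geqslant 0$ and every $N \in \N^\star$ I would insert the laws of $Z_T$ (centered \textsc{Fleming-Viot} started at $\mu$) and of $Z_T^N$ (centered \textsc{Moran} started at a well-chosen $\mu_N$) and write
\[
|\langle \Phi, \pi_N\rangle - \langle \Phi, \pi\rangle| \leqslant |\langle \Phi, \pi_N\rangle - \E_{\mu_N}(\Phi(Z_T^N))| + |\E_{\mu_N}(\Phi(Z_T^N)) - \E_\mu(\Phi(Z_T))| + |\E_\mu(\Phi(Z_T)) - \langle \Phi, \pi\rangle|.
\]
The two outer terms will be made small by the exponential ergodicity estimates (uniformly in $N$ for the first one), while the middle term will vanish as $N \to +\infty$ at fixed $T$ by the convergence of the centered \textsc{Moran} process.

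To build the approximating initial conditions I would proceed exactly as in the proof of Theorem~\ref{Thm_Ergodicite_VT} (Subsection~\ref{Ergodicite_Sect_4.5}): draw an i.i.d. sample $(X_i)_{1\leqslant i\leqslant N}$ of law $\mu$, form its empirical measure and center it, setting $\mu_N := \tau_{-\langle \id, \frac{1}{N}\sum_i\delta_{X_i}\rangle}\,\sharp\,\frac{1}{N}\sum_{i=1}^N\delta_{X_i}\in \MM_{1,N}^{c,2}(\R)$. Since the $(X_i)$ are i.i.d., hence exchangeable, $\mu_N$ converges in law to $\mu$, and because $\mu$ has finite second moment the quantity $\E(\langle \id^2, \mu_N\rangle) = (1-\frac{1}{N})\Var(X_1)$ is bounded uniformly in $N$. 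Thus $\mu_N$ meets all the hypotheses of Proposition~\ref{Prop_0_Genealogie} with limit $Z_0 = \mu$.

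For the first term, Proposition~\ref{Prop_2_Genealogie} gives $|\langle \Phi, \pi_N\rangle - \E_{\mu_N}(\Phi(Z_T^N))| \leqslant 2\|\Phi\|_\infty\,\|\P_{\mu_N}(Z_T^N\in\cdot) - \pi_N\|_{TV} \leqslant 2\|\Phi\|_\infty\,\alpha\exp(-\beta T)$ with $\alpha,\beta$ independent of $N$; symmetrically Theorem~\ref{Thm_Ergodicite_VT} bounds the third term by $2\|\Phi\|_\infty\,\alpha\exp(-\beta T)$. For the middle term, Proposition~\ref{Prop_0_Genealogie} yields convergence in law of $(Z_t^N)_{t\geqslant 0}$ to $(Z_t)_{t\geqslant 0}$ on $\CCCC^0([0,+\infty),\MM_1^{c,2}(\R))$; the evaluation map $y\mapsto y(T)$ being continuous there, the continuous mapping theorem gives $Z_T^N \to Z_T$ in law, whence $\E_{\mu_N}(\Phi(Z_T^N)) \to \E_\mu(\Phi(Z_T))$ as $N\to+\infty$ for each fixed $T$. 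Given $\varepsilon>0$, I would first pick $T$ so large that the two outer terms are each below $\varepsilon/3$ (this is possible uniformly in $N$), then let $N\to+\infty$ to push the middle term below $\varepsilon/3$. This gives $\limsup_{N}|\langle \Phi,\pi_N\rangle - \langle \Phi,\pi\rangle|\leqslant 2\varepsilon/3$, and as $\varepsilon$ and $\Phi$ are arbitrary, $\langle \Phi,\pi_N\rangle \to \langle \Phi,\pi\rangle$ for every bounded continuous $\Phi$, which is exactly weak convergence $\pi_N\to\pi$ in $\MM_1(\MM_1(\R))$.

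I expect the delicate point to be the \emph{uniformity in $N$} of the relaxation estimate entering the first term: it is precisely the $N$-independence of the constants in Proposition~\ref{Prop_2_Genealogie} (itself inherited from the coming down from infinity of the \textsc{Kingman} genealogy, uniformly in the number of particles) that allows $T$ to be fixed \emph{before} sending $N\to+\infty$. Were the rate of convergence to $\pi_N$ to degenerate with $N$, this interchange of limits would break down. The only other point needing care is that $\Phi$ is continuous on $\MM_1(\R)$ whereas the processes live in $\MM_1^{c,2}(\R)$; this is harmless since $\MM_1^{c,2}(\R)$ carries the trace topology, so restrictions of bounded continuous functionals remain admissible test functions and the \textsc{Portmanteau} characterisation of weak convergence on the Polish space $\MM_1(\R)$ applies verbatim.
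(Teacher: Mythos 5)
Your proof is correct and follows essentially the same route as the paper: the same three-term triangle inequality through the time-$T$ laws, with the outer terms controlled by the $N$-uniform exponential ergodicity of Proposition~\ref{Prop_2_Genealogie} and Theorem~\ref{Thm_Ergodicite_VT}, and the middle term by the convergence in law of Proposition~\ref{Prop_0_Genealogie}. You merely spell out details the paper leaves implicit (the i.i.d.\ construction of $\mu_N$, the continuous-mapping step for the evaluation map, and the order-of-limits bookkeeping), and your closing remark correctly identifies the $N$-uniformity of the relaxation rate as the crux of the argument.
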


\begin{proof}  Let $T \geqslant 0$, $\mu_{N} \in \MM_{1,N}^{c,2}({ \R^{d}})$ and $\mu \in \MM_{1}^{c,2}({ \R^{d}})$ such that $\mu_{N}$ converges in law to $\mu$. From Proposition \ref{Prop_2_Genealogie} and Theorem \ref{Thm_Ergodicite_VT}, we have for all $f \in \CCCC_{b}(\MM_{1}({ \R^{d}}), \R)$,
\begin{align*}
\left|\left\langle f, \pi_{N} \right\rangle - \left\langle f, \pi \right\rangle \right| & \leqslant \left|\left\langle f, \pi_{N} \right\rangle - \E_{\mu_{N}}\left(f\left(\widehat{Z}_{T}^{N, \mu_{N}} \right) \right) \right| + \left|\E_{\mu_{N}}\left(f\left(\widehat{Z}_{T}^{N, \mu_{N}} \right) \right) - \E_{\mu}\left(f\left( Z_{T}\right) \right) \right| \\
& \qquad  + \left|\E_{\mu}\left(f\left( Z_{T}\right) \right) - \left\langle f, \pi \right\rangle \right| \\
& \leqslant 2 \left\|f \right\|_{\infty} \alpha \exp\left(-\beta T \right) + \left|\E_{\mu_{N}}\left(f\left(\widehat{Z}_{T}^{N, \mu_{N}} \right) \right) - \E_{\mu}\left(f\left( Z_{T}\right) \right) \right|.
\end{align*}
The announced result follows from Proposition \ref{Prop_0_Genealogie}. \qedhere
\end{proof}

{ \subsubsection{Characterisation of $\pi_{N}$ from the modified look-down construction \label{Look-down_1}}}
We consider the probability space $\left(\check{\Omega}, \check{\FF}, \check{\P} \right)$ where we define the modified look-down process on $(-\infty, 0]$ as a population dynamics on the set $\N$ of levels where 
one individual is assigned to each level. To each pair of levels $(i,j) \in \N^{2}$ with $1 \leqslant i < j$, we assign an independent \textsc{Poisson} processes $\left(N_{ij}(t)\right)_{t\geqslant 0}$ with intensity { $2\gamma$} and to each level $i \in \N^{\star}$, we assign an independent standard Brownian motion $\left(B_{i}(t) \right)_{t\leqslant 0}$ on $\R_{-}$. Jointly with the modified look-down is constructed for all $N \in \N^{\star}$, the so-called $N-$look-down process whose evolution is given as follows:

\begin{figure}[!h]
\centering		
\begin{minipage}[h]{.515\textwidth} \centering
\centering		
\definecolor{ccqqqq}{rgb}{0.8,0.,0.} 
 \definecolor{ffqqff}{rgb}{1.,0.,1.} 
\definecolor{ffxfqq}{rgb}{0.490,0.490,1} 
\definecolor{ttzzqq}{rgb}{0.294,0.,0.51} 
\definecolor{wwqqzz}{rgb}{0.4,0.,0.6} 
\definecolor{qqqqff}{rgb}{0.2,0.6,0.} 
\definecolor{qqwuqq}{rgb}{0.,0.39215686274509803,0.}
\begin{tikzpicture}[line cap=round,line join=round,>=triangle 45,x=0.575cm,y=0.575cm]
\clip(-3.25,-0.75) rectangle (12,5.25);
\draw [>=stealth,->,line width=2.5pt, color = gray] (1.,1.25) -- (1.,2.);
\draw [shift={(0.75,1.25)},line width=2.5pt, color = gray]  plot[domain=-1.5715003927507514:0.00830175753034396,variable=\t]({1.*0.25000006196362157*cos(\t r)+0.*0.25000006196362157*sin(\t r)},{0.*0.25000006196362157*cos(\t r)+1.*0.25000006196362157*sin(\t r)});

\draw [shift={(0.75,2.25)},line width=1pt, color = gray]  plot[domain=-1.5715003927507514:0.00830175753034396,variable=\t]({1.*0.25000006196362157*cos(\t r)+0.*0.25000006196362157*sin(\t r)},{0.*0.25000006196362157*cos(\t r)+1.*0.25000006196362157*sin(\t r)});

\draw [shift={(0.75,3.25)},line width=1pt, color = gray]  plot[domain=-1.5715003927507514:0.00830175753034396,variable=\t]({1.*0.25000006196362157*cos(\t r)+0.*0.25000006196362157*sin(\t r)},{0.*0.25000006196362157*cos(\t r)+1.*0.25000006196362157*sin(\t r)});

\draw [shift={(0.75,4.25)},line width=1pt, color = gray]  plot[domain=-1.5715003927507514:0.00830175753034396,variable=\t]({1.*0.25000006196362157*cos(\t r)+0.*0.25000006196362157*sin(\t r)},{0.*0.25000006196362157*cos(\t r)+1.*0.25000006196362157*sin(\t r)});

\draw [shift={(1.25,2.75)},line width=1pt, color = gray]  plot[domain=1.5766858572921507:3.141592653589793,variable=\t]({1.*0.2500043358838501*cos(\t r)+0.*0.2500043358838501*sin(\t r)},{0.*0.2500043358838501*cos(\t r)+1.*0.2500043358838501*sin(\t r)});

\draw [shift={(1.25,3.75)},line width=1pt, color = gray]  plot[domain=1.5766858572921507:3.141592653589793,variable=\t]({1.*0.2500043358838501*cos(\t r)+0.*0.2500043358838501*sin(\t r)},{0.*0.2500043358838501*cos(\t r)+1.*0.2500043358838501*sin(\t r)});

\draw [shift={(1.25,4.75)},line width=1pt, color = gray]  plot[domain=1.5766858572921507:3.141592653589793,variable=\t]({1.*0.2500043358838501*cos(\t r)+0.*0.2500043358838501*sin(\t r)},{0.*0.2500043358838501*cos(\t r)+1.*0.2500043358838501*sin(\t r)});

\draw [line width=1pt, color = gray] (1.,2.25)-- (1.,2.75);
\draw [line width=1pt, color = gray] (1.,3.25)-- (1.,3.75);
\draw [line width=1pt, color = gray] (1.,4.25)-- (1.,4.75);

\draw [line width=2.5pt, color = gray] (1,2)-- (2.75,2);
\draw [line width=1pt, color = gray] (2.75,2)-- (11,2);

\draw [shift={(2.75,3.25)},line width=1pt, color = Mon_orange]  plot[domain=-1.5715003927507514:0.00830175753034396,variable=\t]({1.*0.25000006196362157*cos(\t r)+0.*0.25000006196362157*sin(\t r)},{0.*0.25000006196362157*cos(\t r)+1.*0.25000006196362157*sin(\t r)});

\draw [>=stealth,->,line width=2.5pt, color = Mon_orange] (3.,2.25) -- (3.,3.);

\draw [shift={(2.75,2.25)},line width=2.5pt, color = Mon_orange]  plot[domain=-1.5715003927507514:0.00830175753034396,variable=\t]({1.*0.25000006196362157*cos(\t r)+0.*0.25000006196362157*sin(\t r)},{0.*0.25000006196362157*cos(\t r)+1.*0.25000006196362157*sin(\t r)});
\draw [line width=1pt, color = ffxfqq] (-0.5,2.)-- (1,2.);

\draw [line width=2.5pt, color = Mon_orange] (3,3.)-- (5.75,3.);
\draw [line width=1pt, color = Mon_orange] (5.75,3.)-- (6,3.);

\draw [line width=1pt, color = ffxfqq] (-0.5,1.)-- (0.75,1.);
\draw [line width=1pt, color = ffxfqq] (0.75,1.)-- (11.,1.);

\draw [shift={(3.25,3.75)},line width=1pt, color = Mon_orange]  plot[domain=1.5766858572921507:3.141592653589793,variable=\t]({1.*0.2500043358838501*cos(\t r)+0.*0.2500043358838501*sin(\t r)},{0.*0.2500043358838501*cos(\t r)+1.*0.2500043358838501*sin(\t r)});
\draw [line width=1pt, color = Mon_orange] (3.,3.25)-- (3.,3.75);
\draw [line width=1pt, color = ffxfqq] (-0.5,3.)-- (1.25,3.);
\draw [line width=1pt, color = gray] (1.25,3.)-- (3,3.);

\draw [line width=1pt, color = Mon_orange] (3.25,4.)-- (5.75,4.);
\draw [line width=1pt, color = Mon_orange] (5.75,4.)-- (6.25,4.);

\draw [shift={(2.75,4.25)},line width=1pt, color = Mon_orange]  plot[domain=-1.5515546232470658:0.,variable=\t]({1.*0.250046287535086*cos(\t r)+0.*0.250046287535086*sin(\t r)},{0.*0.250046287535086*cos(\t r)+1.*0.250046287535086*sin(\t r)});
\draw [shift={(3.25,4.75)},line width=1pt, color = Mon_orange]  plot[domain=1.5707963267948966:3.141592653589793,variable=\t]({1.*0.25*cos(\t r)+0.*0.25*sin(\t r)},{0.*0.25*cos(\t r)+1.*0.25*sin(\t r)});

\draw [line width=1pt, color = ffxfqq] (-0.5,4.)-- (1.25,4.);
\draw [line width=1pt, color = gray] (1.25,4.)-- (3.25,4.);

\draw [line width=1pt, color = Mon_orange] (3.,4.25)-- (3.,4.75);
\draw [line width=1pt, color = Mon_orange] (3.25,5.)-- (6.25,5.);

\draw [>=stealth,->,line width=1.5pt, color = qqqqff] (6.,1.25) -- (6.,3);
\draw [shift={(5.75,1.25)},line width=1.5pt, color = qqqqff]  plot[domain=-1.5707963267948966:0.,variable=\t]({1.*0.25*cos(\t r)+0.*0.25*sin(\t r)},{0.*0.25*cos(\t r)+1.*0.25*sin(\t r)});
\draw [shift={(5.75,3.25)},line width=2.5pt, color = qqqqff]  plot[domain=-1.5707963267948966:0.,variable=\t]({1.*0.25*cos(\t r)+0.*0.25*sin(\t r)},{0.*0.25*cos(\t r)+1.*0.25*sin(\t r)});
\draw [shift={(6.25,3.75)},line width=2.5pt, color = qqqqff]  plot[domain=1.5707963267948966:3.141592653589793,variable=\t]({1.*0.25*cos(\t r)+0.*0.25*sin(\t r)},{0.*0.25*cos(\t r)+1.*0.25*sin(\t r)});
\draw [shift={(5.75,4.25)},line width=1pt, color = qqqqff]  plot[domain=-1.5707963267948966:0.,variable=\t]({1.*0.25*cos(\t r)+0.*0.25*sin(\t r)},{0.*0.25*cos(\t r)+1.*0.25*sin(\t r)});
\draw [shift={(6.25,4.75)},line width=1pt, color = qqqqff]  plot[domain=1.5707963267948966:3.141592653589793,variable=\t]({1.*0.25*cos(\t r)+0.*0.25*sin(\t r)},{0.*0.25*cos(\t r)+1.*0.25*sin(\t r)});

\draw [line width=1pt, color = qqqqff] (6,3.)-- (11.,3.);
\draw [line width=2.5pt, color = qqqqff] (6.,3.25)-- (6.,3.75);
\draw [line width=2.5pt, color = qqqqff] (6.25,4.)-- (7.75,4.);
\draw [line width=1pt, color = qqqqff] (7.75,4.)-- (8,4.);
\draw [line width=1pt, color = qqqqff] (6.,4.25)-- (6.,4.75);
\draw [line width=1pt, color = qqqqff] (6.25,5.)-- (7.75,5.);
\draw [line width=1pt, color = qqqqff] (7.75,5.)-- (8.25,5.);

\draw [>=stealth,->,line width=1.5pt, color = ccqqqq] (8.,3.25) -- (8.,4.);
\draw [shift={(7.75,3.25)},line width=1.5pt, color = ccqqqq ]  plot[domain=-1.5707963267948966:0.,variable=\t]({1.*0.25*cos(\t r)+0.*0.25*sin(\t r)},{0.*0.25*cos(\t r)+1.*0.25*sin(\t r)});

\draw [shift={(7.75,4.25)},line width=2.5pt, color = ccqqqq]  plot[domain=-1.5707963267948966:0.,variable=\t]({1.*0.25*cos(\t r)+0.*0.25*sin(\t r)},{0.*0.25*cos(\t r)+1.*0.25*sin(\t r)});
\draw [shift={(8.25,4.75)},line width=2.5pt, color = ccqqqq]  plot[domain=1.5707963267948966:3.141592653589793,variable=\t]({1.*0.25*cos(\t r)+0.*0.25*sin(\t r)},{0.*0.25*cos(\t r)+1.*0.25*sin(\t r)});

\draw [line width=1pt, color = ccqqqq] (8,4.)-- (11.,4.);
\draw [line width=2.5pt, color = ccqqqq] (8.,4.25)-- (8.,4.75);
\draw [line width=2.5pt, color = ccqqqq] (8.25,5.)-- (11.,5.);

\draw [line width=1pt, color = ffxfqq] (-0.5,5.)-- (1.25,5.);
\draw [line width=1pt, color = gray] (1.25,5.)-- (3.25,5.);

\draw [>=stealth,->,line width=1pt] (-1.,0.) -- (12,0.);

\begin{scriptsize}

\draw (-2.5,1) node {$B_{1}(t)$}; 
\draw (-2.5,2) node {$B_{2}\left(t\right)$}; 
\draw (-2.5,3) node {$B_{3}\left(t\right)$}; 
\draw (-2.5,4) node {$B_{4}\left(t\right)$}; 
\draw (-2.5,5) node {$B_{5}\left(t\right)$}; 

\draw[color=black] (11.5,1) node {$\check{u}_{1}$};
\draw[color=black] (11.5,2) node {$\check{u}_{2}$};
\draw[color=black] (11.5,3) node {$\check{u}_{3}$};
\draw[color=black] (11.5,4) node {$\check{u}_{4}$};
\draw[color=black] (11.5,5) node {$\check{u}_{5}$};

\draw[color=black] (-1,1) node {$1$};
\draw[color=black] (-1,2) node {$2$};
\draw[color=black] (-1,3) node {$3$};
\draw[color=black] (-1,4) node {$4$};
\draw[color=black] (-1,5) node {$5$};

\draw [color=black] (11,0)-- ++(0pt,0 pt) -- ++(0pt,0 pt) ++(0pt,-2.5pt) -- ++(0 pt,5.0pt);
\draw[color=black] (11,-0.5) node {$0$};
\draw [color=ccqqqq] (8,0)-- ++(0pt,0 pt) -- ++(0pt,0 pt) ++(0pt,-2.5pt) -- ++(0 pt,5.0pt);
\draw[color=ccqqqq] (8,-0.5) node {$-t_{1}$};
\draw [color=qqqqff] (6,0)-- ++(0pt,0 pt) -- ++(0pt,0 pt) ++(0pt,-2.5pt) -- ++(0 pt,5.0pt);
\draw[color=qqqqff] (6,-0.5) node {$-t_{2}$};
\draw [color=Mon_orange] (3,0)-- ++(0pt,0 pt) -- ++(0pt,0 pt) ++(0pt,-2.5pt) -- ++(0 pt,5.0pt);
\draw[color=Mon_orange] (3,-0.5) node {$-t_{3}$};
\draw [color=gray] (1,0)-- ++(0pt,0 pt) -- ++(0pt,0 pt) ++(0pt,-2.5pt) -- ++(0 pt,5.0pt);
\draw[color=gray] (1,-0.5) node {$-t_{4}$};
\draw [color=black] (-0.5,0)-- ++(0pt,0 pt) -- ++(0pt,0 pt) ++(0pt,-2.5pt) -- ++(0 pt,5.0pt);
\draw[color=black] (-0.5,-0.5) node {$-T$};
\end{scriptsize}
\end{tikzpicture}
\caption{Graphical representation of the modified look-down process with $N = 5$.}
\label{Fig_Modified_Look_Down}
\end{minipage}
\hspace{1.3cm}
\begin{minipage}[h]{.375\textwidth}\centering
\definecolor{ccqqqq}{rgb}{0.8,0.,0.} 
\definecolor{wwqqzz}{rgb}{0.4,0.,0.6} 
\definecolor{qqwuqq}{rgb}{0.,0.39215686274509803,0.}
\definecolor{ffqqff}{rgb}{1.,0.,1.} 
\definecolor{ffxfqq}{rgb}{0.490,0.490,1} 
\definecolor{ttzzqq}{rgb}{0.294,0.,0.51} 
\definecolor{qqqqff}{rgb}{0.2,0.6,0.} 
\begin{tikzpicture}[line cap=round,line join=round,>=triangle 45,x=0.42cm,y=0.42cm]
\clip(-4.45,-0.25) rectangle (9.5,13.35);
\draw [>=stealth,<-, line width=1pt] (0,0.25) -- (0,13.25);

\draw [line width=1pt,color=ffxfqq] (1,1)-- (1,6);
\draw [line width=1pt,color=qqqqff] (3,1)-- (3,4);
\draw [line width=1pt,color=ccqqqq] (5,1)-- (5,4);
\draw [line width=1pt,color=gray] (7,1)-- (7,9);
\draw [line width=2.5pt,color=ccqqqq] (9,1)-- (9,4);
\draw [line width=2.5pt,color=qqqqff] (9,4)-- (9,6);
\draw [line width=2.5pt,color=Mon_orange] (9,6)-- (9,9);
\draw [line width=1pt,color=ffxfqq] (2.5,6)-- (2.5,11);
\draw [line width=1pt,color=qqqqff] (4,4)-- (4,6);
\draw [line width=2.5pt,color=gray] (8,9)-- (8,11);
\draw [line width=1pt,color=ffxfqq] (5.25,11)-- (5.25,12.5);

\draw [line width=1pt,color=qqqqff] (3,4)-- (4,4);
\draw [line width=1pt,color=ccqqqq] (4,4)-- (5,4);

\draw [line width=1pt,color=qqqqff] (2.5,6)-- (4,6);
\draw [line width=1pt,color=ffxfqq] (1,6)-- (2.5,6);

\draw [line width=1pt,color=ffxfqq] (2.5,11)-- (5.25,11);
\draw [line width=2.5pt,color=gray] (5.25,11)-- (8,11);

\draw [line width=2.5pt,color=Mon_orange] (8,9)-- (9,9);
\draw [line width=1pt,color=gray] (7,9)-- (8,9);

\begin{scriptsize}
\draw [fill=blue] (1,1) circle (2.5pt);
\draw[color=blue] (1,0.35) node {$\check{u}_{1}$};
\draw [fill=blue] (3,1) circle (2.5pt);
\draw[color=blue] (3,0.35) node {$\check{u}_{3}$};
\draw [fill=blue] (5,1) circle (2.5pt);
\draw[color=blue] (5,0.35) node {$\check{u}_{4}$};
\draw [fill=blue] (7,1) circle (2.5pt);
\draw[color=blue] (7,0.35) node {$\check{u}_{2}$};
\draw [fill=blue] (9,1) circle (2.5pt);
\draw[color=blue] (9,0.35) node {$\check{u}_{5}$};
\draw [color=black] (0,1)-- ++(-2.5pt,0 pt) -- ++(5.0pt,0 pt) ++(-2.5pt,-0pt) -- ++(0 pt,0pt);
\draw[color=black] (-0.9,1) node {$0$};
\draw [color=ccqqqq] (0,4)-- ++(-2.5pt,0 pt) -- ++(5.0pt,0 pt) ++(-2.5pt,-0pt) -- ++(0 pt,0pt);
\draw[color=ccqqqq] (-0.9,4) node {$-t_{1}$};
\draw [color=qqqqff] (0,6)-- ++(-2.5pt,0 pt) -- ++(5.0pt,0 pt) ++(-2.5pt,-0pt) -- ++(0 pt,0pt);
\draw[color=qqqqff] (-0.9,6) node {$-t_{2}$};
\draw [color=Mon_orange] (0,9)-- ++(-2.5pt,0 pt) -- ++(5.0pt,0 pt) ++(-2.5pt,-0pt) -- ++(0 pt,0pt);
\draw[color=Mon_orange] (-0.9,9) node {$-t_{3}$};
\draw [color=gray] (0,11)-- ++(-2.5pt,0 pt) -- ++(5.0pt,0 pt) ++(-2.5pt,-0pt) -- ++(0 pt,0pt);
\draw[color=gray] (-2.45,11) node {$-\check{T}_{coal}^{5} = -t_{4}$};
\draw [color=black] (0,12.5)-- ++(-2.5pt,0 pt) -- ++(5.0pt,0 pt) ++(-2.5pt,-0pt) -- ++(0 pt,0pt);
\draw[color=black] (-0.9,12.5) node {$-T$};
\end{scriptsize}
\end{tikzpicture}
\caption{\textsc{Kingman}'s genealogy $\left(\check{k}_{5, t}\right)_{0\leqslant t \leqslant T}$ under the modified look-down model on the left, tracing back from time $0$ to time $-T$.}
\label{Fig_Look-down_genealogy}
\end{minipage}
\end{figure}

\begin{itemize}
\item[\textbf{(1)}] \textbf{Birth/Death rule.} Each jump time $t_{k}$ of one of the \textsc{Poisson} process $\left(N_{ij} \right)_{1\leqslant i < j \leqslant N}$ corresponds to a reproduction event at backward time $-t_{k}$.  When the time $t_{k}$ is the jump time of the \textsc{Poisson} process $N_{ij}$, we put an arrow from $i$ to $j$ as illustrated in Figure \ref{Fig_Modified_Look_Down} which means that the individual at level $i$ puts a child at level $j$. The offspring at level $j$ adopts the current spatial position of its parent at level $i$. The parent level and position do not change. Individuals previously at level $\ell \in \{j, \cdots, N-1 \}$ are shifted one level up to $\ell + 1$ and the individual at level $N$ dies.
\item[\textbf{(2)}] \textbf{Spatial motion.} Between reproduction events, individuals' spatial positions at each level $i$ evolve according to the standard {  $d$-dimensional} Brownian motion $B_{i}(-t)$. As explain below, we will fix the position of the individual at level $1$ at coalescence time to $0$.
\end{itemize}
Note that the $N-$modified look-down process is simply the first $N$ levels of the $(N+k)-$modified look-down for any $k \in \N^{\star}$. In other words, the modified look-down construction can be done with an infinite population as a projective limit of the so-called (infinite) modified look-down. From {\color{blue} \cite{donnelly_countable_1996, donnelly_particle_1999}}, the genealogy $\left(\check{k}_{N,t} \right)_{t\geqslant 0}$ in backward time since time $0$ of a sample from a population evolving according to the $N-$modified look-down is exactly determined by \textsc{Kingman}'s $N-$coalescent with coalescence rate { $2\gamma$}. In Figure \ref{Fig_Look-down_genealogy} we give the \textsc{Kingman} genealogy associated to the $5-$modified look-down  of the Figure \ref{Fig_Modified_Look_Down}. \\ 

We denote by $a(i,t)$, $i \in \left\{1, \cdots, N \right\}$, $t \in (-\infty, 0]$,
 the ancestor level of the individual at level $i$ at time $t$. For example, in Figure \ref{Fig_Modified_Look_Down}, for all $t \in \, ]-t_{3}, -t_{4}]$, $a(5,t) = 2$ and for all $t \in \, ]-t_{2}, -t_{4}]$, $a(3,t) = 1$. Let us consider the random variables
\begin{align*}
\check{T}_{coal}^{N} & := \inf{\left\{ T \geqslant 0\left. \phantom{1^{1^{1^{1}}}} \hspace{-0.6cm} \right| a(i, -T) = 1, \ \forall i \in \left\{1, \cdots, N \right\} \right\}}, \\
 \check{T}_{coal}^{\infty} & := \inf{\left\{ T \geqslant 0\left. \phantom{1^{1^{1^{1}}}} \hspace{-0.6cm} \right| a(i, -T) = 1, \ \forall i \in \N^{\star} \right\}},
\end{align*}
which can be interpreted respectively as the coalescence time (i.e. the first time where $|\check{k}_{N,t}|= 1$) of the \textsc{Kingman} $N-$coalescent $\left(\check{k}_{N,t} \right)_{t\geqslant 0}$ and 
the \textsc{Kingman} coalescent $\left(\check{k}_{\infty,t} \right)_{t\geqslant 0}$. 
Note that, for all $N \in \N^{\star}$, $\check{T}_{coal}^{N} \leqslant \check{T}_{coal}^{\infty}$ $\check{\P}-$a.s. {  In the next proposition}, we establish that $\check{T}^{\infty}_{coal}$ admits moments of any order. 
{ \begin{Prop} For all $k \in \N$, there exists a constant $C_{k}>0$ such that $\E\left(\left(\check{T}_{coal}^{\infty} \right)^{k} \right) \leqslant C_{k}$.
\label{Prop_Genealogie_Moments}
\end{Prop}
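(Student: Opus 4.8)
The plan is to identify $\check{T}_{coal}^{\infty}$ with the coalescence time of the (infinite) \textsc{Kingman} coalescent, and then to control all its moments through its Laplace transform, the essential part of which has already been computed in the proof of Proposition \ref{Prop_2_Genealogie}.

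First I would recall that, by the modified look-down construction, the genealogy $\left(\check{k}_{\infty,t}\right)_{t\geqslant 0}$ is \textsc{Kingman}'s coalescent with coalescence rate $\gamma$. Consequently $\check{T}_{coal}^{\infty}$, defined as the first time all levels descend from level $1$, is exactly the first time this coalescent reaches a single block, i.e. its height. In law it therefore coincides with $H_{\infty} = \sum_{k\, = \, 2}^{+\infty}{T_{k}}$, where the $\left(T_{k}\right)_{k\geqslant 2}$ are independent and $T_{k}$ is exponentially distributed with parameter $\gamma\binom{k}{2} = \gamma\frac{k(k-1)}{2}$ (as already used in Subsection \ref{Ergodicite_Sect_4.4}); in particular $\check{T}_{coal}^{\infty} < +\infty$ $\check{\P}-$a.s. since the coalescent comes down from infinity.

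Next, I would reuse the computation carried out in the proof of Proposition \ref{Prop_2_Genealogie}: for every $\lambda \in \, ]0,\gamma[$,
\begin{equation*}
\E\left(\exp\left(\lambda \check{T}_{coal}^{\infty}\right)\right) = \frac{\gamma}{\gamma-\lambda}\prod\limits_{k\, = \, 3}^{+\infty}{\frac{1}{1-\frac{2\lambda}{\gamma k(k-1)}}} < +\infty,
\end{equation*}
the infinite product being convergent because $\sum_{k\geqslant 3}{\frac{2\lambda}{\gamma k(k-1)}} < +\infty$. Thus the Laplace transform of $\check{T}_{coal}^{\infty}$ is finite on a right neighbourhood of the origin. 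Fixing any $\lambda \in \, ]0,\gamma[$, say $\lambda = \gamma/2$, and using the elementary inequality $x^{k} \leqslant \frac{k!}{\lambda^{k}}\exp(\lambda x)$ valid for all $x \geqslant 0$ (which follows from $\frac{(\lambda x)^{k}}{k!} \leqslant \exp(\lambda x)$), I would conclude that
\begin{equation*}
\E\left(\left(\check{T}_{coal}^{\infty}\right)^{k}\right) \leqslant \frac{k!}{\lambda^{k}}\E\left(\exp\left(\lambda \check{T}_{coal}^{\infty}\right)\right) =: C_{k} < +\infty.
\end{equation*}

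There is no genuine obstacle here: the only points requiring care are the identification of $\check{T}_{coal}^{\infty}$ with $H_{\infty}$ and the convergence of the product defining the Laplace transform, both of which are standard facts about \textsc{Kingman}'s coalescent already invoked earlier. As an alternative route that avoids the Laplace transform altogether, one could bound the $L^{k}$ norm directly by \textsc{Minkowski}'s inequality, $\left\|\check{T}_{coal}^{\infty}\right\|_{k} \leqslant \sum_{k'\geqslant 2}{\left\|T_{k'}\right\|_{k}} = (k!)^{1/k}\frac{2}{\gamma}\sum_{k'\geqslant 2}{\frac{1}{k'(k'-1)}} = (k!)^{1/k}\frac{2}{\gamma}$, which yields the explicit constant $C_{k} = k!\,(2/\gamma)^{k}$.
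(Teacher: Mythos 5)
Your proof is correct and follows essentially the same route as the paper's: the representation $\check{T}_{coal}^{\infty} = \sum_{k\geqslant 2}T_{k}$ with independent exponentials of parameters $\gamma\binom{k}{2}$, the finiteness of the exponential moment via the product formula already established in the proof of Proposition \ref{Prop_2_Genealogie}, and a pointwise bound of the form $x^{k} \leqslant c_{k}\exp\left(\frac{\gamma}{2}x\right)$ (the paper uses the optimised constant $c_{k} = \left(\frac{2k}{\gamma \exp(1)}\right)^{k}$ instead of your $\frac{k!}{\lambda^{k}}$, a purely cosmetic difference). Your closing remark via \textsc{Minkowski}'s inequality, using $\sum_{k'\geqslant 2}\frac{1}{k'(k'-1)} = 1$ to get the explicit constant $C_{k} = k!\left(2/\gamma\right)^{k}$, is also valid and slightly more self-contained, but does not change the substance.
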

\begin{proof}
Note that $\check{T}^{\infty}_{coal} = \sum_{k \, = \, 2}^{+\infty}{T_{k}}$ where $\left(T_{k} \right)_{k \geqslant 2}$ are independent random variables such that $T_{k}$ follows an exponential law of parameter ${ 2\gamma} \binom{k}{2}$. Hence the result follows from (\ref{Eq_exponential_Tchebychev_inequality}). \qedhere
\end{proof}}

We shall be interested in the spatial position $\check{u}_{i}$ of the individual at level $i \in \N^{\star}$ at time $0$ assuming that the position of its ancestor at backward time $-\check{T}_{coal}^{\infty}$ is $0$. For example, if we assume that, in Figure \ref{Fig_Look-down_genealogy}, $\check{T}_{coal}^{\infty} = \check{T}_{coal}^{5} = t_{4}$,  then the spatial position of the individual at level $5$ at backward time $-\check{T}_{coal}^{\infty}$, represented by the curve in bold in Figures \ref{Fig_Modified_Look_Down} and \ref{Fig_Look-down_genealogy}, is 
\[\check{u}_{5} := B_{2}(-t_{4}) - B_{2}(-t_{3}) + B_{3}(-t_{3}) - B_{3}(-t_{2}) + B_{4}(-t_{2}) - B_{4}(-t_{1}) + B_{5}(-t_{1}). \] 
 Similarly, $\check{u}_{1} := B_{1}(-t_{4}), \check{u}_{2} := B_{2}\left(-t_{4} \right),  \check{u}_{3} := B_{1}(-t_{4}) - B_{1}(-t_{2}) + B_{3}\left(-t_{2} \right),  \check{u}_{4} := B_{1}\left(-t_{4} \right) - B_{1}\left(-t_{2} \right) + B_{3}\left(-t_{2} \right) - B_{3}\left(-t_{1} \right) + B_{4}\left(-t_{1} \right)$. In general, we define for all $i \in \N^{\star}$, the random variable \begin{equation}
\check{u}_{i} :=  \int_{-\check{T}_{coal}^{\infty}}^{0}{\dd B_{a\left(i, t\right)}(t)}. 
\label{Def_u_i_check}
\end{equation} 

{  In view of (\ref{Ecriture_v_i}), it is natural to introduce for all $N, i \in \N^{\star}$, 
\begin{equation}
\check{v}_{i}^{N} := \check{u}_{i} - \frac{1}{N}\sum\limits_{j\, = \, 1}^{N}{\check{u}_{j}}.
    \label{Ecriture_v_i_check}
\end{equation}}
Let us define respectively the empirical distribution of $\left(\check{u}_{i}\right)_{1\leqslant i \leqslant N}$ and its centered version by \[\check{Y}_{coal}^{N} := \frac{1}{N}\sum_{i\, = \, 1}^{N}{\delta_{\check{u}_{i}}} \qquad {\rm and} \qquad \check{Z}_{coal}^{N} := \frac{1}{N}\sum_{i\, = \, 1}^{N}{\delta_{\check{v}_{i}^{N}}}.\]

{  For any fixed $T >0$, for all $i, j \in \left\{1, \cdots, N \right\}$, let us consider $T_{ij}$ the coalescence time between the individuals $i$ and $j$ at time $T$ in the process $\left(k_{N, t} \right)_{0\leqslant t \leqslant T}$ and $\check{T}_{ij}$ the coalescence time between individuals at level $i$ and $j$ at time $0$ in the process $\left(\check{k}_{N, t} \right)_{t\geqslant 0}$.} 

\begin{Prop}
The measure-valued random variable  $\check{Z}^{N}_{coal}$ has the law $\pi_{N}$.
\label{Prop_2_5_Genealogie}
\end{Prop}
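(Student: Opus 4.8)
The plan is to identify $\check{Z}^N_{coal}$ as the weak limit of the centered \textsc{Moran} process $Z^N_T$ as $T \to +\infty$, and then to conclude by uniqueness of the invariant measure. Indeed, Proposition \ref{Prop_2_Genealogie} gives $\|\P_{\mu_N}(Z^N_T \in \cdot) - \pi_N\|_{TV} \to 0$ as $T \to +\infty$ for every initial condition $\mu_N \in \MM_{1,N}^{c,2}(\R)$, so it suffices to prove that $Z^N_T$ converges in law to $\check{Z}^N_{coal}$; by uniqueness of the limit this forces $\check{Z}^N_{coal}$ to have law $\pi_N$.

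First I would use the backward representation of the forward process. By Corollary \ref{Corollaire_1_Genealogie}, for any $\mu_N \in \MM_{1,N}^{c,2}(\R)$ one has $Z^N_T \overset{{\rm{law}}}{=} \widehat{Z}^{N,\mu_N}_T = \frac{1}{N}\sum_{i=1}^N \delta_{v_i}$. On the event $\{|k_{N,T}| = 1\}$, the centering effect recorded in \eqref{Ecriture_v_i} shows that the family $(v_i)_{1\leqslant i \leqslant N}$ no longer depends on the ancestral positions, hence is independent of $\mu_N$; moreover, conditionally on the genealogy $k_N$, Proposition \ref{Prop_Genealogie_Vecteur_Gaussien} yields $(v_i)_{1\leqslant i \leqslant N}\sim \NN^{(N)}(0_{\R^N}, \Sigma)$, with $\Sigma$ an explicit function of the pairwise coalescence times $(T_{ij})$. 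Since \textsc{Kingman}'s $N$-coalescent coalesces in a.s. finite time, $\K_{N,T}(|k_{N,T}| = 1) \to 1$ as $T \to +\infty$, so the contribution of the complementary event is negligible in the limit.

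Next I would match this with the look-down side. By the construction recalled in Subsubsection \ref{Look-down_1} after \cite{donnelly_countable_1996, donnelly_particle_1999}, the genealogy $(\check{k}_{N,t})_{t\geqslant 0}$ of the $N$-modified look-down is distributed as \textsc{Kingman}'s $N$-coalescent. The crucial point is again a centering cancellation: since level $1$ is never displaced and all levels $1,\dots,N$ share this common ancestor on the interval $[-\check{T}^\infty_{coal}, -\check{T}^N_{coal}]$, the increment $\int_{-\check{T}^\infty_{coal}}^{-\check{T}^N_{coal}} \dd B_1(t)$ is common to every $\check{u}_i$ with $i \leqslant N$ and therefore disappears in $\check{v}_i^N = \check{u}_i - \frac{1}{N}\sum_{j=1}^N \check{u}_j$. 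Thus $\check{v}^N$ depends only on the \textsc{Brownian} increments accumulated after $-\check{T}^N_{coal}$, exactly as in \eqref{Ecriture_v_i}, and conditionally on $\check{k}_N$ one has $(\check{v}_i^N)_{1\leqslant i \leqslant N}\sim \NN^{(N)}(0_{\R^N},\check{\Sigma})$ by Proposition \ref{Prop_Genealogie_Vecteur_Gaussien_Check}, where $\check{\Sigma}$ is the same function of $(\check{T}_{ij})$ as $\Sigma$ is of $(T_{ij})$.

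Finally I would conclude by a convergence argument. Conditionally on full coalescence, the law of the measure $Z^N_T$ is the image of the mixture of the degenerate Gaussians $\NN^{(N)}(0_{\R^N},\Sigma)$ over the conditional law of $(T_{ij})$ given $\{|k_{N,T}| = 1\}$. As $T \to +\infty$ this conditioning becomes asymptotically trivial, so the conditional law of the pairwise coalescence times converges to the unconditioned one, which is precisely the law of $(\check{T}_{ij})$ supplied by the look-down. Matching the two Gaussian covariance formulas of Propositions \ref{Prop_Genealogie_Vecteur_Gaussien} and \ref{Prop_Genealogie_Vecteur_Gaussien_Check} then gives $Z^N_T \Rightarrow \check{Z}^N_{coal}$, which combined with the total-variation convergence to $\pi_N$ of Proposition \ref{Prop_2_Genealogie} identifies the law of $\check{Z}^N_{coal}$ with $\pi_N$. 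The main obstacle will be the careful bookkeeping of the two centering cancellations (forward and look-down) and the justification that the conditional distribution of $(T_{ij})$ given $\{|k_{N,T}|=1\}$ indeed converges to that of $(\check{T}_{ij})$; everything else reduces to the identity of the explicit covariance formulas already established.
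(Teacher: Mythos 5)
Your proposal is correct and follows essentially the same route as the paper: reduce to Proposition \ref{Prop_2_Genealogie}, decompose on the full-coalescence event, and match the conditional centered Gaussian structures of Propositions \ref{Prop_Genealogie_Vecteur_Gaussien} and \ref{Prop_Genealogie_Vecteur_Gaussien_Check}, with the non-coalesced events contributing errors controlled by $\check{\P}\left(\left|\check{k}_{N,T}\right|>1\right)$ and $\widehat{\P}\left(\left|k_{N,T}\right|>1\right)$ which vanish as $T \to +\infty$. The only difference is cosmetic: the paper sidesteps your asymptotic conditional-law step for the coalescence times by noting the exact identity in law $\widehat{Z}^{N,\mu_{N}}_{T}\II_{\left\{\left|k_{N,T}\right|=1\right\}} \overset{\rm law}{=} \check{Z}^{N}_{coal}\II_{\left\{\left|\check{k}_{N,T}\right|=1\right\}}$ at every fixed $T$, so that only the probabilities of incomplete coalescence need to tend to zero.
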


\begin{proof} The proof consists in establishing for all $f : \MM_{1}^{c,2}({ \R^{d}}) \to \R$ measurable real bounded function, $\E\left(f\left(\check{Z}^{N}_{coal} \right) \right) = \int_{\MM_{1}^{c, 2}({ \R^{d}})}^{}{f(\mu)\pi_{N}(\dd \mu)}$. From Proposition \ref{Prop_2_Genealogie}, it is sufficient to establish for all $\mu_{N} \in \MM_{1, N}^{c, 2}({ \R^{d}})$, that  $\lim_{T \to + \infty}{\left|\E\left(f\left( \check{Z}^{N}_{coal}\right) \right) - \E\left(f\left( \widehat{Z}^{N, \mu_{N}}_{T}\right) \right)  \right|} = 0$. \\ Let $f : \MM_{1}^{c,2}({ \R^{d}}) \to \R$ be a measurable real bounded function. Note that, 
\begin{align*}
\left|\E\left(f\left( \check{Z}^{N}_{coal}\right) \right) - \E\left(f\left( \widehat{Z}^{N, \mu_{N}}_{T}\right) \right)  \right| & \leqslant \left|\E\left(f\left( \check{Z}^{N}_{coal}\right)\II_{\left\{\left|\check{k}_{N,T} \right| = 1 \right\}} \right) - \E\left(f\left( \widehat{Z}^{N, \mu_{N}}_{T}\right)\II_{\left\{\left|k_{N,T} \right| = 1 \right\}} \right)  \right| \\
& \qquad + \left\|f \right\|_{\infty} \left[\check{\P}\left(\left|\check{k}_{N,T} \right| > 1 \right) + \widehat{\P}\left(\left|k_{N,T} \right| > 1 \right) \right].
\end{align*}

{  For all $\mu_{N} \in \MM_{1, N}^{c, 2}({ \R^{d}})$, conditionally to $k_{N} := \left(k_{N,t} \right)_{0\leqslant t \leqslant T}$, on the event $\left\{\left|k_{N,T} \right| = 1 \right\}$, we have from (\ref{Def_u_i}) that for all $i, j \in \left\{1, \cdots, N \right\}$, the covariance matrix $\Cov\left(u_{i}, u_{j} \left| \, k_{N} \right.\right)$ is equal to  $(T - T_{ij})I$ where $I$ designates the identity matrix of size $d$. Conditionally to $k_{N}$, on the event $\left\{\left|k_{N,T} \right| = 1 \right\}$, it follows from (\ref{Ecriture_v_i}) that $\left(v_{i}\right)_{1\leqslant i \leqslant N} \sim \NN^{({ Nd})}\left(0_{\R^{{ Nd}}}, \Sigma \right)$  where $\Sigma$ has the block decomposition $\left(\Sigma_{ij} \right)_{1\leqslant i, j \leqslant N}$ where for all $i,j \in \{1, \cdots, N\}$, $\Sigma_{ij}$ is the $d\times d$ matrix defined by  
\begin{equation*}
  \Sigma_{ij} := \Cov\left(v_{i}, v_{j} \left| \, k_{N} \right. \right) = \left(\frac{1}{N}\sum_{k\, = \, 1}^{N}{\left(T_{ik} + T_{jk} \right)} - \left( T_{ij} + \frac{1}{N^{2}}\sum_{k, \ell \, = \, 1}^{N}{T_{k \ell}}  \right)\right)I.
\end{equation*}

In similar way as previously, we have from (\ref{Def_u_i_check}) and (\ref{Ecriture_v_i_check}) that, conditionally to $\check{k}_{N} := \left(\check{k}_{N,t} \right)_{t \geqslant 0}$, on the event $\left\{\left|\check{k}_{N,T} \right| = 1 \right\}$, we have for all $i, j \in \left\{1, \cdots, N \right\}$
\begin{equation}
    \Cov\left(\check{u}_{i}, \check{u}_{j} \left| \, \check{k}_{N} \right. \right) = \left(\check{T}_{coal}^{\infty} - \check{T}_{ij}\right)I.
    \label{Eq_Cov_u_i_check} 
\end{equation}
Hence, conditionally to $\check{k}_{N}$, on the event $\left\{\left|\check{k}_{N,T} \right| = 1 \right\}$, $\left( \check{v}_{i}^{N}\right)_{1\leqslant i \leqslant N} \sim \NN^{({ Nd})}\left(0_{\R^{{ Nd}}}, \check{\Sigma} \right)$ where $\check{\Sigma} := \left(\check{\Sigma}_{ij} \right)_{1\leqslant i, j \leqslant N}$ and for all $i, j \in \left\{1, \cdots, N \right\}$, $\check{\Sigma}_{ij}$ is a matrix of size $d\times d$ defined by
\begin{equation}
   \check{\Sigma}_{ij} := \Cov\left(\check{v}_{i}^{N}, \check{v}_{j}^{N} \left| \, \check{k}_{N} \right. \right) = \left(\frac{1}{N}\sum_{k\, = \, 1}^{N}{\left(\check{T}_{ik} + \check{T}_{jk} \right)} - \left(\check{T}_{ij} + \frac{1}{N^{2}}\sum_{k, \ell \, = \, 1}^{N}{\check{T}_{k \ell}}  \right)\right)I .
\label{Sigma_Genealogie}
\end{equation}
Hence,} it follows that for all $\mu_{N} \in \MM_{1, N}^{c, 2}({ \R^{d}})$, \[\widehat{Z}^{N, \mu_{N}}_{T}\II_{\left\{\left|k_{N,T} \right| = 1 \right\}} \overset{{\rm law}}{=} \check{Z}^{N}_{coal} \II_{\left\{\left|\check{k}_{N,T} \right| = 1 \right\}}.\]
As established in Step 2 of the proof of Proposition \ref{Prop_2_Genealogie}, $$\lim_{T\to + \infty}{\check{\P}\left(\left|\check{k}_{N,T} \right| > 1 \right)} = \lim_{T \to + \infty}{\widehat{\P}\left(\left|k_{N,T} \right| > 1 \right)} = 0,$$ which concludes the proof. \qedhere
\end{proof}

In the next proposition, we establish the exchangeability property of the family $\left(\check{u}_{i} \right)_{i\in \N^{\star}}$ which will allow to apply the \textsc{De Finetti} representation theorem. 

\begin{Prop}
\begin{itemize}
\item[\textbf{{\rm{\textbf{(1)}}}}]  The family $\left(\check{u}_{i} \right)_{i\in \N^{\star}}$ is exchangeable. 
\item[\textbf{{\rm{\textbf{(2)}}}}]  
There exists a random variable measure-valued $\check{Y}_{coal}^{\infty} : \check{\Omega} \to \MM_{1}({ \R^{d}})$ such that $\left(\check{Y}_{coal}^{N} \right)_{N \in \N^{\star}}$ converges $\check{\P}-${\rm a.s.} when $N\to + \infty$ to $\check{Y}_{coal}^{\infty}$ in $\MM_{1}({ \R^{d}})$ which is equipped with the weak topology. Moreover, given $\check{Y}_{coal}^{\infty}$, $\left(\check{u}_{i} \right)_{i\in \N^{\star}}$ is i.i.d. of law $\check{Y}_{coal}^{\infty}$. 
\end{itemize} 
\label{Prop_3_Genealogie}
\end{Prop}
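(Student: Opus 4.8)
The plan is to deduce part \textbf{(1)} from the conditional Gaussian structure of the family $(\check{u}_{i})_{i\in\N^{\star}}$ already established in Proposition \ref{Prop_Genealogie_Vecteur_Gaussien_Check}, combined with the exchangeability of \textsc{Kingman}'s coalescent, and then to obtain part \textbf{(2)} as a direct application of the \textsc{De Finetti} representation theorem to the infinite exchangeable sequence produced in part \textbf{(1)}.

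For part \textbf{(1)}, I would first record that, conditionally on the genealogy $\check{k}_{\infty} := (\check{k}_{\infty,t})_{t\geqslant 0}$, the family $(\check{u}_{i})_{i\in\N^{\star}}$ is centered Gaussian and its covariance depends on the labels only through the array of pairwise coalescence times: by the infinite version of Proposition \ref{Prop_Genealogie_Vecteur_Gaussien_Check}\textbf{(1)},
\[\Cov\left(\check{u}_{i}, \check{u}_{j} \left| \, \check{k}_{\infty} \right. \right) = \check{T}_{coal}^{\infty} - \check{T}_{ij}, \qquad i,j \in \N^{\star},\]
which merely reflects that the lineages of $i$ and $j$ share the \textsc{Brownian} increments along the common branch of length $\check{T}_{coal}^{\infty} - \check{T}_{ij}$ and use independent increments afterwards. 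Now fix a finite permutation $\sigma$ of $\N^{\star}$. Since $\check{T}_{coal}^{\infty}$ is the time to the most recent common ancestor of the whole population, it is invariant under relabeling; and since the modified look-down genealogy is \textsc{Kingman}'s coalescent, whose law on the partition-valued process is invariant under permutations of the labels {\color{blue}\cite{donnelly_countable_1996, donnelly_particle_1999}}, the array $(\check{T}_{\sigma(i)\sigma(j)})$ has the same law as $(\check{T}_{ij})$. As the conditional law of $(\check{u}_{\sigma(i)})$ given $\check{k}_{\infty}$ is the centered Gaussian with covariance $\check{T}_{coal}^{\infty} - \check{T}_{\sigma(i)\sigma(j)}$, mixing over the permutation-invariant law of the genealogy yields
\[\left(\check{u}_{\sigma(i)} \right)_{i\in\N^{\star}} \overset{{\rm{law}}}{=} \left(\check{u}_{i} \right)_{i\in\N^{\star}},\]
which is exactly the desired exchangeability.

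For part \textbf{(2)}, having established that $(\check{u}_{i})_{i\in\N^{\star}}$ is an infinite exchangeable sequence of $\R$-valued random variables on $(\check{\Omega}, \check{\FF}, \check{\P})$, I would invoke the \textsc{De Finetti} representation theorem: the empirical measures $\check{Y}_{coal}^{N} = \frac{1}{N}\sum_{i\,=\,1}^{N}{\delta_{\check{u}_{i}}}$ converge $\check{\P}$-a.s., for the weak topology on $\MM_{1}(\R)$, to a random directing measure $\check{Y}_{coal}^{\infty} : \check{\Omega} \to \MM_{1}(\R)$, and conditionally on $\check{Y}_{coal}^{\infty}$ the variables $(\check{u}_{i})_{i\in\N^{\star}}$ are i.i.d.\ with common law $\check{Y}_{coal}^{\infty}$. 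The almost sure weak convergence of the empirical measures of an exchangeable sequence to its directing measure is the standard \textsc{Glivenko-Cantelli} statement accompanying the representation theorem.

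The main obstacle lies in part \textbf{(1)}, namely the careful passage from the finite computation of Proposition \ref{Prop_Genealogie_Vecteur_Gaussien_Check} to the infinite family, and the verification that the conditional law of $(\check{u}_{i})$ given the genealogy is a Gaussian depending on the labels solely through the exchangeable array $(\check{T}_{ij})$ and the permutation-invariant quantity $\check{T}_{coal}^{\infty}$. This rests on the fact that $\check{T}_{coal}^{\infty} < +\infty$ $\check{\P}$-a.s.\ (coming down from infinity, Proposition \ref{Prop_Genealogie_Moments}), which guarantees that each $\check{u}_{i}$ is a well-defined finite \textsc{Gaussian} integral. Once exchangeability is secured, part \textbf{(2)} is routine.
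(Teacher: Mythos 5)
Your proposal is correct and takes essentially the same route as the paper: both arguments reduce exchangeability to the observation that, conditionally on the genealogy $\check{k}_{\infty}$, the law of $\left(\check{u}_{i}\right)_{i}$ is a centered Gaussian that depends on the labels only through the array $\left(\check{T}_{ij}\right)$ and the permutation-invariant time $\check{T}_{coal}^{\infty}$ (via Proposition \ref{Prop_Genealogie_Vecteur_Gaussien_Check}), together with the invariance in law of the look-down/\textsc{Kingman} genealogy under finite relabelings from \cite{donnelly_countable_1996, donnelly_particle_1999}. Part \textbf{(2)} is then, exactly as in the paper, a direct appeal to \textsc{De Finetti}'s representation theorem with its accompanying almost sure convergence of empirical measures to the directing measure.
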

\begin{proof} \textbf{{\rm{\textbf{(1)}}}} From {\color{blue} \cite[\color{black} Proof of Theorem 2.2]{donnelly_countable_1996}}, it is enough to show for each $N \in \N^{\star}$, $\left(\check{u}_{i} \right)_{1\leqslant i \leqslant N}$ is exchangeable. Let $\sigma : \N^{\star} \to \N^{\star}$ be a finite permutation, that is to say a bijection that leaves all but finitely many points unchanged. The well-known backward construction of the modified look-down process {\color{blue} \cite{donnelly_countable_1996, donnelly_particle_1999}} entails that $\left(\check{k}_{\infty, t}\right)_{t\geqslant 0} \overset{\rm law}{=} \left(\check{k}_{\infty, t}^{\sigma}\right)_{t\geqslant 0}$ where $\check{k}_{\infty, t}^{\sigma}$ is the partition obtained by applying the permutation $\sigma$ to $\check{k}_{\infty, t}$. Therefore, for any permutation $\sigma : \{1, \cdots, N \} \to \{1, \cdots, N \}$ extended by $\id$ to $\N^{\star}$, it is sufficient to prove that $\left(\left(\check{u}_{i} \right)_{1\leqslant i \leqslant N} \left| \, \check{k}_{\infty} \right. \right) \overset{\rm law}{=} \left(\left(\check{u}_{\sigma(i)} \right)_{1\leqslant i \leqslant N} \left| \, \check{k}_{\infty}^{\sigma} \right. \right)$ to obtain the announced result. \\ 

 We define $\check{T}_{coal}^{\infty, \sigma} := \check{T}_{coal}^{\infty}\left(\check{k}_{\infty}^{\sigma} \right)$ and for all $i, j \in \left\{1, \cdots, N \right\}$, $\check{T}_{ij}^{\sigma} := \check{T}_{ij}\left(\check{k}_{\infty}^{\sigma} \right)$. Note that $\check{T}_{coal}^{\infty, \sigma} = \check{T}_{coal}^{\infty}$ $\check{\P}-$a.s. and it follows from the fact $\left(\check{k}_{\infty, t}\right)_{t\geqslant 0} \overset{\rm law}{=} \left(\check{k}_{\infty, t}^{\sigma}\right)_{t\geqslant 0}$ that $$\left(\left(\check{T}_{ij}^{\sigma}, \check{T}_{coal}^{\infty, \sigma} \right)\right)_{1\leqslant i, j \leqslant N} \overset{\rm law}{=} \left(\left(\check{T}_{ij}, \check{T}_{coal}^{\infty} \right)\right)_{1\leqslant i, j \leqslant N}.$$ 
 {  Hence, we conclude from (\ref{Eq_Cov_u_i_check}).} 

\noindent \textbf{(2)} As the family $\left(\check{u}_{i} \right)_{i\in \N^{\star}}$ is exchangeable, the announced result follows from \textsc{De Finetti}'s representation theorem {\color{blue} \cite[\color{black} Theorem 12.26 and Remark 12.27]{klenke_probability_2013}}.
\end{proof}

We conclude this section with a corollary which will be useful to characterise the probability measure $\pi$.

\begin{Cor}
\begin{itemize}
\item[\textbf{{\rm{\textbf{(1)}}}}] For all $k \in \N$, the random variable $\check{Y}_{coal}^{\infty}$ satisfies $\left\langle \left\|\id\right\|^{k} , \check{Y}_{coal}^{\infty}\right\rangle < \infty$ $\check{\P}-${\rm a.s.} 
\item[\textbf{{\rm{\textbf{(2)}}}}] The limit $\check{u}_{\infty} := \lim_{N\to+\infty}{\frac{1}{N}\sum_{j\, = \, 1}^{N}{\check{u}_{j}} } $
exists $\check{\P}-${\rm a.s.} and satisfies $\check{u}_{\infty} = \left\langle \id, \check{Y}^{\infty}_{coal} \right\rangle$.
\end{itemize}
\label{Corollaire_0_Genealogie}
\end{Cor}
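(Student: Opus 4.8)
The plan is to reduce both statements to the \textsc{De Finetti} structure of $\left(\check{u}_{i} \right)_{i\in \N^{\star}}$ established in Proposition \ref{Prop_3_Genealogie}\textbf{(2)}, combined with a uniform control of the moments of a single variable $\check{u}_{1}$ coming from Proposition \ref{Prop_Genealogie_Moments}. The decisive preliminary step, on which everything rests, is to show that $\check{u}_{1}$ has finite moments of all orders.

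To this aim, I would first observe that, conditionally on the genealogy $\check{k}_{\infty}$, the variable $\check{u}_{i}$ is a sum of independent Brownian increments along its ancestral lineage, whose total duration equals $\check{T}_{coal}^{\infty}$; since the Brownian motions $\left(B_{i}\right)_{i\in \N^{\star}}$ are independent of the genealogy, $\check{u}_{i}$ is therefore, conditionally on $\check{k}_{\infty}$, a centred Gaussian variable of variance $\check{T}_{coal}^{\infty}$ (this is exactly the content of Proposition \ref{Prop_Genealogie_Vecteur_Gaussien_Check}\textbf{(1)} with $\check{T}_{ii} = 0$). Writing $m_{k} := \E\left(|Z|^{k} \right)$ for $Z \sim \NN(0,1)$, this gives $\E\left(|\check{u}_{1}|^{k} \left| \, \check{k}_{\infty} \right. \right) = m_{k} \left(\check{T}_{coal}^{\infty} \right)^{k/2}$, whence
\begin{equation*}
\E\left(|\check{u}_{1}|^{k} \right) = m_{k}\, \E\left(\left(\check{T}_{coal}^{\infty} \right)^{k/2} \right) \leqslant m_{k}\left(1 + \E\left(\left(\check{T}_{coal}^{\infty} \right)^{\lceil k/2 \rceil} \right) \right) < \infty,
\end{equation*}
using the elementary bound $x^{k/2} \leqslant 1 + x^{\lceil k/2\rceil}$ for $x \geqslant 0$ and the moment estimate of Proposition \ref{Prop_Genealogie_Moments}.

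For \textbf{(1)}, I would then invoke Proposition \ref{Prop_3_Genealogie}\textbf{(2)}: conditionally on $\check{Y}_{coal}^{\infty}$, the family $\left(\check{u}_{i}\right)_{i\in \N^{\star}}$ is i.i.d.\ of law $\check{Y}_{coal}^{\infty}$, so that $\left\langle |\id|^{k}, \check{Y}_{coal}^{\infty} \right\rangle = \E\left(|\check{u}_{1}|^{k} \left| \, \check{Y}_{coal}^{\infty} \right. \right)$ $\check{\P}$-a.s. Taking expectations and using the tower property together with the bound above yields $\E\left(\left\langle |\id|^{k}, \check{Y}_{coal}^{\infty} \right\rangle \right) = \E\left(|\check{u}_{1}|^{k} \right) < \infty$, and consequently $\left\langle |\id|^{k}, \check{Y}_{coal}^{\infty} \right\rangle < \infty$ $\check{\P}$-a.s., for every $k \in \N$.

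For \textbf{(2)}, I would apply part \textbf{(1)} with $k = 1$ to secure that $\check{Y}_{coal}^{\infty}$ has finite first moment $\check{\P}$-a.s. Conditionally on $\check{Y}_{coal}^{\infty}$, the variables $\left(\check{u}_{j}\right)_{j\in \N^{\star}}$ are i.i.d.\ of that law, so the strong law of large numbers applied conditionally gives $\frac{1}{N}\sum_{j\, = \, 1}^{N}{\check{u}_{j}} \to \left\langle \id, \check{Y}_{coal}^{\infty} \right\rangle$ on a set of full conditional probability; integrating this conditional almost-sure statement against the law of $\check{Y}_{coal}^{\infty}$ promotes it to $\check{\P}$-a.s.\ convergence, which is precisely the existence of $\check{u}_{\infty}$ and the identity $\check{u}_{\infty} = \left\langle \id, \check{Y}_{coal}^{\infty} \right\rangle$. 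The Gaussian moment computation and the interchange of conditioning are routine; the only point requiring a little care is this last conditional application of the law of large numbers, where one must verify that the conditional first moment is $\check{\P}$-a.s.\ finite, which is exactly what part \textbf{(1)} provides.
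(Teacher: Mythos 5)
Your proof is correct, and part \textbf{(2)} coincides with the paper's argument (the paper's chain of equalities $\left\langle \id, \check{Y}^{\infty}_{coal} \right\rangle = \lim_{N}\left\langle \id, \check{Y}^{N}_{coal} \right\rangle = \check{u}_{\infty}$ is precisely the conditional strong law of large numbers you spell out, and your remark that its applicability rests on part \textbf{(1)} with $k=1$ is the right care point). For part \textbf{(1)}, however, you take a genuinely different route. The paper truncates, writing $\left|\id\right|_{M}$ for the truncation of $\left|\id\right|$ at level $M$, applies \textsc{Fatou}'s lemma along the $\check{\P}$-a.s.\ weak convergence $\check{Y}^{N}_{coal} \to \check{Y}^{\infty}_{coal}$ of Proposition \ref{Prop_3_Genealogie} to get $\E\left(\left\langle \left|\id\right|_{M}^{k}, \check{Y}^{\infty}_{coal} \right\rangle\right) \leqslant \varliminf_{N} \frac{1}{N}\sum_{i}\left(1 + \E\left(\check{u}_{i}^{2k}\right)\right)$, bounds the Gaussian moments via $\E\left(\check{u}_{i}^{2k}\right) = \frac{(2k)!}{2^{k}k!}\E\left(\left(\check{T}^{\infty}_{coal}\right)^{k}\right)$ and Proposition \ref{Prop_Genealogie_Moments}, and finally lets $M \to +\infty$ by monotone convergence. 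You bypass truncation and Fatou entirely by using the \textsc{De Finetti} disintegration directly: since the conditional law of $\check{u}_{1}$ given $\check{Y}^{\infty}_{coal}$ is $\check{Y}^{\infty}_{coal}$, the identity $\left\langle \left|\id\right|^{k}, \check{Y}^{\infty}_{coal}\right\rangle = \E\left(\left|\check{u}_{1}\right|^{k} \left| \, \check{Y}^{\infty}_{coal} \right.\right)$ holds $\check{\P}$-a.s.\ (valid for nonnegative measurable functions by monotone class, with both sides possibly infinite), and the tower property gives $\E\left(\left\langle \left|\id\right|^{k}, \check{Y}^{\infty}_{coal}\right\rangle\right) = \E\left(\left|\check{u}_{1}\right|^{k}\right) < \infty$, hence a.s.\ finiteness. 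Both arguments ultimately rest on the same two inputs — the conditional Gaussianity of $\check{u}_{1}$ with variance $\check{T}^{\infty}_{coal}$ (Proposition \ref{Prop_Genealogie_Vecteur_Gaussien_Check} with $\check{T}_{ii} = 0$) and the moment bound of Proposition \ref{Prop_Genealogie_Moments} — but your route is shorter, avoids manipulating unbounded test functions under weak convergence, and yields the exact moment identity rather than only a bound; the paper's truncation–Fatou pattern, on the other hand, needs only the a.s.\ convergence of the empirical measures rather than the full conditional-law form of De Finetti, and it is the template reused in Step 2 of the subsequent corollary computing $\int M_{2}(\mu)\,\pi(\dd\mu)$, which is presumably why the authors chose it.
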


\begin{proof}
\textbf{(1)} For all $M \in (0, + \infty)$, let us consider $\left|\id \right|_{M}$ the truncation function of $\id$ at level $M$ defined by $ \left|\id \right|_{M} = \left\|\id\right\|$ on $[-M,M]^{ d}$ and $ \left|\id \right|_{M} = M$ on $\R^{ d} \, \backslash \, [-M,M]^{ d}$. By \textsc{Fatou}'s lemma, we obtain that \[\E\left(\left\langle \left|\id \right|_{M}^{k}, \check{Y}_{coal}^{\infty} \right\rangle \right) \leqslant \varliminf_{N \to + \infty}{\E\left(\left\langle \left|\id \right|_{M}^{k}, \check{Y}_{coal}^{N} \right\rangle \right)} \leqslant \varliminf_{N\to + \infty}{\frac{1}{N}\sum_{i\, = \, 1}^{N}{\left( 1 +  \E\left(\left\|\check{u}_{i}\right\|^{2k} \right) \right)} }. \]
Now, classical moment results for Gaussian random variables show that for all $n \in \N$, $\E\left(G^{2n} \right) = \frac{(2n)!}{2^{n}n!}\sigma^{2n}$ for $G \sim \NN(0,\sigma^{2})$. The announced result follows from 
{ (\ref{Eq_Cov_u_i_check})}. \\

\textbf{(2)} From Proposition \ref{Prop_3_Genealogie}, given $\check{Y}_{coal}^{\infty}$, $\left(\check{u}_{i} \right)_{i\in \N^{\star}}$ is i.i.d. Therefore, the announced almost surely existence limit follows from the Strong Law of Large Numbers. Moreover, 
\[\left\langle \id, \check{Y}^{\infty}_{coal} \right\rangle = \lim_{N\to + \infty}{\left\langle \id, \check{Y}^{N}_{coal} \right\rangle} = \lim_{N\to + \infty}{\frac{1}{N}\sum_{j\, = \, 1}^{N}{\check{u}_{j}}} = \check{u}_{\infty}, \qquad \check{\P}-{\rm a.s.} \qedhere \]
\end{proof}

\subsubsection{Characterisation of the invariant probability measure $\pi$ \label{Look-down_2}}

{  We can now conclude with the characterisation of the invariant measure $\pi$:} now we define the random variable $\check{Z}_{coal}^{\infty} \in \MM_{1}({ \R^{d}})$ as 
\begin{equation*}
\check{Z}^{\infty}_{coal} := \tau_{-\left\langle \id, \check{Y}^{\infty}_{coal} \right\rangle} \, \sharp \check{Y}^{\infty}_{coal}. 
\label{Z_infty_coal}
\end{equation*}
The following proposition establishes the convergence of $\left(\check{Z}^{N}_{coal} \right)_{N \in \N^{\star}}$ to $\check{Z}_{coal}^{\infty}$. 
Let us recall the following well-known fact useful for the proof below: a straightforward adaptation of the proof of {\color{blue} \cite[\color{black} Lemma 2.1.2]{Dawson}} allows us to obtain that 
the algebra of polynomials \[\Span\left(\left\{ { P_{f, n}(\mu)} \left. \phantom{1^{1^{1^{1}}}} \hspace{-0.6cm} \right| f : { (\R^{d})}^{n} \to \R {\rm{\ uniformly \ continuous}}, \mu \in \MM_{1}({ \R^{d}}), { n\in \N^{\star}} \right\} \right)\] is convergence determining in $\MM_{1}\left(\MM_{1}({ \R^{d}})\right)$ {  where $P_{f,n}$ is given by (\ref{Eq_Fonction_polynome_mu})}. 

\begin{Thm} \begin{itemize}
\item[\textbf{\rm{\textbf{(1)}}}] The sequence of random variables $\left(\check{Z}_{coal}^{N} \right)_{N \in \N^{\star}}$ converges $\check{\P}-${\rm a.s.} when $N \to + \infty$ to $\check{Z}^{\infty}_{coal}$ in $\MM_{1}({ \R^{d}})$ for the weak convergence topology.   
\item[\textbf{\rm{\textbf{(2)}}}] The random variable $\check{Z}_{coal}^{\infty}$ has the law $\pi$.
\end{itemize}
\label{Convergence_Z_infini_check}
\end{Thm}

\begin{proof}
\textbf{(1)} From the previous reminder, it is sufficient to prove that for all $n \in \N^{\star}$, for all $f : { (\R^{d})}^{n} \to \R$ uniformly  continuous, { $\lim_{N\to + \infty}{P_{f,n}\left(\check{Z}_{coal}^{N} \right)} = P_{f,n}\left(\check{Z}_{coal}^{\infty} \right)$}. With an argument similar to the proof of Proposition \ref{Prop_0_Genealogie}, we obtain that for all $n \in \N^{\star}$, for all $f : { (\R^{d})}^{n} \to \R$ uniformly  continuous, { $P_{f\circ \tau_{-\left\langle \id, \cdot \right\rangle},n}$ from $\MM_{1}^{2}({ \R^{d}})$ to $\R$}
 is continuous. Let $n \in \N^{\star}$ and $f : { (\R^{d})}^{n} \to \R$ uniformly continuous. Now,  $\check{Y}^{\infty}_{coal} \in \MM_{1}^{2}({ \R^{d}})$ from Corollary \ref{Corollaire_0_Genealogie} and 
 \[{ P_{f\circ\tau_{-\left\langle \id, \check{Y}^{N}_{coal}  \right\rangle}, n}\left(\check{Y}^{N}_{coal} \right) = P_{f, n}\left(\check{Z}^{N}_{coal} \right)} \qquad {\rm and} \qquad  { P_{f\circ\tau_{-\left\langle \id, \check{Y}^{\infty}_{coal}  \right\rangle}, n}\left(\check{Y}^{\infty}_{coal} \right) = P_{f, n}\left(\check{Z}^{\infty}_{coal} \right)}\]
 From Proposition \ref{Prop_3_Genealogie},  { $\lim_{N\to + \infty}{P_{f\circ\tau_{-\left\langle \id, \check{Y}^{N}_{coal}  \right\rangle},n}\left(\check{Y}^{N}_{coal} \right)} = P_{f\circ\tau_{-\left\langle \id, \check{Y}^{\infty}_{coal}  \right\rangle}, n}\left(\check{Y}^{\infty}_{coal} \right)$} $\check{\P}-$a.s. which concludes the proof. \\
 
\textbf{(2)} Let $n \in \N^{\star}$. As for all $N \in \N^{\star}$ and $f : { (\R^{d})}^{n} \to \R$ uniformly continuous, { \begin{align*}
& \left|\E\left(P_{f,n}\left(\check{Z}_{coal}^{\infty}\right)\right) - \int_{\MM_{1}({ \R^{d}})}^{}{P_{f,n}(\mu) \pi(\dd \mu)} \right| \\
& \hspace{0.5cm} \leqslant \E\left(\left| P_{f,n}\left(\check{Z}_{coal}^{\infty} \right) - P_{f,n}\left(\check{Z}_{coal}^{N} \right) \right| \right) + \left| \E\left(P_{f,n}\left(\check{Z}_{coal}^{N} \right) \right) - \int_{\MM_{1}({ \R^{d}})}^{}{P_{f,n}\left(\mu \right) \pi_{N}(\dd \mu)} \right| \\
& \hspace{2cm} + \left|\int_{\MM_{1}({ \R^{d}})}^{}{P_{f,n}\left(\mu \right) \left[\pi(\dd \mu) - \pi_{N}(\dd \mu)\right]} \right|,
\end{align*}}
the announced result follows from Theorem \ref{Convergence_Z_infini_check} \textbf{(1)}, Proposition \ref{Prop_2_5_Genealogie} and Lemma \ref{Lemme_1_Genealogie} when $N~\to~+\infty$. \qedhere 
\end{proof}

{ \subsubsection{Application: computation of the second moment of the invariant measure}}

{  We assume here that $d = 1$.} The last characterisation of the probability measure $\pi$ is suitable to make explicit computations. The next corollary gives an expression of the second moment under~$\pi$.

\begin{Cor}
We have $\int_{\MM_{1}^{c, 2}(\R)}^{}{M_{2}\left(\mu\right)\pi\left(\dd \mu\right)} = 1/{ 2\gamma}$. 
\end{Cor}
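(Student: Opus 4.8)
The plan is to transport the computation into the modified look-down picture, where the second moment of a typical individual reduces to a pairwise coalescence time. By Proposition~\ref{Convergence_Z_infini_check}~\textbf{(2)}, the random measure $\check{Z}^{\infty}_{coal}$ has law $\pi$, so that
$$\int_{\MM_{1}^{c,2}(\R)}{M_{2}(\mu)\,\pi(\dd\mu)} = \E\left(M_{2}\left(\check{Z}^{\infty}_{coal}\right)\right).$$
Since a translation does not change the variance of a measure, $M_{2}(\check{Z}^{\infty}_{coal}) = M_{2}(\check{Y}^{\infty}_{coal})$, and Corollary~\ref{Corollaire_0_Genealogie}~\textbf{(1)} (applied with $k=2$) guarantees that this quantity is integrable. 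It therefore suffices to evaluate $\E(M_{2}(\check{Y}^{\infty}_{coal}))$.

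First I would exploit the \textsc{De Finetti} structure of Proposition~\ref{Prop_3_Genealogie}~\textbf{(2)}: conditionally on $\check{Y}^{\infty}_{coal}$, the variables $\left(\check{u}_{i}\right)_{i\in\N^{\star}}$ are i.i.d. with common law $\check{Y}^{\infty}_{coal}$. Hence $M_{2}(\check{Y}^{\infty}_{coal})$ is exactly the conditional variance of $\check{u}_{1}$ given $\check{Y}^{\infty}_{coal}$, and using two of these i.i.d. copies,
$$\E\left(\left(\check{u}_{1} - \check{u}_{2}\right)^{2} \mid \check{Y}^{\infty}_{coal}\right) = 2\,\Var\left(\check{u}_{1} \mid \check{Y}^{\infty}_{coal}\right) = 2\,M_{2}\left(\check{Y}^{\infty}_{coal}\right),$$
so that after taking expectations $\E(M_{2}(\check{Y}^{\infty}_{coal})) = \tfrac{1}{2}\E\left((\check{u}_{1}-\check{u}_{2})^{2}\right)$.

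Then I would compute $\E((\check{u}_{1}-\check{u}_{2})^{2})$ from the Gaussian covariance structure. Conditioning on the coalescent genealogy and applying Proposition~\ref{Prop_Genealogie_Vecteur_Gaussien_Check}~\textbf{(1)} to the pair $(1,2)$, together with $\check{T}_{11}=\check{T}_{22}=0$, gives
$$\E\left(\left(\check{u}_{1}-\check{u}_{2}\right)^{2} \mid \check{k}_{\infty}\right) = \Var\left(\check{u}_{1}\mid\check{k}_{\infty}\right) + \Var\left(\check{u}_{2}\mid\check{k}_{\infty}\right) - 2\,\Cov\left(\check{u}_{1},\check{u}_{2}\mid\check{k}_{\infty}\right) = 2\,\check{T}_{12},$$
the dependence on $\check{T}^{\infty}_{coal}$ cancelling out. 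Finally, $\check{T}_{12}$, the coalescence time of two lineages in \textsc{Kingman}'s coalescent with coalescence rate $\gamma$, is exponentially distributed with parameter $\gamma\binom{2}{2}=\gamma$, whence $\E(\check{T}_{12})=1/\gamma$. Combining the three steps yields $\E(M_{2}(\check{Y}^{\infty}_{coal})) = \tfrac{1}{2}\cdot 2\,\E(\check{T}_{12}) = 1/\gamma$, which is the announced value.

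The main obstacle is rigorously justifying that the conditional covariance identity of Proposition~\ref{Prop_Genealogie_Vecteur_Gaussien_Check}, which is stated on the event $\{|\check{k}_{N,T}|=1\}$, may be used for the limiting variables $\check{u}_{i}$ defined through $\check{T}^{\infty}_{coal}$, and that the successive exchanges between expectation and conditioning are legitimate. This is where the uniform finite-moment control provided by Proposition~\ref{Prop_Genealogie_Moments} and Corollary~\ref{Corollaire_0_Genealogie} is essential, ensuring that all the variances and covariances involved are finite and that the passage to the infinite look-down is valid.
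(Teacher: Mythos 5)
Your proof is correct, but it follows a genuinely different route from the paper's. The paper works at finite $N$: it computes $\E\left(M_{2}\left(\check{Z}^{N}_{coal}\right)\right) = \frac{N-1}{N}\cdot\frac{1}{\gamma}$ by expanding $\left\langle \id^{2}, \check{Z}^{N}_{coal}\right\rangle$ in terms of the $\check{u}_{i}$ and using $\E\left(\check{u}_{i}\check{u}_{j}\right) = \E\left(\check{T}^{\infty}_{coal} - \check{T}_{ij}\right)$, and then spends two substantial preliminary steps (a uniform-in-$N$ bound on $\E\left(M_{2k}\left(\check{Z}^{N}_{coal}\right)\right)$ via Gaussian moments and Proposition \ref{Prop_Genealogie_Moments}, followed by a truncation argument with $\left|\id\right|^{2}_{M}$ and H\"{o}lder estimates) to justify passing $\E\left(M_{2}\left(\check{Z}^{N}_{coal}\right)\right) \to \E\left(M_{2}\left(\check{Z}^{\infty}_{coal}\right)\right)$. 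You bypass the finite-$N$ approximation entirely: translation invariance of the variance gives $M_{2}\left(\check{Z}^{\infty}_{coal}\right) = M_{2}\left(\check{Y}^{\infty}_{coal}\right)$, the \textsc{De Finetti} structure of Proposition \ref{Prop_3_Genealogie} identifies this with the conditional variance of $\check{u}_{1}$, so that $\E\left(M_{2}\left(\check{Y}^{\infty}_{coal}\right)\right) = \frac{1}{2}\E\left(\left(\check{u}_{1}-\check{u}_{2}\right)^{2}\right)$, and the covariance identity reduces the latter to $\E\left(2\check{T}_{12}\right) = 2/\gamma$ since $\check{T}_{12}$ is exponential of parameter $\gamma$ by sampling consistency of \textsc{Kingman}'s coalescent. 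The subtlety you flag — that Proposition \ref{Prop_Genealogie_Vecteur_Gaussien_Check} is stated conditionally on $\check{k}_{N}$ on the event $\left\{\left|\check{k}_{N,T}\right| = 1\right\}$ — is real but harmless here: since $\check{T}^{\infty}_{coal} < \infty$ $\check{\P}$-a.s.\ (Proposition \ref{Prop_Genealogie_Moments}), the identity $\Cov\left(\check{u}_{i},\check{u}_{j}\left|\,\check{k}\right.\right) = \check{T}^{\infty}_{coal} - \check{T}_{ij}$ holds almost surely for the limiting variables, and indeed the paper's own Step 3 uses it in exactly this unconditional form. The trade-off: your argument is shorter and needs only $\E\left(\check{T}^{\infty}_{coal}\right) < \infty$ plus the \textsc{De Finetti} representation, whereas the paper's detour through $\check{Z}^{N}_{coal}$ additionally establishes uniform moment control and convergence of second moments along $\pi_{N} \to \pi$, which is reusable information beyond this single identity.
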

\begin{proof} \textbf{Step 1. Uniform bound in $N$ of $\E\left(M_{2k}\left(\check{Z}^{N}_{coal} \right) \right), {  k \in \N^{\star}}$.} In this step, we want to establish \[\sup_{N \in \N^{\star}}{\E\left(\left\langle \left|\id\right|^{2k}, \check{Z}^{N}_{coal} \right\rangle \right)} < \infty.\]
Let $N \in \N^{\star}$. From Proposition \ref{Prop_3_Genealogie} \textbf{(1)}, 
\begin{align*}
 \E\left(\left\langle \left|\id\right|^{2k}, \check{Z}^{N}_{coal} \right\rangle \right) & = \E\left( \left. \E\left(\frac{1}{N}\sum\limits_{i\, = \, 1}^{N}{\left|\check{u}_{i} - \frac{1}{N}\sum\limits_{j\, = \, 1}^{N}{\check{u}_{j}} \right|^{2k}} \right|  \left(\check{T}_{m\ell} \right)_{1\leqslant m, \ell \leqslant N}, \check{T}^{\infty}_{coal} \right)\right) \\
 & = \E\left(\E\left( \left|\check{v}_{1}^{N} \right|^{2k}\left| \left(\check{T}_{m\ell} \right)_{1\leqslant m, \ell \leqslant N}, \check{T}^{\infty}_{coal} \right.\right) \right).
\end{align*}
From { (\ref{Sigma_Genealogie}) and Proposition 
\ref{Prop_Genealogie_Moments}}, we obtain that
\begin{align*}
\E\left(\E\left( \left|\check{v}_{1}^{N} \right|^{2k}\left| \left(\check{T}_{m,\ell} \right)_{1\leqslant m, \ell \leqslant N}, \check{T}^{\infty}_{coal} \right.\right) \right) = \frac{(2k)!}{2^{k}k!}\E\left(\check{\Sigma}_{ii}^{2k} \right) \leqslant \frac{(2k)!}{k!}\E\left(\left(\check{T}^{\infty}_{coal}\right)^{2k} \right) <\infty,
\end{align*} 
and the announced result follows. \\

\textbf{Step 2. Convergence result of $\E\left(M_{2}\left(\check{Z}^{N}_{coal} \right) \right)$ to $\E\left(M_{2}\left(\check{Z}^{N}_{coal} \right) \right)$.} Note that for all $N \in \N^{\star}$, $M \in (0, + \infty)$,
\begin{align*}
\left|\E\left(M_{2}\left(\check{Z}^{N}_{coal} \right) \right) - \E\left(M_{2}\left(\check{Z}^{\infty}_{coal} \right) \right) \right| & \leqslant {\textbf{\rm{\textbf{(A)}}}}_{N, M} + {\textbf{\rm{\textbf{(B)}}}}_{N, M} + {\textbf{\rm{\textbf{(C)}}}}_{N, M},
\end{align*}
where
\begin{align*}
{\textbf{\rm{\textbf{(A)}}}}_{N, M} & := \left|\E\left(M_{2}\left(\check{Z}^{N}_{coal} \right) \right) - \E\left(\left\langle \left|\id \right|^{2}_{M},  \check{Z}^{N}_{coal} \right\rangle \right) \right|, \\
{\textbf{\rm{\textbf{(B)}}}}_{N, M} & := \left|\E\left(\left\langle \left|\id \right|^{2}_{M},  \check{Z}^{N}_{coal} \right\rangle \right) - \E\left(\left\langle \left|\id \right|^{2}_{M},  \check{Z}^{\infty}_{coal} \right\rangle \right)\right|, \\
{\textbf{\rm{\textbf{(C)}}}}_{N, M} & := \left|\E\left(\left\langle \left|\id \right|^{2}_{M},  \check{Z}^{\infty}_{coal} \right\rangle \right) - \E\left(M_{2}\left(\check{Z}^{\infty}_{coal} \right) \right)  \right|.
\end{align*}

From the inequality $\left| \id^{2} - \id^{2}_{M}\right| \leqslant \frac{2}{3\sqrt{3} M} \left|\id \right|^{3}$, then the \textsc{H\"{o}lder} inequality, we obtain that 
\begin{align*}
{\textbf{\rm{\textbf{(A)}}}}_{N,M}  \leqslant \E\left(\left\langle \left| \id^{2} - \id_{M}^{2} \right|, \check{Z}^{N}_{coal} \right\rangle \right) \leqslant \frac{2}{3\sqrt{3}M}\E\left(\left\langle \left|\id \right|^{3}, \check{Z}^{N}_{coal} \right\rangle \right)  \leqslant \frac{2}{3\sqrt{3}M}\E\left(\left\langle \id^{4}, \check{Z}^{N}_{coal} \right\rangle^{\frac{3}{4}} \right).
\end{align*}
From Step 1, we deduce that for all $N \in \N^{\star}$, \[{\textbf{\rm{\textbf{(A)}}}}_{N,M} \leqslant \frac{2}{3\sqrt{3}M}\left(1 + \sup_{N \in \N^{\star}}{\E\left(\left\langle \id^{4}, \check{Z}^{N}_{coal} \right\rangle \right)} \right) < \infty.\]
In similar way, we obtain that for all $N \in \N^{\star}$, ${\textbf{\rm{\textbf{(C)}}}}_{N,M} \leqslant \frac{2}{3\sqrt{3}M}\left(1 + \E\left(\left\langle \id^{4}, \check{Z}^{\infty}_{coal}\right\rangle \right) \right)$ where $\E\left(\left\langle \id^{4}, \check{Z}^{\infty}_{coal}\right\rangle \right) < \infty$ from Corollary \ref{Corollaire_0_Genealogie} \textbf{(1)}. By the monotone convergence theorem, we deduce that for all $N \in \N^{\star}$, $\E\left(M_{2}\left(\check{Z}^{N}_{coal} \right) \right) = \lim_{M \to + \infty}{\E\left(\left\langle \left|\id \right|_{M}^{2}, \check{Z}^{N}_{coal} \right\rangle \right)}$ and $\E\left(M_{2}\left(\check{Z}^{\infty}_{coal} \right) \right) = \lim_{M \to + \infty}{\E\left(\left\langle \left|\id \right|_{M}^{2}, \check{Z}^{\infty}_{coal} \right\rangle \right)}$. From Theorem \ref{Convergence_Z_infini_check}, for all $M \in (0, +\infty)$, $\lim_{N \to +\infty}{{\textbf{\rm{\textbf{(B)}}}}_{N,M}} = 0$. Hence $\lim_{N\to+ \infty}{\E\left(M_{2}\left(\check{Z}^{N}_{coal} \right) \right)} = \E\left(M_{2}\left(\check{Z}^{\infty}_{coal} \right) \right)$. \\

\textbf{Step 3. Conclusion.}
Note that for all $N \in \N^{\star}$, \begin{align*}
M_{2}\left(\check{Z}_{coal}^{N} \right) = \left\langle \id^{2}, \check{Z}_{coal}^{N} \right\rangle = \frac{1}{N}\sum\limits_{i\, = \, 1}^{N}{\check{u}_{i}^{2}} - \frac{1}{N^{2}}\left[\sum\limits_{i\, = \, 1}^{N}{\check{u}_{i}^{2}} + 2 \sum\limits_{1\leqslant i < j \leqslant N}^{}{\check{u}_{i}\check{u}_{j}} \right].
\end{align*}
From 
{ (\ref{Eq_Cov_u_i_check})}, we have for all $i, j \in \left\{1, \cdots, N \right\}$, $\E\left(\check{u}_{i}\check{u}_{j}\right) = \E\left(\E\left(\check{u}_{i}\check{u}_{j} \left|  \, \check{k}_{N} \right. \right)\right) = \E\left(\check{T}_{coal}^{\infty} - \check{T}_{ij} \right)$ where $\check{T}_{ij}$ is an exponential random variable with parameter { $2\gamma$} if $i\neq j$. 
Therefore  $\E\left(M_{2}\left(\check{Z}_{coal}^{N} \right) \right) = \frac{N-1}{N} \times \frac{1}{{ 2\gamma}}$. By Step 2, we deduce that $\E\left(M_{2}\left(\check{Z}^{\infty}_{coal} \right) \right) = 1/{ 2\gamma}$ which completes the proof. \qedhere
\end{proof}

\section{Proof of Theorem \ref{Prop_PB_Mg_Z} \label{Section_preuve_Existence}}

{  The proof of Theorem \ref{Prop_PB_Mg_Z} in any dimension $d \in \N^{\star}$ is identical to that in  dimension $1$ but is much more cumbersome to write. For the sake of clarity, only the proof in the case $d = 1$ is presented. We leave the details for the multidimensioanl case to the reader.} \\

We divide the proof of the main result into { 3} steps,  each of which will constitute a section (Sections \ref{Etape_2_Existence} to { \ref{Etape_4_Existence}}). We recall that the aim of this proof is to prove that the law $\P_{\tau_{-\left\langle \id, \nu \right\rangle}  \sharp \, \nu}^{\rm FVc}$ of the process $\left(Z_{t}\right)_{0\leqslant t \leqslant T}$ defined by \[\forall t \geqslant 0, \qquad Z_{t} := \tau_{-\left\langle \id, Y_{t} \right\rangle} \sharp \, Y_{t}, \] under $\P_{\nu}^{\rm FV}$ for $\nu \in \MM_{1}^{2}(\R)$ is solution of the martingale problem {\rm{(\ref{PB_Mg_Z_Multi_d})}}.  
{  By standard density arguments, it is sufficient to consider the case $F, g \in \CCCC^{4}_{b}(\R, \R)$ to obtain the announced result with $F \in \CCCC^{2}(\R, \R)$ and $g \in \CCCC^{2}_{b}(\R, \R)$.}
In {\rm{(\ref{Existence_PB_mg})}} there are essentially two types of terms: $\left\langle \id, Y_{t} - Y_{s} \right\rangle$ and $\left\langle g^{(j)}\circ\tau_{-\left\langle \id , Y_{s} \right\rangle} , Y_{t} - Y_{s} \right\rangle$, $j\in\{0, 1, 2 \}$, $s\leqslant t$. In Sections \ref{Etape_2_Existence} and \ref{Etape_3_Existence}, we prove that the two previous quantities admit a \textsc{Doob}'s semi-martingale decomposition. In Section \ref{Etape_4_Existence} we handle all the terms in  (\ref{Existence_PB_mg})  involving respectively the first and second derivatives of $F$. {  We end this section dealing with} the different error terms involved in (\ref{Existence_PB_mg}).  {  By classical arguments, we can prove that the martingale involved in (\ref{PB_Mg_Z_Multi_d}) is square integrable and we can compute its martingale bracket (\ref{Variation_quad_FV_recentre_Multi_d}).} 

\subsection{\textsc{Doob}'s semi-martingale decomposition of $\left\langle \id, Y_{t} - Y_{s}  \right\rangle$, $s\leqslant t$ \label{Etape_2_Existence}} 

In {\rm{(\ref{PB_Mg_Y})}}, $M^{\id}(g)$ is well-defined only for $g \in \CCCC^{2}_{b}(\R, \R)$. The expression  makes sense for more general functions $g$. The goal of this section is to prove that, for any $k \in \N$,  $M^{\id}\left(\id^{k} \right)$ is the martingale part in the \textsc{Doob} semi-martingale decomposition of $\left\langle \id^{k}, Y_{t} \right\rangle$. In particular, 
\begin{equation*}
\left\langle \id , Y_{s } - Y_{t_{i}^{n} \wedge t} \right\rangle = M^{\id}_{s}(\id) - M^{\id}_{t_{i}^{n}\wedge t}(\id), \qquad s \geqslant t_{i}^{n}\wedge t  \label{Martingale_id_id_Etape_2}
\end{equation*}
is a $\P_{\nu}^{\rm FV}-$martingale. 

\begin{Lem} Let $\nu \in \MM_{1}(\R)$ 
and let $\P_{\nu}$ be a distribution on $\Omega$ satisfying {\rm{(\ref{PB_Mg_Y})}} and such that $Y_{0} $ is equal in law to $\nu$. Let $T > 0$ and $k \in \N^{\star}$ be fixed. 
\begin{itemize}
\item[{\rm{\textbf{{\rm{\textbf{(1)}}}}}} ] If $\left\langle \left|\id \right|^{k}, \nu \right\rangle  <\infty$, then there exist two constants $C_{k,T}, \widetilde{C}_{k,T} >0$, such that any stochastic process $\left(Y_{t} \right)_{0\leqslant t \leqslant T}$ whose law is $\P_{\nu}$ satisfies
\begin{align*}
{\rm{\textbf{\rm{\textbf{(a)}}}}} \qquad & \sup_{t\in[0,T]}{\E_{\nu}\left(\left\langle \left|\id\right|^{k}, Y_{t}\right\rangle \right)}  \leqslant  C_{k, T}\left(1 + \left\langle \left|\id \right|^{k}, \nu \right\rangle \right),  \\
{\rm{\textbf{\rm{\textbf{(b)}}}}} \qquad & \forall \alpha >0, \quad \P_{\nu}\left(\sup_{t\in[0,T]}{\left\langle \left|\id\right|^{k}, Y_{t}\right\rangle } \geqslant \alpha \right) \leqslant \frac{\widetilde{C}_{k, T}\left(1 + \left\langle \left|\id \right|^{k}, \nu \right\rangle \right)}{\alpha}. 
\end{align*}
\item[{\rm{\textbf{{\rm{\textbf{(2)}}}}}}] If $\left\langle \left|\id \right|^{k}, \nu \right\rangle  <\infty$, then the process $\left(M_{t}^{\id}\left(\id^{k}\right)\right)_{0\leqslant t \leqslant T}$ defined by \[M_{t}^{\id}\left(\id^{k} \right) := \left\langle \id^{k}, Y_{t} \right\rangle - \left\langle \id^{k}, Y_{0} \right\rangle - \int_{0}^{t}{\left\langle \frac{k(k-1)}{2} \id^{k-2}, Y_{s}\right\rangle \dd s}, 
\]
  is a continuous $\P_{\nu}-$martingale. Moreover, if $\left\langle \left|\id \right|^{2k}, \nu \right\rangle <\infty$, then $\left(M^{\id}\left(\id^{k}\right) \right)_{0\leqslant t \leqslant T}$ is a martingale in $L^{2}\left(\Omega \right)$ whose quadratic variation is given by
\begin{equation*}
\left\langle M^{\id}\left(\id^{k}\right) \right\rangle_{t}  = 2\gamma \int_{0}^{t}{\left[\left\langle \id^{2k}, Y_{s}  \right\rangle - \left\langle \id^{k}, Y_{s}  \right\rangle^{2} \right]\dd s}.
\end{equation*}
\end{itemize}
\label{Lem_Mart_id_id_bien_def}
\end{Lem}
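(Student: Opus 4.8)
The plan is to prove the four assertions (1)(a), (1)(b), and (2) as a close parallel of the proof of Proposition~\ref{Prop_moments_non_born�s_RECENTRE}, the key simplification being that the original \textsc{Fleming-Viot} martingale problem~(\ref{PB_Mg_Y}) has \emph{no} centering terms in its generator. First I would set up the same truncating sequence of test functions used in Proposition~\ref{Prop_moments_non_born�s_RECENTRE}: take $(g_n)_{n\in\N}$ of class $\CCCC^2(\R,\R)$ with compact support, $|g_n|\leqslant|\id|$, $g_n=\id$ on $[-n,n]$, with $\|g_n''\|_\infty\to 0$ and $g_n'$ uniformly bounded, and set $h_n:=\sqrt{1+g_n^2}$, $h:=\sqrt{1+\id^2}$. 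For each fixed $A,\ell\in\N$ introduce the stopping time $\tau_{A,\ell}:=\inf\{t\geqslant 0\mid \langle|\id|^\ell,Y_t\rangle\geqslant A\}$, which localizes all quantities.

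For (1)(a) I would apply~(\ref{PB_Mg_Y}) with $F=\id$ and $g=h_n^k$, compute $(h_n^k)'$ and $(h_n^k)''$ exactly as in the proof of Proposition~\ref{Prop_moments_non_born�s_RECENTRE}, and observe that only the single Laplacian term $\langle\tfrac12(h_n^k)'',Y_s\rangle$ survives (the drift terms proportional to $\gamma$ carrying $M_2$ and $\langle g'\times\id,\cdot\rangle$ are now absent). Using $\|g_n'\|_\infty$, $\|g_n''\|_\infty$ and the same \textsc{H�lder} bounds $\langle h_n^{k-2},Y_t\rangle\leqslant\langle h^k,Y_t\rangle$, one obtains
\[
\E\bigl(\langle h_n^k,Y_{t\wedge\tau_{A,k}}\rangle\bigr)\leqslant \E\bigl(\langle h^k,Y_0\rangle\bigr)+C_1(k)\,\E\Bigl(\int_0^{t\wedge\tau_{A,k}}\langle h^k,Y_s\rangle\,\dd s\Bigr)+C_2(k,A)\|g_n''\|_\infty.
\]
Passing to the limit $n\to+\infty$ by \textsc{Fatou}'s lemma kills the last term, and \textsc{Gronwall}'s lemma gives the exponential bound, from which $\tau_{A,k}\to+\infty$ $\P_\nu$-a.s.\ follows by a \textsc{Markov}-inequality argument identical to the recentered case, yielding~(1)(a) after a final \textsc{Fatou} passage $A\to+\infty$. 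For (1)(b) I would repeat Step~2 of Proposition~\ref{Prop_moments_non_born�s_RECENTRE}: bound the supremum probability by the sum of the probabilities of the initial term, the finite-variation terms, and the martingale term, control the last by \textsc{Doob}'s maximal inequality together with the $L^1$-bound on $M^{\id}_{T\wedge\tau_{A,k}}(h_n^k)$ coming from Step~1, and conclude by dominated convergence in $n$ then $A$.

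For (2) I would establish that $M^{\id}(\id^k)=\lim_n M^{\id}(g_n^k)$ is a continuous local martingale exactly as in Step~3 of Proposition~\ref{Prop_moments_non_born�s_RECENTRE}, using $|g_n^k|\leqslant|\id|^k$ and the dominated convergence theorem for conditional expectations; here the finite first-moment hypothesis $\E(\langle|\id|^k,\nu\rangle)<\infty$ already suffices to obtain a genuine (not merely local) martingale, since the integrand $\langle\tfrac{k(k-1)}2\id^{k-2},Y_s\rangle$ is controlled by $\langle|\id|^k,Y_s\rangle$ via \textsc{H�lder} and~(1)(a), so no auxiliary $(k{+}1)$-moment is needed as it was in the recentered setting. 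The $L^2$-statement and the quadratic variation then follow from~(\ref{Crochet_M_G_M_H_general}) with $G=H=\id$, $g=h=g_n^k$, giving $\langle M^{\id}(g_n^k)\rangle_t=2\gamma\int_0^t(\langle g_n^{2k},Y_s\rangle-\langle g_n^k,Y_s\rangle^2)\,\dd s$, and passing $n\to+\infty$ under the assumption $\E(\langle|\id|^{2k},\nu\rangle)<\infty$ (which secures $M^{\id}(\id^k)\in L^2$ by \textsc{H�lder}) to identify $\langle M^{\id}(\id^k)\rangle_t=2\gamma\int_0^t(\langle\id^{2k},Y_s\rangle-\langle\id^k,Y_s\rangle^2)\,\dd s$. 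The main obstacle, as in the recentered analogue, is purely the bookkeeping of the truncation and localization limits---justifying each \textsc{Fatou}/dominated-convergence interchange uniformly---rather than any genuinely new idea; the absence of the centering terms makes every estimate strictly simpler than in Proposition~\ref{Prop_moments_non_born�s_RECENTRE}.
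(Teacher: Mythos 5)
Your proposal is correct and follows essentially the same route as the paper, whose proof of this lemma consists precisely of the remark that one argues as in the proposition on moments of the centered \textsc{Fleming-Viot} process: the same truncations $g_{n}$, $h_{n}=\sqrt{1+g_{n}^{2}}$, stopping times $\tau_{A,\ell}$, \textsc{Gronwall}--\textsc{Fatou} and \textsc{Doob} maximal-inequality steps, simplified by the absence of the centering terms. Your observation that the $(k+1)$-moment hypothesis becomes unnecessary for the genuine martingale property, since the drift only involves $\id^{k-2}$ and is controlled via (1)(a), correctly accounts for the weaker hypotheses in the present statement.
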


\begin{proof} The proof is similar to that of Proposition \ref{Prop_moments_non_bornes_RECENTRE}. \qedhere
\end{proof}

\subsection{\textsc{Doob}'s semi-martingale decomposition of $\left\langle g^{(j)} \circ \tau_{-\left\langle \id, Y_{s}  \right\rangle}, Y_{t} - Y_{s}  \right\rangle$, $s \leqslant t$, $j\in\{0,1,2\}$ \label{Etape_3_Existence}} 

Equation  (\ref{Existence_PB_mg}) involves terms of the form $\left\langle g^{(j)}\circ\tau_{-\left\langle \id, Y_{t_{i}^{n} \wedge t} \right\rangle}, Y_{t_{i+1}^{n} \wedge t} - Y_{t_{i}^{n} \wedge t} \right\rangle$ with $j \in \{0,1, 2\}$. We wish to express, each of these terms using the martingale problem (\ref{PB_Mg_Y}). However, this  leads us to consider quantities of the form 
\begin{equation}
M^{\id}_{t_{i+1}^{n}\wedge t}\left(g^{(j)} \circ \tau_{-\left\langle \id, Y_{t_{i}^{n}\wedge t} \right\rangle} \right) - M^{\id}_{t_{i}^{n}\wedge t}\left(g^{(j)} \circ \tau_{-\left\langle \id, Y_{t_{i}^{n}\wedge t} \right\rangle} \right) \label{Difference_Mart_id}
\end{equation}
with $j \in \{0,1, 2\}$, which are not well defined at the moment. Indeed, in (\ref{Difference_Mart_id}) the input argument is a predictable \emph{random} function  of the process $\left(Y_{t} \right)_{0\leqslant t \leqslant T}$ while the martingale problem (\ref{PB_Mg_Y}) defines $M_{t}^{\id}(g)$ 
 only for \emph{deterministic} functions $g$. {\rm{Lemma \ref{Lem_Q_omega_Martingale}}} hereafter, allows us to give a precise meaning to  (\ref{Difference_Mart_id}) by extending the well-defined character of the martingales of (\ref{PB_Mg_Y}) to predictable input arguments. The proof of this technical lemma, {  is based on classical arguments using \emph{regular conditional probabilities}, that we recall in Appendix \ref{Appendix_Regular_Conditional_Proba} for completeness.}  
  
 \begin{Lem} Let $t^{\star} \in \R_{+}$ be a deterministic time and $h : \Omega \rightarrow \CCCC_{b}^{2}(\R, \R) $ be a measurable  function satisfying  the following property: 
 \begin{equation*}
\forall \, \omega, \omega' \in \Omega, \qquad h(\omega) = h(\omega') \qquad \qquad {\rm{if}} \quad \omega_{|_{\left[0, t^{\star}\right]}} = \omega'_{|_{\left[0, t^{\star}\right]}}.
\label{prop_fonc_h}
\end{equation*}
Then, the following process defined, for all $t \in [0,T]$, by
    \[\MM_{t}\left(\widetilde{\omega}\right) :=  M_{t}^{\id}\left(h\left(\widetilde{\omega}\right)\right)\left(\widetilde{\omega}  \right) - M_{t\wedge t^{\star}}^{\id}\left(h\left(\widetilde{\omega}\right) \right) \left(\widetilde{\omega}  \right)\] 
    is a $\P_{\nu}^{\rm FV}(\dd \widetilde{\omega})$ square integrable martingale whose quadratic variation is given by 
 \[\left\langle \MM\left(\widetilde{\omega} \right) \right\rangle_{t} = 2\gamma \int_{t\wedge t^{\star}}^{t}{\left[\left\langle h^{2}\left(\widetilde{\omega}_{s} \right), \widetilde{\omega}_{s} \right\rangle - \left\langle h\left(\widetilde{\omega}_{s} \right), \widetilde{\omega}_{s} \right\rangle^{2} \right]\dd s}.  \]
  \label{Lem_Q_omega_Martingale}
\end{Lem}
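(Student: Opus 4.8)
The plan is to reduce the statement about a \emph{random} predictable test function $h(\widetilde\omega)$ to the already-known martingale property of $M^{\id}_t(g)$ for \emph{deterministic} $g$, using the hypothesis that $h(\omega)$ depends only on $\omega|_{[0,t^\star]}$. First I would introduce a \emph{regular conditional probability} of $\P^{FV}_\nu$ given $\FF_{t^\star}$: for $\P^{FV}_\nu$-a.e.\ $\omega$ there is a probability measure $\P^{\omega}$ on $\Omega$ such that $\P^{\omega}(Y_{\cdot\wedge t^\star}=\omega_{\cdot\wedge t^\star})=1$ and $\P^{\omega}$ governs the future evolution after $t^\star$. Because the original Fleming--Viot process is Markov and the martingale problem \eqref{PB_Mg_Y} is well-posed, under $\P^{\omega}$ the shifted canonical process is again a solution of the martingale problem started from $\omega_{t^\star}$; thus for every \emph{fixed} deterministic $g\in\CCCC^2_b(\R,\R)$ the process $\bigl(M^{\id}_t(g)-M^{\id}_{t\wedge t^\star}(g)\bigr)_{t\geqslant t^\star}$ is a square integrable $\P^{\omega}$-martingale, with the bracket given by \eqref{Crochet_M_G_M_H_general}.

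The key observation is that, by the assumed measurability property, on the support of $\P^{\omega}$ the random function $h(\widetilde\omega)$ is $\P^{\omega}$-a.s.\ equal to the single deterministic function $g_\omega := h(\omega)$, since any $\widetilde\omega$ in that support agrees with $\omega$ on $[0,t^\star]$. Hence I would write, for $t\geqslant t^\star$,
\[
\MM_t(\widetilde\omega) = M^{\id}_t\bigl(g_\omega\bigr)(\widetilde\omega) - M^{\id}_{t\wedge t^\star}\bigl(g_\omega\bigr)(\widetilde\omega)\qquad \P^{\omega}\text{-a.s.},
\]
so that under each conditional law $\P^{\omega}$ the process $\MM$ coincides with the increment of a genuine deterministic-$g$ martingale. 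I would then verify the martingale identity
\[
\E^{\omega}\!\left(\MM_t \mid \FF_s\right) = \MM_s,\qquad t^\star\leqslant s\leqslant t,
\]
and for $s<t^\star$ note that $\MM_s=0$ while $\E^{\omega}(\MM_t)=0$ by the conditional martingale property. Integrating these identities against $\P^{FV}_\nu(\dd\omega)$ and using the disintegration $\P^{FV}_\nu(\dd\widetilde\omega)=\int \P^{\omega}(\dd\widetilde\omega)\,\P^{FV}_\nu(\dd\omega)$ recovers the martingale property of $\MM$ under $\P^{FV}_\nu$ itself. The bracket formula follows identically: applying \eqref{Crochet_M_G_M_H_general} with $G=H=\id$ and $g=h=g_\omega$ under $\P^{\omega}$ yields
\[
\left\langle \MM(\widetilde\omega)\right\rangle_t = 2\gamma\int_{t\wedge t^\star}^{t}\Bigl[\bigl\langle h^2(\widetilde\omega_s),\widetilde\omega_s\bigr\rangle - \bigl\langle h(\widetilde\omega_s),\widetilde\omega_s\bigr\rangle^2\Bigr]\dd s,
\]
where I use $h(\widetilde\omega_s)=h(\widetilde\omega)$ by predictability to identify the integrand. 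Square integrability follows from $h$ taking values in $\CCCC^2_b(\R,\R)$, so $h(\widetilde\omega)$ and $h^2(\widetilde\omega)$ are uniformly bounded, making both $M^{\id}_t(g_\omega)$ and its bracket bounded on $[0,T]$.

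The main obstacle I anticipate is making the disintegration argument fully rigorous, i.e.\ justifying that the regular conditional probabilities $\P^{\omega}$ exist (this needs $\Omega$ Polish, which holds here), that they are concentrated on paths agreeing with $\omega$ up to $t^\star$, and crucially that each $\P^{\omega}$ solves the Fleming--Viot martingale problem from the value $\omega_{t^\star}$. This last point relies on the Markov property together with uniqueness of the solution to \eqref{PB_Mg_Y}, and requires care to check that the exceptional $\P^{FV}_\nu$-null set on which the conditional measure may fail to be a solution does not interfere with the a.e.\ statements. A secondary technical point is the measurability of $\omega\mapsto g_\omega=h(\omega)$ as a map into $\CCCC^2_b(\R,\R)$ and of the resulting integrands, which I would handle by the joint measurability hypothesis on $h$ together with standard arguments on measurability of stochastic integrals in the parameter $\omega$. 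Once these measure-theoretic foundations are secured, the martingale and bracket identities are immediate consequences of the deterministic case already contained in \eqref{PB_Mg_Y}--\eqref{Crochet_M_G_M_H_general}.
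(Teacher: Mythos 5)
Your proposal is correct and follows essentially the same route as the paper: the paper also introduces regular conditional probabilities $\Q_{\omega}$ of $\P_{\nu}^{FV}$ given $\FF_{t^{\star}}$, proves in Theorem \ref{Proba_regulieres_condi_FV} that the time-shifted measures $\P_{\omega} = \theta_{t^{\star}} \sharp \, \Q_{\omega}$ solve the martingale problem {\rm{(\ref{PB_Mg_Y})}} with initial condition $\omega_{t^{\star}}$ outside a single $\P_{\nu}^{FV}$-null set (handling precisely the null-set issue you flag, via countable families of rational times, generating events and a $\|\cdot\|_{W_{0}^{2,\infty}}$-dense countable class of test functions), and then exploits that $h(\widetilde{\omega}) = h(\omega)$ is deterministic under each $\Q_{\omega}$ before integrating against $\P_{\nu}^{FV}(\dd \omega)$. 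The only cosmetic difference is that the paper establishes that the conditional measures solve the martingale problem directly by a tower-property computation (in the spirit of Lemma 4.19 of Karatzas--Shreve), rather than deducing it from the Markov property and uniqueness of {\rm{(\ref{PB_Mg_Y})}} as you suggest, but both justifications are sound.
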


{\rm{Lemma \ref{Lem_Q_omega_Martingale}}} with $t^{\star} = t$ allows us to assert that {\rm{(\ref{Difference_Mart_id})}} is a $\P_{\nu}^{\rm FV}(\dd Y)-$martingale increment. Thus, we obtain for $j\in \{0,1,2 \}$:
\begin{align*}
& \left\langle g^{(j)}\circ\tau_{-\left\langle \id, Y_{t_{i}^{n}\wedge t} \right\rangle}, Y_{t_{i+1}^{n} \wedge t} - Y_{t_{i}^{n} \wedge t}  \right\rangle =  \int_{t_{i}^{n}\wedge t}^{t_{i+1}^{n}\wedge t}{\frac{1}{2}\left\langle g^{(j+2)}\circ\tau_{-\left\langle \id, Y_{t_{i}^{n}\wedge t} \right\rangle},  Y_{s} \right\rangle \dd s} \label{Decomp_semi_martingale_alea}  \\
 &  \hspace{6.25cm} + M_{t_{i+1}^{n}\wedge t}^{\id}\left(g^{(j)}\circ\tau_{-\left\langle \id, Y_{t_{i}^{n}\wedge t} \right\rangle} \right) - M_{t_{i}^{n}\wedge t}^{\id}\left(g^{(j)}\circ\tau_{-\left\langle \id, Y_{t_{i}^{n} \wedge t}  \right\rangle} \right) \notag 
\end{align*} 
where $\left(M_{s\wedge t}^{\id}\left(g^{(j)}\circ\tau_{-\left\langle \id, Y_{t_{i}^{n}\wedge t} \right\rangle} \right) - M_{t_{i}^{n}\wedge t}^{\id}\left(g^{(j)}\circ\tau_{-\left\langle \id, Y_{t_{i}^{n} \wedge t} \right\rangle} \right)\right)_{s\geqslant t_{i}^{n}}$ is a $\P_{\nu}^{\rm FV}$ square integrable martingale satisfying for all $s \geqslant t_{i}^{n}$,
\begin{equation*}
\begin{aligned}
& \left\langle M_{\cdot \wedge t}^{\id}\left(g^{(j)}\circ\tau_{-\left\langle \id, Y_{t_{i}^{n}\wedge t} \right\rangle} \right) - M_{t_{i}^{n}\wedge t}^{\id}\left(g^{(j)}\circ\tau_{-\left\langle  \id, Y_{t_{i}^{n} \wedge t} \right\rangle} \right) \right\rangle_{s} \\
& \hspace{0.75cm} = 2\gamma \int_{t_{i}^{n}\wedge t}^{s \wedge t}{\left[\left\langle \left(g^{(j)}\circ\tau_{-\left\langle \id, Y_{t_{i}^{n} \wedge t} \right\rangle}\right)^{2} , Y_{u} \right\rangle - \left\langle g^{(j)}\circ\tau_{-\left\langle \id, Y_{t_{i}^{n} \wedge t} \right\rangle}, Y_{u} \right\rangle^{2} \right]\dd u}. 
\end{aligned} 
\label{Martingale_h_i_n_Etape_3}
\end{equation*}

\subsection{Expressions of the terms of (\ref{Existence_PB_mg}) \label{Etape_4_Existence}} 
In the rest of this proof, we use the following notations to simplify the writing. We denote for all $s \geqslant 0$, $R(s) := \left\langle \id, Y_{s}\right\rangle$. Our goal is to prove the following lemma:
\begin{Lem}
When the mesh of the subdivision $0 = t_{0}^{n} < t_{1}^{n} <\dots < t_{p_{n}}^{n} = T$ of $[0,T]$ tends to $0$ when $n \to +\infty$, we have the following convergence in probability
\[\lim_{n\to + \infty}{\sum\limits_{i\, = \, 0}^{p_{n}-1}{{\rm{\textbf{\rm{\textbf{(A)}}}}}_{i}}}   = \int_{0}^{t}{F'\left(\left\langle g, Z_{s}  \right\rangle \right) \left( \left\langle \frac{g''}{2}, Z_{s}  \right\rangle + {\color{black} \gamma  \left\langle g'', Z_{s} \right\rangle M_{2}(Z_{s})   }- 2\gamma\left\langle g' \times \id, Z_{s} \right\rangle \right)} + {\rm{Mart_{t}}},\]
where $\left({\rm{Mart_{t}}}\right)_{0\leqslant t \leqslant T}$ is a $\P_{\nu}^{\rm FV}-$martingale. 
\label{Lemme_Termes_F'}
\end{Lem}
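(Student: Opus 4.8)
The plan is to start from the decomposition \eqref{Morceau_Ai_F_prime} of $\mathrm{\textbf{(A)}}_i$ and replace each increment $\langle g^{(j)}\circ\tau_{-\langle \id, Y_{t_i^n\wedge t}\rangle}, Y_{t_{i+1}^n\wedge t}-Y_{t_i^n\wedge t}\rangle$ and $\langle \id, Y_{t_{i+1}^n\wedge t}-Y_{t_i^n\wedge t}\rangle$ by their \textsc{Doob} semi-martingale decompositions obtained in Subsections \ref{Etape_2_Existence} and \ref{Etape_3_Existence}. For the increments of the identity function I would use that $\langle \id, Y_{t_{i+1}^n\wedge t}-Y_{t_i^n\wedge t}\rangle = M^{\id}_{t_{i+1}^n\wedge t}(\id)-M^{\id}_{t_i^n\wedge t}(\id)$ is a pure martingale increment (the drift $\frac{k(k-1)}{2}\id^{k-2}$ vanishes for $k=1$), and for the test-function increments I would substitute \eqref{Decomp_semi_martingale_alea} with $j\in\{0,1,2\}$.

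First I would collect the terms according to their order in the increments. The zeroth-order pieces $F'(\langle g\circ\tau_{-R(t_i^n\wedge t)}, Y_{t_i^n\wedge t}\rangle)\langle g\circ\tau_{-R(t_i^n\wedge t)}, Y_{t_{i+1}^n\wedge t}-Y_{t_i^n\wedge t}\rangle$ split, via \eqref{Decomp_semi_martingale_alea} with $j=0$, into a bounded-variation part $\int F'(\cdots)\frac12\langle g''\circ\tau_{-R(t_i^n\wedge t)}, Y_s\rangle\,\dd s$ and a martingale increment. As the mesh tends to $0$, the drift sums converge (in probability) to $\int_0^t F'(\langle g, Z_s\rangle)\langle \frac{g''}{2}, Z_s\rangle\,\dd s$ since $\langle g\circ\tau_{-R(s)}, Y_s\rangle = \langle g, Z_s\rangle$ and $R(t_i^n\wedge t)\to R(s)$ by continuity. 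The first-order cross terms $-\langle \id, \Delta_i Y\rangle \langle g'\circ\tau_{-R}, Y_{t_i^n\wedge t}\rangle$, after substituting the two martingale decompositions, produce a quadratic-covariation contribution whose predictable bracket I would compute using \eqref{Martingale_h_i_n_Etape_3} and the bracket between $M^{\id}(\id)$ and $M^{\id}(g'\circ\tau_{-R})$; summing over $i$ and passing to the limit this yields $-2\gamma\int_0^t F'(\langle g, Z_s\rangle)\langle g'\times\id, Z_s\rangle\,\dd s$ (using that centering gives $\langle g'\circ\tau_{-R}\times\id, Z_s\rangle$-type terms). The second-order term $\frac12\langle \id,\Delta_i Y\rangle^2\langle g''\circ\tau_{-R}, Y_{t_i^n\wedge t}\rangle$ converges, through the quadratic variation of $M^{\id}(\id)$ given by Lemma \ref{Lem_Mart_id_id_bien_def}\textbf{(2)}, namely $2\gamma[\langle\id^2,Y_s\rangle-\langle\id,Y_s\rangle^2]=2\gamma M_2(Z_s)$, to $\gamma\int_0^t F'(\langle g, Z_s\rangle)\langle g'', Z_s\rangle M_2(Z_s)\,\dd s$. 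I would also need the mixed increment $\langle g'\circ\tau_{-R}, \Delta_i Y\rangle$ appearing in the bracket of \eqref{Morceau_Ai_F_prime} to give a contribution that combines with the above to produce the correct $M_2$ coefficient.

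The genuinely delicate point will be justifying the passage to the limit of the sums of the martingale increments to a single martingale $\mathrm{Mart}_t$, and controlling the error between the discrete sums and the integrals uniformly in the mesh. I would assemble all surviving martingale increments $F'(\langle g\circ\tau_{-R(t_i^n\wedge t)}, Y_{t_i^n\wedge t}\rangle)\big(M^{\id}_{t_{i+1}^n\wedge t}(g\circ\tau_{-R})-M^{\id}_{t_i^n\wedge t}(g\circ\tau_{-R})\big)$ and the analogous first-order ones into a discrete stochastic integral against a fixed continuous $L^2$-martingale; using the $L^2$ bounds from Proposition \ref{Prop_moments_non_born�s_RECENTRE} together with the boundedness of $F',F''$ and of $g,g',g''$ on the relevant sets, I would show this discrete integral converges in probability (and the limit is a martingale because it is an $L^2$-limit of martingales). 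The hard part is precisely this uniform $L^2$-control combined with the continuity-in-probability estimates needed so that all the "Riemann-sum against bracket" approximations have vanishing error; the error terms of order $|\langle\id,\Delta_i Y\rangle|^3$ and $\sum_k|\langle g^{(k)}\circ\tau_{-R},\Delta_i Y\rangle|^3$ from \eqref{Existence_PB_mg} must be shown to vanish, which relies on fourth-moment bounds and is the reason one initially restricts to $F,g\in\CCCC_b^4(\R,\R)$.
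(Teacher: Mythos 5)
Your strategy is exactly the paper's: the proof there expands $\sum_i {\rm{\textbf{(A)}}}_i$ into six pieces ${\rm{\textbf{(A)}}}_i^{1},\dots,{\rm{\textbf{(A)}}}_i^{6}$ obtained by the substitutions you describe, treats the drift of (\ref{Decomp_semi_martingale_alea}) by \textsc{Riemann} sums plus Lemma \ref{Lem_convergence_proba}, extracts the centering terms through \textsc{It\^o}'s product formula and the bracket (\ref{Crochet_M_G_M_H_general}), obtains the martingale part from $L^{2}$-bounded discrete stochastic integrals, and kills the cubic errors via Lemma \ref{Lem_controle_erreur}. So the approach is the right one, and you correctly identify the delicate points (uniform $L^{2}$ control of the martingale sums, and the fact that (\ref{Decomp_semi_martingale_alea}) for the random test functions $g^{(j)}\circ\tau_{-R(t_{i}^{n}\wedge t)}$ rests on the predictable extension of Subsection \ref{Etape_3_Existence}).

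However, your term-by-term ledger contains a genuine mis-attribution that would derail a literal execution. Writing $\Delta_{i}Y := Y_{t_{i+1}^{n}\wedge t} - Y_{t_{i}^{n}\wedge t}$: the piece $-\left\langle \id, \Delta_{i}Y\right\rangle\left\langle g'\circ\tau_{-R(t_{i}^{n}\wedge t)}, Y_{t_{i}^{n}\wedge t}\right\rangle$ is a martingale increment multiplied by an $\FF_{t_{i}^{n}\wedge t}$-measurable factor, so it contributes \emph{only} to ${\rm{Mart}}_{t}$ (the paper's ${\rm{\textbf{(A)}}}_i^{3}$); it produces no covariation. The term $-2\gamma\int_{0}^{t}{F'\left(\left\langle g, Z_{s}\right\rangle\right)\left\langle g'\times\id, Z_{s}\right\rangle \dd s}$ comes instead from the product of the \emph{two} increments, $-\left\langle\id,\Delta_{i}Y\right\rangle\left\langle g'\circ\tau_{-R(t_{i}^{n}\wedge t)},\Delta_{i}Y\right\rangle$ (the paper's ${\rm{\textbf{(A)}}}_i^{4}$), whose \textsc{It\^o} decomposition yields the bracket $\left\langle M^{\id}(\id), M^{\id}(g'\circ\tau_{-R})\right\rangle$ of (\ref{Crochet_M_G_M_H_general}), equal after centering to $2\gamma\int{\left\langle g'\times\id, Z_{s}\right\rangle \dd s}$. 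Consequently your closing claim that the mixed increment "combines with the above to produce the correct $M_{2}$ coefficient" is wrong: the coefficient $\gamma\left\langle g'', Z_{s}\right\rangle M_{2}(Z_{s})$ comes \emph{solely} from the squared increment $\frac{1}{2}\left\langle\id,\Delta_{i}Y\right\rangle^{2}\left\langle g''\circ\tau_{-R(t_{i}^{n}\wedge t)}, Y_{t_{i}^{n}\wedge t}\right\rangle$ via $\dd\left\langle M^{\id}(\id)\right\rangle_{s} = 2\gamma M_{2}(Z_{s})\dd s$ (Lemma \ref{Lem_Mart_id_id_bien_def}), with no combination needed; the mixed increment's role is exhausted by the $-2\gamma\left\langle g'\times\id\right\rangle$ term. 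You also omit the drift-times-increment cross term (the product of the $g'$-drift from (\ref{Decomp_semi_martingale_alea}) with $M^{\id}_{t_{i+1}^{n}\wedge t}(\id)-M^{\id}_{t_{i}^{n}\wedge t}(\id)$), which the paper isolates as ${\rm{\textbf{(A)}}}_i^{6}$ and controls by $xy\leqslant\frac{2}{3}(x^{3/2}+y^{3})$. Finally, a minor point: the cubic error terms are not handled by fourth-moment bounds; Lemma \ref{Lem_controle_erreur} localizes with a stopping time making the bracket $2\gamma A^{2}$-\textsc{Lipschitz} and applies the \textsc{Burkholder-Davis-Gundy} inequality with exponent $3$, so only $\E\left(\left\langle\id^{2},\nu\right\rangle\right)<\infty$ is needed.
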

The proof of Lemma {\ref{Lemme_Termes_F'}} is based on the following decomposition of ${\rm{\textbf{\rm{\textbf{(A)}}}}}_{i}$ (given by the expression (\ref{Morceau_Ai_F_prime})) {  which makes use of} the \textsc{Doob} semi-martingale decompositions of Sections \ref{Etape_2_Existence} and \ref{Etape_3_Existence}.
We have
\[{\rm{\textbf{\rm{\textbf{(A)}}}}}_{i}  =  \sum\limits_{k\, = \, 1}^{6}{F'\left(\left\langle g\circ\tau_{-R\left(t_{i}^{n}\wedge t\right)}, Y_{t_{i}^{n} \wedge t} \right\rangle \right){\rm{\textbf{\rm{\textbf{(A)}}}}}^{k}_{i}}\] where 
\begin{align*}
{\rm{\textbf{\rm{\textbf{(A)}}}}}^{1}_{i} & = \int_{t_{i}^{n}\wedge t}^{t_{i+1}^{n}\wedge t}{\frac{1}{2}\left\langle g''\circ\tau_{-R\left(t_{i}^{n}\wedge t\right)}, Y_{s} \right\rangle \dd s}, \\
{\rm{\textbf{\rm{\textbf{(A)}}}}}^{2}_{i} & = M_{t_{i+1}^{n}\wedge t}^{\id}\left(g\circ\tau_{-R\left(t_{i}^{n}\wedge t\right) } \right) - M_{t_{i}^{n}\wedge t}^{\id}\left(g\circ\tau_{-R\left(t_{i}^{n}\wedge t\right) } \right),\\
{\rm{\textbf{\rm{\textbf{(A)}}}}}^{3}_{i} & = - \left[M_{t_{i+1}^{n}\wedge t}^{\id}\left(\id\right) - M_{t_{i}^{n}\wedge t}^{\id}\left( \id \right) \right]\left\langle g'\circ \tau_{-R\left(t_{i}^{n}\wedge t \right)}, Y_{t_{i}^{n}\wedge t} \right\rangle, \phantom{\left(\tau_{-\left\langle Y_{t_{i}^{n}\wedge t}, \id \right\rangle } \right)} \\
{\rm{\textbf{\rm{\textbf{(A)}}}}}^{4}_{i} & = - \left[M_{t_{i+1}^{n}\wedge t}^{\id}\left(\id\right) - M_{t_{i}^{n}\wedge t}^{\id}\left( \id \right) \right] \left[M_{t_{i+1}^{n}\wedge t}^{\id}\left(g'\circ\tau_{-R\left(t_{i}^{n}\wedge t\right) } \right) - M_{t_{i}^{n} \wedge t}^{\id}\left(g'\circ\tau_{-R\left(t_{i}^{n}\wedge t\right) } \right)\right], \\
{\rm{\textbf{\rm{\textbf{(A)}}}}}^{5}_{i} & = \frac{1}{2}  \left[M_{t_{i+1}^{n}\wedge t}^{\id}\left(\id \right) - M_{t_{i}^{n}\wedge t}^{\id}\left(\id \right) \right]^{2} \left\langle g''\circ \tau_{-R\left(t_{i}^{n}\wedge t \right)}, Y_{t_{i}^{n}\wedge t} \right\rangle, \\
{\rm{\textbf{\rm{\textbf{(A)}}}}}^{6}_{i} & = O \left( \left|t_{i+1}^{n}\wedge t - t_{i}^{n}\wedge t \right| \left|M_{t_{i+1}^{n}\wedge t}^{\id}(\id) - M_{t_{i}^{n}\wedge t}^{\id}(\id) \right|  \right).
\end{align*}
Note that we used the following inequality \begin{equation*}
\int_{t_{i}^{n} \wedge t}^{t_{i+1}^{n} \wedge t}{\frac{1}{2}\left\langle g^{(3)}\circ\tau_{-R\left(t_{i}^{n}\wedge t \right)}, Y_{s} \right\rangle \dd s} \leqslant \left\|g^{(3)} \right\|_{\infty} \left(t_{i+1}^{n} \wedge t - t_{i}^{n} \wedge t \right) \quad \P_{\nu}^{\rm FV}-{\rm{a.s.}}
 \label{Majoration_constante_universelle}
\end{equation*}  
to bound the term $\left[M^{\id}_{t_{i+1}^{n}}(\id) - M^{\id}_{t_{i}^{n}}(\id)\right]\int_{t_{i}^{n}\wedge t}^{t_{i+1}^{n}\wedge t}{\frac{1}{2}\left\langle g^{(3)} \circ \tau_{-R\left(t_{i}^{n}\wedge t \right)}, Y_{s} \right\rangle \dd s}$ by ${\rm{\textbf{\rm{\textbf{(A)}}}}}^{6}_{i}$. Our goal in the sequel is to write each of these six quantities as sums of finite variation terms, martingale terms and negligible terms and to study the limit of each of them.

\subsubsection{Decomposition and study of ${\rm{\textbf{\rm{\textbf{(A)}}}}}^{1}_{i}$ \label{Etape_4.1_Existence}} 
Note that, for any $i \in \{0, \cdots, p_{n}-1\}$, 
\begin{align*}
& {\rm{\textbf{\rm{\textbf{(A)}}}}}^{1}_{i} = \int_{t_{i}^{n} \wedge t}^{t_{i+1}^{n} \wedge t}{\frac{1}{2}\left\langle g''\circ\tau_{-R\left(s\right)}, Y_{s} \right\rangle \dd s} + \int_{t_{i}^{n}\wedge t}^{t_{i+1}^{n}\wedge t}{\frac{1}{2} \left\langle g''\circ\tau_{-R\left(t_{i}^{n}\wedge t\right)} - g''\circ\tau_{-R\left(s\right)}, Y_{s} \right\rangle \dd s}. 
\end{align*}
Using a argument of convergence of \textsc{Riemann}'s sums for the first term and {\rm{Lemma \ref{Lem_convergence_proba} \textbf{(2)}}} for the second one, we obtain {  in probability}, that 
\[\lim_{n\to+\infty}{\sum_{i\, = \, 0}^{p_{n} - 1}{F'\left(\left\langle g \circ \tau_{-R\left(t_{i}^{n}\wedge t \right)}, Y_{t_{i}^{n}\wedge t} \right\rangle \right){\rm{\textbf{\rm{\textbf{(A)}}}}}^{1}_{i}}} = \int_{0}^{t}{F'\left(\left\langle  g, Z_{s} \right\rangle \right)\left\langle \frac{g''}{2}, Z_{s} \right\rangle \dd s}.\]

\subsubsection{Martingale contribution of ${\rm{\textbf{\rm{\textbf{(A)}}}}}^{2}_{i}$ and ${\rm{\textbf{\rm{\textbf{(A)}}}}}^{3}_{i}$ \label{Etape_4.2_Existence}}
Note that $\sum_{i\, = \, 0}^{p_{n}- 1}{F'\left(\left\langle g\circ\tau_{-R\left(t_{i}^{n}\wedge t\right)}, Y_{t_{i}^{n}\wedge t} \right\rangle \right)}{\rm{\textbf{\rm{\textbf{(A)}}}}}^{3}_{i},$ is a stochastic integral  with respect to the square integrable martingale $\left( M^{\id}_{s}(\id)\right)_{0\leqslant s \leqslant T}$. Since $F'$ and $s \mapsto \left\langle g'\circ \tau_{-R\left(s \right)}, Y_{s} \right\rangle$ are bounded, we deduce that \[\lim_{n\to + \infty}\sum\limits_{i\, = \, 0}^{p_{n}- 1}{F'\left(\left\langle g\circ\tau_{-R\left(t_{i}^{n}\wedge t\right)}, Y_{t_{i}^{n}\wedge t} \right\rangle \right)}{\rm{\textbf{\rm{\textbf{(A)}}}}}^{3}_{i} \] is $\P_{\nu}^{\rm FV}-$martingale. The term \[\MM_{t}^{n} := \sum\limits_{i\, = \, 0}^{p_{n}- 1}{F'\left(\left\langle g\circ\tau_{-R\left(t_{i}^{n}\wedge t\right)}, Y_{t_{i}^{n}\wedge t} \right\rangle \right)}{\rm{\textbf{\rm{\textbf{(A)}}}}}^{2}_{i} \]
is a stochastic integral with respect to a martingale which depends on $n$. However, the same argument as above applies because $\left(\MM_{t}^{n}\right)_{0\leqslant t \leqslant T}$ is {  uniformly bounded in $L^{2}\left(\Omega \right)$ by {\rm{Lemma \ref{Lem_Q_omega_Martingale}}}, hence uniformly integrable.}

\subsubsection{Contributions of ${\rm{\textbf{\rm{\textbf{(A)}}}}}^{4}_{i}$ and ${\rm{\textbf{\rm{\textbf{(A)}}}}}^{5}_{i}$}

The contribution of the  next two terms corresponds to the terms due to the centering effect in the martingale problem (\ref{PB_Mg_Z_Multi_d}). \vspace{0.3cm} \\ 
\textbf{Study of the term ${\rm{\textbf{\rm{\textbf{(A)}}}}}^{4}_{i}$.} Using \textsc{It\^{o}}'s formula and the relation (\ref{Crochet_M_G_M_H_general}), we obtain that 
\begin{align*}
{\rm{\textbf{\rm{\textbf{(A)}}}}}^{4}_{i} &= {\rm{\textbf{\rm{\textbf{(A)}}}}}^{41}_{i} + {\rm{\textbf{\rm{\textbf{(A)}}}}}^{42}_{i} + {\rm{\textbf{\rm{\textbf{(A)}}}}}^{43}_{i}
\end{align*}
where
\begin{align*}
{\rm{\textbf{\rm{\textbf{(A)}}}}}^{41}_{i} & = -\int_{t_{i}^{n}\wedge t}^{t_{i+1}^{n}\wedge t}{\left[M_{s}^{\id}(\id) - M_{t_{i}^{n}\wedge t}^{\id}(\id)\right]\dd M_{s}^{\id}\left(g'\circ \tau_{-R\left(t_{i}^{n}\wedge t \right)}\right) }, \\
{\rm{\textbf{\rm{\textbf{(A)}}}}}^{42}_{i} & = -\int_{t_{i}^{n}\wedge t}^{t_{i+1}^{n}\wedge t}{\left[M_{s}^{\id}\left(g'\circ \tau_{-R\left(t_{i}^{n}\wedge t \right)}\right) - M_{t_{i}^{n}\wedge t}^{\id}\left(g'\circ \tau_{-R\left(t_{i}^{n}\wedge t \right)}\right)\right]\dd M_{s}^{\id}\left(\id\right) }, \\
{\rm{\textbf{\rm{\textbf{(A)}}}}}^{43}_{i} & = -2\gamma\int_{t_{i}^{n}\wedge t}^{t_{i+1}^{n}\wedge t}{\left[\left\langle \id \times g'\circ \tau_{-R\left(t_{i}^{n}\wedge t \right)}, Y_{s} \right\rangle - \left\langle \id , Y_{s} \right\rangle \left\langle g'\circ \tau_{-R\left(t_{i}^{n}\wedge t \right)} , Y_{s}\right\rangle \right]\dd s }.
\end{align*}
Using the same arguments as for ${\rm{\textbf{\rm{\textbf{(A)}}}}}^{2}_{i}$,  we deduce, in probability, that for $k \in \{41, 42 \}$, 
\[\lim_{n\to +\infty}{\sum\limits_{i\, = \, 0}^{p_{n}-1}{F'\left(\left\langle g\circ\tau_{-R\left(t_{i}^{n}\wedge t\right)}, Y_{t_{i}^{n} \wedge t} \right\rangle \right){\rm{\textbf{\rm{\textbf{(A)}}}}}^{k}_{i}} } \] 
is a $\P_{\nu}^{\rm FV}-$martingale. Moreover, we decompose the integral of ${\rm{\textbf{\rm{\textbf{(A)}}}}}^{43}_{i}$ in the following way: 
\begin{align*}
{\rm{\textbf{\rm{\textbf{(A)}}}}}^{43}_{i} & = {\rm{\textbf{\rm{\textbf{(A)}}}}}^{431}_{i} + {\rm{\textbf{\rm{\textbf{(A)}}}}}^{432}_{i} + {\rm{\textbf{\rm{\textbf{(A)}}}}}^{433}_{i}
\end{align*}
where
\begin{align*}
{\rm{\textbf{\rm{\textbf{(A)}}}}}^{431}_{i} & = -2\gamma \int_{t_{i}^{n}\wedge t}^{t_{i+1}^{n}\wedge t}{\left\langle \id \times g'\circ \tau_{-R\left(t_{i}^{n}\wedge t\right)} - \id \times g'\circ \tau_{-R\left(s\right)}, Y_{s} \right\rangle \dd s}, \\
{\rm{\textbf{\rm{\textbf{(A)}}}}}^{432}_{i} & = -2\gamma \int_{t_{i}^{n}\wedge t}^{t_{i+1}^{n}\wedge t}{\left\langle \id, Y_{s}  \right\rangle \left\langle g'\circ \tau_{-R\left(s\right)} -  g'\circ \tau_{-R\left(t_{i}^{n}\wedge t\right)}, Y_{s} \right\rangle \dd s}, \\
{\rm{\textbf{\rm{\textbf{(A)}}}}}^{433}_{i} & = -2\gamma \int_{t_{i}^{n}\wedge t}^{t_{i+1}^{n}\wedge t}{\left[\left\langle \id \times g'\circ \tau_{-R\left(s\right)}, Y_{s} \right\rangle - \left\langle \id, Y_{s}  \right\rangle \left\langle g'\circ \tau_{-R\left(s\right)}, Y_{s} \right\rangle \right]\dd s}.
\end{align*}
Using {\rm{Lemma \ref{Lem_convergence_proba}}}, we deduce, in probability, that for $k \in \{431, 432 \}$, \[\lim_{n\to +\infty}{\sum\limits_{i\, = \, 0}^{p_{n}-1}{F'\left(\left\langle g\circ\tau_{-R\left(t_{i}^{n}\wedge t\right)}, Y_{t_{i}^{n} \wedge t} \right\rangle \right){\rm{\textbf{\rm{\textbf{(A)}}}}}^{k}_{i}} } = 0,\]  
and we deduce from the convergence of \textsc{Riemann}'s sums that, $\P_{\nu}^{\rm FV}-$a.s. and hence in probability,
\[\lim_{n\to +\infty}{\sum\limits_{i\, = \, 0}^{p_{n}-1}{F'\left(\left\langle g\circ\tau_{-R\left(t_{i}^{n}\wedge t\right)}, Y_{t_{i}^{n} \wedge t} \right\rangle \right){\rm{\textbf{\rm{\textbf{(A)}}}}}^{433}_{i}} } =  -2\gamma\int_{0}^{t}{F'\left(\left\langle  g, Z_{s} \right\rangle \right) \left\langle g' \times \id, Z_{s} \right\rangle \dd s}.\]

\noindent \textbf{Study of the term ${\rm{\textbf{\rm{\textbf{(A)}}}}}^{5}_{i}$.} As ${\rm{\textbf{\rm{\textbf{(A)}}}}}^{5}_{i}$ satisfies the following decomposition: 
\begin{align*}
& {\rm{\textbf{\rm{\textbf{(A)}}}}}^{5}_{i} = \left(\int_{t_{i}^{n}\wedge t}^{t_{i+1}^{n}\wedge t}{\left[M^{\id}_{s}(\id) - M^{\id}_{t_{i}^{n}\wedge t}(\id) \right]\dd M_{s}^{\id}(\id)} \right. \\
& \hspace{4cm} + \left.  \gamma \int_{t_{i}^{n}\wedge t}^{t_{i+1}^{n}\wedge t}{\left[\left\langle \id^{2}, Y_{s} \right\rangle -\left\langle \id, Y_{s} \right\rangle^{2}  \right]\dd s}\right)\left\langle g''\circ \tau_{-R\left(t_{i}^{n}\wedge t \right)}, Y_{t_{i}^{n}\wedge t} \right\rangle
\end{align*}
and proceeding as for ${\rm{\textbf{\rm{\textbf{(A)}}}}}^{4}_{i}$ above, we obtain, in probability,  that
\[\lim_{n\to +\infty}{\sum\limits_{i\, = \, 0}^{p_{n}-1}{F'\left(\left\langle g\circ\tau_{-R\left(t_{i}^{n}\wedge t\right)}, Y_{t_{i}^{n} \wedge t} \right\rangle \right){\rm{\textbf{\rm{\textbf{(A)}}}}}^{5}_{i}} } = \gamma   \int_{0}^{T}{F'\left(\left\langle g, Z_{s\wedge t} \right\rangle\right)\left\langle g'', Z_{s\wedge t} \right\rangle M_{2}\left(Z_{s\wedge t}\right)\dd s} + {\rm{Mart}}_{t}^{(1)} \]
where $\left({\rm{Mart}}_{t}^{(1)} \right)_{0\leqslant t \leqslant T}$ is a $\P_{\nu}^{\rm FV}-$martingale.

\subsubsection{Study of the error term ${\rm{\textbf{\rm{\textbf{(A)}}}}}^{6}_{i}$}
From the inequality: for all $x, y \in \R_{+}$, $xy \leqslant \frac{2}{3}\left(x^{\frac{3}{2}} + y^{3} \right)$ and  {\rm{Lemma \ref{Lem_controle_erreur}}}, we deduce, in probability, that 
 \begin{multline*}
 \lim_{n\to + \infty}\sum\limits_{i\, = \, 0}^{p_{n} - 1}F'\left(\left\langle g \circ \tau_{-R\left(t_{i}^{n} \wedge t \right)}, Y_{t_{i}^{n} \wedge t} \right\rangle \right)\left|t_{i+1}^{n}\wedge t - t_{i}^{n} \wedge t \right| \left|M^{\id}_{t_{i+1}^{n}\wedge t}(\id) - M^{\id}_{t_{i}^{n} \wedge t}(\id) \right|  \\
 \leqslant \lim_{n\to + \infty}{\frac{2\left\|F' \right\|_{\infty}}{3} \left(\sum\limits_{i \, = \, 0}^{p_{n}-1}{\left| t_{i+1}^{n}\wedge t - t_{i}^{n}\wedge t\right|^{\frac{3}{2}}} + \sum\limits_{i \, = \, 0}^{p_{n}-1}{\left|M^{\id}_{t_{i+1}^{n}\wedge t}(\id) - M^{\id}_{t_{i}^{n} \wedge t}(\id) \right|^{3}}  \right)} = 0
 \end{multline*}
 and this completes the proof of {\rm{Lemma} \ref{Lemme_Termes_F'}}. \hfill $\square$ 

\subsubsection{Expressions of terms of (\ref{Existence_PB_mg}) involving $F''$ and error terms\label{Etape_5_Existence}}

{ Decomposing ${\rm{\textbf{(B)}}}_{i}$ in similar way as for ${\rm{\textbf{(A)}}}_{i}$, and using similar arguments, we can prove the following lemma:}

\begin{Lem}
When the mesh of the subdivision $0 = t_{0}^{n} < t_{1}^{n} <\dots < t_{p_{n}}^{n} = T$ of $[0,T]$ tends to $0$ when $n \to +\infty$, we obtain in probability that
\begin{multline*}
\lim_{n\to + \infty}{\sum\limits_{i\, = \, 0}^{p_{n}-1}{{\rm{\textbf{\rm{\textbf{(B)}}}}}_{i}}}   = \gamma\int_{0}^{t}{F''\left(\left\langle g, Z_{s} \right\rangle \right)\left[ \left\langle g^{2}, Z_{s} \right\rangle - \left\langle g,  Z_{s}\right\rangle^{2}  +  \left\langle g', Z_{s} \right\rangle^{2}M_{2}\left(Z_{s} \right)   \right.  } \\
  \left. \phantom{\left\langle g', Z_{s} \right\rangle^{2}} - 2\left\langle g', Z_{s} \right\rangle\left\langle g \times \id, Z_{s} \right\rangle \right]\dd s + \widehat{{\rm{Mart}}}_{t}
\end{multline*} 
where $\left({\widehat{{\rm{Mart}}}_{t}}\right)_{0\leqslant t \leqslant T}$ is a $\P_{\nu}^{\rm FV}-$martingale.
\label{Lemme_Termes_F''}
\end{Lem}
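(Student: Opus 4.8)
The plan is to proceed exactly as in Subsection \ref{Etape_4_Existence} for the $F'$ terms, treating the five pieces ${\rm{\textbf{(B)}}}_{i}^{k}$ one by one; the essential difference is that each ${\rm{\textbf{(B)}}}_{i}^{k}$ is now \emph{quadratic} in the increments of $Y$, so every piece will feed the finite-variation drift through a quadratic or cross variation, leaving only a remainder martingale. I would first combine the first two pieces, using the algebraic identity ${\rm{\textbf{(B)}}}_{i}^{1}+{\rm{\textbf{(B)}}}_{i}^{2}=\left\langle g\circ\tau_{-R(t_{i}^{n}\wedge t)},\,Y_{t_{i+1}^{n}\wedge t}-Y_{t_{i}^{n}\wedge t}\right\rangle^{2}$. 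Inserting the \textsc{Doob} semi-martingale decomposition (\ref{Decomp_semi_martingale_alea}) with $j=0$, this square splits into the square of the finite-variation part (of order $|\Delta t|^{2}$, hence negligible after summation), a cross term between the finite-variation and martingale parts (negligible by a \textsc{Cauchy-Schwarz} estimate of the type used in Lemma \ref{Lem_controle_erreur}), and the square of the martingale increment. Writing $(\Delta M)^{2}=\Delta\langle M\rangle+\bigl((\Delta M)^{2}-\Delta\langle M\rangle\bigr)$ and using the bracket (\ref{Martingale_h_i_n_Etape_3}), together with $\langle(g\circ\tau_{-R(s)})^{2},Y_{s}\rangle=\langle g^{2},Z_{s}\rangle$ and $\langle g\circ\tau_{-R(s)},Y_{s}\rangle=\langle g,Z_{s}\rangle$, the summed squares of the martingale increments converge, weighted by $\tfrac12 F''(\langle g,Z_{s}\rangle)$, to $\gamma\int_{0}^{t}F''(\langle g,Z_{s}\rangle)[\langle g^{2},Z_{s}\rangle-\langle g,Z_{s}\rangle^{2}]\,\dd s$, while the centered remainders form a $\P_{\nu}^{FV}$-martingale.

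I would then dispatch the remaining three pieces in the same spirit. For ${\rm{\textbf{(B)}}}_{i}^{3}$ the relevant martingale is the \emph{fixed} one $M^{\id}(\id)$, whose bracket is $2\gamma\int M_{2}(Y_{s})\,\dd s$ by Lemma \ref{Lem_Mart_id_id_bien_def} with $k=1$; since $M_{2}(Y_{s})=M_{2}(Z_{s})$ and $\langle g'\circ\tau_{-R(s)},Y_{s}\rangle=\langle g',Z_{s}\rangle$, this piece contributes $\gamma\int_{0}^{t}F''(\langle g,Z_{s}\rangle)\langle g',Z_{s}\rangle^{2}M_{2}(Z_{s})\,\dd s$, again up to a remainder martingale. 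The term ${\rm{\textbf{(B)}}}_{i}^{4}$, being the product of a finite-variation integral of order $|\Delta t|$ with a single increment of $M^{\id}(\id)$, is negligible, its summed $L^{2}$-norm being controlled by $\sum_{i}|\Delta t|^{2}\,\Delta\langle M^{\id}(\id)\rangle_{i}\to 0$ via Lemmas \ref{Lem_convergence_proba} and \ref{Lem_controle_erreur}. Finally, for ${\rm{\textbf{(B)}}}_{i}^{5}$ I would use the cross-variation formula (\ref{Crochet_M_G_M_H_general}) with $G=H=\id$: the product of the increments of $M^{\id}(g\circ\tau_{-R(t_{i}^{n}\wedge t)})$ and $M^{\id}(\id)$ converges to $2\gamma\int[\langle(g\circ\tau_{-R(s)})\,\id,Y_{s}\rangle-\langle g\circ\tau_{-R(s)},Y_{s}\rangle\langle\id,Y_{s}\rangle]\,\dd s$. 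The key simplification is that, after the change of variables $y=x-R(s)$, this bracket equals exactly $\langle g\times\id,Z_{s}\rangle$, the surplus terms cancelling thanks to the centering $\langle\id,Z_{s}\rangle=0$; hence this piece contributes $-2\gamma\int_{0}^{t}F''(\langle g,Z_{s}\rangle)\langle g',Z_{s}\rangle\langle g\times\id,Z_{s}\rangle\,\dd s$, modulo a remainder martingale. Summing the four contributions reproduces precisely the announced drift, and all remainder martingales are gathered into $\widehat{{\rm{Mart}}}_{t}$.

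Throughout, replacing each frozen weight $\langle g^{(j)}\circ\tau_{-R(t_{i}^{n}\wedge t)},Y_{t_{i}^{n}\wedge t}\rangle$ by its value at the running time $s$ is justified by the continuity of $s\mapsto\langle g^{(j)}\circ\tau_{-R(s)},Y_{s}\rangle$ and the convergence-in-probability estimates of Lemma \ref{Lem_convergence_proba}, exactly as in Subsection \ref{Etape_4.1_Existence}. The main obstacle is the passage from weighted sums of \emph{squared} and \emph{product} martingale increments to integrals against the corresponding (cross-)variation when the martingale's integrand is frozen at the left endpoint and therefore changes from one interval to the next. This is precisely where the extension of the martingale problem to predictable test functions (Lemma \ref{Lem_Q_omega_Martingale}, obtained through regular conditional probabilities) is indispensable: it guarantees that, over each interval $[t_{i}^{n}\wedge t,\,t_{i+1}^{n}\wedge t]$, the frozen increment of $M^{\id}(g\circ\tau_{-R(t_{i}^{n}\wedge t)})$ is a genuine square-integrable martingale increment with the bracket (\ref{Martingale_h_i_n_Etape_3}), so that the orthogonality of successive increments, combined with the moment bounds of Lemma \ref{Lem_Mart_id_id_bien_def}, can be invoked both to identify the limiting drift and to show that the remainder terms are $\P_{\nu}^{FV}$-martingales.
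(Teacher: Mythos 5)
Your proposal is correct and follows essentially the same route as the paper's proof: a term-by-term semimartingale analysis of the ${\rm{\textbf{(B)}}}_{i}^{k}$ resting on the predictable-input extension of Lemma \ref{Lem_Q_omega_Martingale}, the brackets (\ref{Martingale_h_i_n_Etape_3}) and (\ref{Crochet_M_G_M_H_general}) (including the same key cancellation $\left\langle \left(g\circ\tau_{-R(s)}\right)\times\id, Y_{s}\right\rangle - \left\langle g\circ\tau_{-R(s)}, Y_{s}\right\rangle\left\langle \id, Y_{s}\right\rangle = \left\langle g\times\id, Z_{s}\right\rangle$ for the ${\rm{\textbf{(B)}}}_{i}^{5}$ contribution), Riemann-sum convergence with frozen weights unfrozen via Lemma \ref{Lem_convergence_proba}, and error control via Lemma \ref{Lem_controle_erreur}. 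The only, inessential, deviation is at ${\rm{\textbf{(B)}}}_{i}^{1}+{\rm{\textbf{(B)}}}_{i}^{2}$: the paper applies the martingale problem (\ref{PB_Mg_Y}) with $F(x)=x^{2}$ (the $M^{\id^{2}}$ martingale), producing $\left\langle g, \cdot\right\rangle\left\langle g''/2, \cdot\right\rangle$ drift terms that cancel between the two pieces, whereas you square the semimartingale increment directly so that these $g''$ contributions are absorbed into the negligible drift-squared and drift-times-martingale terms — both yield the same limiting drift and remainder martingale.
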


{  From Lemma {\rm{\ref{Lem_controle_erreur}}}, we deduce that the different error terms involved in the approximation (\ref{Existence_PB_mg}) converge in probability to $0$.}

\section{Proof of the results of Section \ref{Section_Unicite}\label{Section_preuve_Unicite}} 
{ 
Recall from (\ref{Eq_Bn_Multi_d}) that $B^{(n), d}$ is given for all $f \in \CCCC^{2}_{b}\left(\left(\R^{d}\right)^{n}, \R\right)$ by 
\[B^{(n), d}f(x) := \frac{\Delta f(x)}{2} - 2\gamma \sum_{k\, = \, 1}^{d}{\left(\nabla f(x) \cdot \varepsilon_{k} \right)\left(x\cdot \varepsilon_{k}\right)}\]
where $\varepsilon_{k} = 
\left(e_{k}, \cdots, e_{k}\right) \in \left(\R^{d}\right)^{n}$ and $e_{k}$ is the $k^{\rm th}$ vector of the canonical basis of $\R^{d}$. In this part, we justify the extension from the case $d = 1$ to the multidimensional case. To do this, we give a probabilistic interpretation of the semi-group $(T^{(n), d}(t))_{t\geqslant 0}$, generated by the operator $B^{(n), d}$, using the \textsc{Feynman-Kac} formula: for any $f \in \CCCC_{b}^{2}\left(\left(\R^{d} \right)^{n}, \R\right)$, \[T^{(n),d}(t)f(x) := \E_{x}f\left(X_{t}\right)\]
where $\left(X_{t} \right)_{t\geqslant 0}$ is solution to the following SDE: 
\[ X_{0} = x, \qquad \qquad \dd X_{t} = \dd B_{t} - 2\gamma b(X_{t})\dd t, \qquad X_{t} \in \R^{nd}, \quad t>0,\]
where $\left(B_{t}\right)_{t\geqslant 0} = (B_{t}^{(1)}, \cdots, B_{t}^{(n)} )_{t\geqslant 0}$ is a $nd-$standard Brownian motion, $b(x) = \left(b_{i,k}(x)\right)_{1 \leqslant i \leqslant n,\,1 \leqslant k \leqslant d} = (\sum_{j\, = \, 1}^{n}{x_{j,k}} )_{1 \leqslant i \leqslant n,\, 1 \leqslant k \leqslant d}$ where $x := (x_{1}, \cdots, x_{n} )^{t} \in (\R^{d})^{n}$ and $x_{i} := (x_{i,1}, \cdots, x_{i,d} )^{t} \in \R^{d}$. We shall denote $X_t=(X^{i,k}_t)_{1\leqslant i\leqslant n,\, 1\leqslant i\leqslant d}$. \\

Note that, for all $k \in \{1, \cdots, d \}$, $Y_{t}^{(k)} := (X_{t}^{1,k}, \cdots, X_{t}^{n,k} )$ are $d$ independent processes of semi-group $(T^{(n)}(t) )_{t\geqslant 0} := (T^{(n),1}(t) )_{t\geqslant 0}$, solutions to the following SDE: \[ \forall k \in \{1, \cdots, d\}, \qquad Y_{0}^{(k)} = x_{k}, \qquad \qquad \dd Y_{t}^{(k)} = \dd B_{t}^{(k)} - 2\gamma \left(Y_{t}^{(k)}\cdot \bm{1} \right)\bm{1}\dd t, \quad t>0\]
where $\bm{1} \in \R^{n}$ denotes the vector whose coordinates are all $1$ and $x \cdot y$ denotes the scalar product between $x$ and $y$ in $\R^{n}$. Considering a function $f\in \CCCC^{2}_{b}\left(\left(\R^{d} \right)^{n}, \R\right)$ with product form $f\left(x_{1}, \cdots, x_{n}\right) = \prod_{k\, = \, 1}^{n}{g_{k}\left(x_{k} \right)}$ with $g_{k} \in \CCCC^{2}_{b}\left(\R^{d}, \R \right)$, note that
\begin{equation}
    T^{(n),d}(t)f\left(x_{1}, \cdots, x_{n} \right) = \prod_{k\, = \, 1}^{d}{T^{(n), 1}(t)g_{k}(x_{k})} .
    \label{Eq_T_n_Multi_d}
\end{equation}
So, it is sufficient to study the case $d = 1$ since the general case can be deduced easily from (\ref{Eq_T_n_Multi_d}) following the same method as below. We leave the details to the reader.}

\subsection{Study of a semi-group \label{Sous_section_6_1}}

{  From now on, we assume that $d=1$.} In this section, we devote a specific study to the semi-group $(T^{(n)}(t))_{t\geqslant 0}$ generated by the operator {  $ B^{(n)} := B^{(n),1}$ defined for all $f \in \CCCC^{2}_{b}\left(\R^{n}, \R \right)$ by
\begin{equation}
B^{(n)}f(x) = \frac{\Delta f(x)}{2} - 2\gamma\left(\nabla f(x) \cdot \bm{1} \right)(x\cdot \bm{1}).
    \label{Operateur_B_n}
\end{equation}}
 In Section \ref{Sous_section_6_1_1}, we provide an explicit expression of $(t,x)\mapsto T^{(n)}(t)f(x)$ and prove that it is a strong solution to the semi-group PDE associated with $B^{(n)}$, by means of \textsc{Feynman-Kac}'s formula. With the aim of obtaining fairly fine bounds on this operator (see {\rm{Corollary \ref{Bornes_T_n_t}}}), we give all the necessary details. In Section \ref{Sous_section_6_1_2}, we give a MILD formulation of the martingale problem (\ref{PB_Mg_dual_FVr_Multi_d}) using the semigroup $(T^{(n)}(t) )_{t\geqslant 0}$ in  {\rm{Proposition \ref{Prop_Eq_MILD}}}. 

\subsubsection{Construction of the semi-group\label{Sous_section_6_1_1}}

For any real vector-valued function $f$ and $g$ of $L^{1}(\R^{n})$, we denote by $(f \ast g)(x) := \int_{\R^{n}}^{}{f(t)g(x-t)\dd t}$ the convolution product of $f$ and $g$. 
We denote by $\CCCC^{1,2}_{b}\left(\R_{+} \times \R^{n}, \R \right)$ the space of real functions on $\R_{+} \times \R^{n}$ of class $\CCCC^{1}(\R_{+}, \R)$ with respect to the first variable and of class $\CCCC^{2}_{b}(\R^{n}, \R)$ to the second variable.
\begin{Thm} The family of operators $\left(T^{(n)}(t) \right)_{t\geqslant 0}$ defined as:
\begin{align}
\forall t > 0, \ \forall x \in \R^{n}, \qquad T^{(n)}(t)f(x)   & := \int_{\R^{n}}^{}{f(u)g_{t,x}^{X}(u)\dd u}, \label{T_n_t_f} \\
 \forall x \in \R^{n}, \hspace{0.25cm} \qquad  T^{(n)}(0)f(x) & := f(x),  \notag 
\end{align}
where $g_{t,x}^{X}$ is the density of the Gaussian distribution $\NN^{(n)}\left(m_{t,x}, \Sigma_{t} \right)$ where $\Sigma_{t} := P\sigma_{t} P^{-1}$ and  $m_{t,x} := P\mu_{t,P^{-1}x} = x - \frac{\left(1-\exp\left(-2\gamma nt\right) \right)}{n}\left(x \cdot \bm{1} \right)\bm{1}$ 
 with \[ \mu_{t,y} := \begin{pmatrix}
y_{1}\exp\left(-2\gamma nt \right) \\ y_{2} \\ \vdots \\ y_{n} \end{pmatrix} \quad  \rm{and} \quad  \sigma_{t} := \begin{pmatrix}
e_{4}(t) & 0 & \dots  & \dots & 0 \\
0 & t & 0 & \dots & 0 \\
\vdots & 0 & \ddots & \ddots & \vdots \\
\vdots & \vdots & \ddots & \ddots & 0 \\
0 & 0 & \dots & 0 & t
\end{pmatrix},
 \] 
where $e_{4}(t) := \frac{1 - \exp(-4\gamma n t)}{4 \gamma n}$ and $P$ is an explicit change of orthonormal basis matrix defined in the proof below, is a semi-group of bounded operators on $L^{\infty}\left(\R^{n}\right)$. In addition, for all $f \in \CCCC^{2}_{b}(\R^{n}, \R)$, 
\begin{itemize}
\item[{\rm{\textbf{\rm{\textbf{(1)}}}}}] The application $(t,x) \mapsto T^{(n)}(t)f(x)$ is of class $\CCCC^{1,2}(\R_{+} \times \R^{n}, \R)$ and is a strong solution of the PDE 
\begin{align}
\forall t \geqslant  0,  \ \forall x \in \R^{n}, \qquad \partial_{t}u(t,x)   & = \frac{1}{2}\Delta u(t,x) -2\gamma \left(\nabla u(t,x)\cdot \bm{1} \right)\left(x\cdot\bm{1} \right)  \label{EDP_1} \\
 \forall x \in \R^{n}, \hspace{0.25cm} \qquad  u(0,x) & = f(x), \label{EDP_2}
\end{align}
 \end{itemize}
 and
\[\begin{array}{lrcl}
\textbf{\rm{\textbf{(2)}}} & \nabla T^{(n)}(t)f(x) & = & \left(\partial_{x_{i}}m_{t,x} \cdot \left(\nabla f \ast g_{t,0}^{X} \right)(m_{t,x}) \right)_{1\leqslant i \leqslant n}^{t} \vspace{0.1cm} \\
\textbf{\rm{\textbf{(3)}}} & \forall i, j \in \left\{1, \cdots, n \right\}, \quad \partial_{x_{i}x_{j}}^{2}T^{(n)}(t)f(x) & = & \left(\partial_{x_{j}}m_{t,x} \right)^{t}\left[\left(f \ast \Hess\left(g_{t, 0}^{X}\right) \right)(m_{t,x})\partial_{x_{i}}m_{t,x}\right].
\end{array}\]
\label{Thm_T_n_t}
\end{Thm}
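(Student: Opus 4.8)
The plan is to realise $\left(T^{(n)}(t)\right)_{t\geqslant 0}$ as the transition semigroup of the diffusion generated by $B^{(n)}$ and to exploit that this diffusion is an explicit Gaussian (\textsc{Ornstein–Uhlenbeck}) process. Writing the second term of (\ref{Operateur_B_n}) as $-2\gamma(x\cdot\bm{1})\,\bm{1}\cdot\nabla f(x)$, one recognises $B^{(n)}$ as the infinitesimal generator of the linear stochastic differential equation
\[
\dd X_t = A X_t\,\dd t + \dd W_t, \qquad X_0 = x, \qquad A := -2\gamma\,\bm{1}\bm{1}^{t},
\]
where $(W_t)_{t\geqslant 0}$ is an $n$-dimensional standard Brownian motion. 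The \textsc{Feynman–Kac} formula, here with no potential term, then identifies $T^{(n)}(t)f(x) = \E\left[f\left(X_t^x\right)\right]$, so the whole statement reduces to computing the law of $X_t^x$ and reading off its consequences.

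First I would diagonalise the drift. The matrix $\bm{1}\bm{1}^{t}$ is symmetric of rank one, with eigenvalue $n$ for the eigenvector $\bm{1}/\sqrt{n}$ and eigenvalue $0$ on $\bm{1}^{\perp}$; I take $P$ to be the orthogonal matrix whose first column is $\bm{1}/\sqrt{n}$ and whose remaining columns complete an orthonormal basis of $\bm{1}^{\perp}$, so that $P^{-1} = P^{t}$ and $P^{-1}AP = \diag(-2\gamma n, 0, \dots, 0)$. In the rotated coordinates $Y := P^{-1}X$ the equation decouples, since $P^{t}W$ is again a standard Brownian motion by \textsc{Lévy}'s characterisation: the first coordinate is a one-dimensional \textsc{Ornstein–Uhlenbeck} process with rate $-2\gamma n$, the others are Brownian motions. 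Solving each scalar equation explicitly gives $Y_t \sim \NN^{(n)}(\mu_{t, P^{-1}x}, \sigma_t)$ with the stated $\mu_{t,y}$ and $\sigma_t$, the entry $e_4(t) = \int_0^t \exp(-4\gamma n s)\,\dd s$ being the variance of the Ornstein–Uhlenbeck component. Hence $X_t = P Y_t \sim \NN^{(n)}(m_{t,x}, \Sigma_t)$ with $\Sigma_t = P\sigma_t P^{-1}$ and $m_{t,x} = P\mu_{t,P^{-1}x} = x - \tfrac{1-\exp(-2\gamma nt)}{n}(x\cdot\bm{1})\bm{1}$, which is (\ref{T_n_t_f}). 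Boundedness on $L^{\infty}(\R^n)$ is immediate since $g_{t,x}^{X}$ is a probability density, and the semigroup property $T^{(n)}(t+s) = T^{(n)}(t)T^{(n)}(s)$ follows from the \textsc{Markov} property of $X$ — equivalently, from the Chapman–Kolmogorov identity, which reduces in the $Y$-coordinates to the elementary composition rules for the scalar Ornstein–Uhlenbeck and Brownian semigroups.

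For point (1), since $\sigma_t$ has strictly positive diagonal for $t>0$, the covariance $\Sigma_t$ is positive definite and $(t,x)\mapsto (m_{t,x},\Sigma_t)$ is smooth; Gaussian-tail domination then legitimises differentiation under the integral sign in (\ref{T_n_t_f}), giving $(t,x)\mapsto T^{(n)}(t)f(x)\in\CCCC^{1,2}(\R_+\times\R^n,\R)$. That this function solves (\ref{EDP_1})–(\ref{EDP_2}) is \textsc{Kolmogorov}'s backward equation for the diffusion with generator $B^{(n)}$; I would establish it directly by applying \textsc{Itô}'s formula to $s\mapsto u(t-s, X_s^x)$ and invoking the martingale property, the initial condition $u(0,\cdot)=f$ coming from $X_0^x=x$.

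Finally, points (2)–(3) follow from a convolution rewriting. Since $g_{t,0}^{X}$ is even and $m_{t,0}=0$, one has $T^{(n)}(t)f(x) = \left(f\ast g_{t,0}^{X}\right)(m_{t,x})$. Differentiating through the chain rule and transferring derivatives onto the convolution factor via $\nabla\left(f\ast g_{t,0}^{X}\right) = (\nabla f)\ast g_{t,0}^{X}$ and $\Hess\left(f\ast g_{t,0}^{X}\right) = f\ast\Hess\left(g_{t,0}^{X}\right)$, together with the fact that $\partial_{x_i}m_{t,x} = \epsilon_i - \tfrac{1-\exp(-2\gamma nt)}{n}\bm{1}$ is constant in $x$ — which is precisely why no extra term appears in the second derivative — yields the stated formulas. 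The main obstacle is purely computational: carrying out the diagonalisation and checking that $P\sigma_t P^{-1}$ coincides with the Ornstein–Uhlenbeck covariance $\int_0^t \exp(As)\exp(A^{t}s)\,\dd s$ and composes correctly under Chapman–Kolmogorov. The conceptual content is entirely contained in the linear-Gaussian structure of the process, which makes every quantity explicit.
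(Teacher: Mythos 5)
Your proposal follows exactly the paper's route: the \textsc{Feynman-Kac} representation $T^{(n)}(t)f(x)=\E_{x}f(X_{t})$ for the SDE $\dd X_{t}=\dd B_{t}-2\gamma\left(X_{t}\cdot\bm{1}\right)\bm{1}\,\dd t$, diagonalisation of the rank-one drift via the orthogonal matrix $P$ with first column $\bm{1}/\sqrt{n}$ (giving an \textsc{Ornstein-Uhlenbeck} first coordinate with variance $e_{4}(t)=\int_{0}^{t}{\exp(-4\gamma ns)\dd s}$ and Brownian remaining coordinates), the resulting Gaussian law $\NN^{(n)}\left(m_{t,x},\Sigma_{t}\right)$, and for points \textbf{(2)}--\textbf{(3)} the convolution identity $T^{(n)}(t)f(x)=\left(f\ast g_{t,0}^{X}\right)(m_{t,x})$ combined with the observation that $\partial_{x_{i}}m_{t,x}$ is constant in $x$, so that the second-derivative formula carries no extra term. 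All of this matches Steps 1, 2 and 4 of the paper's proof and is correct, including the semigroup property via the \textsc{Markov} property.

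There is, however, one genuine gap, and it is precisely the point the paper singles out as the only non-routine part of the theorem: the validity of the PDE (\ref{EDP_1}) \emph{up to} $t=0$. The formula (\ref{T_n_t_f}) defines $T^{(n)}(t)f$ by a Gaussian kernel only for $t>0$, and as $t\downarrow 0$ the covariance $\Sigma_{t}$ degenerates to $0$, so the kernel concentrates to a Dirac mass: your claim that ``Gaussian-tail domination legitimises differentiation under the integral sign, giving $\CCCC^{1,2}(\R_{+}\times\R^{n},\R)$'' fails near $t=0$, since $\partial_{t}g_{t,x}^{X}$ contains factors of order $t^{-1}$ and $e_{4}(t)^{-1}$ that admit no integrable dominating function uniformly down to $t=0$. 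Your fallback via \textsc{It\^o}'s formula applied to $s\mapsto u(t-s,X_{s}^{x})$ does not repair this: it is a verification argument that presupposes exactly the $\CCCC^{1,2}$ regularity up to the time boundary that is at stake. The paper devotes its Step 3 to this point (needed later for the MILD formulation of Proposition \ref{Prop_Eq_MILD}): for $f\in\CCCC^{2}_{b}(\R^{n},\R)$ one proves directly that $t^{-1}\left[\int_{\R^{n}}^{}{f(u)g_{t,x}^{X}(u)\dd u}-f(x)\right]$ converges to $\frac{1}{2}\Delta f(x)-2\gamma\left(\nabla f(x)\cdot\bm{1}\right)\left(x\cdot\bm{1}\right)$, by a second-order \textsc{Taylor} expansion of $f$ around $x$, using the asymptotics $x-m_{t,x}=\frac{1-\exp\left(-2\gamma nt\right)}{n}\left(x\cdot\bm{1}\right)\bm{1}=O(t)$ and $\left(\Sigma_{t}\right)_{ii}\sim t$, and controlling the remainder by splitting the integration domain into $B(x,\alpha)$ and its complement, with a fourth-moment \textsc{Markov}-type bound on the Gaussian tail. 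You need to supply this limit computation (or an equivalent generator-domain argument) to close point \textbf{(1)} at $t=0$; everything else in your proposal stands.
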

As we will see in the proof, everything follows quite directly from the \textsc{Feynman-Kac} formula, except the fact that $(t,x) \mapsto T^{(n)}(t)f(x)$ is a strong solution of the PDE up to time $t = 0$. This technical point will be useful for the MILD formulation and this is why we make a detailed proof.

\begin{proof} In view of the operator $B^{(n)}$ given by (\ref{Operateur_B_n}), it is natural to define the semi-group $T^{(n)}(t)$ using the \textsc{Feynman-Kac} formula:  for any  $f \in \CCCC^{2}_{b}(\R^{n}, \R)$, \[T^{(n)}(t)f(x) := \E_{x}f(X_{t}) \]
 where $\left(X_{t} \right)_{t\geqslant 0}$ is solution to the following SDE: 
\begin{equation}
X_{0} = x, \qquad \qquad \dd X_{t} = \dd B_{t} - 2\gamma \left(X_{t}\cdot\bm{1}\right)\bm{1}\dd t, \qquad X_{t} \in \R^{n}, \quad t>0
 \label{EDS_1}
\end{equation}
where $\left(B_{t}\right)_{t\geqslant 0}$ is a $n-$standard Brownian motion and $x \in \R^{n}$. \\

In Step 1, we check that this definition of $T^{(n)}(t)$ coincides with the one given in the statement of Theorem \ref{Thm_T_n_t}. In Step 2, we verify that $(x,t) \mapsto \E_{x}f(X_{t})$ is indeed a solution of the PDE {\rm{(\ref{EDP_1})}} for all $t >0$. In Step 3, we treat the case $t = 0$. In Step 4, we prove the announced expressions of the derivatives of $T^{(n)}(t)f(x)$. \\ 

\textbf{Step 1. Change of basis in the SDE  (\ref{EDS_1}).} We consider the orthonormal basis $(v_{1}, \cdots, v_{n})$ of $\R^{n}$ defined by $v_{1} := \frac{1}{\sqrt{n}}(1 \ \cdots \ 1)^{t}$ and for $2 \leqslant i\leqslant n$, \begin{align*}
v_{i} & := \sqrt{\frac{i-1}{i} }\left(\underbrace{\frac{1}{i-1}, \cdots, \frac{1}{i-1}}_{i-1 \ \rm{terms}}, -1, 0, \cdots, 0  \right)^{t}. 
\end{align*}
We denote by $P$ the change of basis matrix from the canonical basis to the orthonormal basis $(v_{1}, \cdots, v_{n})$.
We define for all $t \geqslant 0$, $Z_{t} = P^{-1}X_{t}$, i.e. $Z_{t} := \left(Z_{t}^{(1)}, \cdots,  Z_{t}^{(n)}\right)$ where for all $i\in \{1, \cdots, n \}$, $Z_{t}^{(i)} := \left(X_{t}\cdot v_{i}\right)$. It is standard to check that $W_{t} := \left(W_{t}^{(1)}, \cdots, W_{t}^{(n)} \right)$ where for all $i\in \{1, \cdots, n \}$, $W_{t}^{(i)} := \left(B_{t}\cdot v_{i}\right)$ is a $n-$standard Brownian motion and that $\left(Z_{t} \right)_{t \geqslant 0}$ is solution to the SDE \begin{equation}
Z_{0} = y = P^{-1}x,  \qquad \qquad \left\{\begin{array}{rcll}
\dd Z_{t}^{(1)} & = & \dd W_{t}^{(1)} - 2\gamma n Z_{t}^{(1)} \dd t, &  \\
\dd Z_{t}^{(j)} & = & \dd W_{t}^{(j)}, & j \in \{2, \cdots, n \}
\end{array} \right. .
 \label{EDS_2}
 \end{equation} 
 All coordinates in (\ref{EDS_2}) are independent  and solve a one-dimensional SDE whose solution is explicit (\textsc{Ornstein-Uhlenbeck} for $Z^{(1)}$, standard Brownian motion for $Z^{(i)}, i\geqslant 2$). 
  It follows that  $Z_{t}$ is a Gaussian vector of law $\NN^{(n)}\left(\mu_{t,y}, \sigma_{t} \right)$. 
Therefore, for any $t >0$ and all $y \in \R^{n}$,  $\P_{Z_{t}}$ has a density with respect to the \textsc{Lebesgue} measure on $\R^{n}$ given by: 
\begin{equation}
g^{Z}_{t,y}(z_{1}, \cdots, z_{n}) = \frac{1}{(2 \pi)^{\frac{n}{2}}\sqrt{\det(\sigma_{t})} } \exp\left(-\frac{\left[z_{1} - y_{1}\exp\left(-2\gamma n t  \right) \right]^{2}}{2e_{4}(t)} - \frac{1}{2t}\sum\limits_{j\, = \, 2}^{n}{(z_{j} - y_{j})^{2}}   \right). 
\label{g_Z_preuve}
\end{equation}
Since, $X_{t} = PZ_{t}$, we deduce that for all $x \in \R^{n}$ and for all  $t>0$, $X_{t}$ follows the normal distribution $\NN^{(n)}\left(m_{t,x}, \Sigma_{t} \right)$, with density 
\begin{align}
g_{t, x}^{X}(r) & := g_{t, P^{-1}x}^{Z}\left(P^{-1}r\right)  = \frac{1}{(2 \pi)^{\frac{n}{2}}\sqrt{\det(\Sigma_{t})} }\exp\left(-\frac{(r-m_{t,x})^{t}\Sigma_{t}^{-1}(r-m_{t,x})}{2} \right). \label{Relation_g_X_et_g_Z}
\end{align}
Hence, $\E_{x}f\left(X_{t} \right)$ coincides with (\ref{T_n_t_f}). \\

\textbf{Step 2. $T^{(n)}(t)f$ is solution to (\ref{EDP_1}) on $(0, +\infty) \times \R^{n}$.} Without difficulty we verify that for any $y \in \R^{n}$, $g^{Z}_{t,y}$ satifies the following \textsc{Fokker-Planck} PDE: \begin{equation}
\forall t >0, \forall z \in \R^{n}, \qquad \partial_{t}g^{Z}_{t, y}(z)  = \frac{1}{2}\Delta_{y}g^{Z}_{t,y}(z) - 2 \gamma n \partial_{y_{1}}g^{Z}_{t,y}(z).  \label{EDP_pour_g_Z} 
\end{equation}
We deduce from (\ref{Relation_g_X_et_g_Z}) that \[\forall y \in \R^{n}, \forall r \in \R^{n}, \qquad \partial_{t}g_{t, y}^{Z}\left(P^{-1}r\right) = \partial_{t}g_{t,Py}^{X}(r),\]
and, for all $y, r \in \R^{n}$,  $\partial_{y_{i}}g_{t, y}^{Z}(P^{-1}r) = \sum\limits_{k\, = \, 1}^{n}{P_{ki}\partial_{x_{k}}g_{t, Py}^{X}(r)}$. In particular, 
\[\partial_{y_{1}}g_{t, y}^{Z}(P^{-1}r) = \frac{1}{\sqrt{n}}\left(\nabla_{x}g_{t,Py}^{X}(r)\cdot\bm{1}\right). \]
In an analogous way, we deduce that 
\begin{align*}
\Delta_{y}g_{t, y}^{Z}(P^{-1}r) & 
= \sum\limits_{i\, = \, 1}^{n}{\sum\limits_{k,\ell\, = \, 1}^{n}{P_{ki}P_{\ell i}\partial_{x_{\ell}, x_{k}}^{2} g_{t, Py}^{X}(r)} } = \Delta_{x}g_{t,Py}^{X}(r),
\end{align*}
because $P$ is an orthonormal matrix. From (\ref{EDP_pour_g_Z}) and since $\left(P^{-1}x \right)_{1} = \frac{\left(x\cdot\bm{1}\right)}{\sqrt{n}}$, we deduce that the density $g_{t,x}^{X}$ satisfies:
\begin{equation*}
\forall t>0, \forall x \in \R^{n}, \forall r \in \R^{n}, \qquad  \partial_{t}g_{t, x}^{X}(r) - \frac{1}{2}\Delta_{x}g_{t, x}^{X}(r) + 2\gamma (x\cdot\bm{1})\nabla_{x}g_{t, x}(r) = 0. \label{EDP_pour_g_X}
\end{equation*}
Now, the fact that for all $f \in L^{\infty}(\R^{n}, \R)$, \[T^{(n)}(t)f(x) = \int_{\R^{n}}^{}{f(r)g_{t,x}^{X}(r)\dd r} \] is $\CCCC^{1,2}_{b}\left((0, + \infty) \times \R^{n}, \R \right)$ and is a solution of   
(\ref{EDP_1}) on $(0, + \infty) \times \R^{n}$ follows from the theorem of differentiation under the integral sign. Note that, if $f$ is continuous, \[T^{(n)}(t)f(x) = \E_{x}f\left(X_{t}\right) \xrightarrow[t \to 0]{ } f(x) \] by the dominated convergence theorem which leads to (\ref{EDP_2}). \\

\textbf{Step 3. Verification of (\ref{EDP_1}) up to $t =0$.} Assume that $f \in \CCCC^{2}_{b}(\R^{n}, \R)$. This is equivalent to prove that for all $x \in \R^{n}$, \[\lim_{t\to 0}\frac{\int_{\R^{n}}^{}{f(u)g_{t,x}^{X}(u)\dd u} -f(x) }{t} =  \frac{1}{2}\Delta f(x) - 2\gamma \left(\nabla f(x) \cdot \bm{1} \right)\left(x \cdot \bm{1} \right).\]
Let $x \in \R^{n}$ be fixed. Using \textsc{Taylor}'s formula we obtain that \[\int_{\R^{n}}^{}{f(u)g_{t,x}^{X}(u)\dd u} -f(x) = {\textbf{\rm{\textbf{(A)}}}}_{t} + {\textbf{\rm{\textbf{(B)}}}}_{t} + {\textbf{\rm{\textbf{(C)}}}}_{t},\]
where 
\begin{center}
\begin{tabular}{ll}
${\textbf{\rm{\textbf{(A)}}}}_{t}  := \displaystyle{\int_{\R^{n}}^{}{\left([u-x] \cdot \nabla f(x) \right)g_{t,x}^{X}(u) \dd u},}$ \quad & ${\textbf{\rm{\textbf{(B)}}}}_{t}  := \displaystyle{\frac{1}{2}\int_{\R^{n}}^{}{(u-x)^{t}\Hess(f)(x)(u-x)g_{t,x}^{X}(u)\dd u},}$ \vspace{0.2cm}\\
${\textbf{\rm{\textbf{(C)}}}}_{t}  := \displaystyle{\int_{\R^{n}}^{}{R_{x}(u) g_{t,x}^{X}(u)\dd u},}$ &
\end{tabular}
\end{center}
where $R_{x}(u) := o\left(\left\|x-u \right\|_{2}^{2}\right)$. As $g_{t,x}^{X}$ is the density of $\NN^{(n)}\left(m_{t,x}, \Sigma_{t} \right)$, where 
\begin{equation}
 \begin{aligned}
x - m_{t,x}  = \left(1 - \exp\left( -2\gamma n t\right) \right) \frac{(x \cdot \bm{1})}{\sqrt{n}} \times \frac{\bm{1}}{\sqrt{n}} & \underset{t\to 0}{\sim} -2\gamma (x \cdot \bm{1}) t,  \\
 \forall i \in \{1, \cdots, n \}, \quad \left(\Sigma_{t} \right)_{ii} & \underset{t \to 0}{\sim} t, 
\end{aligned}
\label{m_t_x_Sigma_t}
\end{equation}
it follows that 
\begin{align*}
{\textbf{\rm{\textbf{(A)}}}}_{t} & 
=   \left(\nabla f(x) \cdot \left[m_{t,x} - x \right]\right) = -\frac{\left( 1 - \exp\left(-2\gamma nt \right)\right)}{n}\left(\nabla f(x) \cdot \bm{1} \right)\left( x \cdot \bm{1}\right). \\
{\textbf{\rm{\textbf{(B)}}}}_{t} & = \frac{1}{2} \int_{\R^{n}}^{}{\left(u - m_{t,x} \right)^{t}\Hess (f)(x) \left(u - m_{t,x} \right)g_{t,x}^{X}(u) \dd u} \\
& \hspace{1cm} + \frac{1}{2}\left(x - m_{t,x} \right)^{t}\Hess(f)(x) \left(x - m_{t,x} \right) \\
& = \frac{1}{2}\sum\limits_{i, j \, = \, 1}^{n}{\partial_{x_{i}x_{j}}^{2}{f(x)}\left(\Sigma_{t} \right)_{ij}} + \left(1 - \exp(-2\gamma n t) \right)^{2} \frac{\left(x \cdot \bm{1} \right)^{2}}{2n^{2}}\bm{1}^{t}\Hess(f)(x) \bm{1}.
\end{align*}
Therefore, \[\lim_{t\to 0}{\frac{{\textbf{\rm{\textbf{(A)}}}}_{t} + {\textbf{\rm{\textbf{(B)}}}}_{t}}{t} }  = - 2\gamma \left(\nabla f(x) \cdot \bm{1} \right)\left(x \cdot \bm{1} \right) + \frac{\Delta f(x)}{2}.\]
Now, it remains to manage the ${\textbf{\rm{\textbf{(C)}}}}_{t}$ error term. Note that, 
\begin{align*}
\forall \varepsilon >0, \exists \alpha >0, \forall u \in B(x,\alpha), \qquad \left|R_{x}(u)\right|  & \leqslant \varepsilon \left\|u-x \right\|_{2}^{2}. \\
\forall u \in \R^{n} \, \backslash \, B(x, \alpha), \qquad \left|R_{x}(u)\right| & \leqslant 2\left\|f \right\|_{\infty} + \left\|\nabla f \right\|_{\infty} \left\|u-x \right\|_{2} \\ 
& \hspace{1cm} + \frac{1}{2}\left\|\Hess f(x) \right\|_{\infty} \left\|u-x \right\|_{2}.
\end{align*}
Let $\varepsilon >0$, $\alpha >0$ and $t_{0} \geqslant 0$ such that for all $t \in [0, t_{0}]$,  $\left\|x - m_{t,x} \right\|_{2} \leqslant \frac{\alpha}{2}$. Let $t \in [0, t_{0}]$. Separating the domain of integration of the integral of ${\textbf{\rm{\textbf{(C)}}}}_{t}$ into $B(x,\alpha)$ and $\R^{n} \, \backslash \, B(x,\alpha)$, it follows from the \textsc{Young} and previous inequalities that there exists a constant $C >0$ such that
\begin{align*}
{\textbf{\rm{\textbf{(C)}}}}_{t} & \leqslant \varepsilon \int_{\R^{n}}^{}{\left\|u-x \right\|_{2}^{2}g_{t,x}^{X}(u)\dd u} + C \int_{\R^{n} \, \backslash \, B(x,\alpha)}{\left(1 + \left\| u-x \right\|_{2}^{2} \right)g_{t,x}^{X}(u) \dd u} \\
& \leqslant 2\varepsilon\sum_{i \, = \, 1}^{n}{\left(\Sigma_{t} \right)_{ii}} + 2(\varepsilon + C)\left\|x-m_{t,x}\right\|_{2}^{2} + 2C \int_{\R^{n} \, \backslash \, B(x,\alpha)}{\left(1 + \left\| u- m_{t,x} \right\|_{2}^{2} \right)g_{t,x}^{X}(u) \dd u}  
\end{align*}
Now, for the choice of $\alpha$ and then the \textsc{Markov} inequality, we obtain that 
\begin{align*}
 \int_{\R^{n} \, \backslash \, B(x,\alpha)}{\left(1 + \left\| u- m_{t,x} \right\|_{2}^{2} \right)g_{t,x}^{X}(u) \dd u} & \leqslant \left(1 + \frac{4}{\alpha^{2}} \right) \int_{\R^{n} \, \backslash \, B\left(m_{t,x},\frac{\alpha}{2} \right)}{\left\| u- m_{t,x} \right\|_{2}^{2} g_{t,x}^{X}(u) \dd u} \\
&  \leqslant  \left(1 + \frac{4}{\alpha^{2}} \right) \frac{\int_{\R^{n}}^{}{\left\|u-m_{t,x} \right\|_{2}^{4}g_{t,x}^{X}(u)\dd u}}{\left(\frac{\alpha}{2} \right)^{4}} \\
&  \leqslant \frac{16n}{\alpha^{4}}\left(1 + \frac{4}{\alpha^{2}} \right)\sum\limits_{i \, = \, 1}^{n}{\int_{\R^{n}}^{}{\left(u_{i} - \left(m_{t,x} \right)_{i} \right)^{4}g_{t,x}^{X}(u)\dd u}}. 
\end{align*}
As for all $i \in \{1, \cdots, n\}$, the fourth moment of a random variable of law $\NN\left(0, \left( \Sigma_{t}\right)_{ii} \right)$ is smaller than $3 \left(\Sigma_{t} \right)_{ii}^{2}$, it follows from (\ref{m_t_x_Sigma_t}) that there exists a constant $\widetilde{C} >0$ such that \[\frac{{\textbf{\rm{\textbf{(C)}}}}_{t}}{t} \leqslant 2\varepsilon n + \varepsilon  \widetilde{C} \]
for $t$ small enough and then the conclusion. \\

\textbf{Step 4. Expression of the derivatives of $T^{(n)}(t)f$.} Noting that for all $u \in \R^{n}$, $g_{t,x}^{X}(u) = g_{t, 0}^{X}(u-m_{t,x})$ and using the symmetry property of this density, we obtain that \[T^{(n)}(t)f(x) = \left(f\ast g_{t,0}^{X}\right)(m_{t,x}).\] By the chain rule formula, we deduce the properties {\rm{\textbf{\rm{\textbf{(2)}}}}} and for all $i, j \in \left\{1, \cdots, n \right\}$, 
\begin{align*}
& \partial_{x_{i}x_{j}}^{2}T^{(n)}(t)f(x) = \left(\partial_{x_{j}}m_{t,x} \right)^{t}\left[\left(f \ast \Hess\left(g_{t, 0}^{X}\right) \right)(m_{t,x})\partial_{x_{i}}m_{t,x}\right] \\
& \hspace{3.5cm} + \left(\partial_{x_{i}x_{j}}^{2}m_{t,x} \cdot \left( f \ast \nabla g_{t, 0}^{X}\right)(m_{t,x})\right).
\end{align*}
The property \textbf{(3)} follows.  \qedhere 
\end{proof}

The following corollary is useful for bounding the dual process in Section \ref{Sous_section_6_2}. 

\begin{Cor} Let $f \in \CCCC^{2}(\R^{n}, \R)$. We assume that there exists a constant $C_{1} >0$ such that for all $x \in \R^{n}$, \[|f(x)| \leqslant C_{1}\left(1 + \left\|x \right\|_{2}^{2n}  \right). \] Then, for all $t > 0$ and $x \in \R^{n}$, there exists two constants $C_{2}(t,n) >0$ locally bounded on $\R_{+} \times \N$ {  such that $t\mapsto C_{2}(t,n)$ is non-decreasing} and $C_{3}(t,n) >0$ locally bounded on $(0, + \infty) \times \N$ satisfying  \\
\[\begin{array}{lrcl}
\textbf{\rm{\textbf{(1)}}} & \left|T^{(n)}(t)f(x) \right| & \leqslant & C_{2}(t,n)\left(1 + \left\|x \right\|_{2}^{2n}  \right) \vspace{0.2cm} \\
\textbf{\rm{\textbf{(2)}}} & \left\|\left(\Hess\left(g_{t,0}^{X} \right) \ast f\right)\left(m_{t,x} \right) \right\| & \leqslant & C_{3}(t,n)\left(1 + \left\|x \right\|_{2}^{2n} \right)
\end{array}\]
\label{Bornes_T_n_t}
\end{Cor}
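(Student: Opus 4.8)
The plan is to exploit the explicit Gaussian representation of $T^{(n)}(t)$ from Theorem~\ref{Thm_T_n_t}, together with standard Gaussian moment estimates, tracking carefully the dependence on $t$ so as to distinguish the two domains of local boundedness. The key structural facts I would use are that, since $P$ is orthogonal, the eigenvalues of $\Sigma_{t} = P\sigma_{t}P^{-1}$ are exactly those of $\sigma_{t}$, namely $e_{4}(t)$ and $t$ (the latter with multiplicity $n-1$), and that $e_{4}(t) \leqslant t$ for all $t$ by the inequality $1 - e^{-s} \leqslant s$; hence $\lambda_{\min}(\Sigma_{t}) = e_{4}(t)$ and $\lambda_{\max}(\Sigma_{t}) = t$ on $(0,+\infty)$.

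For part \textbf{(1)}, I would start from the probabilistic representation $T^{(n)}(t)f(x) = \E_{x}f(X_{t})$, where under $\P_{x}$ one has $X_{t} = m_{t,x} + G$ with $G \sim \NN^{(n)}(0, \Sigma_{t})$. The growth hypothesis on $f$ gives $|T^{(n)}(t)f(x)| \leqslant C_{1}\,\E_{x}(1 + \|X_{t}\|_{2}^{2n})$, and the elementary inequality $\|X_{t}\|_{2}^{2n} \leqslant 2^{2n-1}(\|m_{t,x}\|_{2}^{2n} + \|G\|_{2}^{2n})$ splits the estimate into two pieces. The first is controlled deterministically: inserting $|x \cdot \bm{1}| \leqslant \sqrt{n}\,\|x\|_{2}$ and $\|\bm{1}\|_{2} = \sqrt{n}$ into the formula for $m_{t,x}$ yields $\|m_{t,x}\|_{2} \leqslant 2\|x\|_{2}$ uniformly in $t$. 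For the second, writing $G = \Sigma_{t}^{1/2}Z$ with $Z$ standard normal gives $\E\|G\|_{2}^{2n} \leqslant \lambda_{\max}(\Sigma_{t})^{n}\,\E\|Z\|_{2}^{2n} = t^{n}\kappa_{n}$ for a constant $\kappa_{n}$ depending only on $n$. Collecting these bounds yields (1) with $C_{2}(t,n)$ a polynomial in $t$, hence locally bounded on all of $\R_{+} \times \N$, the value at $t=0$ being covered directly by $T^{(n)}(0)f = f$.

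For part \textbf{(2)} the crucial point is that we only control the growth of $f$ itself, not of its derivatives, so we cannot integrate by parts to transfer the Hessian onto $f$. Instead I would differentiate the Gaussian kernel explicitly, using for the centered density $g_{t,0}^{X}$ the identity $\Hess(g_{t,0}^{X})(z) = (\Sigma_{t}^{-1} z z^{t} \Sigma_{t}^{-1} - \Sigma_{t}^{-1})\,g_{t,0}^{X}(z)$. After the change of variables $v = m_{t,x} - u$, the convolution becomes $\int_{\R^{n}}(\Sigma_{t}^{-1} v v^{t}\Sigma_{t}^{-1} - \Sigma_{t}^{-1})\,g_{t,0}^{X}(v)\,f(m_{t,x}-v)\,\dd v$, whence
$$\|(\Hess(g_{t,0}^{X}) * f)(m_{t,x})\| \leqslant \int_{\R^{n}} \big(\|\Sigma_{t}^{-1}\|^{2}\|v\|_{2}^{2} + \|\Sigma_{t}^{-1}\|\big)\,g_{t,0}^{X}(v)\,|f(m_{t,x}-v)|\,\dd v.$$
Bounding $|f(m_{t,x}-v)| \leqslant C_{1}2^{2n-1}(1 + \|m_{t,x}\|_{2}^{2n} + \|v\|_{2}^{2n})$ and using $\|m_{t,x}\|_{2}^{2n} \leqslant 2^{2n}\|x\|_{2}^{2n}$, the integral reduces to even Gaussian moments of $G \sim \NN^{(n)}(0,\Sigma_{t})$ of order up to $2n+2$, each bounded by $t^{k}\kappa_{k}$ as above, multiplied by powers of $\|\Sigma_{t}^{-1}\| = 1/\lambda_{\min}(\Sigma_{t}) = 1/e_{4}(t)$.

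The main obstacle, and the reason the two statements carry different domains, is precisely the factor $\|\Sigma_{t}^{-1}\| = 1/e_{4}(t)$: since $e_{4}(t) \sim t$ as $t \to 0^{+}$, this quantity blows up at the origin, which is why $C_{3}(t,n)$ is only locally bounded on $(0,+\infty) \times \N$, whereas no inverse covariance enters (1) and $C_{2}$ remains locally bounded up to $t = 0$. Beyond this, the remaining work is routine: verifying that $e_{4}(t)$ and $t$ are continuous and strictly positive on $(0,+\infty)$, that the moment constants $\kappa_{k}$ depend only on $n$ and $k$, and that all the estimates assemble into expressions of the claimed form $C(t,n)(1 + \|x\|_{2}^{2n})$ with $C$ continuous, and therefore locally bounded, on the respective domains.
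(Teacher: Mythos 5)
Your proof is correct and takes essentially the same route as the paper: both rest on the explicit Gaussian representation of $T^{(n)}(t)$ from Theorem \ref{Thm_T_n_t}, Gaussian moment bounds with all variances dominated by $t$ (via $e_{4}(t)\leqslant t$), and, for part \textbf{(2)}, explicit differentiation of the Gaussian kernel with the blow-up at $t=0$ controlled by the smallest eigenvalue $e_{4}(t)$ of $\Sigma_{t}$, which is exactly the role of the paper's $1/V_{k}(t)$ factors and explains the restricted domain of $C_{3}$. The only difference is presentational: the paper computes coordinatewise in the diagonalizing $Z$-basis using the product densities (\ref{g_Z_i_explicite}), whereas you argue coordinate-free via $\left\|\Sigma_{t}^{-1}\right\| = 1/e_{4}(t)$ and the identity $\Hess\left(g_{t,0}^{X}\right)(z) = \left(\Sigma_{t}^{-1}zz^{t}\Sigma_{t}^{-1} - \Sigma_{t}^{-1}\right)g_{t,0}^{X}(z)$, which yields the same estimates.
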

\begin{proof} \textbf{Step 1. Proof of (1).} From (\ref{g_Z_preuve}) and (\ref{Relation_g_X_et_g_Z}), note that for all $x, r \in \R^{n}$, $t \geqslant 0$, $g_{t,x}^{X}(r) =  \prod_{j\, = \, 1}^{n}{g_{t,\left[P^{-1}x\right]_{j}}^{Z_{j}}\left(\left[P^{-1}r\right]_{j}\right)},$
where 
\begin{equation}
\begin{aligned}
g_{t, y_{1}}^{Z^{(1)}}(z_{1}) & := \frac{1}{\sqrt{2\pi e_{4}(t)}}\exp\left(-\frac{ \left[z_{1} - y_{1}\exp\left(-2 \gamma nt \right)\right]^{2}}{2 e_{4}(t)}  \right), \\
g_{t, y_{j}}^{Z^{(j)}}(z_{j}) & := \frac{1}{\sqrt{2\pi t}}\exp\left(-\frac{\left[z_{j} - y_{j} \right]^{2}}{2t} \right), \qquad j \in \{ 2, \cdots, n\}. 
\end{aligned}
\label{g_Z_i_explicite}
\end{equation}
Since  $\|Pz\|_{2} = \|z\|_{2}$ for all $z \in \R^{n}$, we also have \[\int_{\R^{n}}^{}{\|u \|_{2}^{2n}g_{t,x}^{X}(u)\dd u} = \int_{\R^{n}}^{}{\|z \|_{2}^{2 n}g_{t,P^{-1}x}^{Z}(z)\dd z}.\]
Hence,
\begin{align*}
\int_{\R^{n}}^{}{\|u \|_{2}^{2n}g_{t,x}^{X}(u)\dd u} & \leqslant n^{n-1}\sum\limits_{i\, = \, 1}^{n}{ \int_{\R^{n}}^{}{z_{i}^{2n} \prod\limits_{j \, = \, 1}^{n}{g_{t, \left[P^{-1}x\right]_{j}}^{Z^{(j)}}(z_{j}) \dd z} }} =  n^{n-1}\sum\limits_{i\, = \, 1}^{n}{\E\left(\left[Z^{(i)}\right]^{2 n} \right)}.
\end{align*}
Classical moment bounds for Gaussian random variables show that $\E\left(G^{2n} \right) \leqslant C(n) t^{2n}$ for $G \sim \NN(0,t)$ and $C(n)>0$. Since $e_{4}(t) \leqslant t$ and using (\ref{g_Z_i_explicite}), we deduce that there exists two constants $\widetilde{C}_{1}(n)$ and 
$\widetilde{C}_{2}(t,n)$ such that for all $i \in \{1, \cdots, n \}$
\begin{align*}
\E\left(\left[Z^{(i)}\right]^{2 n} \right) & \leqslant \widetilde{C}_{1}(n) \left(\left(P^{-1}x \right)_{i}^{2n} + t^{2n} \right) \leqslant \widetilde{C}_{2}(t,n)\left(1 + \left\|P^{-1}x \right\|_{2}^{2n} \right).
\end{align*}
The result \textbf{(1)} follows. \\

\textbf{Step 2. Proof of (2).} Now, we want to control, for all $i, j \in \left\{1, \cdots, n \right\}$, \[\left|\left(\left(\Hess\left(g_{t,0}^{X} \right)\ast f \right)\left(m_{t,x} \right) \right)_{ij} \right|  \leqslant C_{1}(n)\int_{\R^{n}}^{}{\left|\partial_{r_{i}r_{j}}^{2}g_{t,0}^{X}(r) \right|\left(1 + \left\|m_{t,x} - r\right\|_{2}^{2n} \right)\dd r}.\]
 For all $k \in \left\{1, \cdots, n \right\}$, we consider: \[V_{k}(t) := \left\{ \begin{array}{ccc}
e_{4}(t) & {\rm{if}} & k = 1 \\
t  & {\rm{if}} & k \neq 1
\end{array}\right. .\] 
From (\ref{g_Z_i_explicite}), we deduce that for all $i, j, k \in \left\{1, \cdots, n \right\}$, for all $t > 0$, 
\begin{align*}
\partial_{r_{i}}g_{t, 0}^{Z^{(k)}}\left(\left(P^{-1}r \right)_{k} \right) & = - \frac{\left(P^{-1}\right)_{ki}\left(P^{-1}r\right)_{k}}{V_{k}(t)}g_{t,0}^{Z^{(k)}}\left(\left(P^{-1}r \right)_{k} \right), \\
\partial_{r_{j}r_{i}}^{2}g_{t, 0}^{Z^{(k)}}\left(\left(P^{-1}r \right)_{k} \right) & = \frac{\left(P^{-1} \right)_{kj}\left(P^{-1} \right)_{ki}}{V_{k}(t)} \left(\frac{\left(P^{-1}r \right)_{k}^{2}}{V_{k}(t)} - 1 \right)g_{t,0}^{Z^{(k)}}\left(\left(P^{-1}r \right)_{k} \right).
\end{align*}
Hence, for all $i, j \in \left\{1, \cdots, n \right\}$, for all $t > 0$, \begin{align*}
 \partial_{r_{j}r_{i}}^{2}g_{t, 0}^{X}\left(r\right) 
& = \sum\limits_{k\, = \, 1}^{n}{\frac{\left(P^{-1} \right)_{kj}\left(P^{-1} \right)_{ki}}{V_{k}(t)} \left(\frac{\left(P^{-1}r \right)_{k}^{2}}{V_{k}(t)} - 1 \right)g_{t,0}^{X}\left(r \right)} \\
& \hspace{1.25cm} + \sum\limits_{k\, = \, 1}^{n}{\sum\limits_{\substack{\ell \, = \, 1 \\ \ell \, \neq \, k}}^{n}{\frac{\left(P^{-1} \right)_{kj}\left(P^{-1} \right)_{\ell j}}{V_{k}(t)V_{\ell}(t)}\left(P^{-1}r \right)_{k}\left(P^{-1}r \right)_{\ell} g_{t,0}^{X}\left(r \right)} }
\end{align*}
Noting that for all $i, j, k \in \left\{1, \cdots, n \right\}$, $\left|\left(P^{-1} \right)_{ij} \right| \leqslant 1$ and $\left|\left(P^{-1}r \right)_{k} \right| \leqslant n \left\|r \right\|_{2}$, \[\left\|m_{t,x} - r \right\|_{2}^{2n} \leqslant 2^{2n-1}\left(\left\| r\right\|_{2}^{2n} +  2^{2n-1}n^{n}\left(2 - \exp\left(-2\gamma n t \right) \right)\left\|x \right\|_{2}^{2n}\right) , \] 
we deduce that for all $i, j \in \left\{1, \cdots, n \right\}$, there exists a constant $\widetilde{C}_{3}(t,n)>0$ locally bounded on $(0, + \infty) \times \N$ such that 
\begin{align*}
& \int_{\R^{n}}^{}{\left|\partial_{r_{i}r_{j}}^{2}g_{t,0}^{X}(r)\right|\left\|m_{t,x} - r \right\|_{2}^{2n}\dd r} \\ 
& \hspace{0.5cm}\leqslant \int_{\R^{n}}^{}{\left[\frac{1}{e_{4}(t)}\left(\frac{n^{2}\left\|r \right\|_{2}^{2}}{e_{4}(t)} + 1 \right) + \frac{n-1}{t}\left(\frac{n^{2}\left\|r \right\|_{2}^{2}}{t} + 1 \right) + n^{3}\left(\frac{1}{e_{4}(t)} + \frac{n-1}{t} \right)\left\|r \right\|_{2}^{2} \right]} \\
&  \hspace{2cm} \times g_{t,0}^{X}(r) \left\|m_{t,x} - r \right\|_{2}^{2n} \dd r \\
& \hspace{0.5cm}\leqslant \widetilde{C}_{3}(t,n)\left(1 + \left\|x \right\|_{2}^{2n} \right).
\end{align*}
The announced result \textbf{(2)} follows.
\end{proof}

\subsubsection{MILD formulation\label{Sous_section_6_1_2}}
In this section, we establish the MILD formulation associated with the martingale problem (\ref{PB_Mg_dual_FVr_Multi_d}) {  with $d = 1$}. 
\begin{Prop}
Let $\left(X_{t}\right)_{t\geqslant 0}$ be a stochastic process whose law $\P_{\mu}$ is solution to the martingale problem  {\rm{(\ref{PB_Mg_Z_Multi_d})}} with initial value $\mu$. Then, for all $f \in \CCCC^{2}_{b}(\R^{n}, \R)$, 
\begin{align*}
 \left\langle T^{(n)}(t_{0}-t)f, X_{t}^{n} \right\rangle & - \gamma \int_{0}^{t}{\sum\limits_{i\, = \, 1}^{n}{\sum\limits_{\substack{j \, = \, 1 \\ j \, \neq \, i}}^{n}{\left[\left\langle \Phi_{i,j}T^{(n)}(t_{0}-s)f, X_{s}^{n-1} \right\rangle - \left\langle T^{(n)}(t_{0}-s)f, X_{s}^{n} \right\rangle \right]\dd s} } } \\
&  - \gamma \int_{0}^{t}{\sum\limits_{i\, = \, 1}^{n}{\sum\limits_{\substack{j \, = \, 1 }}^{n}{\left\langle K_{i,j}T^{(n)}(t_{0}-s)f, X_{s}^{n+1} \right\rangle}\dd s } }  
\end{align*} 
is a $\P_{\mu}-$ martingale for $0\leqslant t \leqslant t_{0}$. 
\label{Prop_Eq_MILD}
\end{Prop}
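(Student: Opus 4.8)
The plan is to apply the centered \textsc{Fleming-Viot} martingale problem for product measures (Definition~\ref{Def_PB_Mg_dual_FVr}, which $\P_{\mu}$ satisfies by Theorem~\ref{Prop_PB_Mg_Z_dual}) to the \emph{time-dependent} test function $g_{s} := T^{(n)}(t_{0}-s)f$, and then to exploit the backward Kolmogorov identity for the semigroup $\bigl(T^{(n)}(t)\bigr)_{t\geqslant 0}$. By Theorem~\ref{Thm_T_n_t}, for $f \in \CCCC^{2}_{b}(\R^{n},\R)$ the map $(s,x)\mapsto g_{s}(x)$ is of class $\CCCC^{1,2}$ on $[0,t_{0}]\times\R^{n}$ \emph{up to and including} the endpoint $s=t_{0}$ (this is exactly the delicate Step~3 of the proof of Theorem~\ref{Thm_T_n_t}), and since $T^{(n)}$ solves the PDE~(\ref{EDP_1}) with generator $B^{(n)}$, the chain rule gives
\[
\partial_{s} g_{s} = -B^{(n)} g_{s}, \qquad 0\leqslant s \leqslant t_{0}.
\]
Moreover, for $f\in\CCCC^{2}_{b}$ the first and second space derivatives of $g_{s}$ are bounded uniformly in $s\in[0,t_{0}]$ (move the derivatives onto $f$ in Theorem~\ref{Thm_T_n_t}~\textbf{(2)}--\textbf{(3)} via $f\ast\Hess(g^{X}_{t,0})=\Hess(f)\ast g^{X}_{t,0}$, and use that $\partial_{x_{i}}m_{t,x}$ is bounded). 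This uniform control is what avoids the blow-up as $s\to t_{0}$ appearing in Corollary~\ref{Bornes_T_n_t}~\textbf{(2)}.

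First I would establish a time-dependent version of the martingale problem: for a test function $(s,x)\mapsto g_{s}(x)$ of class $\CCCC^{1,2}$ with space derivatives bounded uniformly in $s$, the process
\[
\left\langle g_{t}, X_{t}^{n}\right\rangle - \left\langle g_{0}, X_{0}^{n}\right\rangle - \int_{0}^{t}\!\Bigl[\left\langle \partial_{s}g_{s}, X_{s}^{n}\right\rangle + \LL_{FVc}\left\langle g_{s}, X_{s}^{n}\right\rangle\Bigr]\dd s
\]
is a $\P_{\mu}$-martingale. The argument mirrors the telescoping/subdivision scheme used in Section~\ref{Section_preuve_Existence}: on a subdivision $0=s_{0}<\dots<s_{N}=t$ of mesh tending to $0$ I would write each increment as $\bigl[\langle g_{s_{k+1}}-g_{s_{k}},X_{s_{k+1}}^{n}\rangle\bigr]+\bigl[\langle g_{s_{k}},X_{s_{k+1}}^{n}-X_{s_{k}}^{n}\rangle\bigr]$. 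Applying the \emph{fixed}-test-function martingale problem~(\ref{PB_Mg_dual_FVr}) with frozen argument $g_{s_{k}}$ over $[s_{k},s_{k+1}]$ turns the second bracket into a martingale increment plus $\int_{s_{k}}^{s_{k+1}}\LL_{FVc}\langle g_{s_{k}},X_{r}^{n}\rangle\,\dd r$, while a first-order Taylor expansion in time handles the first bracket as $\approx\langle\partial_{s}g_{s_{k}},X_{s_{k+1}}^{n}\rangle(s_{k+1}-s_{k})$. Letting the mesh go to $0$, \textsc{Riemann}-sum convergence and continuity identify the drift as $\int_{0}^{t}[\langle\partial_{s}g_{s},X_{s}^{n}\rangle+\LL_{FVc}\langle g_{s},X_{s}^{n}\rangle]\,\dd s$, and the accumulated martingale increments converge to a martingale once uniform integrability is secured.

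Once this identity is available for $g_{s}=T^{(n)}(t_{0}-s)f$, the conclusion follows by cancellation. Using the expression~(\ref{Gene_n_copie_FVr}) of $\LL_{FVc}$ together with $\langle\partial_{s}g_{s},X_{s}^{n}\rangle=-\langle B^{(n)}g_{s},X_{s}^{n}\rangle$, the two Laplacian-type terms annihilate:
\begin{align*}
\left\langle \partial_{s}g_{s}, X_{s}^{n}\right\rangle + \LL_{FVc}\left\langle g_{s}, X_{s}^{n}\right\rangle
&= \gamma\sum_{i=1}^{n}\sum_{\substack{j=1\\ j\neq i}}^{n}\Bigl[\left\langle \Phi_{i,j}g_{s}, X_{s}^{n-1}\right\rangle - \left\langle g_{s}, X_{s}^{n}\right\rangle\Bigr]\\
&\qquad + \gamma\sum_{i=1}^{n}\sum_{j=1}^{n}\left\langle K_{i,j}g_{s}, X_{s}^{n+1}\right\rangle,
\end{align*}
which is precisely the integrand subtracted in the statement. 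This yields that the process of Proposition~\ref{Prop_Eq_MILD} is a $\P_{\mu}$-martingale on $[0,t_{0}]$.

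The main obstacle is the rigorous justification of the time-dependent martingale identity, i.e.\ controlling the error terms and the integrability in the telescoping limit. The term $\langle K_{i,j}g_{s},X_{s}^{n+1}\rangle=\langle\partial^{2}_{ij}g_{s},X_{s}^{n}\rangle\,M_{2}(X_{s})$ carries an unbounded factor $M_{2}(X_{s})=\langle\id^{2},X_{s}\rangle$; since $f\in\CCCC^{2}_{b}$ forces $\partial^{2}_{ij}g_{s}$ to be bounded uniformly in $s$, it suffices that $\sup_{s\in[0,t_{0}]}\E\bigl(M_{2}(X_{s})\bigr)<\infty$, which holds for any $\mu\in\MM_{1}^{c,2}(\R)$ by Proposition~\ref{Prop_moments_non_born�s_RECENTRE}~\textbf{(1)(a)} (only second moments enter, so no assumption beyond $\mu\in\MM_{1}^{c,2}(\R)$ is needed). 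Combining these moment bounds with the uniform derivative bounds of Theorem~\ref{Thm_T_n_t}~\textbf{(2)}--\textbf{(3)} gives a uniform $L^{1}$ (indeed $L^{2}$) control of the partial sums, providing the uniform integrability required to pass to the limit and to conclude that the limiting martingale increments form a genuine $\P_{\mu}$-martingale.
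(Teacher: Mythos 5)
Your proposal is correct in substance, and the final cancellation is exactly the paper's, but the mechanism is different. The paper does not re-derive a time-dependent martingale identity by hand: it invokes a version of Lemma 4.3.4 of Ethier--Kurtz \cite{Ethier_markov_1986}, with the boundedness hypothesis on the forward term relaxed to the domination $w(r,\mu)\leqslant C\left(1+M_{2}(\mu)\right)$, applied to $u(r,\mu):=\left\langle T^{(n)}(t_{0}-r)f,\mu^{n}\right\rangle$, $v(r,\mu):=-\left\langle \partial_{t}T^{(n)}(t_{0}-r)f,\mu^{n}\right\rangle$ and $w(r,\mu):=\LL_{FVc}\left\langle T^{(n)}(t_{0}-r)f,\mu^{n}\right\rangle$, and then verifies the hypotheses of that lemma: adaptedness and progressivity, boundedness of $u$ and $v$ and continuity of $\mu\mapsto v(r,\mu)$ (via Theorem \ref{Thm_T_n_t} \textbf{(2)}--\textbf{(3)}), two conditional-expectation identities -- the second of which is precisely your ``frozen test function'' step, using that (\ref{PB_Mg_Z}) classically entails (\ref{PB_Mg_dual_FVr}) -- and the $L^{1}$-continuity in time (\ref{3.12}), secured by the moment bounds of Subsection \ref{Sous_section_2_4_2} and dominated convergence. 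Your telescoping subdivision argument is in effect a self-contained re-proof of that lemma in this special case: it buys independence from the Ethier--Kurtz reference at the price of redoing the uniform-integrability bookkeeping that the lemma packages. The two analytic inputs are identical in both routes: the backward equation $\partial_{s}T^{(n)}(t_{0}-s)f=-B^{(n)}T^{(n)}(t_{0}-s)f$ valid up to the endpoint $s=t_{0}$ (the delicate Step 3 of Theorem \ref{Thm_T_n_t}, which you correctly single out, together with your sound observation that moving derivatives onto $f$ gives bounds uniform in $s$, avoiding the blow-up of Corollary \ref{Bornes_T_n_t} \textbf{(2)}), and the domination of the generator term by $C\left(1+M_{2}(\mu)\right)$ coming from $K_{i,j}$.

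Two small corrections. First, you justify applying (\ref{PB_Mg_dual_FVr}) by Theorem \ref{Prop_PB_Mg_Z_dual}, but that theorem only concerns the particular solution constructed in Theorem \ref{Prop_PB_Mg_Z}, whereas Proposition \ref{Prop_Eq_MILD} must hold for an \emph{arbitrary} solution of (\ref{PB_Mg_Z}) (it feeds into the uniqueness proof); the correct justification, the one used in the paper's Step 2, is that (\ref{PB_Mg_Z}) classically implies (\ref{PB_Mg_dual_FVr}) \cite{Ethier_Kurtz_1993}. Second, your parenthetical ``indeed $L^{2}$'' is too optimistic: the $K_{i,j}$-terms carry the unbounded factor $M_{2}(X_{s})$, and an $L^{2}$ bound would require $\E\left(M_{2}(X_{s})^{2}\right)$, hence fourth moments of $\mu$, which are not assumed. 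This is harmless for your argument, since $L^{1}$ domination via Fubini--Tonelli together with $\sup_{s\leqslant t_{0}}\E\left(M_{2}(X_{s})\right)<\infty$ suffices to pass the martingale property to the limit, but the claim as written should be weakened to $L^{1}$.
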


\begin{proof} Let $t_{0} \geqslant 0$. {  Using (\ref{Eq_Fonction_polynome_mu}), let} $u, v, w : [0, t_{0}] \times \MM_{1}^{c,2}(\R) \times \widetilde{\Omega} \rightarrow \R$ be $\BB([0, t_{0}])\otimes \BB\left(\MM_{1}^{c,2}(\R)\right) \otimes \widetilde{\FF}-$measurable defined by 
\begin{center}
\begin{tabular}{ll}
$\bullet \ u(r,\mu) :=\left\langle T^{(n)}\left(t_{0} - r \right)f, \mu^{n} \right\rangle$,  & \hspace{+0.3cm} $\bullet \   v(r,\mu) := - \left\langle \partial_{t}T^{(n)}(t_{0}-r)f, \mu^{n} \right\rangle$, \vspace{0.15cm} \\
 $\bullet \ w(r, \mu) := { \LL_{\rm FVc}P_{T^{(n)}\left(t_{0}-r \right)f, n}\left(\mu\right)}$. \vspace{0.2cm} & \\ 
\end{tabular}
\end{center}
The expected result is a direct consequence of a version of {\color{blue} \cite[\color{black} Lemma 4.3.4]{Ethier_markov_1986}} where we replace the assumption of boundedness on $w$ by an assumption of domination{ : $\exists C >0, \forall t \in \left[0, t_{0} \right], \forall \mu \in \MM_{1}^{c,2}(\R), \ w(t,\mu) \leqslant C\left(1 + M_{2}(\mu) \right)$. Note that we have control on these moments (see Proposition \ref{Prop_moments_non_bornes_RECENTRE}). The verification of the assumptions of this lemma is left to the reader.} \qedhere  
\end{proof}

\subsection{Proof of Lemma \ref{Lemme_Theta_k} \label{Sous_section_6_2}}
Recall that, our goal is to prove{ , in dimension $1$,} that the stopping time $\theta_{k}$, defined by \begin{equation*}
\forall k \in \N, \qquad \theta_{k} := \inf{\left\{ t \geqslant 0\left. \phantom{1^{1^{1^{1}}}} \hspace{-0.6cm} \right| M(t)  \geqslant k \quad {\rm{or}} \quad \exists s \in [0,t], \ \left\langle  \xi_{s}, X_{t-s}^{M(s)}  \right\rangle \geqslant k \right\}}, 
\end{equation*}
satisfies $\lim_{k \to + \infty}{\theta_{k}} = + \infty $, $\P_{\left(\mu, \xi_{0} \right)}-$a.s. with $\mu \in \MM_{1}^{c,2}(\R)$ and $\xi_{0} \in \CCCC_{b}^{2}\left(\R^{M(0)}, \R \right)$. 
Before to prove Lemma \ref{Lemme_Theta_k}, we introduce the following lemma, whose proof will be given at Section \ref{Sous_section_6_2_2}. We denote by $S_{t}$ the number of jumps of the process $M$ on the time interval $[0,t]$.

\begin{Lem}  If $\xi_{0} \in \CCCC^{2}_{b}(\R^{n}, \R)$ then there exists a function 
$C_{0}$ on $\bigcup_{k \in \N}{(0, +\infty)^{k} \times \{k \}}$ to $\R_{+}$,  locally bounded,  {  such that for all $\left(t_{j} \right)_{j\in \N} \in (0,+\infty)^{\N}$, $k \mapsto C_{0}\left(\left(t_{i} \right)_{0\leqslant i \leqslant k}, k \right)$ is non-decreasing and satisfying
\[\forall t \in [0, T], \ \forall x \in \R^{M(t)}, \qquad \left|\xi_{t}(x) \right| \leqslant C_{0}\left(\tau_{1}, \tau_{2} - \tau_{1}, \cdots, \tau_{S_{T} + 1} - \tau_{S_{T}}, S_{T} \right)\left(1 + \left\|x \right\|_{2}^{2S_{T}}  \right). \]}
\label{Bornes_dual}
\end{Lem}

\vspace{-0.5cm}

The bound obtained above will only allow us to show that  $\theta_{k} \to +\infty$  $\P_{(\mu, \xi_{0})}-$a.s. under the assumption that the initial condition $X_{0}$ has all its finite moments. The following remark shows that we cannot expect that $\theta_{k} \to +\infty$ under weaker assumptions on the initial condition.

{  \begin{Rem} Let $\xi_{0} : x \mapsto \sin(x) \in \CCCC^{2}_{b}(\R, \R)$ and $\mu \in \MM_{1}^{c,2}(\R)$. Let us assume that $\left\langle \left|\id \right|^{4}, \mu  \right\rangle = + \infty$ and $\xi_{t}$ successively jumps at times $\tau_{1}$, $\tau_{2}$ and $\tau_{3}$ with respective jump operator $K_{11}$, $K_{11}$ and $\Phi_{13}$. If we denote by $\tau_{1,2} := \tau_{2} - \tau_{1}$, straightforward (but tedious) computations give  
\[ \xi_{\tau_{1}}(x,y) = K_{11}T^{(1)}(\tau_{1})\xi_{0}(x,y) = -\exp\left(-\frac{e_{4}(\tau_{1})}{2} - 4\gamma \tau_{1} \right)\sin\left(x\exp\left(-2\gamma \tau_{1}\right) \right)y^{2}\]
and the leading order term in $\xi_{\tau_{2}}(x,y,z)$ is of the form $\left(ax - by \right)^{2}z^{2}\sin\left(cx + dy \right)$.
Now, \[\xi_{\tau_{3}}(x,y) = \Phi_{13}T^{(3)}\left(\tau_{3} - \tau_{2} \right)\xi_{\tau_{2}}(x,y).\] If $\tau_{3} = \tau_{2}$, we obtain as leading order term in $\xi_{\tau_{3}}(x,y)$ the term $\left(ax - by \right)^{2}x^{2}\sin\left(cx + dy \right)$, which is not integrable with respect to $\mu^{2}(\dd x, \dd y)$. If $\tau_{3} > \tau_{2}$, one can check that the leading order term in $T^{(3)}(\tau_{3} - \tau_{2})\xi_{\tau_{2}}(x,y,z)$ is of the form $P_{4}(x,y,z)\sin\left(\widetilde{c}x + \widetilde{d}y + \widetilde{e}z  \right)$ where $P_{4}(X,Y,Z)$ is a homogeneous polynomial of degree $4$ such that $P_{4}(X,Y,Z) \to \left(a X - bY \right)^{2}Z^{2}$, $\widetilde{c} \to c$, $\widetilde{d} \to d$ and $\widetilde{e} \to 0$ when $\tau_{3} \to \tau_{2}$. Therefore, for $\tau_{3}$ close enough to $\tau_{2}$, $\xi_{\tau_{3}}(x,y)$ has a non-zero term proportional to $x^{4}\sin\left(\left[\widetilde{c} + \widetilde{e}\right]x + \widetilde{d}y \right)$ which is not compensated by another term. Hence, $\left\langle \left|\xi_{\tau_{3}}\right|, \mu^{2} \right\rangle = +\infty$ if $\tau_{3} - \tau_{2}$ is small enough, for any values of $\tau_{1}$ and $\tau_{2}$. Given $T$ large enough, we have proved that $\theta_{k} \leqslant \tau_{3} \leqslant T$ with positive probability.
 \label{Remarque_contre_exemple}
\end{Rem}
}

\subsubsection{Proof that Lemma \ref{Bornes_dual} implies Lemma \ref{Lemme_Theta_k} \label{Main_proof_theta_k}}
Note that $\theta_{k} = \widehat{\theta}_{k} \wedge \widetilde{\theta}_{k}$ where 
\begin{align*}
\widehat{\theta}_{k} & := \inf{\left\{ t \geqslant 0\left. \phantom{1^{1^{1^{1}}}} \hspace{-0.6cm} \right| M(t)  \geqslant k \right\}} \quad {\rm{and}} \quad \widetilde{\theta}_{k}  := \inf{\left\{ t \geqslant 0\left. \phantom{1^{1^{1^{1}}}} \hspace{-0.6cm} \right| \exists s \in [0,t], \ \left\langle  \xi_{s}, X_{t-s}^{M(s)}  \right\rangle \geqslant k  \right\}}.
\end{align*}
Thanks to {(\ref{Non-explosion})} it follows that $\widehat{\theta}_{k} \rightarrow +\infty$ when $k \rightarrow +\infty$. In order to prove that  $\widetilde{\theta}_{k} \rightarrow  +\infty$ when $k \rightarrow +\infty$, we rely on the control of the dual process obtained in {\rm{Lemma \ref{Bornes_dual}}}. So we need to control $\left\langle \left\|\cdot \right\|_{2}^{2S_{T}}, X_{t-s}^{M(s)} \right\rangle$. Let $T>0$ and $\varepsilon>0$ be arbitrary. From  {(\ref{Non-explosion})}, we choose $A:= A(T, \varepsilon) >0$ such that $\P_{(\mu, \xi_{0})}\left( S_{T} \leqslant A \right) \geqslant 1 - \varepsilon/3$. 
Then, using Proposition \ref{Prop_moments_non_bornes_RECENTRE}, we choose $B := B(T, \varepsilon, A)>0$ such that $\P_{(\mu, \xi_{0})}\left(\forall k \leqslant 2A, \ \forall t \leqslant T, \ \left\langle \left|\id\right|^{k}, X_{t} \right\rangle \leqslant B\right) \geqslant 1 - \varepsilon/3$. Finally, from Lemma \ref{Bornes_dual} we choose $\overline{C}_{0}:= \overline{C}_{0}\left(T, \varepsilon, A \right) >0$ such that $\P_{(\mu, \xi_{0})}\left(C_{0}\left(\left(\tau_{i+1} - \tau_{i} \right)_{0\leqslant i \leqslant A}, A \right) \leqslant \overline{C}_{0} \right) \geqslant 1- \varepsilon/3$.
We recall that for any $m\in \N^{\star}$, for all $x \in \R^{m}$, $\left(\sum_{i\, = \, 1}^{m}{x_{i}}  \right)^{n} \leqslant m^{n-1}\sum_{i\, = \, 1}^{m}{x_{i}^{n}}$.
Thus, the following inequality
\begin{align*}
\left\langle \left\|\cdot \right\|_{2}^{2S_{T}}, X_{t-s}^{M(s)} \right\rangle \leqslant M(s)^{S_{T}-1} \sum\limits_{i\, = \, 1}^{M(s)}{\int_{\R^{M(s)}}^{}{x_{i}^{2S_{T}}X_{t-s}^{M(s)}(\dd x)} } & = M(s)^{S_{T}}\left\langle \id^{2 S_{T}}, X_{t-s} \right\rangle  \\ 
& \leqslant \left(M(0) + A \right)^{A}B,
\end{align*}
takes place with probability $1-2\varepsilon/3$. Therefore, we deduce from {\rm{Lemma \ref{Bornes_dual}}} that for all $s \leqslant t \leqslant T$, \[ \left\langle \xi_{s}, X_{t-s}^{M(s)} \right\rangle  \leqslant \overline{C}_{0}\left(M(0) + A \right)^{A}B\] 
In particular, for $k \geqslant \overline{C}_{0}\left(1 +  \left(M(0) + A \right)^{A}B \right)$, it follows that 
\begin{align*}
\P_{\left(\mu, \xi_{0}\right)}\left(\widetilde{\theta}_{k}\geqslant T \right) & \geqslant \P_{\left(\mu, \xi_{0}\right)}\left(\left\{ S_{T} \leqslant A \right\} \cap \left\{\forall k \leqslant 2A, \ \forall t\leqslant T, \ \left\langle \id^{k}, X_{t} \right\rangle \leqslant B \right\}  \right. \\
& \hspace{3.45cm} \left. \cap \left\{C_{0}\left(\left(\tau_{i+1} - \tau_{i} \right)_{0\leqslant i \leqslant A}, A \right) \right\} \right) \\
&  \geqslant 1 - \varepsilon.
\end{align*}
The conclusion follows. \hfill $\square$ 

\subsubsection{Proof of Lemma \ref{Bornes_dual} \label{Sous_section_6_2_2}}

By mathematical induction on $k\in \N$, we prove the property 
 \begin{align*}
(\PP_{k}) : \quad &  \forall t \in \left[\tau_{k}, \tau_{k+1} \right[, \ \forall x \in \R^{M(t)}, \quad \left|\xi_{t}(x) \right| \leqslant C_{0}(\left(\tau_{i+1} - \tau_{i}\right)_{0\leqslant i\leqslant k}, k) \left(1 + \left\|x \right\|_{2}^{2k}  \right),
\end{align*} 
where $C_{0}$ is locally bounded on $\bigcup_{k\in \N}{\left(0, +\infty\right)^{k} \times \{ k\}}$. \\

\noindent  \textbf{Initial case.} For $k = 0$, $S_{0} = 0$ and $\xi_{0} \in \CCCC^{2}_{b}(\R^{n}, \R)$. Hence, the property $(\PP_{0})$ is satisfied. \vspace{0.1cm} \\
 \textbf{Inductive step.} We assume that, for $k \in \N^{\star}$, the property $(\PP_{k-1})$ is satisfied and prove that $(\PP_{k})$ is also. Let $t \in \left[\tau_{k}, \tau_{k+1} \right[$ and note that $M(t) = M\left(\tau_{k} \right)$. We make a partition of cases according to whether the dual process loses or gains a variable. {  Let $i, j \in \left\{1, \cdots, M\left(\tau_{k-1} \right) \right\}$ be fixed.} \\ 
 
\textbf{Step 1. Case $\Lambda_{k} = \Phi_{i,j}$ at the $k^{\rm{th}}$ jump.} In this case, $M(\tau_{k}) = M(\tau_{k-1}) - 1$ and we deduce from  the explicit expression{ (\ref{Processus_dual_explicite})} of the dual process that {  for all $x \in \R^{M\left(\tau_{k-1}  \right)-1}$,} 
\begin{align*}
\xi_{t}(x) & = T^{\left(M(\tau_{k-1}) - 1\right)}(t-\tau_{k})\Phi_{i,j}T^{\left(M(\tau_{k-1})\right)}(\tau_{k}-\tau_{k-1})\xi_{\tau_{k-1}}(x). 
\end{align*}
By using expression (\ref{Expression_Phi_ij}) of $\Phi_{i,j}$ and the property $\left(\PP_{k-1} \right)$, we deduce from {\rm{Corollary \ref{Bornes_T_n_t}} \textbf{(1)}} that for all $x \in \R^{M\left(\tau_{k-1} \right)-1}$, 
\begin{align*}
& \left|\Phi_{i,j}T^{\left(M\left(\tau_{k-1} \right) \right)}\left(\tau_{k} - \tau_{k-1} \right)\xi_{\tau_{k-1}}(x) \right| \\
& \hspace{2cm} \leqslant C_{2}\left(\tau_{k} - \tau_{k-1}, M\left(\tau_{k-1}\right) \right){C}_{0}\left(\left(\tau_{i+1} - \tau_{i} \right)_{0\leqslant i \leqslant k-1}, k-1 \right)\left(1 + \left\|x \right\|_{2}^{2(k-1)} \right),
\end{align*}
where $C_{2}{C}_{0}$ is locally bounded.
Using again {\rm{Corollary \ref{Bornes_T_n_t}} \textbf{(1)}} and the fact that $t\mapsto C_{2}\left(t, M\left(\tau_{k-1} \right)\right)$ is non-decreasing, we deduce the property $\left(\PP_{k} \right)$. \\

\textbf{Step 2. Case $\Lambda_{k} = K_{i,j}$ at the $k^{\rm{th}}$ jump.}
In this case, $M(\tau_{k}) = M(\tau_{k-1}) + 1$ and the explicit expression (\ref{Processus_dual_explicite}) of dual process that {  for all $x \in \R^{M\left(\tau_{k-1} \right) +1}$,}
\begin{align*}
\xi_{t}(x) & = T^{\left(M(\tau_{k-1})+1\right)}(t-\tau_{k})K_{i,j}T^{\left(M(\tau_{k-1})\right)}(\tau_{k}-\tau_{k-1})\xi_{\tau_{k-1}}(x).
\end{align*}

\noindent From the expression (\ref{Operateur_K_ij}) of $K_{i,j}$ and Theorem \ref{Thm_T_n_t} \textbf{(3)}, we have for all $x \in \R^{M\left(\tau_{k-1} \right) + 1}$,
\begin{align*}
& \left|K_{i,j}T^{\left(M\left(\tau_{k-1} \right) + 1 \right)}\left(\tau_{k} - \tau_{k-1} \right)\xi_{\tau_{k-1}}(x) \right| \\
& \hspace{0.25cm} = \left|\left(\partial_{x_{j}}m_{\tau_{k} - \tau_{k-1},\widetilde{x}} \right)^{t}\left[\left(\xi_{\tau_{k-1}} \ast \Hess\left(g_{\tau_{k} - \tau_{k-1}, 0}^{X}\right) \right)(m_{\tau_{k} - \tau_{k-1}, \widetilde{x}})\partial_{x_{i}}m_{\tau_{k} - \tau_{k-1}, \widetilde{x}}\right] \right|x_{M\left(\tau_{k-1} \right) + 1}^{2},
\end{align*}
where $\widetilde{x} = \left(x_{1}, \cdots, x_{M\left(\tau_{k-1} \right)} \right)^{t} \in \R^{M\left(\tau_{k-1} \right)}$. From the property $\left(\PP_{k-1} \right)$ and {\rm{Corollary \ref{Bornes_T_n_t}} \textbf{(2)}}, we deduce that  
\begin{align*}
& \left|K_{i,j}T^{\left(M\left(\tau_{k-1} \right) + 1 \right)}\left(\tau_{k} - \tau_{k-1} \right)\xi_{\tau_{k-1}}(x) \right| \\ 
& \hspace{2cm} \leqslant C_{3}\left(\tau_{k} - \tau_{k-1}, M\left(\tau_{k-1}\right) \right) C_{0}\left(\left(\tau_{i+1} - \tau_{i} \right)_{0\leqslant i \leqslant k-1}, k-1 \right)   \left(1 + \left\|x\right\|_{2}^{2k} \right),
\end{align*}
where $C_{3}{C}_{0}$ is locally bounded.
Using {\rm{Corollary \ref{Bornes_T_n_t}} \textbf{(1)}}, we deduce the property $\left(\PP_{k} \right)$. We conclude by the principle of induction. \hfill $\square$

\subsection{Proof of Theorem \ref{Thm_Identite_dualite_affaiblie} \label{Sous_section_6_3}}

Recall that $\left(X_{t}\right)_{t\geqslant 0}$ is a stochastic process whose law $\P_{\mu}$ is a solution of the martingale problem (\ref{PB_Mg_Z_Multi_d}) with $\mu \in \MM_{1}^{c,2}(\R)$ and $\left(\xi_{t} \right)_{t\geqslant 0}$ a dual process independent of $\left(X_{t}\right)_{t\geqslant 0}$ built on the same probability space. To simplify, we will note $\P = \P_{(\mu, \xi_{0})}$ the distribution of $\left(\left(X_{t}, \xi_{t}\right)\right)_{t\geqslant 0}$. As $\xi_{0} \in \CCCC^{2}_{b}(\R^{M(0)}, \R)$  and for the choice of the stopping time $\theta_{k}$ given by {\rm{(\ref{Theta_k})}}, the set of quantities, involved in the expectations of the weakened duality identity (\ref{Identite_dualite_affaiblie}), are bounded. \\ 

\textbf{Step 1. Approximation reasoning.} To establish the relation (\ref{Identite_dualite_affaiblie}) we introduce a increasing sequence $0 = t_{0}^{n} < t_{1}^{n} < \dots < t_{p_{n}}^{n} = t$ of subdivisions of $[0,t]$ such that $t_{i+1}^{n} = t_{i}^{n}  + h$ with $h$ tending to $0$. Note that 
\begin{align*}
& \E\left(\left\langle \xi_{t \wedge \theta_{k}} , X_{0}^{M(t\wedge \theta_{k})} \right\rangle\exp\left(\gamma \int_{0}^{t\wedge \theta_{k}}{M^{2}(u) \dd u} \right) \right) - \E\left(\left\langle \xi_{0} , X^{M(0)}_{t\wedge \theta_{k}} \right\rangle\right) \\
& \hspace{ 0.75cm} = \sum\limits_{i\, = \, 0}^{p_{n}-1}{\left[\E\left(\left\langle \xi_{t_{i+1}^{n}\wedge \theta_{k}}, X_{t\wedge \theta_{k} - t_{i+1}^{n}\wedge \theta_{k}}^{M\left(t_{i+1}^{n}\wedge \theta_{k}\right)} \right\rangle\exp\left(\gamma \int_{0}^{t_{i+1}^{n}\wedge \theta_{k}}{M^{2}(u)\dd u}  \right)  \right)   \right.} \\
& \hspace{2.5cm} - \left. \E\left(\left\langle \xi_{t_{i}^{n}\wedge \theta_{k}}, X_{t\wedge \theta_{k} - t_{i}^{n}\wedge \theta_{k}}^{M\left(t_{i}^{n}\wedge \theta_{k}\right)} \right\rangle\exp\left(\gamma \int_{0}^{t_{i}^{n}\wedge \theta_{k}}{M^{2}(u)\dd u}  \right)   \right)\right]. 
\end{align*} 
We are therefore interested in terms of the form 
\begin{equation}
\begin{aligned}
& \E\left(\left\langle \xi_{(s+h)\wedge \theta_{k}}, X_{t\wedge \theta_{k} - (s + h)\wedge \theta_{k}}^{M\left((s+h)\wedge \theta_{k}\right)} \right\rangle\exp\left(\gamma \int_{0}^{(s+h)\wedge \theta_{k}}{M^{2}(u)\dd u}  \right)  \right) \\
& \hspace{0.5cm} - \E\left(\left\langle \xi_{s\wedge \theta_{k}}, X_{t\wedge \theta_{k} - s\wedge \theta_{k}}^{M\left(s\wedge \theta_{k}\right)} \right\rangle\exp\left(\gamma \int_{0}^{s\wedge \theta_{k}}{M^{2}(u)\dd u}  \right)   \right), \quad {\rm{for}} \ s \in \left[0, t-h \right].
\end{aligned}
 \label{Difference_terme_dual}
\end{equation}
It is sufficient to prove that these quantities are $O\left(h^{2}\right)$. The procedure to be adopted is as follows. First of all, we consider separately the two terms which constitute (\ref{Difference_terme_dual}) in Steps 2 and 3. Then, we prove that the sum of these terms is $O\left(h^{2}\right)$ in Steps 4 and 5. Throughout this section, in order to simplify the writing, the following notations are introduced:
\[t_{k} := t\wedge \theta_{k}, \qquad s_{k} := s\wedge \theta_{k}, \qquad {\rm{and}} \qquad  s^{h}_{k} := (s+h)\wedge \theta_{k}.\]
We note respectively $\tau_{1}$, $\tau_{2}$ the first and second jump times after $s_{k}$ for the process $M$. We denote by $\tau_{1,k} := \tau_{1} \wedge \theta_{k}$ and  $\tau_{2,k} := \tau_{2} \wedge \theta_{k}$. \\

\textbf{Step 2. First term of 
 (\ref{Difference_terme_dual})}. We exploit the explicit expression (\ref{Processus_dual_explicite}) of the dual process and make the following partition:
\begin{itemize}
\item[\textbf{(a)}] If there has been \emph{no} jump of $M$ on the interval $\left[s_{k}, s^{h}_{k}\right]$.
\item[\textbf{(b)}] If there was \emph{only one} jump of $M$ on the interval $\left[s_{k}, s^{h}_{k}\right]$ and distinguish according to the events $\{\Lambda = \Phi_{i,j}\}$ and $\{\Lambda = K_{i,j}\}$ where $\Lambda$ is the first $\Lambda_{k}$ defined by (\ref{Saut_Phi_i_j}) and (\ref{Saut_K_i_j}) after $s_{k}$. 
\item[\textbf{(c)}] If there are \emph{two or more than two} jumps of $M$ on the interval $\left[s_{k}, s^{h}_{k}\right]$. 
\end{itemize}
Then 
\begin{align*}
& \E\left(\left\langle \xi_{s_{k}^{h}}, X_{t_{k}- s_{k}^{h}}^{M\left(s_{k}^{h}\right)} \right\rangle\exp\left(\gamma \int_{0}^{s_{k}^{h}}{M^{2}(u)\dd u}  \right) \left| \phantom{1^{1^{1^{1^{1^{1}}}}}} \hspace{-0.8cm} \widetilde{\FF}_{s_{k}} \right.  \right) \notag\\
& \hspace{0.5cm}  = \E\left(\left\langle \xi_{s^{h}_{k}}, X_{t_{k}- s^{h}_{k}}^{M\left(s^{h}_{k}\right)} \right\rangle\exp\left(\gamma \int_{0}^{s^{h}_{k}}{M^{2}(u)\dd u}  \right) \II_{\left\{\tau_{1,k} > s^{h}_{k} \right\}}  \left| \phantom{1^{1^{1^{1^{1^{1}}}}}} \hspace{-0.8cm} \widetilde{\FF}_{s_{k}} \right.  \right) \notag \\ 
&  \hspace{1cm} + \sum\limits_{\substack{i, j \, = \, 1 \\  i \, \neq \, j}}^{M(s_{k})}\E\left(\left\langle \xi_{s^{h}_{k}}, X_{t_{k}- s^{h}_{k}}^{M\left(s^{h}_{k}\right)} \right\rangle\exp\left(\gamma \int_{0}^{s^{h}_{k}}{M^{2}(u)\dd u}  \right)   \II_{\left\{\tau_{1,k} \leqslant s^{h}_{k}, \tau_{2,k} > s_{k}^{h},   \Lambda = \Phi_{i,j}\right\}}  \left| \phantom{1^{1^{1^{1^{1^{1}}}}}} \hspace{-0.8cm} \widetilde{\FF}_{s_{k}} \right.  \right) \notag \\ 
& \hspace{1cm} + \sum\limits_{\substack{ i , j \, = \, 1 }}^{M(s_{k})}\E\left(\left\langle \xi_{s^{h}_{k}}, X_{t_{k}- s^{h}_{k}}^{M\left(s^{h}_{k}\right)} \right\rangle\exp\left(\gamma \int_{0}^{s^{h}_{k}}{M^{2}(u)\dd u}  \right)  \II_{\left\{\tau_{1,k} \leqslant s^{h}_{k}, \tau_{2,k} > s^{h}_{k},  \Lambda = K_{i,j}\right\}}  \left| \phantom{1^{1^{1^{1^{1^{1}}}}}} \hspace{-0.8cm} \widetilde{\FF}_{s_{k}} \right.  \right) \notag \\ 
&  \hspace{1cm}+ \E\left(\left\langle \xi_{s^{h}_{k}}, X_{t_{k}- s^{h}_{k}}^{M\left(s^{h}_{k}\right)} \right\rangle\exp\left(\gamma \int_{0}^{s^{h}_{k}}{M^{2}(u)\dd u}  \right)\II_{\left\{\tau_{2,k} \leqslant s^{h}_{k} \right\}}  \left| \phantom{1^{1^{1^{1^{1^{1}}}}}} \hspace{-0.8cm} \widetilde{\FF}_{s_{k}} \right.   \right). \notag 
\end{align*}

{  We only deal with the second term, where there is a single jump with $\Lambda = \Phi_{i,j}$. The other terms are treated in a similar way.} \\

If there is a jump on $[s_{k}, s^{h}_{k}]$ and for $i \neq j \in \left\{1, 2, \cdots, M\left(s_{k}\right)  \right\}$ fixed, $\Lambda = \Phi_{i,j}$, then $M\left(s^{h}_{k}\right) = M(s_{k}) - 1$ and $\xi_{s^{h}_{k}}  = T^{\left(M(s_{k}) -1 \right)}\left(s^{h}_{k}-\tau_{1,k}\right)\Phi_{i,j} T^{\left(M(s_{k}) \right)}\left(\tau_{1,k} - s_{k} \right)\xi_{s_{k}}$. Thus,
\begin{align*}
& \E\left(\left\langle \xi_{s^{h}_{k}}, X_{t_{k}- s^{h}_{k}}^{M\left(s^{h}_{k}\right)} \right\rangle\exp\left(\gamma \int_{0}^{s^{h}_{k}}{M^{2}(u)\dd u}  \right)   \II_{\left\{\tau_{1,k} \leqslant s^{h}_{k}, \tau_{2,k} > s_{k}^{h},   \Lambda = \Phi_{i,j}\right\}}  \left| \phantom{1^{1^{1^{1^{1^{1}}}}}} \hspace{-0.8cm} \widetilde{\FF}_{s_{k}} \right.  \right)  \\
& \hspace{0.5cm} =\left\langle T^{\left(M\left(s_{k}\right) -1 \right)}\left(s^{h}_{k} -\tau_{1,k}\right)\Phi_{i,j} T^{\left(M\left(s_{k}\right) \right)}\left(\tau_{1,k} - s_{k}  \right)\xi_{s_{k}}, X_{t_{k} - s_{k}^{h}}^{M\left(s_{k}\right) - 1} \right\rangle \phantom{\exp\left(\gamma \int_{0}^{s}{M^{2}(u)\dd u} \right)} \\
& \hspace{1.5cm}  \times \exp\left(\gamma \int_{0}^{s_{k}}{M^{2}(u)\dd u} + \gamma \left[\tau_{1,k} - s_{k}\right] M^{2}(s_{k}) + \gamma\left[s^{h}_{k} -\tau_{1,k}\right]\left(M(s_{k}) - 1\right)^{2}  \right)   \\
& \hspace{1.5cm} \times   \II_{\left\{\tau_{1,k} - s_{k} \leqslant s^{h}_{k} - s_{k}\right\}}\E\left[ \left. \II_{\left\{\Lambda = \Phi_{i,j}\right\}} \II_{\left\{\tau_{2,k} -\tau_{1,k} > s^{h}_{k} - \tau_{1,k} \right\}}  \right| \sigma\left(\tau_{1,k} \right) \vee \widetilde{\FF}_{s_{k}} \right]. 
\end{align*}
Now, using that, given $\Lambda = \Phi_{i,j}$ and $\sigma\left(\tau_{1,k} \right) \vee \widetilde{\FF}_{s_{k}}$,  $\tau_{2,k} - \tau_{1,k}$ follows an exponential law of parameter $\gamma \left(M\left(s_{k}\right) - 1 \right)^{2} + \gamma\left(M\left(s_{k}\right) - 1 \right)\left(M\left(s_{k}\right) - 2 \right) $,
we deduce that
\begin{align*}
& \P\left(\left\{\Lambda = \Phi_{i,j} \right\} \cap  \left\{ \tau_{2,k} - \tau_{1,k}  > s_{k}^{h} - \tau_{1,k} \right\} \left| \phantom{1^{1^{1^{1}}}} \hspace{-0.7cm} \right. \sigma\left(\tau_{1,k} \right) \vee \widetilde{\FF}_{s_{k}} \right) \notag\\
&  \hspace{1cm} = \frac{\exp\left(-\gamma\left[\left(M\left(s_{k} \right) - 1 \right)^{2} + \left(M\left(s_{k} \right) - 1 \right)\left(M\left(s_{k}\right) - 2 \right)^{\phantom{2}} \hspace{-0.15cm} \right]\left[s_{k}^{h} -\tau_{1,k} \right] \right) }{M^{2}\left(s_{k} \right) + M\left(s_{k} \right)\left(M\left(s_{k} \right) - 1 \right)}.  \notag 
\end{align*} 
Then using that, given $\sigma\left(\tau_{1,k} \right) \vee \widetilde{\FF}_{s_{k}}$,  $\tau_{1,k} - s_{k}$ follows an exponential law of parameter $\gamma M^{2}\left(s_{k}\right) + \gamma M\left(s_{k}\right) \left(M\left(s_{k}\right)-1 \right)$, we deduce that 
\begin{align*}
 & \E\left(\left\langle \xi_{s^{h}_{k}}, X_{t_{k}- s^{h}_{k}}^{M\left(s^{h}_{k}\right)} \right\rangle\exp\left(\gamma \int_{0}^{s^{h}_{k}}{M^{2}(u)\dd u}  \right)   \II_{\left\{\tau_{1,k} \leqslant s^{h}_{k}, \tau_{2,k} > s_{k}^{h},   \Lambda = \Phi_{i,j}\right\}}  \left| \phantom{1^{1^{1^{1^{1^{1}}}}}} \hspace{-0.8cm} \widetilde{\FF}_{s_{k}} \right.  \right)  \\
 & \hspace{0.5cm} =  \gamma {\int_{s_{k}}^{s^{h}_{k}}{ \left\{\left\langle T^{\left(M(s_{k}) -1 \right)}\left(s^{h}_{k} - r \right)\Phi_{i,j}T^{\left(M(s_{k}) \right)}\left(r - s_{k}\right)\xi_{s_{k}}, X_{t_{k} - s_{k}^{h}}^{M\left(s_{k}\right) - 1} \right\rangle \exp\left(\gamma \int_{0}^{s_{k}}{M^{2}(u) \dd u} \right)  \phantom{\int_{0}^{s^{h}_{k} - s_{k}}{} } \right. } }  \\
& \hspace{+1.25cm} \times  \left. \exp\left(- 2\gamma \left(r-s_{k}\right)\left(M(s_{k}) - 1 \right) -  \gamma \left(s^{h}_{k} - s_{k}\right)\left(M(s_{k}) - 1 \right) \left(M(s_{k}) - 2 \right)\phantom{\int_{s_{k}}^{s^{h}_{k}}} \hspace{-0.7cm} \right) \right\}\dd r. 
\end{align*}

\textbf{Step 3. Second term of (\ref{Difference_terme_dual}).} It follows from the MILD formulation of {\rm{Proposition \ref{Prop_Eq_MILD}}} that 
\begin{align}
& \E\left(\left\langle \xi_{s_{k}}, X_{t_{k}-s_{k}}^{M(s_{k})} \right\rangle   \exp\left(\gamma\int_{0}^{s_{k}}{M^{2}(u)\dd u}  \right)\left| \phantom{1^{1^{1^{1^{1^{1}}}}}} \hspace{-0.8cm} \widetilde{\FF}_{s_{k}} \right. \right) \notag \\
 & \hspace{0.05cm} = \E\left(\left\langle T^{(M(s_{k}))}(s^{h}_{k}-s_{k})\xi_{s_{k}}, X_{t_{k}-s^{h}_{k}}^{M(s_{k})} \right\rangle \exp\left(\gamma\int_{0}^{s_{k}}{M^{2}(u)\dd u}  \right) \left| \phantom{1^{1^{1^{1^{1^{1}}}}}} \hspace{-0.8cm} \widetilde{\FF}_{s_{k}} \right.  \right)  \notag \\
& \hspace{0.3cm} + \gamma \sum\limits_{\substack{i, j \, = \, 1 \\ i \, \neq \, j}}^{M(s_{k})} \E\left(\int_{0}^{s^{h}_{k} - s_{k}}{\left(\left\langle \Phi_{i,j} T^{\left(M(s_{k}) \right)}(r)\xi_{s_{k}}, X_{t_{k}-s_{k}-r}^{M(s_{k})-1} \right\rangle \right.}  \right. \notag \\ 
& \hspace{1.8cm} - \left.\left.\left\langle T^{\left(M(s_{k}) \right)}(r) \xi_{s_{k}}, X_{t_{k}-s_{k}-r}^{M(s_{k})} \right\rangle \right)\dd r \exp\left(\gamma\int_{0}^{s_{k}}{M^{2}(u)\dd u}  \right) \left| \phantom{1^{1^{1^{1^{1^{1}}}}}} \hspace{-0.8cm} \widetilde{\FF}_{s_{k}} \right.  \right) \notag \\ 
& \hspace{0.3cm} + \gamma\sum\limits_{ i , j \, = \, 1}^{M(s_{k})} \E\left( \int_{0}^{s^{h}_{k} - s_{k}}{\left\langle K_{i,j}T^{(M(s_{k}))}(r)\xi_{s_{k}}, X_{t_{k}-s_{k}-r}^{M(s_{k})+1} \right\rangle \dd r}  \exp\left(\gamma\int_{0}^{s_{k}}{M^{2}(u)\dd u}  \right) \left| \phantom{1^{1^{1^{1^{1^{1}}}}}} \hspace{-0.8cm} \widetilde{\FF}_{s_{k}} \right. \hspace{-1cm} \phantom{\sum\limits_{ i , j \, = \, 1}^{M(s_{k})}} \right).  \notag 
\end{align}

\textbf{Step 4. Conclusion.}   Putting together all the previous equations, we deduce that
\begin{align*}
& \E\left(\left\langle \xi_{(s+h)\wedge \theta_{k}}, X_{t\wedge \theta_{k} - (s + h)\wedge \theta_{k}}^{M\left([s+h]\wedge \theta_{k}\right)} \right\rangle\exp\left(\gamma \int_{0}^{[s+h]\wedge \theta_{k}}{M^{2}(u)\dd u}  \right)  \right) \\
& \hspace{3.5cm} - \E\left(\left\langle \xi_{s\wedge \theta_{k}}, X_{t\wedge \theta_{k} - s\wedge \theta_{k}}^{M\left(s\wedge \theta_{k}\right)} \right\rangle\exp\left(\gamma \int_{0}^{s\wedge \theta_{k}}{M^{2}(u)\dd u}  \right)   \right) \\
& \hspace{0.5cm}  = {\textbf{\rm{\textbf{(A)}}}} + {\textbf{\rm{\textbf{(B)}}}} + {\textbf{\rm{\textbf{(C)}}}} + {\textbf{\rm{\textbf{(D)}}}} + {\textbf{\rm{\textbf{(E)}}}} + O(h^{2}),
\end{align*}
where 
\begin{align*}
{\textbf{\rm{\textbf{(A)}}}} & = \E\left(\left\langle T^{(M(s_{k}))}\left(s^{h}_{k} - s_{k} \right)\xi_{s_{k}}, X_{t_{k} - s^{h}_{k}}^{M(s_{k})} \right\rangle  \right. \\
& \hspace{0.5cm} \times \left. \exp\left(\gamma \int_{0}^{s_{k}}{M^{2}(u)\dd u}   -\gamma\left[s^{h}_{k} - s_{k} \right]M\left(s_{k} \right)\left(M(s_{k}) - 1 \right)  \right) \right)  \\ 
 & \hspace{+0.5cm} + \gamma \E\left(\int_{0}^{s^{h}_{k} - s_{k}}{\sum\limits_{\substack{i, j \, = \, 1 \\ i \, \neq \, j}}^{M(s_{k})}{\left\langle T^{(M(s_{k}))}(r)\xi_{s_{k}}, X_{t_{k} - s_{k} - r}^{M(s_{k})} \right\rangle}\dd r }\exp\left(\gamma \int_{0}^{s_{k}}{M^{2}(u) \dd u} \right)  \right), \\ 
 {\textbf{\rm{\textbf{(B)}}}} & =  \gamma \E\left(\sum\limits_{\substack{i, j \, = \, 1 \\ i \, \neq \, j}}^{M(s_{k})}{\int_{0}^{s^{h}_{k} - s_{k}}{\left\{\left\langle T^{(M(s_{k}) - 1)}\left(s^{h}_{k} - s_{k} - r \right)\Phi_{i,j}T^{(M(s_{k}))}(r)\xi_{s_{k}}, X_{t_{k} - s^{h}_{k}}^{M(s_{k}) - 1} \right\rangle \right.} }  \right.  \\
 & \hspace{-0.15cm} \left. \phantom{\int_{0}^{s_{k}}} - \left.   \left\langle T^{(M(s_{k}) - 1)}\left(0 \right)\Phi_{i,j}T^{(M(s_{k}))}(r)\xi_{s_{k}}, X_{t_{k} - s_{k} - r}^{M(s_{k}) - 1} \right\rangle \right\}\exp\left(\gamma \int_{0}^{s_{k}}{M^{2}(u) \dd u}  \right) \phantom{\sum\limits_{\substack{i, j \, = \, 1 \\ i \, \neq \, j}}^{M(s_{k})}} \hspace{-1cm}\right), \\
{\textbf{\rm{\textbf{(C)}}}} & =  \gamma \E\left(\sum\limits_{ i , j \, = \, 1}^{M(s_{k})}{\int_{0}^{s^{h}_{k} - s_{k}}{\left\{\left\langle T^{(M(s_{k}) + 1)}\left(s^{h}_{k} - s_{k} - r \right)K_{i,j}T^{(M(s_{k}))}(r)\xi_{s_{k}}, X_{t_{k} - s^{h}_{k}}^{M(s_{k}) + 1} \right\rangle \right.} }  \right.  \\
 & \hspace{-0.15cm} \left. \phantom{\int_{0}^{s_{k}}} - \left.   \left\langle T^{(M(s_{k}) + 1)}\left(0 \right)K_{i,j}T^{(M(s_{k}))}(r)\xi_{s_{k}}, X_{t_{k} - s_{k} - r}^{M(s_{k}) + 1} \right\rangle \right\}\exp\left(\gamma \int_{0}^{s_{k}}{M^{2}(u) \dd u}  \right) \phantom{\sum\limits_{ i , j \, = \, 1}^{M(s_{k})}} \hspace{-0.95cm} \right),  \\
 {\textbf{\rm{\textbf{(D)}}}} & =  \gamma \E\left(\sum\limits_{\substack{i, j \, = \, 1 \\ i \, \neq \, j}}^{M(s_{k})}{\int_{0}^{s^{h}_{k} - s_{k}}{\left\{\left\langle T^{(M(s_{k}) - 1)}\left(s^{h}_{k} - s_{k} - r \right)\Phi_{i,j}T^{(M(s_{k}))}(r)\xi_{s_{k}}, X_{t_{k} - s^{h}_{k}}^{M(s_{k}) - 1} \right\rangle \right.} }  \right.  \\
 & \hspace{-0.15cm} \left. \phantom{\int_{0}^{s_{k}}} \times \left. \exp\left(-2\gamma r \left[M(s_{k} ) - 1\right] - \gamma \left(s^{h}_{k} - s_{k}  \right)\left[M(s_{k}) -1  \right] \left[M(s_{k}) -2  \right]  \right) \phantom{1^{1^{1^{1^{1}}}}} \hspace{-0.8cm}\right\} \dd r \right),  \\ 
{\textbf{\rm{\textbf{(E)}}}} & =  \gamma \E\left(\sum\limits_{ i , j \, = \, 1}^{M(s_{k})}{\int_{0}^{s^{h}_{k} - s_{k}}{\left\{\left\langle T^{(M(s_{k}) + 1)}\left(s^{h}_{k} - s_{k} - r \right)K_{i,j}T^{(M(s_{k}))}(r)\xi_{s_{k}}, X_{t_{k} - s^{h}_{k}}^{M(s_{k}) +1} \right\rangle \right.} }  \right.  \\
 & \hspace{-0.15cm} \left. \phantom{\int_{0}^{s_{k}}} \times \left. \exp\left(-2\gamma r M(s_{k} )  - \gamma \left(s^{h}_{k} - s_{k}  \right)M(s_{k}) \left[M(s_{k}) -1  \right]  \right) \phantom{1^{1^{1}}} \hspace{-0.5cm}\right\} \dd r \right).  
\end{align*}

All these terms are $O\left(h^{2} \right)$ uniformly with respect to the other parameters which is sufficient to conclude the proof. This can be proved similarly for each term, so we only give the details for {\textbf{\rm{\textbf{(A)}}}}. We first notice that when $h \to 0$, 
\begin{align*}
{\textbf{\rm{\textbf{(A)}}}} & = \E\left(M(s_{k}) \left(M(s_{k}) - 1 \right)\int_{0}^{s_{k}^{h}-s_{k}}{\left\{\left\langle T^{(M(s_{k}))}\left(s^{h}_{k} - s_{k} \right)\xi_{s_{k}}, X_{t_{k} - s^{h}_{k}}^{M(s_{k})} \right\rangle \right. }  \right. \\
& \hspace{0.5cm} \left. \phantom{\int_{0}^{s_{k}}}  - \left. \left\langle T^{(M(s_{k}))}(r)\xi_{s_{k}}, X_{t_{k} - s_{k}^{\phantom{?}} - r}^{M(s_{k})} \right\rangle \right\} \dd r \exp\left(\gamma \int_{0}^{s_{k}}{M^{2}(u)\dd u}  \right) \right) + O\left(h^{2} \right).
\end{align*}
By conditioning with respect to $\widetilde{\FF}_{s_{k}}$ in the previous expression and using the MILD formulation  of the {\rm{Proposition \ref{Prop_Eq_MILD}}} again we obtain that 
\begin{align*}
& {\textbf{\rm{\textbf{(A)}}}} = \gamma\E\left(\exp\left(\gamma \int_{0}^{s_{k}}{M^{2}(u)\dd u}  \right) M(s_{k}) \left(M(s_{k}) - 1 \right)\int_{0}^{s^{h}_{k} - s_{k}}{}\left[\int_{t_{k} - s_{k} - r}^{t_{k} - s^{h}_{k}}{\left\{ \phantom{\sum\limits_{11}^{}{}} \right.} \right.\right. \\
& \hspace{0.25cm} \sum\limits_{\substack{i, j \, = \, 1 \\ i \, \neq \, j}}^{M(s_{k})}{\left[\left\langle \Phi_{i,j}T^{(M(s_{k}))}\left(t_{k} - s_{k} - v \right)\xi_{s_{k}}, X_{v}^{M(s_{k}) - 1} \right\rangle -\left\langle T^{(M(s_{k}))}\left(t_{k} - s_{k} - v \right)\xi_{s_{k}}, X_{v}^{M(s_{k})} \right\rangle \right] \hspace{-0.75cm}\phantom{\int_{0}^{s^{1}}} }  \\ 
& \hspace{0.25cm} + \left.\left.\left.\sum\limits_{ i , j \, = \, 1}^{M(s_{k})}{\left\langle K_{i,j}T^{(M(s_{k}))}(t_{k} - s_{k} - v) \xi_{s_{k}}, X_{v}^{M(s_{k})+1} \right\rangle}  \right\}\dd v\right]\dd r  \right) + O\left(h^{2} \right).
\end{align*}
Both integrals are on intervals of length at most $h$. Since all the quantities in the integrals are bounded by definition  of the stopping time $\theta_{k}$, we deduce that ${\textbf{\rm{\textbf{(A)}}}} \underset{h\to 0}{=} O(h^{2})$. \hfill $\square$

\appendix
\makeatletter
\def\@seccntformat#1{Annexe~\csname the#1\endcsname:\quad}
\@addtoreset{equation}{section}
  \renewcommand\theequation{\thesection.\arabic{equation}}
\makeatother

\section{Technical lemmas involved in the existence proof}

\subsection{Approximation lemma \label{Sous_sect_lemmes_preuve_existence}}

{ 
\begin{Lem}
Let $p, d \in \N^{\star}$, $\mu, \nu \in \MM_{1}\left(\R^{d}\right)$, $F \in \CCCC_{b}^{3}\left(\R^{p}, \R\right)$ and $g=(g_{1},\cdots,  g_{p}) \in \CCCC^{3}_{b}\left(\R^{d}, \R^{p}\right)$. Then, 
\begin{align*}
& F\left(\left\langle g\circ\tau_{-\left\langle \id, \mu \right\rangle}, \mu \right\rangle \right) - F\left(\left\langle g\circ\tau_{-\left\langle \id, \mu \right\rangle}, \nu \right\rangle \right) \\
& \hspace{0.5cm} = \sum_{i\, = \, 1}^{p}{\partial_{i}F\left(\left\langle g\circ\tau_{-\left\langle \id, \nu \right\rangle}, \nu \right\rangle \right)} \left\{\left\langle g_{i}\circ\tau_{-\left\langle \id, \nu \right\rangle}, \mu - \nu \right\rangle - \left\langle \nabla g_{i}\circ\tau_{-\left\langle \id, \nu \right\rangle}, \nu \right\rangle \cdot \left\langle \id, \mu-\nu \right\rangle \phantom{\frac{1}{2}} \right. \\
& \hspace{1.5cm} - \left.\left\langle \nabla g_{i}\circ\tau_{-\left\langle \id, \nu \right\rangle}, \mu - \nu \right\rangle \cdot \left\langle \id, \mu-\nu \right\rangle + \frac{1}{2}\left\langle \id, \mu - \nu \right\rangle^{t} \left\langle \Hess(g_{i})\circ\tau_{-\left\langle \id, \nu \right\rangle}, \nu \right\rangle \left\langle \id, \mu - \nu \right\rangle\right\} \\
& \hspace{0.75cm} + \frac{1}{2}\sum_{i,j \, = \, 1}^{p}{\partial_{ij}^{2}F\left(\left\langle g\circ\tau_{-\left\langle \id, \nu \right\rangle}, \nu \right\rangle \right)}\left\{\left\langle g_{i}\circ\tau_{-\left\langle \id, \nu \right\rangle}, \mu - \nu \right\rangle \left\langle g_{j}\circ\tau_{-\left\langle \id, \nu \right\rangle}, \mu - \nu \right\rangle \phantom{\frac{1}{2}} \right. \\
& \hspace{1.5cm} - \left\langle g_{i}\circ\tau_{-\left\langle \id, \nu \right\rangle}, \mu - \nu \right\rangle \left\langle \nabla g_{j}\circ\tau_{-\left\langle \id, \nu \right\rangle}, \nu \right\rangle \cdot \left\langle \id, \mu - \nu \right\rangle \\
& \hspace{1.5cm} - \left\langle g_{j}\circ\tau_{-\left\langle \id, \nu \right\rangle}, \mu - \nu \right\rangle \left\langle \nabla g_{i}\circ\tau_{-\left\langle \id, \nu \right\rangle}, \nu \right\rangle \cdot \left\langle \id, \mu - \nu \right\rangle \\
& \hspace{1.5cm} + \left.\left(\left\langle \nabla g_{i}\circ\tau_{-\left\langle \id, \nu \right\rangle}, \nu \right\rangle \cdot \left\langle \id, \mu - \nu \right\rangle   \right) \left(\left\langle \nabla g_{j}\circ\tau_{-\left\langle \id, \nu \right\rangle}, \nu \right\rangle \cdot \left\langle \id, \mu - \nu \right\rangle   \right) \phantom{\frac{1}{2}} \hspace{-0.35cm} \right\} \\
& \hspace{0.75cm} + O\left(\left\|\left\langle \id, \mu - \nu \right\rangle \right\|^{3} + \sum_{j\, = \, 1}^{p}{\sum_{k\, = \, 0}^{2}{\left\|D^{k}g_{j} \circ \tau_{-\left\langle \id, \nu \right\rangle}, \mu-\nu \right\|^{3}}}  \right)
  \end{align*} 
  where $D^{k}g_{j}$ denotes the differential of order $k$ of $g_{j}$.
  \label{Lem_FVr_p_variables}
\end{Lem}
}

\begin{proof} The general case $p, d \in \N^{\star}$ can be proved by a straightforward extension of the proof of the case $p =d =1$ which is the only case that we prove. Applying \textsc{Taylor}'s formula to $g\circ\tau_{- \left\langle  \id , \mu\right\rangle} = g\left(\cdot - \left\langle \id , \mu \right\rangle  \right)$, we obtain that
\begin{align*}
 & \left\langle g\circ\tau_{-\left\langle  \id, \mu \right\rangle} - g\circ\tau_{-\left\langle \id, \nu \right\rangle}, \mu \right\rangle \\
 &  \hspace{0.25cm}= \int_{\R}^{}{\mu(\dd x)\left[g'\left(x - \left\langle \id, \nu  \right\rangle  \right)\left\langle \id, \nu - \mu \right\rangle + \frac{1}{2}g''\left(x - \left\langle \id, \nu \right\rangle   \right) \left\langle \id, \nu - \mu \right\rangle^{2}  + O\left(\left|\left\langle \id, \nu - \mu \right\rangle\right|^{3} \right)\right]} \\
 &\hspace{0.25cm}  = - \left\langle \id, \mu - \nu \right\rangle\left[\left\langle g'\circ\tau_{-\left\langle \id, \nu \right\rangle}, \nu \right\rangle + \left\langle g'\circ\tau_{-\left\langle \id, \nu  \right\rangle}, \mu - \nu \right\rangle  \right] \\ 
 & \hspace{1cm}  + \frac{1}{2} \left\langle \id, \mu - \nu \right\rangle^{2}\left[\left\langle g''\circ\tau_{-\left\langle \id, \nu \right\rangle}, \nu \right\rangle + \left\langle g''\circ\tau_{-\left\langle \id, \nu  \right\rangle}, \mu - \nu  \right\rangle  \right] + O\left(\left|\left\langle \id, \mu - \nu \right\rangle\right|^{3}\right). 
\end{align*}
Therefore, we deduce the following approximation: 
\begin{align}
& \left\langle g\circ\tau_{-\left\langle \id, \mu \right\rangle}, \mu \right\rangle - \left\langle g\circ\tau_{-\left\langle \id , \nu\right\rangle}, \nu \right\rangle \notag \\
& \hspace{0.0cm}= \left\langle g\circ\tau_{-\left\langle \id, \nu \right\rangle}, \mu - \nu  \right\rangle  - \left\langle \id, \mu - \nu \right\rangle\left[\left\langle g'\circ\tau_{-\left\langle \id, \nu \right\rangle}, \nu \right\rangle + \left\langle g'\circ\tau_{-\left\langle \id, \nu \right\rangle}, \mu - \nu \right\rangle  \right] \label{Lemme_appendice_etape} \\
 & \hspace{0.15cm} + \frac{1}{2} \left\langle \id, \mu - \nu \right\rangle^{2}\left\langle g''\circ\tau_{-\left\langle \id, \nu \right\rangle}, \nu \right\rangle + O\left(\left|\left\langle \id, \mu - \nu \right\rangle\right|^{3} + \left\langle \id, \mu - \nu  \right\rangle^{2}\left|\left\langle g''\circ\tau_{-\left\langle \id, \nu \right\rangle}, \mu - \nu \right\rangle\right| \right). \notag
\end{align}
Applying \textsc{Taylor}'s formula to $F\left(\left\langle g\circ\tau_{-\left\langle \id, \mu \right\rangle}, \mu \right\rangle \right)$, we obtain that
\begin{align*}
& F\left(\left\langle g\circ\tau_{-\left\langle \id, \mu \right\rangle} , \mu \right\rangle \right) - F\left(\left\langle g\circ\tau_{-\left\langle \id, \nu \right\rangle}, \nu \right\rangle \right) \\
 & \hspace{1cm}=   F'\left(\left\langle g\circ\tau_{-\left\langle \id, \nu \right\rangle}, \nu \right\rangle \right) \times \left[\left\langle  g\circ\tau_{-\left\langle \id, \mu \right\rangle}, \mu \right\rangle  - \left\langle g\circ\tau_{-\left\langle \id, \nu \right\rangle}, \nu \right\rangle  \right] \\ 
&\hspace{2cm} + \frac{F''\left(\left\langle g\circ\tau_{-\left\langle \id, \nu \right\rangle}, \nu \right\rangle \right)}{2} \times \left[\left\langle g\circ\tau_{-\left\langle \id, \mu \right\rangle}, \mu  \right\rangle  - \left\langle g\circ\tau_{-\left\langle \id, \nu \right\rangle}, \nu \right\rangle  \right]^{2} \\ 
& \hspace{2cm} +  O \left(\left|\left\langle g\circ\tau_{-\left\langle \id, \mu \right\rangle}, \mu \right\rangle  - \left\langle g\circ\tau_{-\left\langle \id, \nu \right\rangle}, \nu \right\rangle \right|^{3} \right).
\end{align*}
Using (\ref{Lemme_appendice_etape}), we deduce that
\begin{align*}
\left[\left\langle g\circ\tau_{-\left\langle \id, \mu \right\rangle}, \mu \right\rangle  - \left\langle g\circ\tau_{-\left\langle \id, \nu \right\rangle}, \nu \right\rangle  \right]^{2} &  = \left\langle g\circ\tau_{-\left\langle \id, \nu \right\rangle}, \mu - \nu \right\rangle^{2} + \left\langle \id, \mu - \nu \right\rangle^{2} \left\langle g'\circ\tau_{-\left\langle \id, \nu \right\rangle}, \nu \right\rangle^{2} \\
 & \hspace{0.5cm} - 2\left\langle g\circ\tau_{-\left\langle \id, \nu \right\rangle}, \mu - \nu \right\rangle\left\langle \id, \mu - \nu \right\rangle\left\langle g'\circ\tau_{-\left\langle \id, \nu \right\rangle}, \nu \right\rangle  \\
& \hspace{0.5cm} + O\left(\left|\left\langle \id, \mu - \nu \right\rangle\right|^{3}  +   \left\langle \id, \mu - \nu \right\rangle^{2}\left[\left|\left\langle g\circ\tau_{-\left\langle \id, \nu \right\rangle}, \mu - \nu \right\rangle\right| \phantom{\frac{1}{2}} \right.  \right. \\
& \hspace{0.85cm}  \left. \phantom{\frac{1}{2}} + \left|\left\langle g'\circ\tau_{-\left\langle \id, \nu \right\rangle}, \mu - \nu \right\rangle\right|   + \left|\left\langle g''\circ\tau_{-\left\langle \id, \nu \right\rangle}, \mu - \nu \right\rangle\right|\right] \\
& \hspace{0.85cm}  \left. \phantom{\frac{1}{2}} + \left|\left\langle \id, \mu - \nu \right\rangle\left\langle g\circ\tau_{-\left\langle \id, \nu \right\rangle}, \mu - \nu \right\rangle\left\langle g'\circ\tau_{-\left\langle \id, \nu \right\rangle}, \mu - \nu \right\rangle\right| \hspace{-0.3cm}\phantom{\frac{1}{2}} \right), 
\end{align*}
and  \begin{align*}
& O\left(\left|\left\langle g\circ\tau_{-\left\langle \id, \mu \right\rangle}, \mu \right\rangle  - \left\langle g\circ\tau_{-\left\langle \id, \nu \right\rangle}, \nu \right\rangle \right|^{3} \right) \\
&  \hspace{0.25cm} = O\left(\left|\left\langle \id, \mu - \nu \right\rangle\right|^{3} + \left\langle \id, \mu - \nu \right\rangle^{2}\left[\left|\left\langle g\circ\tau_{-\left\langle \id, \nu \right\rangle}, \mu - \nu \right\rangle\right| + \left|\left\langle g'\circ\tau_{-\left\langle \id, \nu \right\rangle}, \mu - \nu \right\rangle\right| \phantom{\frac{1}{2}} \hspace{-0.35cm} \right]\right. \\
& \hspace{0.75cm} \left. + \left|\left\langle \id, \mu - \nu \right\rangle\right|\left[\left\langle g\circ\tau_{-\left\langle \id, \nu \right\rangle}, \mu - \nu \right\rangle^{2} + \left\langle g'\circ\tau_{-\left\langle \id, \nu \right\rangle}, \mu - \nu \right\rangle^{2} \right] + \left|\left\langle g\circ\tau_{-\left\langle \id, \nu \right\rangle}, \mu - \nu \right\rangle\right|^{3} \right).
\end{align*}
The announced result follows from \textsc{Young}'s inequalities. \qedhere
\end{proof} 

\subsection{Lemma of convergence \label{Sous_sect_lemmes_preuve_existence}}

\begin{Lem} {  Let $k \in \N^{\star}$ and assume that $Y_0\in{\cal M}_1(\R)$ satisfies
$\left\langle \id^{k}, Y_{0} \right\rangle < \infty$.} We consider for $t \in [0,T]$, an increasing sequence  $0 = t_{0}^{n} < t_{1}^{n} < \dots < t_{p_{n}}^{n} = T$ of subdivisions of $[0,T]$ whose mesh tends to $0$. Then, for all 
$h_{1} \in \{1, \id \}$ and $h_{2} \in \CCCC^{2}_{b}(\R, \R)$, we obtain that, in $\P_{\nu}^{\rm FV}-$probability, 
\begin{align*}
{\rm{\textbf{\rm{\textbf{(1)}}}}} \quad & \lim_{n\to +\infty}\sum\limits_{i \, = \, 0}^{p_{n} - 1}{\int_{t_{i}^{n}\wedge t}^{t_{i+1}^{n}\wedge t}{\left|\left\langle h_{1} \times \left[h_{2}\circ\tau_{-  \left\langle \id, Y_{t_{i}^{n}\wedge t} \right\rangle  } - h_{2}\circ\tau_{-\left\langle   \id, Y_{s} \right\rangle  } \right], Y_{s} \right\rangle \right|^{k}}  \dd s}  = 0, \\
{\rm{\textbf{\rm{\textbf{(2)}}}}} \quad & \lim_{n\to +\infty}\sum\limits_{i \, = \, 0}^{p_{n} - 1}{\int_{t_{i}^{n}\wedge t}^{t_{i+1}^{n}\wedge t}{\left|\left\langle h_{1}, Y_{s} \right\rangle \right|^{k}\left|\left\langle h_{2}\circ\tau_{-\left\langle \id, Y_{t_{i}^{n}\wedge t} \right\rangle  } - h_{2}\circ\tau_{-\left\langle   \id, Y_{s} \right\rangle  }, Y_{s} \right\rangle \right|^{k}}  \dd s}  = 0.
\end{align*}
\label{Lem_convergence_proba}
\end{Lem}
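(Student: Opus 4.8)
The plan is to derive both estimates from a single elementary observation: since $h_2 \in \CCCC^2_b(\R,\R)$ is globally Lipschitz with constant $\|h_2'\|_\infty$, for every $x \in \R$ and all $a,b \in \R$ one has $|h_2(x-a)-h_2(x-b)| \leqslant \|h_2'\|_\infty |a-b|$. Applied pointwise inside the bracket $h_2\circ\tau_{-\langle\id,Y_{t_i^n\wedge t}\rangle}-h_2\circ\tau_{-\langle\id,Y_s\rangle}$ and integrated against $Y_s$, this turns the whole quantity into $\|h_2'\|_\infty$ times the increment $|\langle\id,Y_s\rangle-\langle\id,Y_{t_i^n\wedge t}\rangle|$ of the empirical mean, up to a mass factor controlled separately. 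Thus (1) and (2) will be handled by exactly the same computation, the only difference being whether the $h_1$-factor sits inside the integral against $Y_s$ (which has total mass one) or outside it.

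First I would set $R(s):=\langle\id,Y_s\rangle$ and record that $R$ is $\P_\nu^{FV}$-a.s. continuous on $[0,T]$: by Lemma~\ref{Lem_Mart_id_id_bien_def}\textbf{(2)} with $k=1$ the drift term $\langle\tfrac{k(k-1)}{2}\id^{k-2},Y_s\rangle$ vanishes, so $R(s)=R(0)+M_s^{\id}(\id)$ is a continuous martingale. Being continuous on the compact $[0,T]$, $R$ is uniformly continuous there, so its modulus of continuity $\omega_n:=\sup\{|R(s)-R(s')| : s,s'\in[0,T],\ |s-s'|\leqslant \max_i(t_{i+1}^n-t_i^n)\}$ tends to $0$ a.s. as the mesh tends to $0$. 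For $s\in[t_i^n\wedge t, t_{i+1}^n\wedge t]$ one has $|s-t_i^n\wedge t|\leqslant \max_i(t_{i+1}^n-t_i^n)$, hence $|R(s)-R(t_i^n\wedge t)|\leqslant \omega_n$ \emph{uniformly} in $i$ and $s$. This uniformity is the crux of the argument.

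For (1), the Lipschitz bound gives
\[
\left|\left\langle h_1\times\big[h_2\circ\tau_{-R(t_i^n\wedge t)}-h_2\circ\tau_{-R(s)}\big], Y_s\right\rangle\right| \leqslant \|h_2'\|_\infty\,\omega_n\,\langle|h_1|,Y_s\rangle .
\]
Raising to the $k$-th power, summing over $i$ (the intervals $[t_i^n\wedge t,t_{i+1}^n\wedge t]$ tile $[0,t]$), and applying Jensen's inequality $\langle|h_1|,Y_s\rangle^k\leqslant\langle|\id|^k,Y_s\rangle$ when $h_1=\id$ (and $=1$ when $h_1=1$), the left-hand side of (1) is bounded by $\|h_2'\|_\infty^k\,\omega_n^k\int_0^t\langle|\id|^k,Y_s\rangle\,\dd s$. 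For (2), since $Y_s$ is a probability measure the bracket alone is bounded by $\|h_2'\|_\infty\omega_n$, and the same Jensen step applied to the outer factor $|\langle h_1,Y_s\rangle|^k$ produces the identical bound.

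It then remains to check that $\int_0^t\langle|\id|^k,Y_s\rangle\,\dd s$ is a.s. finite, which is exactly where the moment hypothesis on $\nu$ enters: Lemma~\ref{Lem_Mart_id_id_bien_def}\textbf{(1)(b)} yields $\sup_{s\leqslant T}\langle|\id|^k,Y_s\rangle<\infty$ $\P_\nu^{FV}$-a.s., so the integral is bounded by $t\sup_{s\leqslant T}\langle|\id|^k,Y_s\rangle<\infty$. Consequently both sums are dominated by $\omega_n^k$ times an a.s. finite random variable; since $\omega_n\to 0$ a.s., both converge to $0$ a.s., hence a fortiori in $\P_\nu^{FV}$-probability. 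I expect no serious obstacle here: the only point requiring care is keeping the estimate uniform over the partition, and this is automatic because the entire partition-dependence is absorbed into the single modulus $\omega_n$, which is blind to the index $i$, while the moment factor is controlled globally on $[0,t]$ rather than interval by interval.
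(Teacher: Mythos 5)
Your proof is correct and follows essentially the same route as the paper's: both rest on the Lipschitz bound for $h_{2}$, which reduces the bracket to an increment of $\left\langle \id, Y_{\cdot} \right\rangle = \left\langle \id, Y_{0} \right\rangle + M^{\id}_{\cdot}(\id)$ (continuous by Lemma \ref{Lem_Mart_id_id_bien_def} \textbf{(2)}), combined with the a.s. moment control $\sup_{s\in[0,T]}{\left\langle \left|\id\right|^{k}, Y_{s} \right\rangle} < \infty$ from Lemma \ref{Lem_Mart_id_id_bien_def} \textbf{(1)(b)}. The only cosmetic difference is that you extract a uniform modulus of continuity $\omega_{n}$ and bound the entire sum by $\omega_{n}^{k}$ times an a.s. finite integral, whereas the paper caps the integrand by $C_{\rm Lip}^{k}\left|M_{s}^{\id}(\id) - M_{t_{i}^{n}\wedge t}^{\id}(\id)\right| \wedge \left(2\left\|h_{2}\right\|_{\infty}\right)^{k}$ and applies dominated convergence pathwise; both arguments give a.s. convergence, hence convergence in probability.
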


\begin{proof}
The two properties can be proved similarly. We only prove the first one. Thanks to {\rm{Lemma} \ref{Lem_Mart_id_id_bien_def} \textbf{(1)(a)}} and using that $h_{2}$ is \textsc{Lipschitz}, there exists a constant $C_{{\rm{Lip}}}$ such that
\begin{align*}
& \int_{t_{i}^{n}\wedge t}^{t_{i+1}^{n}\wedge t}{\left|\left\langle  h_{1} \times \left[h_{2}\circ\tau_{-\left\langle  \id, Y_{t_{i}^{n}\wedge t}  \right\rangle } - h_{2}\circ\tau_{-\left\langle   \id, Y_{s}  \right\rangle } \right], Y_{s} \right\rangle \right|^{k}\dd s} \\
& \hspace{2.5cm} \leqslant \int_{t_{i}^{n}\wedge t}^{t_{i+1}^{n}\wedge t}{\left|\left\langle h_{1}, Y_{s} \right\rangle \right|^{k} \left[ C_{{\rm{Lip}}}^{k}\left|M_{s}^{\id}(\id) - M_{t_{i}^{n}\wedge t}^{\id}(\id) \right|\wedge \left(2 \left\|h_{2} \right\|_{\infty} \right)^{k}\right] \dd s}.  
\end{align*}
If $h_{1} = 1$, the dominated convergence theorem allows us to conclude the proof. If $h_{1} = \id$, using that $\sup_{t\in[0,T]}{\left|\left\langle \id, Y_{t} \right\rangle \right|} < \infty$ $\P_{\nu}^{\rm FV}-$a.s. by Lemma \ref{Lem_Mart_id_id_bien_def} \textbf{(1)(b)}, we can also apply the dominated convergence. \qedhere
\end{proof}

\subsection{Control of error terms}

\begin{Lem} Let $t>0$ be fixed and assume that {  $Y_0\in{\cal M}_1(\R)$ satisfies $\left\langle \id^{2}, Y_{0} \right\rangle < \infty$}. Let $j\in \{0, 1, 2 \}$ and $g \in \CCCC_{b}^{4}(\R, \R)$ fixed. The sequences 
\begin{align*}
{\rm{\textbf{\rm{\textbf{(1)}}}}} \quad & \left(\sum\limits_{i\, = \, 0}^{p_{n} - 1}{ \left|M_{t_{i+1}^{n} \wedge t}^{\id}\left(\id\right) - M_{t_{i}^{n} \wedge t}^{\id}\left(\id\right) \right|^{3}} \right)_{n\in \N} \\
{\rm{\textbf{\rm{\textbf{(2)}}}}}  \quad & \left(\sum\limits_{i\, = \, 0}^{p_{n} - 1}{ \left|M_{t_{i+1}^{n} \wedge t}^{\id}\left(g^{(j)}\circ\tau_{-\left\langle \id, Y_{t_{i}^{n}\wedge t} \right\rangle}\right) - M_{t_{i}^{n} \wedge t}^{\id}\left(g^{(j)}\circ\tau_{-\left\langle \id, Y_{t_{i}^{n}\wedge t} \right\rangle}\right) \right|^{3}} \right)_{n\in \N}
\end{align*}
converge to $0$ in $\P_{\nu}^{\rm FV}-$probability.
\label{Lem_controle_erreur}
\end{Lem}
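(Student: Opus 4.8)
The plan is to exploit the elementary inequality $\sum_i a_i^3 \leqslant (\max_i a_i)\sum_i a_i^2$, valid for nonnegative reals, applied to the absolute values of the martingale increments, so that it suffices to control a maximal factor together with a sum of squared increments. In both parts the sum of squares will be bounded in $L^1$ uniformly in $n$ (hence tight), using that squared martingale increments are orthogonal and telescope against the bracket process; the convergence to $0$ then follows from the remaining factor tending to $0$, either pathwise (part \textbf{(1)}) or after passing to a direct $L^1$ estimate (part \textbf{(2)}).

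For \textbf{(1)}, since $\E(\langle \id^2, \nu\rangle)<\infty$, Lemma \ref{Lem_Mart_id_id_bien_def} \textbf{(2)} ensures that $(M^{\id}_{s}(\id))_{0\leqslant s\leqslant t}$ is a continuous square-integrable $\P_{\nu}^{FV}$-martingale with finite bracket. Writing $\Delta_i := M^{\id}_{t_{i+1}^n\wedge t}(\id) - M^{\id}_{t_i^n\wedge t}(\id)$, I would use
\[\sum_{i=0}^{p_n-1}|\Delta_i|^3 \leqslant \Big(\max_{0\leqslant i\leqslant p_n-1}|\Delta_i|\Big)\sum_{i=0}^{p_n-1}|\Delta_i|^2.\]
The maximal factor tends to $0$ $\P_{\nu}^{FV}$-a.s. by uniform continuity of $s\mapsto M^{\id}_s(\id)$ on the compact interval $[0,t]$ when the mesh tends to $0$. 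For the second factor, orthogonality of the increments gives $\E(\sum_i|\Delta_i|^2) = \E(\langle M^{\id}(\id)\rangle_t)$, which is finite and independent of $n$, so $\sum_i|\Delta_i|^2$ is bounded in $L^1$ and therefore tight. A sequence converging to $0$ in probability times a tight sequence converges to $0$ in probability, which proves \textbf{(1)}.

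For \textbf{(2)}, the increments $\Delta_i := M^{\id}_{t_{i+1}^n\wedge t}(h_i) - M^{\id}_{t_i^n\wedge t}(h_i)$, with $h_i := g^{(j)}\circ\tau_{-\langle \id, Y_{t_i^n\wedge t}\rangle}$, are genuine martingale increments by Lemma \ref{Lem_Q_omega_Martingale} (taken with $t^\star = t_i^n\wedge t$), but they belong to \emph{different} martingales for different $i$, so the pathwise argument above is unavailable. Instead I would use that $g\in\CCCC^4_b(\R,\R)$, whence $\|h_i\|_\infty = \|g^{(j)}\|_\infty$, so that the bracket increment in (\ref{Martingale_h_i_n_Etape_3}) is bounded deterministically by $2\gamma\|g^{(j)}\|_\infty^2(t_{i+1}^n\wedge t - t_i^n\wedge t)$. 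This yields $\E(|\Delta_i|^2)\leqslant C(t_{i+1}^n\wedge t - t_i^n\wedge t)$ and, via the \textsc{Burkholder-Davis-Gundy} inequality applied to the stopped increment and its explicit bracket, $\E(|\Delta_i|^4)\leqslant C(t_{i+1}^n\wedge t - t_i^n\wedge t)^2$. By \textsc{Cauchy-Schwarz}, $\E(|\Delta_i|^3)\leqslant (\E|\Delta_i|^2)^{1/2}(\E|\Delta_i|^4)^{1/2}\leqslant C(t_{i+1}^n\wedge t - t_i^n\wedge t)^{3/2}$, and summing while extracting one factor of the mesh,
\[\E\Big(\sum_{i=0}^{p_n-1}|\Delta_i|^3\Big)\leqslant C\sum_{i=0}^{p_n-1}(t_{i+1}^n\wedge t - t_i^n\wedge t)^{3/2}\leqslant C\,\Big(\max_i (t_{i+1}^n - t_i^n)\Big)^{1/2} t \xrightarrow[n\to+\infty]{} 0,\]
giving convergence in $L^1$, and a fortiori in probability.

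The main obstacle is precisely part \textbf{(2)}: because the test function $h_i$ is reset at each subdivision point, the cubed increments do not arise from a single continuous martingale, so path continuity cannot be invoked. The resolution hinges on the boundedness of $g^{(j)}$, which is exactly what makes the fourth-moment estimate uniform in $i$ and $n$; this is also why the two parts need different arguments, since for \textbf{(1)} the unboundedness of $\id$ rules out such a deterministic bracket bound under the mere hypothesis $\E(\langle\id^2,\nu\rangle)<\infty$, forcing the tightness-plus-continuity route there. The one point I would check with care is the applicability of \textsc{Burkholder-Davis-Gundy} to the stopped increments, which follows from their square-integrability together with the explicit bracket formula (\ref{Martingale_h_i_n_Etape_3}).
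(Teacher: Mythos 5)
Your proof is correct, but for part \textbf{(1)} it takes a genuinely different route from the paper. The paper localizes: it introduces the stopping time $\tau_{A} := \inf\{s \geqslant 0 : \langle \id^{2}, Y_{s}\rangle - \langle \id, Y_{s}\rangle^{2} \geqslant A\}$, splits the probability via \textsc{Markov}'s inequality on $\{\tau_{A} \leqslant t\}$, and applies the \textsc{Burkholder-Davis-Gundy} inequality at exponent $3$ to each stopped increment, using that the bracket of $M^{\id}(\id)$ is \textsc{Lipschitz} on $[0,\tau_{A}]$; this gives $\E|\Delta_{i}|^{3} \leqslant C(A)(t_{i+1}^{n}\wedge t - t_{i}^{n}\wedge t)^{3/2}$ and hence a quantitative bound of order $\sqrt{\mathrm{mesh}}$ after summation. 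Your decomposition $\sum_{i}|\Delta_{i}|^{3} \leqslant (\max_{i}|\Delta_{i}|)\sum_{i}|\Delta_{i}|^{2}$ avoids both the localization and the third-moment BDG estimate: the maximal factor vanishes a.s. by uniform continuity of the single continuous martingale $M^{\id}(\id)$, and the $L^{1}$ bound $\E(\sum_{i}\Delta_{i}^{2}) = \E(\langle M^{\id}(\id)\rangle_{t}) < \infty$ — valid precisely under $\E(\langle \id^{2},\nu\rangle) < \infty$ by Lemma \ref{Lem_Mart_id_id_bien_def} \textbf{(2)} — gives tightness of the quadratic sum; this is more elementary, at the cost of producing no rate of convergence where the paper's argument does. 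For part \textbf{(2)} your argument is in substance the paper's: both exploit the deterministic bracket bound $2\gamma\|g^{(j)}\|_{\infty}^{2}(t_{i+1}^{n}\wedge t - t_{i}^{n}\wedge t)$ coming from (\ref{Martingale_h_i_n_Etape_3}); the paper applies BDG once at exponent $3$ to get $\E|\Delta_{i}|^{3} \leqslant C(t_{i+1}^{n}\wedge t - t_{i}^{n}\wedge t)^{3/2}$ directly, whereas you interpolate between the second and fourth moments via \textsc{Cauchy-Schwarz} — a harmless cosmetic detour, since BDG at $p=3$ yields the same bound in one step. Your explicit appeal to Lemma \ref{Lem_Q_omega_Martingale} with $t^{\star} = t_{i}^{n}\wedge t$ to make sense of the random test function $g^{(j)}\circ\tau_{-\langle \id, Y_{t_{i}^{n}\wedge t}\rangle}$ is exactly right (the paper relies on the same device through (\ref{Martingale_h_i_n_Etape_3})), and your diagnosis of why the pathwise route fails in \textbf{(2)} — the increments belong to different martingales, one per subdivision point — correctly identifies why the two parts require different treatments.
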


\begin{proof} 
\textbf{Step 1. Proof of (1).} Let $\varepsilon >0$ and $t\geqslant 0$ fixed. Let $A>0$ to be determined later. We introduce the stopping time \[\tau_{A} := \inf{\left\{ t \geqslant 0\left. \phantom{1^{1^{1^{1}}}} \hspace{-0.6cm} \right| \left\langle \id^{2}, Y_{t} \right\rangle - \left\langle \id, Y_{t} \right\rangle^{2} \geqslant A  \right\}}\] which satisfies almost surely $\lim_{A\to + \infty}{\tau_{A}} = +\infty$ by Lemma \ref{Lem_Mart_id_id_bien_def}. Then, using \textsc{Markov}'s inequality, we obtain that \begin{align*}
& \P_{\nu}^{\rm FV}\left(\sum\limits_{i \, = \, 0}^{p_{n} - 1}{\left|M_{t_{i+1}^{n} \wedge t}^{\id}(\id) - M_{t_{i}^{n}\wedge t}^{\id}(\id)\right|^{3}} > \varepsilon \right) \\
& \hspace{0.2cm}\leqslant \P_{\nu}^{\rm FV}\left(\tau_{A} \leqslant t \right) + \P_{\nu}^{\rm FV}\left(\left\{\tau_{A} > t \right\} \cap \left\{\sum\limits_{i \, = \, 0}^{p_{n} - 1}{\left|M_{t_{i+1}^{n} \wedge t}^{\id}(\id) - M_{t_{i}^{n}\wedge t}^{\id}(\id)\right|^{3}} > \varepsilon \right\} \right) \\
& \hspace{0.2cm}\leqslant \P_{\nu}^{\rm FV}\left(\tau_{A} \leqslant t \right) + \frac{1}{\varepsilon} \sum\limits_{i\, = \, 0}^{p_{n}-1}{\E\left(\left|M^{\id}_{t_{i+1}^{n}\wedge t\wedge \tau_{A}}(\id) - M^{\id}_{t_{i}^{n}\wedge t}(\id)  \right|^{3} \right)  }.
\end{align*}
From the definition of $\tau_{A}$, we obtain for all $s \in  \left[t_{i}^{n} \wedge t, t_{i+1}^{n} \wedge t \right] $, $\left|M^{\id}_{s\wedge \tau_{A}}(\id) - M^{\id}_{t_{i}^{n}\wedge t}(\id) \right|^{3}$ is bounded and $\left\langle M^{\id}(\id) \right\rangle_{t\wedge \tau_{A}}$ is $2\gamma A^{2}$-\textsc{Lipschitz}. Thanks to the \textsc{Burkolder-Davis-Gundy} inequality and {\rm{Lemma} \ref{Lem_Mart_id_id_bien_def} \textbf{(2)}}, there exists a constant $C_{1}$ such that 
\begin{align*}
\E\left(\left|M^{\id}_{t_{i+1}^{n}\wedge t\wedge \tau_{A}}(\id) - M^{\id}_{t_{i}^{n}\wedge t}(\id)  \right|^{3} \right)& \leqslant C_{1} \E\left(\left[\left\langle M^{\id}(\id) \right\rangle_{t_{i+1}^{n}\wedge t \wedge \tau_{A}} - \left\langle M^{\id}(\id) \right\rangle_{t_{i}^{n}\wedge t}  \right]^{\frac{3}{2}} \right) \\
&  \leqslant C_{1}\left(2\gamma A^{2} \right)^{\frac{3}{2} } \left(t_{i+1}^{n}\wedge t - t_{i}^{n}\wedge t \right)^{\frac{3}{2} }.
\end{align*} 
Therefore, if we choose $A$ such that $\P_{\nu}^{\rm FV}\left(\tau_{A} \leqslant t \right) \leqslant \frac{\varepsilon}{2}$, and $n_{0} \in \N$ such that for all $n \geqslant n_{0}$, \[\sqrt{\sup_{i\in \llbracket 0, p_{n} -1\rrbracket}{\left|t_{i+1}^{n}\wedge t - t_{i}^{n_{0}}\wedge t \right|}} \leqslant \frac{\varepsilon^{2}}{2C_{1}\left(2\gamma A^{2} \right)^{\frac{3}{2}}t},\] we obtain that
\begin{align*}
& \P_{\nu}^{\rm FV}\left(\sum\limits_{i \, = \, 0}^{p_{n} - 1}{\left|M_{t_{i+1}^{n} \wedge t}^{\id}(\id) - M_{t_{i}^{n}\wedge t}^{\id}(\id)\right|^{3}} > \varepsilon \right) \\
& \hspace{2.5cm} \leqslant \P_{\nu}^{\rm FV}\left(\tau_{A} \leqslant t \right) + \frac{C_{1}\left(2\gamma A^{2} \right)^{\frac{3}{2} }}{\varepsilon} t \sqrt{\sup_{i\in \llbracket 0, p_{n} -1\rrbracket}{\left|t_{i+1}^{n}\wedge t - t_{i}^{n}\wedge t \right|}} \\
& \hspace{2.5cm} \leqslant \varepsilon,
\end{align*}
and the first announced result follows. \\

\textbf{Step 2. Proof of (2).} In similar way as previously, we obtain that 
\begin{align*}
&  \P_{\nu}^{\rm FV}\left(\sum\limits_{i\, = \, 0}^{p_{n} - 1}{ \left|M_{t_{i+1}^{n} \wedge t}^{\id}\left(g^{(j)}\circ\tau_{-\left\langle \id, Y_{t_{i}^{n}\wedge t} \right\rangle}\right) - M_{t_{i}^{n} \wedge t}^{\id}\left(g^{(j)}\circ\tau_{-\left\langle \id, Y_{t_{i}^{n}\wedge t} \right\rangle}\right) \right|^{3}}>\varepsilon \right)  \\
& \hspace{1cm} \leqslant \frac{1}{\varepsilon} \sum\limits_{i\, = \, 0}^{p_{n} - 1}{\E\left(\left|M^{\id}_{t_{i+1}^{n} \wedge t}\left(g^{(j)}\circ\tau_{-\left\langle \id, Y_{t_{i}^{n}\wedge t} \right\rangle} \right) - M^{\id}_{t_{i}^{n}\wedge t}\left(g^{(j)}\circ\tau_{-\left\langle \id, Y_{t_{i}^{n}\wedge t} \right\rangle} \right)  \right|^{3} \right)} \\
& \hspace{1cm} \leqslant \frac{C_{1}\left(2\gamma \left\|g^{(j)} \right\|_{\infty}^{2} \right)^{\frac{3}{2} }}{\varepsilon}t \sqrt{\sup_{i\in \llbracket 0, p_{n} -1\rrbracket}{\left|t_{i+1}^{n}\wedge t - t_{i}^{n}\wedge t \right|}},
\end{align*}
which converges to 0 when $n \to + \infty$.
\end{proof}

{ 

\section{Extensions to the centered \textsc{Fleming-Viot} martingale~problem\label{Sous_section_2_3}}

The goal of this section is to give some extensions to the martingale problem {\rm{(\ref{PB_Mg_Z_Multi_d})}} which are equivalent and it is usual to switch from one to the other. 

\subsection{Extension to multiple variables}

We firstly introduce the version of the centered \textsc{Fleming-Viot} martingale problem with $p \in \N^{\star}$ variables. We will then provide the heuristic for the martingale problem for polynomials 
given in the next section. 

\begin{Def}
The probability measure $\P_{\mu} \in \MM_{1}\left(\widetilde{\Omega}_{d} \right)$ is said to solve the  \emph{centered \textsc{Fleming-Viot} martingale problem with $p$ variables}, {  with resampling rate $\gamma$}  and  with initial condition $\mu \in \MM_{1}^{c,2}\left(\R^{d}\right)$, if the canonical process $\left(X_{t} \right)_{t\geqslant 0}$ on $\widetilde{\Omega}_{d}$ satisfies  $\P_{\mu}(X_{0} = \mu) = 1$ and for each $F\in \CCCC^{2}(\R^{p}, \R)$ and $g \in \CCCC_{b}^{2}(\R^{d}, \R^{p})$, 
\begin{equation}
\begin{aligned}
\widehat{M}_{t}^{F,d}(g) & := F\left(\left\langle g, X_{t} \right\rangle \right) - F\left(\left\langle g, X_{0} \right\rangle \right) \\
& \hspace{-0.5cm} -  \int_{0}^{t}{\sum_{j\, = \, 1}^{p}{\partial_{j}F\left(\left\langle g, X_{s} \right\rangle \right)\left(\left\langle\frac{\Delta g_{j}}{2}, X_{s} \right\rangle + \gamma \left[\left\langle \id^{t}\left\langle \Hess\left(g_{j} \right), X_{s} \right\rangle \id, X_{s} \right\rangle - 2\left\langle \nabla g_{j} \cdot \id, X_{s} \right\rangle \right] \right)}\dd s} \\
& \hspace{-0.5cm} - \gamma\int_{0}^{t}\sum_{i,j\, = \, 1}^{p}{\partial_{ij}^{2}F\left(\left\langle g, X_{s} \right\rangle\right)\left(\left\langle g_{i}g_{j}, X_{s} \right\rangle - \left\langle g_{i}, X_{s} \right\rangle\left\langle g_{j}, X_{s} \right\rangle + \left\langle \nabla g_{i}, X_{s} \right\rangle^{t}\left\langle \id \, \id^{t}, X_{s} \right\rangle \left\langle \nabla g_{j}, X_{s} \right\rangle  \phantom{\frac{1}{2}}\right.} \\
& \hspace{2cm} - \left\langle \nabla g_{i}, X_{s} \right\rangle \cdot \left\langle g_{j} \times \id, X_{s} \right\rangle
- \left. \left\langle \nabla g_{j}, X_{s} \right\rangle \cdot \left\langle g_{i} \times \id, X_{s} \right\rangle \phantom{\frac{1}{2}} \hspace{-0.35cm} \right)\dd s
\end{aligned}
\label{PB_Mg_Z_p_variables}
\end{equation}
is a continuous $\P_{\mu}-$martingale in $L^{2}\left(\widetilde{\Omega}_{d} \right)$. 
\label{Def_PB_Mg_Z_p_variables}
\end{Def}

\begin{Thm} For all $\mu \in \MM_{1}^{c,2}\left(\R^{d}\right)$, the probability measure $\P_{\mu}$ constructed in {\rm{Theorem \ref{Prop_PB_Mg_Z}}}, satisfies the martingale problem of {\rm{Definition \ref{Def_PB_Mg_Z_p_variables}}}.
\label{Prop_PB_Mg_Z_p_variables}
\end{Thm}

\begin{proof} We can deduce the result from the original \textsc{Fleming-Viot} martingale problem with $p$ variables {\rm{\color{blue} \cite{dawson_wandering_1982}}} given by {\rm{(\ref{PB_Mg_Y_p_variables})}} below, following exactly the same method as for the proof of {\rm{Theorem \ref{Prop_PB_Mg_Z}}}. We recall that, the probability measure $\P_{\nu}^{\rm FV} \in \MM_{1}\left(\Omega_{d} \right)$ is said to solve the  \emph{original \textsc{Fleming-Viot} martingale problem} with $p$ variables {  with resampling rate $\gamma$} and  with initial condition $\nu \in \MM_{1}\left(\R^{d}\right)$, if the canonical process $\left(Y_{t} \right)_{t\geqslant 0}$ on $\Omega_{d}$ satisfies $\P_{\nu}^{\rm FV}(Y_{0} = \nu) = 1$ and for each $F\in \CCCC^{2}(\R^{p}, \R)$ and $g \in \CCCC_{b}^{2}(\R^{d}, \R^{p})$, 
\begin{equation}
    \begin{aligned}
    M_{t}^{F,d}(g) & := F\left(\left\langle g, Y_{t} \right\rangle \right) - F\left(\left\langle g, Y_{0} \right\rangle \right) - \int_{0}^{t}{\sum_{k\, = \, 1}^{p}{\partial_{k}F\left(\left\langle g, Y_{s} \right\rangle\right)\left\langle \frac{\Delta g_{k}}{2}, Y_{s} \right\rangle}\dd s} \\
    & \hspace{1cm} - \gamma \int_{0}^{t}{\sum_{i, j \, = \, 1}^{p}{\partial_{ij}^{2}F\left(\left\langle g, Y_{s} \right\rangle \right) \left[ \left\langle g_{i}g_{j}, Y_{s} \right\rangle - \left\langle g_{i}, Y_{s} \right\rangle \left\langle g_{j}, Y_{s} \right\rangle  \right]}\dd s} 
\end{aligned}
\label{PB_Mg_Y_p_variables}
\end{equation}
is a  $\P_{\nu}^{\rm FV}-$martingale. \qedhere 
\end{proof}

\subsection{Heuristics leading to the martingale problem for polynomials}

Our goal here is to study the \textsc{Doob} semi-martingale decomposition of polynomial functions of the centered \textsc{Fleming-Viot} process. {  To obtain the expression of $\LL_{\rm FVc}^{d}$ for polynomial functions $P_{f, n}$ with $f \in \CCCC_{b}^{2}((\R^{d})^{n}, \R)$ (see (\ref{Eq_Fonction_polynome_mu})),
we first look for this expression when $f$ has product form.} The previous martingale problem {\rm{(\ref{PB_Mg_Z_p_variables})}} gives the following result: for the choice of $F(\alpha_{1}, \cdots, \alpha_{n}) := \prod\limits_{i\, = \, 1}^{n}{\alpha_{i}} $ where $\alpha_{i} \in \R$, {  noting that $F\left(\left\langle g_{1}, \mu \right\rangle, \cdots, \left\langle g_{n}, \mu \right\rangle \right) = P_{f, n}(\mu)$ with $f\left(x_{1}, \cdots, x_{n} \right) := \prod_{i\, = \, 1}^{n}{g_{i}(x_{i})}$ and $g_{i} \in \CCCC^{2}_{b}\left(\R^{d}, \R\right)$, $i\in \{1, \cdots, n \}$}, we deduce that for all $\mu \in \MM_{1}^{c,2}\left(\R^{d}, \R\right)$,
\begin{align*}
    \LL_{\rm FVc}^{d}P_{f,n}(\mu) & = \sum_{i\, = \, 1}^{n}{\left(\left\langle \frac{\Delta g_{i}}{2}, \mu \right\rangle -2\gamma \left\langle \nabla g_{i} \cdot \id, \mu \right\rangle + \gamma\left\langle \id^{t} \left\langle \Hess(g_{i}), \mu \right\rangle \id, \mu \right\rangle \right)\prod_{\substack{k\, = \, 1 \\ k \, \neq \, i}}^{n}{\left\langle g_{k}, \mu \right\rangle}} \\
    & \hspace{0.5cm}  + \gamma \sum_{\substack{i, j \, = \, 1 \\ j \, \neq \, i}}^{n}{\left(\left\langle g_{i} g_{j}, \mu \right\rangle - \left\langle g_{i}, \mu \right\rangle \left\langle g_{j}, \mu \right\rangle + \left\langle \id^{t} \left\langle \nabla g_{i}, \mu \right\rangle \left\langle \nabla g_{j}, \mu \right\rangle^{t} \id, \mu \right\rangle \phantom{\frac{1}{2}}\right.} \\
    & \hspace{1.5cm}  \left. \phantom{\frac{1}{2}} -\left\langle \nabla g_{j}, \mu \right\rangle \cdot \left\langle g_{i} \times \id \right\rangle  - \left\langle \nabla g_{i}, \mu \right\rangle \cdot \left\langle g_{j} \times \id \right\rangle \right)\prod_{\substack{k\, = \, 1 \\ k \, \neq \, i,j}}^{n}{\left\langle g_{k}, \mu \right\rangle}
\end{align*}

The previous relation leads us to introduce, for each $n, d \in \N^{\star}$ and for all $f \in \CCCC_{b}^{2}((\R^{d})^{n}, \R)$, the  operator $B^{(n), d}$ defined by (\ref{Eq_Bn_Multi_d}). Indeed, {  again} for the choice $f(x_{1}, \cdots, x_{n}) := \prod_{i\, = \, 1}^{n}{g_{i}(x_{i})}$ with $g_{i} \in \CCCC^{2}_{b}\left(\R^{d}, \R\right)$, $i\in \{1, \cdots, n \}$, we obtain  
\begin{align*}
B^{(n), d}f(x_{1}, \cdots, x_{n}) & = \sum\limits_{i\, = \, 1}^{n}{\left(\frac{\Delta g_{i}(x_{i)}}{2} - 2\gamma \nabla g_{i}(x_{i}) \cdot \id(x_{i})\right)\prod\limits_{\substack{k\, = \, 1 \\ k \, \neq \, i}}^{n}{g_{k}(x_{k})}  } \\
& \quad - \gamma \sum\limits_{\substack{i,j\, = \, 1 \\ j \, \neq \, i}}^{n}{\left(\left(\nabla g_{j}(x_{j}) \cdot \id\left(x_{i} \right) g_{i}(x_{i}) \right) + \left(\nabla g_{i}(x_{i}) \cdot \id\left(x_{j} \right) g_{j}(x_{j}) \right) \right)\prod\limits_{\substack{k\, = \, 1 \\ k \, \neq \, i,j}}^{n}{g_{k}(x_{k})}}. 
\end{align*}
Note that, {  for all $\mu \in \MM_{1}^{c,2}(\R^{d})$,}
\begin{equation*}
\begin{aligned}
\LL_{\rm FVc}^{d}{  P_{f, n}(\mu)} &  = \left\langle B^{(n), d}f, \mu^{n} \right\rangle + \gamma\sum\limits_{i\, = \, 1}^{n}{\left\langle \id^{t} \left\langle \Hess(g_{i}), \mu \right\rangle \id, \mu \right\rangle  \prod\limits_{\substack{k\, = \, 1 \\ k \, \neq \, i}}^{n}{\left\langle g_{k}, \mu \right\rangle } } \\ 
& \hspace{0.4cm} + \gamma \sum\limits_{i\, = \, 1}^{n}{\sum\limits_{\substack{j\, = \, 1 \\ j \, \neq \, i}}^{n}{\left(\left\langle  g_{i}g_{j}, \mu \right\rangle - \left\langle g_{i}, \mu \right\rangle\left\langle g_{j}, \mu \right\rangle  + \left\langle \id^{t} \left\langle \nabla g_{i}, \mu \right\rangle \left\langle \nabla g_{j}, \mu \right\rangle^{t} \id, \mu \right\rangle  \right) } }\prod\limits_{\substack{k\, = \, 1 \\ k \, \neq \, i, j}}^{n}{\left\langle g_{k}, \mu \right\rangle }. 
\end{aligned}
\label{Eq_Heuristic_polynom}
\end{equation*}

This leads us to introduce Definition (\ref{Def_PB_Mg_dual_FVr_Multi_d}) of the centered \textsc{Fleming-Viot} martingale problem for polynomials.

\section{Regular conditional probabilities and Lemma \ref{Lem_Q_omega_Martingale}\label{Appendix_Regular_Conditional_Proba}}

\subsection{Technical result for Lemma \ref{Lem_Q_omega_Martingale} \label{Etape_9_Existence}}
As the filtered probability space $\left(\Omega, \FF, \left( \FF_{t}\right)_{t\geqslant 0} \right)$ is Polish (see Section \ref{Sous_section_2_1}), we deduce from {\color{blue} \cite[\color{black} Theorem 3.18 of Section 5.3 (p 307)]{Kar}} there exists, for all $\nu \in \MM_{1}(\R)$, a unique family $\left(\Q_{\omega} \right)_{\omega\in \Omega}$ of regular conditional probability of $\P_{\nu}^{\rm FV}$ given $\FF_{t^{\star}}$ and a $\P_{\nu}^{\rm FV}-$null event  $N \in \FF_{t^{\star}}$ such that for all $\omega \in \Omega \, \backslash \, N$, 
\begin{equation}
\Q_{\omega}\left(\left\{ \widetilde{\omega} \in \Omega \left| \phantom{1^{1^{1^{1}}}} \hspace{-0.6cm} \right. \widetilde{\omega}_{t^{\star}} = \omega_{t^{\star}} \right\} \right) = 1.
 \label{Q_omega}
\end{equation} 
The following Theorem \ref{Proba_regulieres_condi_FV} ensures that time shifts of regular conditional probabilities of $\P_{\nu}^{\rm FV}$ remain solutions to the \textsc{Fleming-Viot} martingale problem (\ref{PB_Mg_Y}). The proof of this result is given hereafter and is based on the proof of {\color{blue} \cite[\color{black} Lemma 4.19 of Section 5.4 (p 321)]{Kar}}. We introduce, for $\omega \in \Omega$, the time-shift operator $\theta$ defined by  \[\left[\theta_{s}\omega\right]_{t} := \omega_{s+t}, \qquad 0\leqslant t < +\infty, \quad s\geqslant 0.\]

\begin{Thm} Let $t^{\star} \in \R_{+}$ be a deterministic time. Then there exists  a $\P_{\nu}^{\rm FV}-$null event $N \in \FF_{t^{\star}}$ such that, for every $\omega \in \Omega \, \backslash  \, N$, the probability measure 
\begin{equation}
\P_{\omega}\left(\dd \widetilde{\omega} \right) := \theta_{t^{\star}} \sharp \, \Q_{\omega}\left(\dd \widetilde{\omega} \right) \label{PPP_omega}
\end{equation}
solves the martingale problem {\rm{(\ref{PB_Mg_Y})}} with $\nu := \omega_{t^{\star}}$.
\label{Proba_regulieres_condi_FV}
\end{Thm}

\noindent{\textit{Proof}.} \textbf{Step 0. Preliminary results.} We denote by $\CCCC^{2}_{K}(\R,\R)$ the space of real functions of class $\CCCC^{2}(\R,\R)$ with compact support. It is well-known that the formulation of the martingale problem (\ref{PB_Mg_Y}) for $F, g \in \CCCC^{2}_{b}(\R, \R)$ is equivalent to the one for $F, g \in \CCCC^{2}_{K}(\R, \R)$ {\color{blue} \cite{dawson_wandering_1982}}. The space $\CCCC^{2}_{K}(\R, \R)$ equipped with the norm $\|f \|_{W_{0}^{2,\infty}} := \left\| f \right\|_{\infty} + \left\| f' \right\|_{\infty} + \left\| f'' \right\|_{\infty}$ is separable. So, we can choose a dense countable family $\BB \subset \CCCC^{2}_{K}(\R, \R)$, for the topology associated to the norm, that is to say
\begin{align*}
& \forall F, g \in \CCCC^{2}_{K}(\R, \R), \quad \exists \left(F_{n}\right)_{n \in \N},  \left(g_{n}\right)_{n \in \N} \in \BB^{\N}, \qquad  F_{n} \xrightarrow[n \to + \infty ]{ { \|\cdot \|_{W_{0}^{2,\infty}}} } F, \quad g_{n} \xrightarrow[n \to + \infty ]{ { \|\cdot \|_{W_{0}^{2,\infty}}} } g.
\end{align*} 
Hence, if we denote $\LL_{\rm FV} := \LL_{\rm FV}^{{ 1}}$ 
we deduce that  $\LL_{\rm FV}{\left(F_{n}\right)}_{g_{n}} \xrightarrow[n \to + \infty ]{ {\rm{  \|. \|_{\infty} }} } \LL_{\rm FV} F_{g}$.
\\

\textbf{Step 1. Reformulation of the goal.} Let $\nu \in \MM_{1}(\R)$.  From (\ref{Q_omega}), it follows that $\P_{\omega}\left( \widetilde{\omega}_{0} = \nu \right)  =1$ is satisfied with $\nu := \omega_{t^{\star}}$. The rest of the proof is devoted to construct a $\P_{\nu}^{\rm FV}-$null event $N_{4}$ such that 
\[\E_{\P_{\nu}^{\rm FV}}\left[F_{g}\left(\omega_{t}\right) - F_{g}\left(\omega_{s}\right) - \int_{s}^{t}{\LL_{\rm FV}F_{g}(\omega_{r})\dd r} \left| \phantom{1^{1^{1^{1^{1^{1}}}}}} \hspace{-0.9cm} \right. \FF_{s} \right] = 0,\] is satisfied for all $\omega \in \Omega \, \backslash \, N_{4}$. This means that for all $0\leqslant s< t < \infty$, $A \in \FF_{s}$, $F, g \in \CCCC^{2}_{K}(\R, \R)$, 
\begin{equation}
\forall w \in \Omega \, \backslash \, N_{4}, \qquad  \int_{\Omega}^{}{\left[M_{t}^{F_{g}}\left(\widetilde{\omega} \right) - M_{s}^{F_{g}}\left(\widetilde{\omega} \right) \right]\II_{A}\left(\widetilde{\omega} \right)\P_{\omega}\left(\dd \widetilde{\omega} \right)} = 0,
\label{Propriete_evenement_nul}
\end{equation}
where \[M_{t}^{F_{g}}\left(\widetilde{\omega} \right) := F_{g}\left(\widetilde{\omega}_{t} \right) - F_{g}\left(\widetilde{\omega}_{0} \right) - \int_{0}^{t}{\LL_{\rm FV} F_{g}\left(\widetilde{\omega}_{r} \right)\dd r}. \]
Let $\omega \in \Omega$, $0\leqslant s< t < \infty$, $A \in \FF_{s}$, $F, g \in \CCCC^{2}_{K}(\R, \R)$ be fixed. \\

\textbf{Step 2. Property (\ref{Propriete_evenement_nul}) satisfied except on a $\P_{\nu}^{\rm FV}-$null event $N_{1}(s,t,A,F,g) \in~\FF_{t^{\star}}$.}  As $\LL_{\rm FV} F_{g} \in \CCCC^{2}_{b}(\R, \R)$, the random variable $M_{t}^{F_{g}} - M_{s}^{F_{g}}$ is bounded. Note that,
\begin{align*}
\int_{\Omega}^{}{\left[M_{t}^{F_{g}}\left(\widetilde{\omega} \right) - M_{s}^{F_{g}}\left(\widetilde{\omega} \right) \right]\II_{A}\left(\widetilde{\omega} \right)\P_{\omega}\left(\dd \widetilde{\omega} \right)} 
& = \E_{\Q_{\omega}\left(\dd \hat{\omega} \right)}\left(\left[M_{t}^{F_{g}} - M_{s}^{F_{g}}\right]\circ \theta_{t^{\star}}\left(\widehat{\omega} \right)\II_{\theta_{t^{\star}}^{-1}A}\left( \widehat{\omega} \right) \right)  \\
& = \E_{\P_{\nu}^{\rm FV}}\left(\left[M_{t}^{F_{g}} - M_{s}^{F_{g}} \right] \circ \theta_{t^{\star}}\II_{\theta_{t^{\star}}^{-1}A} \left| \phantom{1^{1^{1^{1}}}} \hspace{-0.7cm}  \right. \FF_{t^{\star}} \right)(\omega) \\
& =  \E_{\P_{\nu}^{\rm FV}}\left[ \E_{\P_{\nu}^{\rm FV}}\left(\left[M_{t}^{F_{g}} - M_{s}^{F_{g}} \right] \circ \theta_{t^{\star}}\II_{\theta_{t^{\star}}^{-1}A} \left| \phantom{1^{1^{1^{1}}}} \hspace{-0.7cm}  \right. \FF_{t^{\star} + s} \right) \left| \phantom{1^{1^{1^{1^{1^{1}}}}}} \hspace{-1cm}  \right. \FF_{t^{\star}} \right](\omega) \\ 
& =  \E_{\P_{\nu}^{\rm FV}}\left[\II_{\theta_{t^{\star}}^{-1}A} \E_{\P_{\nu}^{\rm FV}}\left(\left[M_{t}^{F_{g}} - M_{s}^{F_{g}} \right] \circ \theta_{t^{\star}} \left| \phantom{1^{1^{1^{1}}}} \hspace{-0.7cm}  \right. \FF_{t^{\star} + s} \right) \left| \phantom{1^{1^{1^{1^{1^{1}}}}}} \hspace{-1cm}  \right. \FF_{t^{\star}} \right](\omega) \\ 
& = 0,
\end{align*}
where the last equality follows from martingale property (\ref{PB_Mg_Y}). This chain of equalities shows that the random variable $\omega \mapsto \int_{A}^{}{\left[M_{t}^{F_{g}}\left(\widetilde{\omega} \right) - M_{s}^{F_{g}}\left(\widetilde{\omega} \right) \right]\P_{\omega}\left(\dd \widetilde{\omega} \right)}$ is null except on a $\P_{\nu}^{\rm FV}-$null event $N_{1}(s,t,A,F,g) \in \FF_{t^{\star}}$  which depends on $s, t, A, F$ and $g$. \\ 

\textbf{Step 3. Property (\ref{Propriete_evenement_nul}) satisfied except on a $\P_{\nu}^{\rm FV}-$null event $N_{2}(s,t,F,g) \in~\FF_{t^{\star}}$.} We consider a countable subcollection $\EE$ of $\FF_{s}$ which generates $\FF_{s}$ {\color{blue} \cite[\color{black} Definition 3.17 of Section 5.3 (p 306)]{Kar}} and a $\P_{\nu}-$null event $N_{2}(s,t,F,g) \in \FF_{t^{\star}}$ such that for $\omega \in \Omega \, \backslash \, N_{2}(s,t,F,g)$,  \[
\forall A \in \EE, \qquad \int_{A}^{}{\left[M_{t}^{F_{g}}\left(\widetilde{\omega} \right) - M_{s}^{F_{g}}\left(\widetilde{\omega} \right) \right]\P_{\omega}\left(\dd \widetilde{\omega} \right) } = 0.\]
Therefore, the two measures \[v_{\omega}^{+}(A) := \int_{A}^{}{\left[M_{t}^{F_{g}} - M_{s}^{F_{g}} \right]^{+}\left( \widetilde{\omega}\right)\P_{\omega}\left(\dd \widetilde{\omega} \right)}  \quad {\rm{and}} \quad  v_{\omega}^{-}(A) := \int_{A}^{}{\left[M_{t}^{F_{g}} - M_{s}^{F_{g}} \right]^{-}\left( \widetilde{\omega}\right)\P_{\omega}\left(\dd \widetilde{\omega} \right)},\]
coincide on $\EE$, hence on $\FF_{s}$. Therefore, for $\omega \in \Omega \, \backslash \, N_{2}(s,t,F,g)$, we have proved that for all $A \in \FF_{s}$,  $\E_{\P_{\omega}}\left(\II_{A}\left[M_{t}^{F_{g}} - M_{s}^{F_{g}} \right] \right) = 0$. \\

\textbf{Step 4. Property (\ref{Propriete_evenement_nul}) satisfied except on a $\P_{\nu}^{\rm FV}-$null event $N_{3}(F,g) \in~\FF_{t^{\star}}$.} We may set now, the $\P_{\nu}^{\rm FV}-$null event 
\[N_{3}(F,g) := \bigcup_{\substack{s, t \, \in \, \Q \\ 0\leqslant s < t < \infty} }{N_{2}(s,t,F,g)}. \]
Due to the boundedness and continuity of $t \mapsto M_{t}^{F_{g}}$, it follows from the dominated convergence theorem that for $\omega \in \Omega \, \backslash \, N_{3}(F,g)$ \[ \forall s < t, \forall A \in \FF_{s}, \quad  \E_{\P_{\omega}}\left(\II_{A}\left[M_{t}^{F_{g}} - M_{s}^{F_{g}} \right] \right) = 0, \] 
in other words, for all $\omega \in \Omega \, \backslash \, N_{3}(F,g)$, $\left(M_{t}^{F_{g}}\left(\widetilde{\omega} \right)\right)_{t\geqslant 0}$ is a $\left(\FF_{s}, \P_{\omega}\left(\dd \widetilde{\omega} \right) \right)-$martingale. \\

\textbf{Step 5. Conclusion.} Now we define the $\P_{\nu}^{\rm FV}-$null event
\[N_{4} := \bigcup_{F, g \, \in \,  \BB}{N_{3}(F,g)} \] 
From the {\rm{Step 4}}, we have for all $s \leqslant t$, 
\begin{align*}
& \forall \omega \in \Omega \, \backslash \, N_{4}, \quad \forall A \in \FF_{s}, \quad  \forall F, g \in \BB, \qquad  \E_{\P_{\omega}}\left[\II_{A}\left(M_{t}^{F_{g}}  - M_{s}^{F_{g}} \right) \right] = 0.
\end{align*}
From Step 0, for all $F, g \in \CCCC^{2}_{K}(\R, \R)$, there exist two sequences $\left(F_{n}\right)_{n \in \N}, \left(g_{n}\right)_{n \in \N} \in\BB^{\N}$ such that  \[F_{n} \xrightarrow[n \to + \infty ]{ {  \|.\|_{W_{0}^{2, \infty}} } } F, \quad g_{n} \xrightarrow[n \to + \infty ]{ { \|.\|_{W_{0}^{2, \infty}} } } g, \qquad {\rm{and}} \qquad  \LL_{\rm FV}{\left(F_{n}\right)}_{g_{n}} \xrightarrow[n \to + \infty ]{ {\rm{  \|.\|_{\infty}}} } \AA F_{g}.\]
By the dominated convergence theorem, we deduce that for all $\omega \in \Omega$, $s \leqslant t$, and $A \in \FF_{s}$, 
\begin{align*}
 \E_{\P_{\omega}}\left[\II_{A}\left(M_{t}^{F_{g}}  - M_{s}^{F_{g}} \right) \right]  = \lim_{n \to + \infty}{ \E_{\P_{\omega}}\left[\II_{A}\left(M_{t}^{\left(F_{n}\right)_{g_{n}}}  - M_{s}^{\left(F_{n}\right)_{g_{n}}} \right) \right]}  =   0.
\end{align*}
which concludes the proof.  \hfill $\square$

\subsection{Proof of Lemma \ref{Lem_Q_omega_Martingale} \label{Preuve_lemme_fondamental}} 

 By abuse of notation, we note $h\left(\omega_{|_{\left[0,t^{\star}\right]}} \right)  = h(\omega)$. We want to prove that for all $0\leqslant s \leqslant t$, for all $\FF_{s}-$measurable bounded random variable $Z$, \[\E_{\P_{\nu}^{\rm FV}\left(\dd \tilde{\omega} \right)}\left(\left[\MM_{t}\left(\widetilde{\omega} \right) - \MM_{s}\left(\widetilde{\omega} \right) \right]Z\left(\widetilde{\omega} \right) \right) = 0. \]
Using {\color{blue} \cite[ {\color{black} Definition 3.2 (iii)' of Section 1}]{Ike}}
we deduce that
\begin{align*}
& \E_{\P_{\nu}^{\rm FV}\left(\dd \tilde{\omega} \right)}\left(\left[\MM_{t}\left(\widetilde{\omega} \right) - \MM_{s}\left(\widetilde{\omega} \right) \right]Z\left(\widetilde{\omega} \right) \right)  = \E_{\P_{\nu}\left(\dd \omega \right)}\left[\E_{\Q_{\omega}\left(\dd \tilde{\omega} \right)}\left(\left[\MM_{t}\left(\widetilde{\omega} \right) - \MM_{s}\left(\widetilde{\omega} \right) \right]Z\left(\widetilde{\omega} \right) \right)\right].
\end{align*}
Thus, it is sufficient to prove that for $\P_{\nu}^{\rm FV}-$almost every $\omega\in \Omega$, $\left(\MM_{t}\left(\widetilde{\omega} \right)\right)_{0\leqslant t \leqslant T}$ is a $\Q_{\omega}\left(\dd \widetilde{\omega} \right)-$ martingale and this is what we propose to establish in the rest of this proof. \vspace{0.3cm} \\
For fixed $\omega$, the function $h(\omega) \in \CCCC_{b}^{2}(\R, \R)$ can be considered as deterministic. 
We deduce from {\rm{Theorem} \ref{Proba_regulieres_condi_FV}} that there exists a  $\P_{\nu}^{\rm FV}-$null event $N\in \FF_{t^{\star}}$ such that for all $\omega \in \Omega \, \backslash \, N$, $\left(M_{t}^{\id}(h(\omega))\left(\widetilde{\omega} \right)\right)_{0\leqslant t \leqslant T}$ is a $\P_{\omega}\left(\dd \widetilde{\omega}\right)-$martingale. We deduce from {\color{blue} \cite[ {\color{black} Theorem 3.18 of Section 5.3 (p 307)}]{Kar}} that $\P_{\nu}^{\rm FV}-$almost every $\omega \in \Omega$, 
\begin{equation}
\widetilde{\omega}_{|_{[0, t^{\star}]}} = \omega_{|_{[0, t^{\star}]}}, \qquad \Q_{\omega}\left(\dd\widetilde{\omega}\right)\!-\!{\rm{a.s.}}
 \label{omega_omega_tilde}
\end{equation} 
This implies that, $\Q_{\omega}\left(\dd \widetilde{\omega} \right)-$almost surely, 
\begin{align*}
\MM_{t}\left(\widetilde{\omega} \right) = M^{\id}_{t}\left(h(\omega) \right)\left(\widetilde{\omega} \right) - M^{\id}_{t\wedge t^{\star}}\left(h(\omega) \right)\left(\widetilde{\omega} \right) & = \left\{ \begin{array}{lcc}
M^{\id}_{t-t^{\star}}\left(h\left(\omega\right)\right)\left(\theta_{t^{\star}}\left(\widetilde{\omega}\right) \right) & {\rm{if}} &  t > t^{\star} \\
0 & {\rm{if}} & t \leqslant t^{\star}
\end{array} \right. \\
& = M^{\id}_{\left(t-t^{\star}\right)^{+}}\left(h\left(\omega\right)\right)\left(\theta_{t^{\star}}\left(\widetilde{\omega}\right) \right)
\end{align*}
{  where $(a)^{+}$ designates the non-negative part of $a \in \R$}.
Let $n\in \N^{\star}$ and $0\leqslant s \leqslant T$. To prove the  martingale property for all $\FF_{s}-$measurable bounded random variable $Z$, it is sufficient to prove it on elementary events. Then, we consider a random variable $Z$ of the form \[Z(\omega) := \II_{\left\{\omega_{t_{1}} \in \Gamma_{1}, \cdots, \omega_{t_{n}} \in \Gamma_{n}  \right\} } \] where for all $i \in \{1, \cdots, n\}, t_{i} \leqslant s$ and $\Gamma_{i} \subset \MM_{1}(\R)$ measurable.  We define \[\widetilde{Z}\left(\omega, \widetilde{\omega}\right) := \II_{\left\{\omega_{t_{i}} \in \Gamma_{i}, \forall i \in \{1, \cdots, n \}  \ {\rm{such \ that}} \  t_{i} \leqslant t^{\star}  \right\} } \II_{\left\{\tilde{\omega}_{t_{j}} \in \Gamma_{j}, \forall j \in \{1, \cdots, n \}  \ {\rm{such \ that}} \  t_{j} >  t^{\star}  \right\} }.\]
By (\ref{omega_omega_tilde}), $\widetilde{Z}\left(\omega, \widetilde{\omega} \right) = Z\left(\omega \right)$, $\Q_{\omega}\left(\dd \widetilde{\omega} \right)-{\rm{a.s.}}$ Therefore, for $\P_{\nu}^{\rm FV}-$almost every $\omega \in \Omega$,
\begin{align*}
& \E_{\Q_{\omega}}\left(\left[\MM_{t} - \MM_{s} \right]Z \right) \\ 
& \hspace{0.5cm} =  \E_{\Q_{\omega}\left(\dd \widetilde{\omega} \right)}\left(\left[M_{\left(t-t^{\star}\right)^{+}}^{\id}\left(h\left(\omega\right)\right)\left(\theta_{t^{\star}}\left(\widetilde{\omega} \right)\right) - M_{\left(s-t^{\star}\right)^{+}}^{\id}\left(h\left(\omega\right)\right)\left(\theta_{t^{\star}}\left(\widetilde{\omega} \right)\right) \right]\widetilde{Z}\left(\omega, \widetilde{\omega} \right) \right) \\
& \hspace{0.5cm} = \II_{\left\{\omega_{t_{i}} \in \Gamma_{i}, \forall i \in \{1, \cdots, n \}  \ {\rm{such \ that}} \  t_{i} \leqslant t^{\star}  \right\} } \times \\
& \hspace{2cm} \E_{\Q_{\omega}\left(\dd \tilde{\omega} \right)}\left(\left[M_{t-t^{\star}}^{\id}\left(h\left(\omega\right)\right)\left(\theta_{t^{\star}}\left(\widetilde{\omega} \right)\right) - M_{s-t^{\star}}^{\id}\left(h\left(\omega\right)\right)\left(\theta_{t^{\star}}\left(\widetilde{\omega} \right)\right) \right]^{\phantom{1}} \right. \\
& \hspace{3.5cm}  \times \left. \II_{\left\{\left[\theta_{t^{\star}}\left(\tilde{\omega}\right)\right]_{t_{j}-t^{\star}} \in \Gamma_{j},   \forall j \in \{1, \cdots, n\} \ {\rm{such \ that}} \ t_{j} > t^{\star}  \right\} } \right)  \\
& \hspace{0.5cm} \overset{{(\ref{PPP_omega})}}{=} \II_{\left\{\omega_{t_{i}} \in \Gamma_{i}, \forall i \in \{1, \cdots, n \}  \ {\rm{such \ that}} \  t_{i} \leqslant t^{\star}  \right\} }  \E_{\P_{\omega}\left(\dd \hat{\omega} \right)}\left(\left[M_{t-t^{\star}}^{\id}\left(h\left(\omega\right)\right)\left(\widehat{\omega}\right) - M_{s-t^{\star}}^{\id}\left(h\left(\omega\right)\right)\left(\widehat{\omega}\right) \right]^{\phantom{1}} \right. \\
& \hspace{3.5cm}  \times \left. \II_{\left\{\hat{\omega}_{t_{j}-t^{\star}} \in \Gamma_{j}, \forall j \in \{1, \cdots, n\} \ {\rm{such \ that}} \ t_{j} > t^{\star}  \right\} } \right)  \\
& \hspace{0.5cm} = 0,
\end{align*}
using that $\left(M_{t-t^{\star}}^{\id}\left(h\left(\omega\right)\right)\left(\widehat{\omega}\right) \right)_{t^{\star}\leqslant t \leqslant T+t^{\star}}$ is a $\P_{\omega}\left(\dd \widehat{\omega} \right)-$martingale if the internal indicator is non zero. Thus, for $\P_{\nu}^{\rm FV}-$almost every $\omega\in \Omega$, $\left(\MM_{t}\left(\widetilde{\omega} \right)\right)_{0\leqslant t \leqslant T}$ is a $\Q_{\omega}\left(\dd \widetilde{\omega} \right)-$martingale which completes the first part of this proof. In similar way, we can prove \[M^{\id^{2}}_{t}\left( h\left(\widetilde{\omega} \right)\right)\left(\widetilde{\omega} \right) - M^{\id^{2}}_{t\wedge t^{\star}}\left( h\left(\widetilde{\omega} \right)\right)\left(\widetilde{\omega} \right) \] is a $\P_{\nu}^{\rm FV}-$martingale. Applying \textsc{It\^{o}}'s formula to compute $\left\langle h\left(\widetilde{\omega}_{t} \right), \widetilde{\omega}_{t} \right\rangle^{2}$ and comparing it to the previous result, we obtain the announced result. \hfill $\square$ } \\

\textbf{Funding.} This work was partially funded by the Chair ``Mod\'{e}lisation Math\'{e}matique et Biodiversit\'{e}'' of VEOLIA-Ecole Polytechnique-MNHN-F.X. N.C. was partially funded by the European Union (ERC, SINGER, 101054787).
Views and opinions expressed are however those of the author(s) only and do not necessarily reflect those of the European Union or the European Research Council. Neither the European Union nor the granting authority can be held responsible for them. \\

{  \textbf{Acknowledgments.} We thank the two anonymous referees for their useful comments.}


\bibliographystyle{plain} 
\small \bibliography{These_biblio} \normalsize


\end{document}